\title{Filtrations of global equivariant $K$-theory}
\author{
  Markus Hausmann and Dominik Ostermayr
}
\address{University of Bonn, Germany}
\email{hausmann@math.uni-bonn.de}
\address{University of Cologne, Germany}
\email{dosterma@math.uni-koeln.de}
\newtheorem{theorem}{Theorem}[section]
\newtheorem{Theorem}[theorem]{Theorem}
\newtheorem{Cor}[theorem]{Corollary}
\newtheorem{Lemma}[theorem]{Lemma}
\newtheorem{Prop}[theorem]{Proposition}
\newtheorem*{thmx}{Theorem}
\theoremstyle{definition}
\newtheorem{Def}[theorem]{Definition}
\newtheorem{Remark}[theorem]{Remark}
\newtheorem*{remx*}{Remark}
\newtheorem{Example}[theorem]{Example}
\newtheorem*{Def*}{Definition}
\newtheorem*{Example*}{Example}
\newcommand{\xRightarrow}[2][]{\ext@arrow 0359\Rightarrowfill@{#1}{#2}}
\newcommand{\xr}{\xrightarrow}
\newcommand{\N}{\mathbb{N}}
\newcommand{\Z}{\mathbb{Z}}
\newcommand{\Q}{\mathbb{Q}}
\newcommand{\R}{\mathbb{R}}
\newcommand{\C}{\mathbb{C}}
\newcommand{\Sph}{\mathbb{S}}
\newcommand{\F}{\mathcal{F}}
\newcommand{\upi}{\underline{\pi}}
\newcommand{\dia}{\diamond}
\newcommand{\mL}{\mathcal{L}}
\newcommand{\oL}{\overline{\mL}}
\newcommand{\U}{\mathcal{U}}
\newcommand{\oP}{\overline{\mathcal{P}^R_n}}
\DeclareMathOperator{\map}{map}
\DeclareMathOperator{\Emb}{Emb}
\DeclareMathOperator*{\colim}{colim}
\DeclareMathOperator*{\coker}{coker}
\DeclareMathOperator*{\hocolim}{hocolim}
\DeclareMathOperator*{\res}{res}
\DeclareMathOperator*{\tr}{tr}
\DeclareMathOperator*{\pr}{pr}
\DeclareMathOperator*{\Sym}{Sym}
\DeclareMathOperator*{\Rep}{Rep}
\DeclareMathOperator*{\rk}{rk}
\DeclareMathOperator*{\Ind}{Ind}
\DeclareMathOperator*{\im}{im}
\DeclareMathOperator*{\Bij}{Bij}
\DeclareMathOperator*{\Iso}{Iso}
\DeclareMathOperator*{\sgn}{sgn}
\begin{document}
\begin{abstract}
In \cite{AL07} and \cite{AL10}, Arone and Lesh constructed and studied spectrum level filtrations that interpolate between connective (topological or algebraic) $K$-theory and the Eilenberg-MacLane spectrum for the integers. In this paper we consider (global) equivariant generalizations of these filtrations and of another closely related class of filtrations, the modified rank filtrations of the $K$-theory spectra themselves. We lift Arone and Lesh's description of the filtration subquotients to the equivariant context and apply it to compute algebraic filtrations on representation rings that arise on equivariant homotopy groups. It turns out that these representation ring filtrations are considerably easier to express globally than over a fixed compact Lie group.
Furthermore, they have formal similarities to the filtration on Burnside rings induced by the symmetric products of spheres, which Schwede computed in \cite{Sch14}.
\end{abstract}
\maketitle

\tableofcontents

\setcounter{section}{-1}
\section{Introduction}
The symmetric products of spheres are a much-studied sequence of spectra interpolating between the sphere spectrum and the Eilenberg-MacLane spectrum $H\Z$. In \cite{AL07}, Arone and Lesh showed that this sequence is an example of a general construction of filtrations of the form
\[ k\mathcal{C}=A^{\mathcal{C}}_0\to A^{\mathcal{C}}_1\to \hdots \to A^{\mathcal{C}}_{\infty}\simeq H\Z \]
with $k\mathcal{C}$ denoting the spectrum associated to an ``augmented'' permutative category $\mathcal{C}$.
The symmetric products are the special case for $\mathcal{C}$ the category of finite sets. This construction in particular applies to connective topological $K$-theory and free algebraic $K$-theory of rings, yielding interesting new filtrations of their respective $0$-th Postnikov sections. Arone and Lesh further argued that these filtrations have substantial formal similarities to the symmetric products of spheres, especially in the case of topological $K$-theory. For example, the $n$-th subquotients are all suspension spectra which vanish if $n$ is not a prime power, and $K(n)$-locally the filtration converges after finitely many steps.
They further proved that while the subquotients of the symmetric product filtration are related to the layers of the Goodwillie tower of the identity, the subquotients of this new filtration are related to the layers of the Weiss tower of the functor $V\mapsto BU(V)$.

In the later paper \cite{AL10} it is shown that the filtrations of \cite{AL07} are linked to filtrations
\[ *=k\mathcal{C}^0\to k\mathcal{C}^1\to \hdots \to k\mathcal{C} \]
of the $K$-theory spectra $k\mathcal{C}$ themselves, so-called ``modified rank filtrations''. These are similar in spirit to the stable rank filtrations of algebraic $K$-theory considered by Rognes in \cite{Rog92}, but not equivalent to them in general. The modified rank filtrations come with maps to the symmetric products, and a suitable homotopy pushout gives the filtrations of \cite{AL07}, which we from now on call \emph{complexity filtrations} (based on the usage of that term in  \cite{Lesh00}). The paper \cite{AL10} also contains a study of the subquotients in the modified rank filtration, which once more turn out to be suspension spectra.

In this paper we set up and investigate equivariant versions of both the modified rank and the complexity filtration, and demonstrate further similarities to the symmetric product filtration that arise through their effect on equivariant homotopy groups. We work with the following equivariant generalizations of the spectra involved:
\begin{itemize}
	\item Topological $K$-theory is replaced by (the connective cover of) equivariant $K$-theory in the sense of Segal (\cite{Seg68b}), the $K$-theory of equivariant vector bundles. This makes sense for all compact Lie groups, though the model we use slightly differs from the actual $K$-theory spectrum for non-finite groups, as we explain in Section \ref{sec:pi0ku}.
	\item Algebraic $K$-theory of a discrete ring $R$ is replaced by a $G$-spectrum whose $H$-fixed points (for $H$ a subgroup of $G$) represent the direct sum $K$-theory of $R[H]$-modules that are finitely generated free as $R$-modules, so-called \emph{$R[H]$-lattices}. These spectra are only defined for finite groups $G$.
	\item The Eilenberg-MacLane spectrum $H\Z$ is replaced by the Eilenberg-MacLane spectrum for the constant Mackey functor $\Z$. This makes sense for all compact Lie groups, though there is again a caveat in the non-discrete case (cf. Example \ref{exa:em}).
\end{itemize}
Instead of treating each compact Lie or finite group separately, we work in a global equivariant context. The global framework packages all equivariant $K$-theory spectra for varying groups $G$ into one ``global'' object, trying to capture the full functoriality in $G$. In particular, a consequence of working in the global category is that the equivariant homotopy groups have a richer structure than one might expect. In addition to transfer maps along inclusions, they allow restriction maps along \emph{arbitrary} group homomorphisms, not only to subgroups. Concretely, the collection $\upi_0(X)=\{\pi_0^G(X)\ |\ G \text{ compact Lie} \}$ for a global spectrum $X$ forms a \emph{global functor} (or global Mackey-functor) in the sense of \cite{Sch15}. This extra functoriality is essential for the computation of the effect of the modified rank and complexity filtrations on $\upi_0$, as we explain below.
Concretely, we use orthogonal spaces (for the unstable theory) and orthogonal spectra (for the stable theory) as our model for global homotopy, as developed by Schwede in his book project \cite{Sch15}. In the case of global algebraic $K$-theory, which only forms a symmetric spectrum, we use \cite{Hau15} for the general framework and \cite{Sch13alg} for properties of this specific example.

We now go through the contents of this paper: After recalling some basics about global homotopy theory, we explain global $\Gamma$-space models for connective topological complex $K$-theory $ku$ (or its real version $ko$, over which everything works analogously) and algebraic $K$-theory $kR$ as they were described in \cite{Sch15} and \cite{Sch13alg}, respectively. These models come with a natural global generalization of the modified rank filtration introduced in \cite{AL10}, which we denote by \[*\to ku^1\to ku^2\to \hdots \to ku \hspace{1cm} \text{ resp.} \hspace{1cm} *\to kR^1\to kR^2\to \hdots \to kR.\]
We then describe the subquotients of these filtrations. For this we let $\mL_n$ denote the topological poset of proper decompositions of $\C^n$ as an orthogonal sum of subspaces, ordered by refinement. Here, ``proper'' means that the trivial decomposition into one summand is excluded. The poset carries a $U(n)$-action by applying the isometry to each summand in the decomposition. Similarly, $\mathcal{P}_n^R$ denotes the $GL_n(R)$-poset of proper decompositions of $R^n$ as a direct sum of free submodules. We show:
\begin{thmx}[Subquotients in the modified rank filtration, Theorems \ref{theo:zig-zag} and \ref{theo:alglattice}] There are global equivalences
\begin{eqnarray*} & ku^n/ku^{n-1} \simeq \Sigma^{\infty} (E_{gl}U(n)_+\wedge_{U(n)} |\mL_n|^{\dia}) \\
									& kR^n/kR^{n-1} \simeq \Sigma^{\infty} (E_{gl}GL_n(R)_+\wedge_{GL_n(R)} |\mathcal{P}_n^R|^{\dia}). \end{eqnarray*}
\end{thmx}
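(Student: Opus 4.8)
The plan is to reduce the claim to the non-equivariant computations of \cite{AL10}, made functorial enough to pass to the global setting, by working with explicit filtered $\Gamma$-space models. I describe the complex $K$-theory case; the algebraic one is formally identical, with the category of finite-dimensional complex inner product spaces replaced by that of finitely generated free $R$-modules, $U(n)$ by $GL_n(R)$, and $\mL_n$ by $\mathcal{P}^R_n$, working in the symmetric-spectrum global framework of \cite{Hau15} and \cite{Sch13alg} (where, everything being discrete, the intermediate zig-zag can be shortened). Recall from \cite{Sch15} the global $\Gamma$-space $F$ modelling $ku$, whose value $F(S)$ is the realized nerve of the topological category of finite-dimensional complex inner product spaces equipped with an $S$-indexed orthogonal direct-sum decomposition and isometric isomorphisms preserving it. The modified rank filtration is induced by a filtration $\ast = F^0 \subseteq F^1 \subseteq \cdots \subseteq F$ by sub-$\Gamma$-spaces --- a suitably collapsed version of the locus where the total space has dimension $\le n$ --- so that $ku^n$ is the spectrum built from $F^n$ and $ku^n/ku^{n-1}$ is the one built from the subquotient $\Gamma$-space $\bar{F}^n := F^n/F^{n-1}$. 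It therefore suffices to analyse $\bar{F}^n$.

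Two properties of $\bar{F}^n$ must be established. First, it is \emph{linear}: the associated global spectrum is a suspension spectrum $\Sigma^\infty Z_n$ of a well-defined based global space $Z_n$. This is exactly the purpose of the modification in the rank filtration --- one collapses just enough that the degeneracies of $\bar{F}^n$ split off and it becomes a special, indeed linear, $\Gamma$-space --- and I would check that the relevant argument of \cite{AL10} is natural in the ambient symmetry and hence applies verbatim in the global framework. Second, $Z_n$ must be identified with $E_{gl}U(n)_+ \wedge_{U(n)} |\mL_n|^{\dia}$. The structural input is that the classifying space of the topological category of $n$-dimensional complex inner product spaces and isometric isomorphisms models the global classifying space $B_{gl}U(n)$ and, before passing to the quotient, is the global Borel construction $E_{gl}U(n)/\!/U(n)$ (cf.\ \cite{Sch15}). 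This lets me replace, globally and functorially in the extra datum $X$, the orthogonal space that assigns to $V$ the space of $n$-dimensional complex inner product spaces equipped with the additional structure $X$, by $E_{gl}U(n) \times_{U(n)} X(\C^n)$.

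Applying this to the rank-exactly-$n$ stratum of $F$ taken modulo $F^{n-1}$, the datum attached to $\C^n$ is the reduced diagram, indexed over $\Gamma^{\mathrm{op}}$, of $S$-indexed orthogonal decompositions of $\C^n$; this is precisely the $\Gamma$-space input that \cite{AL10} identify non-equivariantly with $\Sigma^\infty |\mL_n|^{\dia}$, and, $E_{gl}U(n)$ being globally free, the functor $E_{gl}U(n)_+ \wedge_{U(n)} (-)$ is homotopical and commutes with the homotopy colimits occurring in that identification. This yields $Z_n \simeq E_{gl}U(n)_+ \wedge_{U(n)} |\mL_n|^{\dia}$, hence the theorem. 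To verify that the comparison maps between the various models are global equivalences I would use the fixed-point description
\[ \bigl(E_{gl}U(n)_+ \wedge_{U(n)} Y\bigr)^G\ \simeq\ \bigvee_{[\alpha\colon G \to U(n)]} EC_G(\alpha)_+ \wedge_{C_G(\alpha)} Y^{\alpha(G)}, \]
with coproduct over conjugacy classes of continuous homomorphisms and $C_G(\alpha)$ the centralizer of the image: this reduces each comparison to a non-equivariant statement about the fixed subspaces $|\mL_n|^{\alpha(G)}$, themselves realizations of decomposition posets of $\C^n$ regarded as an $\alpha(G)$-representation, together with the bar-construction and cofinality arguments of \cite{AL10}.

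The main obstacle is that $\bar{F}^n$ is not literally $\Sigma^\infty$ of the stated global space, so the identification has to be routed through a zig-zag of maps of $\Gamma$-spaces --- this is why the complex case is phrased as a zig-zag. One interpolates between the nerve of the topological category above and a two-sided bar construction making the shape $E_{gl}U(n)_+ \wedge_{U(n)} (-)$ manifest, and must then show that every step is a \emph{global} equivalence and not merely an underlying or naively-equivariant one; here the global freeness of $E_{gl}U(n)$ together with the fixed-point decomposition above do the real work, and one must make sure that linearity of $\bar{F}^n$ --- which fails for the unmodified rank filtration --- is genuinely used. A secondary technical point is keeping track of the topology on $\mL_n$ and of the precise meaning of the $\dia$-construction for a topological poset, so that geometric realization, the collapsed/reduced structures and the passage to fixed points all interact compatibly; in the algebraic case $\mathcal{P}^R_n$ is discrete, which simplifies matters, but one then has to check that the symmetric-spectrum-only framework of \cite{Hau15} still supports the homotopy-colimit manipulations that the complex case carries out with orthogonal spectra.
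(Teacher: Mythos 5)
Your overall strategy coincides with the paper's: decompose the quotient $\Gamma$-space as a balanced product $L_\C(\C^n,\Sym(\C\otimes -))_+\wedge_{U(n)}\mathcal{L}(n,-)$, observe that the first factor is a global universal space for $U(n)$ and that the realization of the second factor splits off its sphere coordinate, so that $ku^n/ku^{n-1}$ is (in this model, even isomorphic to) a suspension spectrum, and then compare the resulting $U(n)$-orthogonal space of configurations with the constant space $|\mL_n|$ by a zig-zag through a bar construction over $\mL_n$, checking each leg on $\Gamma(\alpha)$-fixed points via the fixed-point formula for global homotopy orbits. The algebraic case is indeed formally parallel, modulo semistability of the symmetric spectra involved.

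The one place where your plan has a genuine gap is the final comparison. After the fixed-point reduction, what must be proved for each $\alpha:G\to U(n)$ is that the $\Gamma(\alpha)$-fixed points of the configuration space $\oL_n(\U_G)$ --- configurations of points of $\U_G$ labelled by the pieces of a weakly $\alpha(G)$-invariant decomposition of $\C^n$ --- are equivalent to the nerve of the poset of such decompositions. This is a genuinely equivariant statement: it is not obtained by applying the non-equivariant identification of \cite{AL10} to $\C^n$ ``regarded as an $\alpha(G)$-representation'', and the paper deliberately does not proceed by equivariantizing Arone--Lesh's categorical arguments (it says so explicitly). What is actually needed, and what the paper supplies, is the following. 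The bar construction $Z_n=\hocolim_{\mL_n}F_n$ must be taken over the subspaces $F_n(\bigoplus W_i)\subseteq\oL_n$ of \emph{regular} configurations, those whose label vectors span a space of the maximal dimension $|I|-1$; without this restriction the structure maps of the diagram are not even well defined, since the averaged labels can collide or vanish. The leg $Z_n\to|\mL_n|$ becomes an equivalence after global homotopy orbits because each $F_n(\bigoplus W_i)^{\Gamma(\alpha)}$ is a space of $G$-equivariant linear embeddings of $\widetilde{\R}[I]$ into $\U_G$, contractible by completeness of the universe --- an input with no non-equivariant counterpart to cite. And the leg $Z_n\to\oL_n$ is \emph{not} an equivalence on the nose but only after one suspension, proved by filtering both sides by the number of summands and comparing subquotients; this is why Theorem \ref{theo:zig-zag} is stated with an $S^1\wedge(-)$, harmless only because one passes to suspension spectra afterwards. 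As written, your appeal to the ``bar-construction and cofinality arguments of \cite{AL10}'' does not cover any of these three points, and they constitute the actual content of the proof.
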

The underlying non-equivariant statement of this theorem is due to Arone and Lesh (cf. \cite[Section 2.2]{AL10} for the case of topological $K$-theory). The expression $\Sigma^{\infty}$ denotes the suspension spectrum of a global space (in the framework we use: an orthogonal space) and $(-)^{\dia}$ stands for the unreduced suspension. The global spaces that appear here are part of a general construction that takes a based $K$-space $X$ for some topological group $K$ and produces a global space $E_{gl}K_+\wedge_K X$, its \emph{global homotopy orbits}. Given a compact Lie group $G$, the underlying $G$-homotopy type of this construction is $E_GK_+\wedge_K X$, where $E_GK$ is a universal space for principal $K$-bundles in $G$-spaces. In particular, the underlying non-equivariant homotopy type agrees with the usual homotopy orbits. 
However, while a $K$-equivariant map that is a non-equivariant weak equivalence already induces a weak equivalence on homotopy orbits, the global version is sensitive to the isotropy of $X$ at all compact Lie subgroups of $K$. Global homotopy orbits also behave differently to their non-equivariant versions in another regard: If $K$ is a compact Lie group and $X$ a finite $K$-CW complex, the global homotopy orbits $E_{gl}K_+\wedge_K X$ are finite global spaces, though their underlying $G$-spaces often are not compact. Via the theorem above this implies that the filtration terms $ku^n$ and $kR^n$ for $R$ finite are compact in the global stable homotopy category, while they are not compact as non-equivariant spectra.

\begin{remx*}[Global Barratt-Priddy-Quillen Theorem, Theorem \ref{theo:gbpq}]Our methods can also be applied to a rank filtration $k\mathcal{F}in^1\to k\mathcal{F}in^2\to \hdots \to k\mathcal{F}in$ of the global $K$-theory of finite sets $k\mathcal{F}in$, yielding a  proof of the global Barratt-Priddy-Quillen Theorem: The subquotient $k\F in^n/k\F in^{n-1}$ is globally equivalent to $\Sigma^{\infty} ((E_{gl} \Sigma_n)_+\wedge_{\Sigma_n} |\mathcal{P}_n^{\Sigma}|^{\dia})$, where $\mathcal{P}_n^{\Sigma}$ denotes the lattice of partitions of the set $\{1,\hdots,n\}$, excluding the trivial decomposition into just one subset.
If $n$ is at least two, this lattice has a $\Sigma_n$-invariant smallest element, namely the decomposition $\{1,\hdots,n\}=\{1\}\sqcup \{2\} \sqcup \hdots \sqcup \{n\}$. Hence its nerve is $\Sigma_n$-equivariantly contractible. The first stage $k\mathcal{F}in^1$ is easily identified with the global sphere spectrum, so one obtains that the unit $\mathbb{S}\to k\mathcal{F}in$ is a global equivalence. In \cite{Rog92}, Rognes also used his version of the stable rank filtration to give a proof of the non-equivariant Barratt-Priddy-Quillen theorem.
\end{remx*}

We then proceed by considering complexity filtrations
\[ ku\simeq A_0^u\to A_1^u\to \hdots \to A_{\infty}^u\simeq H\Z \hspace{1cm} \text{ and} \hspace{1cm} kR\simeq A_0^R\to A_1^R\to \hdots \to A_{\infty}^R\simeq H\Z.\]
We define these as suitable homotopy pushouts that involve the modified rank and symmetric product filtration, generalizing the non-equivariant description of \cite{AL10}. In \cite[Corollary 8.3]{AL07} it is shown that the $n$-th subquotient of the complexity filtration can be non-equivariantly described as the suspension spectrum of a classifying space for the collection of so-called standard subgroups of $U(n)$ (resp. $GL_n(R)$). We denote this collection by $\overline{\mathcal{C}}_n$. There are natural global equivariant generalizations of classifying spaces for collections (similarly to the global homotopy orbits discussed above), which we explain in Section \ref{sec:uni} and denote by~$B_{gl}\overline{\mathcal{C}}_n$. Generalizing the non-equivariant statement in \cite{AL07}, we then show:
\begin{thmx}[Subquotients in the complexity filtration, Theorems \ref{theo:quotcomp} and  \ref{theo:algquotcomp}] There is a global equivalence
\[ A_n^u/A_{n-1}^u\simeq \Sigma^{\infty} (B_{gl}\overline{\mathcal{C}}_n^u)^{\dia} \]
and if $R$ is an integral domain with $2\neq 0$ also
\[ A_n^R/A_{n-1}^R\simeq \Sigma^{\infty} (B_{gl}\overline{\mathcal{C}}_n^R)^{\dia}. \]
\end{thmx}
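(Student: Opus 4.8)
The plan is to use the pushout description of the complexity filtration to reduce the statement to the already-established descriptions of the subquotients of the modified rank filtration together with those of the global symmetric products, and then to carry out an equivariant refinement of Arone and Lesh's combinatorial identification in \cite[Section~8]{AL07}.

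First I would recall that, by construction, $A^u_n$ is the homotopy pushout of the diagram $SP^n \leftarrow ku^n \to ku$, where $SP^\bullet$ denotes the global symmetric product filtration of $H\Z$ and $ku^n \to SP^n$ is the natural comparison map; likewise $A^R_n$ is the homotopy pushout of $SP^n \leftarrow kR^n \to kR$. Comparing the pushout squares for $n-1$ and for $n$, and using that homotopy pushouts commute with the formation of cofibres, one obtains the formal identification
\[ A^u_n/A^u_{n-1} \;\simeq\; \operatorname{cofib}\bigl(\,ku^n/ku^{n-1} \longrightarrow SP^n/SP^{n-1}\,\bigr), \]
and analogously for $A^R_n/A^R_{n-1}$.

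Next I would substitute the descriptions of the two subquotients. By Theorem~\ref{theo:zig-zag} we have $ku^n/ku^{n-1} \simeq \Sigma^\infty\bigl(E_{gl}U(n)_+ \wedge_{U(n)} |\mL_n|^\dia\bigr)$, and the global subquotients of the symmetric product filtration are suspension spectra of global homotopy orbit spaces built from $\Sigma_n$ and the poset of partitions of $\{1,\dots,n\}$ (the global analogue of the classical picture, and the instance of the present theorem for the symmetric products themselves). Under these identifications the comparison map is the suspension spectrum of an explicit map $\phi_n$ of global spaces, so that $A^u_n/A^u_{n-1} \simeq \Sigma^\infty\,\mathrm{cone}(\phi_n)$, and the problem becomes the unstable identification $\mathrm{cone}(\phi_n) \simeq (B_{gl}\overline{\mathcal{C}}^u_n)^\dia$.

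For this I would invoke the recognition principle for global classifying spaces of collections from Section~\ref{sec:uni}: it suffices to determine, for every compact Lie subgroup $H \le U(n)$, the homotopy type of the $H$-fixed points of $\mathrm{cone}(\phi_n)$, and to verify that these are contractible exactly when $H$ is a standard subgroup. This reduces to an analysis of the fixed-point subposets $|\mL_n|^H$ and the corresponding subcomplexes of the partition complex: one shows that $|\mL_n|^H$ is contractible unless $H$ lies in $\overline{\mathcal{C}}^u_n$, while for the remaining $H$ collapsing the partition part turns $|\mL_n|^H$ into a model for the classifying space of $\overline{\mathcal{C}}^u_n$. Non-equivariantly this is \cite[Section~8]{AL07}; the main obstacle is the simultaneous control of all these fixed points, i.e.\ making the poset arguments of \cite{AL07} work $H$-equivariantly for every $H$ at once. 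The algebraic case runs in parallel, with $GL_n(R)$ and $\mathcal{P}^R_n$ in place of $U(n)$ and $\mL_n$ and Theorem~\ref{theo:alglattice} in place of Theorem~\ref{theo:zig-zag}; the hypotheses that $R$ be an integral domain and that $2\neq 0$ are used precisely in this fixed-point analysis --- the former so that free direct summands of $R^n$ have free complements and ranks are additive, so that $\mathcal{P}^R_n$ has the same formal structure as $\mL_n$, the latter so that $R^\times$ is nontrivial (the analogue of $U(1)\neq 1$), keeping the isotropy of the discrete decomposition, a wreath product $\Sigma_n \wr R^\times$, and hence the collection $\overline{\mathcal{C}}^R_n$ of the expected shape. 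These are also exactly the hypotheses Arone and Lesh need non-equivariantly.
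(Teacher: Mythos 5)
Your first reduction is exactly the paper's: comparing the defining pushout squares for $n-1$ and $n$ and taking termwise cofibers identifies $A_n^u/A_{n-1}^u$ with the cofiber of $ku^n/ku^{n-1}\to Sp^n/Sp^{n-1}$. After that, however, your route diverges from the paper's and, as written, has a genuine gap. The paper does \emph{not} pass to the lattice models of Theorem \ref{theo:zig-zag} at this point; it stays with the geometric $\Gamma$-space models of Corollary \ref{cor:kuquot}, where $ku^n/ku^{n-1}\cong \Sigma^{\infty}(L_{\C}(\C^n,\Sym(\C\otimes -))_+\wedge_{U(n)}\oL_n^{\dia})$, and observes that under the isomorphism $\oL_n/U(n)\cong S(\overline{\R^n}\otimes-)/\Sigma_n$ the comparison map to $Sp^n/Sp^{n-1}$ is literally the collapse of the $L_{\C}(\C^n,\Sym(\C\otimes-))$-factor. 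The cofiber of a collapse map $X_+\wedge Z\to Z$ is $X^{\dia}\wedge Z$, so the subquotient is $(L_{\C}(\C^n,\Sym(\C\otimes-))*\oL_n)^{\dia}/U(n)$, and Lemma \ref{lem:join} together with Proposition \ref{prop:clacompl} identifies the join as a global universal space for $\overline{\mathcal{C}}_n^u$. No new fixed-point analysis is needed; everything was already proved for the rank filtration.

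Your proposal instead transports the comparison map to the lattice model $|\mL_n|$ and then identifies the cone by a fixed-point analysis. This creates two problems you do not resolve. First, the comparison map is only defined on the geometric models; the zig-zag of Theorem \ref{theo:zig-zag} becomes an equivalence only after global homotopy orbits and one suspension, so you would have to show it is compatible with the map to the partition model of $Sp^n/Sp^{n-1}$ — extra work the paper's route avoids entirely. Second, your stated fixed-point criterion is not correct: to recognize $(B_{gl}\overline{\mathcal{C}}_n^u)^{\dia}$ one must check, for every compact Lie group $G$, the fixed points of the universal space \emph{upstairs} under subgroups of $U(n)\times G$ whose intersection with $U(n)\times 1$ lies in (resp.\ outside) the collection — it is not a matter of "$|\mL_n|^H$ contractible exactly when $H$ is standard" (a universal space has contractible fixed points for subgroups \emph{in} the collection and empty isotropy outside it, and the trivial subgroup, which lies in $\overline{\mathcal{C}}_n^u$, certainly does not have contractible fixed points in the cone). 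No argument is actually supplied for these contractibility claims beyond a pointer to making \cite{AL07} equivariant. Finally, the hypotheses on $R$ enter not through rank additivity or nontriviality of $R^{\times}$ per se, but through Lemma \ref{lem:dom2} (Arone--Lesh): for an integral domain with $2\neq 0$, the complete subgroup $\prod GL(W_i)$ fixes a decomposition if and only if it is refined by $\bigoplus W_i$; this is exactly what makes $\oP$ a global universal space for $\mathcal{C}_n^R$, and it is what fails for $\mathbb{F}_2$.
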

The conditions on $R$ were also required in \cite{AL07}, and in fact it can be shown that the statement is false in full generality.

\begin{remx*}It might be interesting to the reader familiar with \cite{AL07} and \cite{AL10} that the methods we use to obtain these descriptions of the subquotients are quite different. While Arone and Lesh perform categorical constructions, we work with an explicit $\Gamma$-space model for connective $K$-theory and decompose it geometrically. It turns out that with this model the quotients are in fact \emph{isomorphic} to suspension spectra of global spaces. Hence the main work lies in examining the global equivariant homotopy type of those and identifying them as geometric models of classifying spaces or lattices.
\end{remx*}

Afterwards, we apply our global description of the filtration subquotients to show another formal similarity between complexity filtrations and the symmetric product filtration. For this we recall a result of Schwede \cite{Sch14}, where he considers the global version of the symmetric product filtration
\[ \mathbb{S}=Sp^1\to Sp^2\to \hdots \to Sp^{\infty}\simeq H\Z. \]
On $0$-th homotopy, the map $\mathbb{S}\to H\Z$ induces the augmentation from the Burnside ring global functor $A(-)\cong \upi_0(\mathbb{S})$ to the constant functor $\Z$, sending a finite $G$-set to its number of elements. Applying $\upi_0$ to the symmetric product filtration gives a filtration
\[ \upi_0(\mathbb{S})\to \upi_0(Sp^2)\to \hdots \to \upi_0 (H\Z)\cong \underline{\Z} \]
of this augmentation. Schwede showed that this algebraic filtration allows a compact description when considered in the global context. For this we let $\tau_n^{\Sigma}$ denote the tautological $n$-element $\Sigma_n$-set, thought of as an element in $\pi_0^{\Sigma_n}(\mathbb{S})\cong A(\Sigma_n)$.
\begin{thmx}[Schwede, {\cite[Theorem 3.13]{Sch14}}] The map $\upi_0 (\mathbb{S})\to \upi_0 (Sp^n)$ is surjective for all $n\in \N$, with kernel generated as a global functor by the single element $(\tau_n^{\Sigma}-n\cdot 1)\in \pi_0^{\Sigma_n} (\mathbb{S})$. In particular, \[ \upi_0(Sp^n)\cong A(-)/(\tau_n^{\Sigma}-n\cdot 1)\] as global functors.
\end{thmx}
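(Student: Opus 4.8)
The plan is to induct on $n$ using the cofiber sequences of global orthogonal spectra $Sp^{n-1}\to Sp^n\to Sp^n/Sp^{n-1}$. All three are globally connective, so the long exact sequences in $\upi_*$ break off, for each $n\ge 2$, into a four-term exact sequence of global functors
\[ \upi_1(Sp^n/Sp^{n-1})\xrightarrow{\ \partial_n\ }\upi_0(Sp^{n-1})\xrightarrow{\ j_n\ }\upi_0(Sp^n)\longrightarrow \upi_0(Sp^n/Sp^{n-1})\longrightarrow 0. \]
The base case $n=1$ is the isomorphism $\upi_0(\mathbb{S})\cong A(-)$ and the fact that $\tau_1^\Sigma-1\cdot 1$ vanishes. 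Assuming the statement for $n-1$ — so $A(-)$ surjects onto $\upi_0(Sp^{n-1})$ with kernel the global subfunctor $J_{n-1}$ generated by $\tau_{n-1}^\Sigma-(n-1)\cdot 1$ — the theorem for $n$ reduces to: (a) $\upi_0(Sp^n/Sp^{n-1})=0$, which makes $j_n$, hence $A(-)\to\upi_0(Sp^n)$, surjective; and (b) the kernel $I_n$ of $A(-)\to\upi_0(Sp^n)$, which by exactness satisfies $I_n/J_{n-1}\cong\operatorname{im}(\partial_n)$ inside $\upi_0(Sp^{n-1})\cong A(-)/J_{n-1}$, equals the global subfunctor $J_n$ generated by $\tau_n^\Sigma-n\cdot 1$. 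Note the elementary identity $\res^{\Sigma_n}_{\Sigma_{n-1}}(\tau_n^\Sigma-n\cdot 1)=\tau_{n-1}^\Sigma-(n-1)\cdot 1$ already shows $J_{n-1}\subseteq J_n$, so the inductive bookkeeping is consistent.

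\textbf{Step 1: the subquotient and surjectivity.} Evaluating on a representation $V$, a point of $Sp^n(S^V)=(S^V)^{\times n}/\Sigma_n$ lies in the image of $Sp^{n-1}(S^V)$ exactly when one coordinate is the basepoint, giving a natural homeomorphism $Sp^n(S^V)/Sp^{n-1}(S^V)\cong (S^V)^{\wedge n}/\Sigma_n$; splitting off the trivial summand of $\mathbb{R}^n\cong\mathbb{R}\oplus\bar\rho_n$, with $\bar\rho_n$ the reduced permutation representation of $\Sigma_n$, rewrites this as $S^V\wedge(S^{\bar\rho_n\otimes V}/\Sigma_n)$. Hence $Sp^n/Sp^{n-1}$ is the global suspension spectrum of the based global space $\mathbf{Y}_n\colon V\mapsto S^{\bar\rho_n\otimes V}/\Sigma_n$, a Thom-type global space built functorially from $\Sigma_n$ acting on $\bar\rho_n$ (Schwede's symmetric-product analogue of the rank-filtration subquotients discussed above). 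For $n\ge 2$ the representation $\bar\rho_n$ has positive rank, and inspecting the fixed points of $S^{\bar\rho_n\otimes V}/\Sigma_n$ over each homomorphism $G\to\Sigma_n$ shows that $\mathbf{Y}_n$ is globally connected; a connectivity argument in the global stable category then gives $\upi_0(\Sigma^\infty\mathbf{Y}_n)=0$, which is (a), and hence surjectivity of $A(-)\to\upi_0(Sp^n)$ by induction.

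\textbf{Step 2: the generator of the kernel.} It remains to identify $\operatorname{im}(\partial_n)$ with the global subfunctor generated by the class of $\tau_n^\Sigma-n\cdot 1$. First, $\tau_n^\Sigma-n\cdot 1$ does lie in $I_n$: writing $\tau_n^\Sigma=\tr_{\Sigma_{n-1}}^{\Sigma_n}(1)$, it is represented in $\pi_0^{\Sigma_n}(Sp^n)$ by the $\Sigma_n$-map $S^V\to(S^V)^{\wedge n}/\Sigma_n\hookrightarrow Sp^n(S^V)$ sending $v$ to the diagonal $n$-tuple, and one writes down an explicit $\Sigma_n$-equivariant homotopy inside $Sp^n(S^V)$ — which is built to hold exactly $n$ points — contracting this configuration onto a single point, thereby deforming the map to $n$ copies of the unit; so its class in $A(-)/J_{n-1}$ lies in $\operatorname{im}(\partial_n)$. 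Since $\partial_n$ is a map of global functors, it then suffices to produce a distinguished element $z_n\in\pi_1^{\Sigma_n}(Sp^n/Sp^{n-1})$ that generates $\operatorname{im}(\partial_n)$ over the global functor and to show $\partial_n(z_n)$ equals, up to sign, the class of $\tau_n^\Sigma-n\cdot 1$. The generation statement should follow from the Thom-spectrum-over-$B_{gl}\Sigma_n$ description of $Sp^n/Sp^{n-1}$ from Step 1, via the representability properties of such global classifying (Thom) spaces — the same circle of ideas as the global Barratt--Priddy--Quillen theorem — which force its low-dimensional homotopy to be pulled and transferred from the group $\Sigma_n$ itself.

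\textbf{Main obstacle.} I expect the heart of the argument to be the explicit computation of $\partial_n(z_n)$: one must unwind the connecting map of $Sp^{n-1}\hookrightarrow Sp^n\to Sp^n/Sp^{n-1}$ at the level of $\Sigma_n$-equivariant maps and identify the boundary of the Thom/fundamental class of $\mathbf{Y}_n$ — morally the class recording the degeneration of a generic $n$-point configuration onto the fat diagonal — with precisely the difference between $\tr_{\Sigma_{n-1}}^{\Sigma_n}(1)=\tau_n^\Sigma$ and the $n$-fold multiple of the unit. This requires genuine equivariant transversality rather than a formal manipulation, and it is also the place where the analogy with Schwede's symmetric-product computation is most substantive. (An alternative route, perhaps cleaner for the injectivity half of (b), is to compute $\pi_0^G(Sp^n\mathbb{S})$ directly for each compact Lie group $G$ by determining all geometric fixed point spectra $\Phi^H(Sp^n\mathbb{S})$ — a non-equivariant computation in terms of configuration spaces — and recovering $\pi_0^G(Sp^n\mathbb{S})$ as a subgroup of $\prod_{[H\le G]}\pi_0\Phi^H(Sp^n\mathbb{S})$ through the mark homomorphisms, then matching against the combinatorially explicit quotient $A(G)/\langle [G/L']-[L:L']\cdot[G/L] : L'\le L\le G,\ [L:L']\le n\rangle$, which one checks equals $A(G)/J_n(G)$.)
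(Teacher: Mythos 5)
Your induction scheme, base case, and Step 1 are sound: the identification $Sp^n/Sp^{n-1}\cong\Sigma^{\infty}\bigl(V\mapsto S^{\overline{\R^{n}}\otimes V}/\Sigma_n\bigr)$ is exactly the one recalled in Section 2, every equivariant fixed-point space of this based orthogonal space is an unreduced suspension and hence connected, and $\upi_0$ of the suspension spectrum of a globally connected based space vanishes by the tom Dieck splitting (Proposition \ref{prop:pi0susp}), giving surjectivity. The first half of Step 2 is also right in substance, although the description is garbled: the diagonal tuple $v\mapsto[v,\dots,v]$ represents $n\cdot 1$, while $\tr_{\Sigma_{n-1}}^{\Sigma_n}(1)$ is represented by the spread-out $n$-point configuration coming from the Pontryagin--Thom collapse, and the homotopy (available only once $n$ slots exist) slides the latter onto the former. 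The identity $\res^{\Sigma_n}_{\Sigma_{n-1}}(\tau_n^{\Sigma}-n\cdot 1)=\tau_{n-1}^{\Sigma}-(n-1)\cdot 1$ is correct.

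The genuine gap is the kernel computation. You propose to control $\im(\partial_n)$ by finding a single class $z_n\in\pi_1^{\Sigma_n}(Sp^n/Sp^{n-1})$ generating it, on the grounds that the Thom-spectrum description ``forces low-dimensional homotopy to be pulled and transferred from $\Sigma_n$''. That is not a property $\pi_1$ of a suspension spectrum of a global space has: the tom Dieck splitting in degree one produces summands involving $\pi_1^s$ of a point and the first homology of the homotopy-orbit pieces $EW_GH\times_{W_GH}X^H$, so there is no single-generator statement in degree $1$ analogous to the degree-$0$ one you rely on, and even a generating set would still leave you to evaluate the connecting map on it. The actual proofs (Schwede's in \cite{Sch14}, and the analogues for $ku$ and $kR$ in Sections \ref{sec:pi0ku} and \ref{sec:pi0comp}) never touch $\pi_1$ of the cofiber: they present $Sp^n$ as a homotopy pushout of $Sp^{n-1}\leftarrow\Sigma^{\infty}_+X_n\rightarrow\Sph$ with $X_n$ the \emph{unbased} global space $S(\overline{\R^{n}}\otimes-)/\Sigma_n$ (a global classifying space of the complete subgroups of $\Sigma_n$) and with the left leg an explicitly constructed map of the type $\psi_n$ from Section \ref{sec:pi0ku}. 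The Mayer--Vietoris sequence (as in Corollaries \ref{cor:exact} and \ref{cor:exact2}) then replaces $\upi_1$ of the cofiber by $\upi_0(\Sigma^{\infty}_+X_n)$, which is a free global functor on an explicitly enumerable basis by Proposition \ref{prop:pi0susp}, and the relations are read off by evaluating both legs on these basis elements. Even granting that, a combinatorial step remains that your sketch omits: the basis elements impose, at each finite group $H$, one relation for every decomposable $n$-element $H$-set, and one must check that all of these follow from the single universal relation at $\Sigma_n$ via restriction along (generally non-injective) homomorphisms and transfer --- this is precisely where the global functoriality earns its keep. Your parenthetical alternative has a similar unproved input: $\pi_0^G(Sp^n)$ is a proper quotient of $A(G)$, so its map to the product of the groups $\pi_0\Phi^H(Sp^n)$ is not injective for free, and the mark-homomorphism argument does not apply without further work.
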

The $\Sigma_n$-set $\tau_n^{\Sigma}$ is the universal $n$-element $G$-set, in the sense that for every $n$-element $G$-set $X$ there is a unique up to conjugacy group homomorphism $\alpha:G\to \Sigma_n$ such that $\alpha^*(\tau_n^{\Sigma})\cong X$. So quotiening out by $(\tau_n^{\Sigma}-n\cdot 1)$ can be loosely interpreted as forgetting all $G$-actions on sets with size at most $n$, though it is in fact more complicated, due to the presence of transfers.

In our case the complexity filtration induces a filtration of the augmentation from the representation ring global functor $\Rep(-)$ (over $\C$ in the case of $\upi_0(ku)$ resp. over $R$ in the case of $\upi_0(kR)$) to the constant functor with value $\Z$, sending a $G$-representation to its dimension or rank. There is a natural replacement for the universal $\Sigma_n$-set $\tau_n^{\Sigma}$ in this context: The $n$-th unitary group $U(n)$ acts tautologically on $\C^n$ (respectively $GL_n(R)$ on $R^n$), and every $n$-dimensional $G$-representation can be obtained by pulling this representation back along a homomorphism which is unique up to conjugacy. We let these universal representations be denoted by $\tau_n^{\C}\in \pi_0^{U(n)}(ku)$ respectively $\tau_n^{R}\in \pi_0^{GL_n(R)}(kR)$ if $R$ is finite. Then we have:
\begin{thmx}[Complexity filtration on $\upi_0$, Theorems \ref{theo:picomp} and \ref{theo:algpicomp}]\ 
\begin{enumerate}
     \item The map $\upi_0 (ku)\to \upi_0 (A_n^u)$ is surjective for all $n\in \N$, with kernel generated as a global functor by the element $(\tau_n^{\C}-n\cdot 1)\in \pi_0^{U(n)} (ku)$. In particular, \[ \upi_0(A_n^u)\cong \upi_0(ku)/(\tau_n^{\C}-n\cdot 1)\] as global functors.
    \item Let $R$ be a finite ring. Then the map $\upi_0(kR)\to \upi_0(A_n^u)$ is surjective for all $n\in \N$, with kernel generated as a global functor by the element $(\tau_n^{R}-n\cdot 1)\in \pi_0^{GL_n(R)}(kR)$. In particular,
    \[ \upi_0(A_n^R)\cong {\Rep}_R(-)/(\tau_n^{R}-n\cdot 1)\]
    as global functors.
\end{enumerate}
\end{thmx}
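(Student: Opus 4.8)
\textit{Plan of proof.} We describe the argument for $ku$. The case of $kR$ with $R$ finite is the same, with $U(n)$, $ku$, $ku^n$ replaced by $GL_n(R)$, $kR$, $kR^n$ and Theorems~\ref{theo:zig-zag},~\ref{theo:quotcomp} by~\ref{theo:alglattice},~\ref{theo:algquotcomp}; the kernel computation can moreover be run through the modified rank filtration directly (see below), which covers all finite rings and not only those of~\ref{theo:algquotcomp}. The plan is to imitate Schwede's proof of the analogous statement for symmetric products \cite{Sch14}, feeding in the geometric descriptions of the two filtrations. By construction (following \cite{AL10}) $A_n^u$ is the homotopy pushout of $ku\leftarrow ku^n\to Sp^n$; taking cofibres of the two legs of the pushout square gives a cofibre sequence $ku\xrightarrow{\ \iota\ }A_n^u\to Q_n^u$ with $Q_n^u:=\operatorname{cofib}(ku^n\to Sp^n)$, and comparing the pushout squares for consecutive filtration stages identifies $A_n^u/A_{n-1}^u$ with $\operatorname{cofib}(ku^n/ku^{n-1}\to Sp^n/Sp^{n-1})$, i.e.\ with $\Sigma^\infty(B_{gl}\overline{\mathcal{C}}_n^u)^{\dia}$ by Theorem~\ref{theo:quotcomp}. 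All spectra here are connective, so the $\upi_{-1}$ terms in the associated long exact sequences vanish; in particular $\upi_0(\iota)$ is surjective as soon as $\upi_0(Q_n^u)=0$, with kernel the image of $\upi_1(Q_n^u)\to\upi_0(ku)$.

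\emph{Surjectivity.} It suffices that $\upi_0(ku^n)\to\upi_0(Sp^n)$ be onto. By Schwede's theorem $\upi_0(Sp^n)$ is generated as a global functor by the image of the unit $1\in\pi_0^e(\Sph)$; since $|\mL_1|=\emptyset$, Theorem~\ref{theo:zig-zag} identifies $ku^1\simeq\Sigma^\infty_+B_{gl}U(1)$, through which the unit $\Sph\to ku$ factors (via the basepoint of $B_{gl}U(1)$, the trivial line), so that unit class already lies in the image of $\upi_0(ku^1)\subseteq\upi_0(ku^n)$. As this image is a sub--global functor containing a generating element, it is everything. Hence $\upi_0(Q_n^u)=0$ and $\upi_0(ku)\twoheadrightarrow\upi_0(A_n^u)$; the same computation gives $\upi_0(A_n^u/A_{n-1}^u)=0$, so $\upi_0(A_{n-1}^u)\twoheadrightarrow\upi_0(A_n^u)$.

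\emph{The kernel.} Write $I_n:=\ker(\upi_0(ku)\to\upi_0(A_n^u))$. As in the symmetric--product case one has $\tau_n^{\C}-n\cdot 1\in I_n$: the tautological representation lifts to a rank $n$ class $\widetilde\tau\in\pi_0^{U(n)}(ku^n)$ whose image in $\pi_0^{U(n)}(Sp^n)$ is $n\cdot 1$ (the map $ku^n\to Sp^n$ retains only the underlying multiset, and $\C^n$ has no $U(n)$--invariant splitting, so that multiset carries the trivial $U(n)$--action), whence the pushout square forces $\iota_*(\tau_n^{\C})=n\cdot 1$ in $\pi_0^{U(n)}(A_n^u)$. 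Restricting along the block inclusion $U(n-1)\hookrightarrow U(n)$ sends $\tau_n^{\C}-n\cdot 1$ to $\tau_{n-1}^{\C}-(n-1)\cdot 1$, so $(\tau_{n-1}^{\C}-(n-1)\cdot 1)\subseteq(\tau_n^{\C}-n\cdot 1)$. We induct on $n$ (the case $n=0$ being trivial, $\tau_0^{\C}=0$), so it remains to show $I_n\subseteq I_{n-1}+(\tau_n^{\C}-n\cdot 1)$, which then equals $(\tau_n^{\C}-n\cdot 1)$. Now $I_n=\iota_*^{-1}(\im\delta_n)$ for the connecting map $\delta_n\colon\upi_1(A_n^u/A_{n-1}^u)\to\upi_0(A_{n-1}^u)$ and $\iota\colon ku\to A_{n-1}^u$ (which is onto by the previous step), so it is enough to show $\im\delta_n$ is contained in the image under $\iota_*$ of the ideal $(\tau_n^{\C}-n\cdot 1)$. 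The cofibre sequence $\Sigma^\infty_+B_{gl}\overline{\mathcal{C}}_n^u\to\Sph\to\Sigma^\infty(B_{gl}\overline{\mathcal{C}}_n^u)^{\dia}$ has split first map (since $B_{gl}\overline{\mathcal{C}}_n^u$ has a global point, the trivial subgroup of $U(n)$ being standard), so $\upi_1(A_n^u/A_{n-1}^u)\cong\widetilde{\upi}_0(\Sigma^\infty_+B_{gl}\overline{\mathcal{C}}_n^u)$, the kernel of the Burnside--ring augmentation; this group is spanned by differences $[(L,\beta)]-[(L,\beta')]$ of pairs consisting of a subgroup $L\le G$ with $G/L$ finite and a homomorphism $\beta\colon L\to U(n)$ with image subconjugate to a standard subgroup. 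The crux — obtained by tracing the equivalence of Theorem~\ref{theo:quotcomp} through its proof and matching the boundary with the universal $n$--dimensional bundle over $B_{gl}\overline{\mathcal{C}}_n^u$ — is that $\delta_n$ sends $[(L,\beta)]$ to $\iota_*\Ind_L^G(\beta^*\tau_n^{\C})$, i.e.\ it realizes a class as the genuine $n$--dimensional representation it classifies, pushed into $A_{n-1}^u$. Granting this, $\delta_n\big([(L,\beta)]-[(L,\beta')]\big)=\iota_*\big(\Ind_L^G\beta^*(\tau_n^{\C}-n\cdot 1)-\Ind_L^G\beta'^*(\tau_n^{\C}-n\cdot 1)\big)$ (the contributions of $n\cdot 1$ cancel), a difference of transfers of restrictions of the generator $\tau_n^{\C}-n\cdot 1$, hence lies in $\iota_*$ of that ideal. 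This closes the induction. (The formula for $\delta_n$ is readily verified when $n=1$, where it is the map induced by the tautological line bundle $ku^1\to ku$; more generally a direct chase of the first cofibre sequence gives $I_n=(ku^n\to ku)_*\ker(\upi_0(ku^n)\to\upi_0(Sp^n))$, which is the form used for arbitrary finite $R$ in the algebraic case.)

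\emph{Main obstacle.} The hard part is the identification of $\delta_n$ in the last step: one has to follow the explicit geometric model behind Theorem~\ref{theo:quotcomp} closely enough to recognise the boundary of $A_{n-1}^u\to A_n^u\to\Sigma^\infty(B_{gl}\overline{\mathcal{C}}_n^u)^{\dia}$ as ``push the universal $n$--dimensional bundle on $B_{gl}\overline{\mathcal{C}}_n^u$ into $A_{n-1}^u$''; everything else is formal manipulation of long exact sequences together with Schwede's theorem. It is exactly the global framework that makes the conclusion clean: because $\upi_0(ku)=\Rep_{\C}(-)$ admits restrictions along \emph{all} homomorphisms $G\to U(n)$ and not just subgroup inclusions, the single universal class $\tau_n^{\C}-n\cdot 1$ cuts out the entire kernel, whereas over a fixed compact Lie group the corresponding kernel has no such small generating set.
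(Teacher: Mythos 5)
Your overall skeleton matches the paper's: both arguments induct on $n$, use that $\upi_0(A^u_{n-1})\to\upi_0(A^u_n)$ is surjective with kernel generated by the image of $\tau_n^{\C}-n\cdot 1$, and absorb $\tau_{n-1}^{\C}-(n-1)\cdot 1$ into $\tau_n^{\C}-n\cdot 1$ by restriction along $U(n-1)\hookrightarrow U(n)$. Your surjectivity argument (unit of $Sp^n$ factors through $ku^1$, so $\upi_0(\operatorname{cofib}(ku^n\to Sp^n))=0$) is correct and a clean alternative to the paper's (which instead splits the augmentation of the corner space in a Mayer--Vietoris square). The observation that the algebraic case goes through for all finite rings, not just integral domains with $2\neq 0$, is also right and is exactly how the paper states Theorem \ref{theo:algpicomp}.

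However, there is a genuine gap at the step you yourself flag as the crux, and it cannot be closed by ``tracing the equivalence of Theorem \ref{theo:quotcomp} through its proof'': the identification of the boundary $\delta_n\colon\upi_1(A^u_n/A^u_{n-1})\to\upi_0(A^u_{n-1})$ as $[(L,\beta)]\mapsto \iota_*\Ind_L^G(\beta^*\tau_n^{\C})$ is precisely where all of the paper's technical work sits. The equivalence $A^u_n/A^u_{n-1}\simeq\Sigma^{\infty}(B_{gl}\overline{\mathcal{C}}_n^u)^{\dia}$ alone does not determine the boundary map of the triangle; one needs an explicit nullhomotopy of the composite $\Sigma^{\infty}_+((E_{gl}U(n)*\oL_n)/U(n))\to A^u_n$ realizing the homotopy-cocartesian square (\ref{eq:comppushout}), and this is exactly what the stable map $\psi_n$ of Section \ref{sec:pi0ku} (with its shrinking functions $s_x^V$ and the projections $p_x$), the homotopy $\overline{\psi}_n$ of Appendix \ref{app:cofiber}, Proposition \ref{prop:image} computing $(\psi_n)_*$ on generators, and the square (\ref{eq:square}) relating $\gamma_n$ to $\alpha_n$ supply. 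A second, unflagged gap: your description of $\upi_1(A^u_n/A^u_{n-1})$ as spanned by differences of classes $[(L,\beta)]$ of \emph{homomorphisms} $\beta\colon L\to U(n)$ presupposes that every element of $\pi_0^L(B_{gl}\overline{\mathcal{C}}_n^u)$ is represented by such a $\beta$. Since $\overline{\mathcal{C}}_n^u$ contains nontrivial subgroups, the relevant isotropy groups $\Lambda\le U(n)\times L$ need not be graphs, so this is not automatic; it is the content of the paper's lemma that $(k_n)_*\colon\pi_0^G(B_{gl}U(n))\to\pi_0^G((E_{gl}U(n)*\oL_n)/U(n))$ is surjective, proved by explicitly splitting $N_{U(n)}C\to W_{U(n)}C$ for complete $C$ (choosing compatible orthonormal bases). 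Without both of these inputs the induction does not close.
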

\begin{remx*} We emphasize that this theorem (as well as Schwede's) relies heavily on working in the global context and having restrictions along non-injective homomorphisms available. When working over a fixed finite group $G$, the kernel of $\pi_0^G(kR)\to \pi_0^G(A_n^R)$ is usually not generated by a single element (neither as an abelian group, nor as a $G$-Mackey functor) and is more complicated and less conceptual to describe.
\end{remx*}
Hence, $\upi_0(A_n^u)$ and $\upi_0(A_n^R)$ can be interpreted as the representation ring global functor modulo forgetting all group actions on vector spaces of dimension $n$ (respectively free $R$-modules of rank $n$).
This theorem reduces an explicit calculation of $\pi_0^G(A_n^u)$ or $\pi_0^G(A_n^R)$ to an algebraic exercise in representation theory, for which we give examples in Section \ref{sec:exa}. The reason why $R$ has to be finite is that otherwise the general linear groups are not finite and so are not part of the global theory. There is also a description of $\upi_0(A_n^R)$ when $R$ is not finite (Proposition \ref{prop:algpicomp2}) but it is no longer simplified by the global framework.

Moreover, we compute an algebraic description of the filtration on the representation ring itself that is induced from the modified rank filtration:
\newpage
\begin{thmx}[Modified rank filtration on $\upi_0$] The global functor $\upi_0(ku^n)$ (and similarly $\upi_0 (kR^n)$ for finite $R$) is the free global functor on the classes $\tau_1^{\C},\tau_2^{\C},\hdots,\tau_n^{\C}$ modulo finitely many universal relations that identify
\begin{itemize}
	\item homotopy-theoretic sums with direct sums of representations
	\item transfers with induction of representations
\end{itemize}
as long as the total dimension is at most $n$.
\end{thmx}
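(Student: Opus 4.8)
The plan is to compare $\upi_0(ku^n)$ with an explicitly presented global functor and to prove the comparison map an isomorphism by induction along the modified rank filtration. Let $\mathcal{R}_n$ denote the quotient of the free global functor on classes $\tau_1^{\C}\in\mathcal{R}_n(U(1)),\dots,\tau_n^{\C}\in\mathcal{R}_n(U(n))$ by the following finitely many relations: for each $j,k\geq1$ with $j+k\leq n$, the block-sum relation equating $\iota_{j,k}^{*}(\tau_{j+k}^{\C})$ (the pullback along the block embedding $\iota_{j,k}\colon U(j)\times U(k)\hookrightarrow U(j+k)$) with the homotopy-theoretic sum of the pullbacks of $\tau_j^{\C}$ and $\tau_k^{\C}$ along the two projections; and for each $k,m\geq1$ with $mk\leq n$, the universal instance of ``transfer equals induction'' for inducing a rank-$k$ class up an index-$m$ subgroup, an equation in the degree of the group $U(k)\wr\Sigma_m$. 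Passing to the colimit over $n$ collapses all of these relations at once, so that $\colim_n\mathcal{R}_n$ recovers the (known) presentation of $\Rep_{\C}(-)$ by all representations together with their direct-sum and induction relations; since $\colim_n ku^n\simeq ku$ and $\upi_0(ku)\cong\Rep_{\C}(-)$, this is the consistency check the statement must satisfy.

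First I would build a comparison homomorphism $\Phi_n\colon\mathcal{R}_n\to\upi_0(ku^n)$. The tautological $U(k)$-representation on $\C^k$ has rank $k$, so in the filtered $\Gamma$-space model of the modified rank filtration it is represented by an honest class $\tau_k^{\C}\in\pi_0^{U(k)}(ku^k)$, which I push forward to $\pi_0^{U(k)}(ku^n)$ for $k\leq n$. Extending by the global functor structure defines $\Phi_n$, provided the defining relations of $\mathcal{R}_n$ hold in $\upi_0(ku^n)$; this is exactly where one uses that the model is filtered by total rank, so that the fold maps realizing the homotopy-theoretic sum and the external maps realizing the transfers add, respectively multiply, ranks, and hence respect the filtration precisely as long as the total rank stays at most $n$. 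Thus the two families of relations hold already on the level of representing maps and $\Phi_n$ is well defined; its surjectivity will be a by-product of the induction below.

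The core of the proof is that $\Phi_n$ is an isomorphism, by induction on $n$. For $n=1$ the decomposition complex $\mL_1$ is empty, so by the theorem on subquotients of the modified rank filtration $ku^1\simeq\Sigma^{\infty}(B_{gl}U(1))_+$, whose $\upi_0$ is the free global functor on a single class in degree $U(1)$ — which is $\mathcal{R}_1$, since no relations occur for $n=1$. For the inductive step, the cofiber sequence $ku^{n-1}\to ku^n\to ku^n/ku^{n-1}$ together with connectivity of $ku^{n-1}$ yields the exact sequence of global functors
\[
\upi_1(ku^n/ku^{n-1})\xrightarrow{\ \partial\ }\upi_0(ku^{n-1})\longrightarrow\upi_0(ku^n)\longrightarrow\upi_0(ku^n/ku^{n-1})\longrightarrow 0 .
\]
By the same subquotient theorem $ku^n/ku^{n-1}\simeq\Sigma^{\infty}(E_{gl}U(n)_+\wedge_{U(n)}|\mL_n|^{\dia})$, and writing $|\mL_n|^{\dia}$ as the cofiber of $|\mL_n|_+\to S^0$ refines this into a cofiber sequence
\[
\Sigma^{\infty}_+\bigl(E_{gl}U(n)\times_{U(n)}|\mL_n|\bigr)\longrightarrow\Sigma^{\infty}_+B_{gl}U(n)\longrightarrow ku^n/ku^{n-1} .
\]
The global homotopy groups of $\Sigma^{\infty}_+B_{gl}U(n)$ are computed in Schwede's book; in particular $\upi_0$ is free on one class. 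For the left-hand term I would filter $E_{gl}U(n)\times_{U(n)}|\mL_n|$ by the skeleta of the nerve of the decomposition poset $\mL_n$, which exhibits it as a homotopy colimit of the global spaces $B_{gl}(\mathrm{Stab}_\sigma)$ over chains $\sigma$ in $\mL_n$; each $\mathrm{Stab}_\sigma$ is a finite product of groups of the form $U(d)\wr\Sigma_a$, so its contribution is again accessible by Schwede's methods. Tracing the maps, one identifies $\upi_0(ku^n/ku^{n-1})$ with $\mathcal{R}_n$ modulo the subfunctor generated by $\tau_1^{\C},\dots,\tau_{n-1}^{\C}$ (the ``new generator $\tau_n^{\C}$ part''), and the image of $\partial$ with $\Phi_{n-1}$ applied to the kernel of the canonical map $\mathcal{R}_{n-1}\to\mathcal{R}_n$, i.e.\ with exactly the relations of $\mathcal{R}_n$ that only involve $\tau_1^{\C},\dots,\tau_{n-1}^{\C}$. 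The four-term exact sequence then becomes the evident presentation of $\mathcal{R}_n$ in terms of $\mathcal{R}_{n-1}$, and the five lemma gives that $\Phi_n$ is an isomorphism. The case of $\upi_0(kR^n)$ for finite $R$ is identical, with $U(k)$, $\mL_n$, $ku$ replaced by $GL_k(R)$, $\mathcal{P}_n^R$, $kR$, using the corresponding subquotient theorem.

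The main obstacle is the homotopy-theoretic input of the preceding step: one must control, at every compact Lie subgroup of $U(n)$ (respectively $GL_n(R)$), enough of the low-dimensional global homotopy of the decomposition complex $|\mL_n|$ (respectively $|\mathcal{P}_n^R|$), and organize the bookkeeping over the poset of decomposition-stabilizers carefully enough, to show that $\im(\partial)$ is precisely the relation submodule predicted by $\mathcal{R}_n$ and no larger. It is here that $\upi_1$ genuinely enters, since the degree-one stable stems at the various fixed-point spaces contribute to $\upi_1(ku^n/ku^{n-1})$; and it is here that the global framework is essential, in that restriction along non-injective homomorphisms is what allows a single universal transfer relation at a given group to generate all of the corresponding induction identities, so that only finitely many relations are needed in total. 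This step is the direct counterpart of the computation behind Schwede's description of $\upi_0(Sp^n)$, and the present argument runs parallel to it.
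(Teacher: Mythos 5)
Your overall architecture (a presented global functor $\mathcal{R}_n$, a comparison map, induction on $n$ using the identification of the subquotients) matches the paper, and your generators and relations are the right ones. But there is a genuine gap at the decisive step, namely the identification of the relations, i.e.\ of $\im(\partial)$ in your four-term sequence. You propose to control this via $\upi_1(ku^n/ku^{n-1})$, and you correctly observe that this forces you to confront degree-one equivariant stable homotopy of a suspension spectrum: the tom Dieck splitting in degree one involves the equivariant $1$-stems of all the fixed-point spaces (in particular $\Z/2$-contributions from $\pi_0$ classes), and there is no indication of how the resulting classes, or the boundary map on them, would be computed. This is not merely ``bookkeeping over the poset of decomposition-stabilizers''; as stated, the step where you say ``tracing the maps, one identifies \dots the image of $\partial$'' is precisely the content of the theorem and is not supplied.

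The paper avoids $\upi_1$ entirely by a different mechanism. The two cofiber sequences with common cofiber $ku^n/ku^{n-1}$ are assembled into a homotopy pushout square
\[ \xymatrix{ \Sigma^{\infty}_+ (E_{gl}U(n)\times_{U(n)}\oL_n) \ar[r]^-p\ar[d]_{\psi_n} & \Sigma^{\infty}_+ (B_{gl}U(n))\ar[d]^{\alpha_n}\\ ku^{n-1}\ar[r]_{i_n} & ku^n,} \]
whose Mayer--Vietoris sequence terminates in $\upi_0(ku^n)\to 0$ because the upper left corner is a connective (suspension) spectrum. The relations are then the image of $\upi_0$ of that corner, which is computed purely combinatorially: by the tom Dieck splitting it is generated by $\pi_0$ of the $\Gamma(\alpha)$-fixed points of $\oL_n(\U_G)$, i.e.\ by weakly $G$-fixed decompositions, and every such decomposition reduces (up to refinement) to either a two-summand fixed decomposition or a transitively permuted one --- exactly your two families of universal relations. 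The second essential ingredient you are missing is the left vertical map $\psi_n$ itself: to show that the universal two-summand decomposition maps to a homotopy-theoretic sum and the transitively permuted one to a transfer, the paper constructs $\psi_n$ explicitly (shrinking the supports of the configuration points so that the image lands in $ku^{n-1}$) and identifies its restriction to a fixed point with a pinch map, respectively with the Thom--Pontryagin transfer construction. Without an explicit model of this map (or of the boundary map it replaces), the inductive step cannot be closed. Your appeal to the analogy with Schwede's computation of $\upi_0(Sp^n)$ is apt, but that computation likewise proceeds through an explicit geometric representative rather than through $\upi_1$ of the quotient.
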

A precise formulation is given in Theorems \ref{theo:pirank} and \ref{theo:algpirank}. The proof uses an elementary examination of the fixed points of decomposition lattices of $G$-representations and the construction of an explicit geometric representative of a certain stable map, which allows us to identify its effect on $\upi_0$. We note that this is only a filtration in the sense that the colimit gives the representation ring, as the connecting maps are in general neither injective nor surjective. Again there is also a description for non-finite $R$ (Proposition \ref{prop:algpirank2}), but it is in general not finitely generated as a global functor.

In the paper we do not treat the cases of topological and algebraic $K$-theory in parallel. Because the setup is technically somewhat different for the algebraic case (symmetric versus orthogonal spectra, finite groups versus compact Lie groups), we first focus on topological $K$-theory where we give all proofs in full detail (Section \ref{sec:top}). Later in Section \ref{sec:alg} we deal with algebraic $K$-theory, leaving out the arguments that are similar to their topological version.

The appendix contains three rather technical sections. The first provides a proof that the maps relating the filtration terms are cofibrations, the second shows that the (orthogonal) spaces that appear in the filtration quotients are sufficiently cofibrant, and the third is needed to ensure that a certain square constructed in Section \ref{sec:pi0ku} is a homotopy-pushout.

\textbf{Acknowledgements}: We would like to thank Stefan Schwede for suggesting this project and for many helpful discussions and comments. Moreover, we would like to thank Greg Arone for various conversations about the content of this paper. The research was supported by the Deutsche Forschungsgemeinschaft Graduiertenkolleg 1150 ``Homotopy and Cohomology''.

\section{Global homotopy theory of orthogonal spaces and spectra} \label{sec:orththeory}
We begin by recalling the basic definitions of global equivariant homotopy theory as introduced by Schwede in \cite{Sch14} and \cite{Sch15}, starting with the unstable theory of orthogonal spaces (Section \ref{sec:orthspaces}) together with some important examples (Sections \ref{sec:uni} and \ref{sec:glhomorb}). In Section \ref{sec:orthspec} we deal with stable global homotopy theory via orthogonal spectra.

\subsection{Orthogonal spaces} \label{sec:orthspaces}

Let $L_{\R}$ be the topological category of finite dimensional real inner product spaces with linear isometric embeddings. 
\begin{Def} An \emph{orthogonal space} is a continuous functor from $L_{\R}$ to the category of spaces. 
\end{Def}
All orthogonal spaces that occur in this paper have the following additional property:
\begin{Def} An orthogonal space $X$ is \emph{closed} if for every inner product map $i:V\to W$ the structure map $X(i):X(V)\to X(W)$ is a closed embedding.
\end{Def}

Equivariance comes into play as follows: Let $G$ be a compact Lie group and $V$ an inner product space with a $G$-action through linear isometries. Then the evaluation $X(V)$ of any orthogonal space $X$ on $V$ inherits a $G$-action via the homomorphism $G\to O(V)$. Moreover, if $W$ is another $G$-representation and $i:V\to W$ is a $G$-equivariant inner product map, then the structure map $X(i):X(V)\to X(W)$ is $G$-equivariant.

The evaluation can be extended further to countably infinite dimensional $G$-representations via the formula
\[ X(W)=\colim_{f.d.V\subseteq W} X(V) \]
where the colimit is taken over all finite dimensional $G$-subrepresentations $V$ of $W$. In particular this is used to evaluate orthogonal spaces on a complete $G$-universe $\U_G$, a countably infinite dimensional representation in which every finite dimensional representation embeds.

An important invariant for a (global) orthogonal space $X$ is the $0$-th homotopy set functor, i.e., the collection of the sets  \[ \pi_0^G (X)=\colim_{f.d. V\subseteq \U_G} \pi_0( X(V)^G)\] for all compact Lie groups $G$. Given a continuous group homomorphism $\alpha:K\to G$ there is an induced restriction map $\alpha^*:\pi_0^G(X)\to \pi_0^K(X)$ constructed as follows: Every $G$-fixed point $x\in X(V)$ for some $V$ also represents a $K$-fixed point in $X(\alpha^*(V))$, where $\alpha^*(V)$ denotes the restricted representation. While $\alpha^*(V)$ might not be contained in the chosen $K$-universe $\U_K$, there exists a $K$-embedding $\alpha^*(V)\to \U_K$ which we can use to obtain an element $\alpha^*([x])$ in $\pi_0^K(X)$. In \cite[Prop. I.6.5]{Sch15} it is shown that this element does not depend on the chosen embedding and that, furthermore, inner automorphisms act as the identity.
In other words the assignment $\upi_0 (X):G\mapsto \pi_0^G (X)$ defines a functor from the opposite of the category $\Rep$ of compact Lie groups and conjugacy classes of continuous group homomorphisms to the category of sets.
\begin{Remark} If $X$ is closed, $\pi_0^G(X)$ can be naturally identified with $\pi_0(X(\U_G)^G)$, as in this case $\pi_0$ commutes with the colimit.\end{Remark}
Taking into account the equivariant evaluations of an orthogonal space leads to a notion of weak equivalence, called \emph{global equivalence}. It is easiest to state if the involved orthogonal spaces are closed: 
\begin{Def}[cf. {\cite[Proposition I.1.14]{Sch15}}] A map $f:X\to Y$ of closed orthogonal spaces is called a \emph{global equivalence} if for every compact Lie group $G$ the induced map \[ f(\U_G)^G:X(\U_G)^G\to Y(\U_G)^G \]
on $G$-fixed points is a weak equivalence of spaces.
\end{Def}
The evaluation $X(\U_G)$ should be thought of as \emph{the $G$-space underlying $X$}. In this sense a map of orthogonal spaces is a global equivalence if and only if it is an equivariant equivalence on all underlying $G$-spaces.

For general orthogonal spaces the colimit defining $X(\U_G)$ needs to be replaced by a homotopy colimit, but we do not need the general notion in this paper. Details are given in \cite{Sch15}. There it is also shown that the class of global equivalences takes part in several model structures and hence the localized homotopy category can be dealt with by methods of homotopical algebra.

\subsection{Global universal and classifying spaces of collections}
\label{sec:uni}
In this section we explain a class of examples of orthogonal spaces that is central to this paper, so-called \emph{global universal} and \emph{global classifying} spaces associated to a collection of subgroups of a Lie group $K$.

\begin{Def} Let $K$ be a Lie group. A set of closed subgroups of $K$ is called a \emph{collection} if it is closed under conjugation. 
\end{Def}
A \emph{universal space} for a collection $\mathcal{C}$ is a cofibrant $K$-space $E\mathcal{C}$ with the property that all isotropy groups lie in $\mathcal{C}$ and for every (closed) subgroup $H$ in $\mathcal{C}$ the $H$-fixed points $E\mathcal{C}^ H$ are contractible. Here and throughout the paper we say that a $K$-space is \emph{cofibrant} if it is the retract of a $K$-cell complex. Every collection possesses a universal space unique up to $K$-homotopy equivalence. The quotient of such a universal space by the $K$-action is called a \emph{classifying space of $\mathcal{C}$} and denoted $B\mathcal{C}$.

Given a collection $\mathcal{C}$ of subgroups of a Lie group $K$ together with an additional Lie group $G$, the set of closed subgroups of $K\times G$ whose intersection with $K\times 1$ lies in $\mathcal{C}$ also forms a collection, which we denote by $\mathcal{C}\langle G\rangle$.
\begin{Example} An important example is the collection $1_K$ which only contains the trivial subgroup of $K$. A subgroup of $K\times G$ lies in $1_K\langle G\rangle$ if and only if it is of the form $\{(\varphi(h),h)\ |\ h\in H\}$ for a closed subgroup $H$ of $G$ and a continuous group homomorphism $\varphi:H\to K$. 
\end{Example}

This give rise to the following global notion:

\begin{Def} Let $\mathcal{C}$ be a collection of subgroups of a Lie group $K$. A closed $K$-orthogonal space $X$ is called a \emph{global universal space} for $\mathcal{C}$ if for every compact Lie group $G$ the $(K\times G)$-space $X(\U_G)$ is a universal space for the collection $\mathcal{C}\langle G\rangle$. The quotient of a global universal space by the $K$-action is called a \emph{global classifying space of $\mathcal{C}$}.
\end{Def}
A global universal space for $\mathcal{C}$ will be denoted $E_{gl}\mathcal{C}$, a global classifying space $B_{gl}\mathcal{C}$. The following example of a global classifying space is fundamental to global equivariant homotopy theory:
\begin{Example} \label{exa:bglg}A global universal space $E_{gl}1_K$ (resp. global classifying space $B_{gl}1_K$) associated to the collection $1_K$ is called a \emph{global universal space} (resp. \emph{global classifying} \emph{space) of $K$}. We also use the notation $E_{gl}K$ (resp. $B_{gl}K$) for this global homotopy type. If $K$ is compact, a model for $E_{gl}K$ is given by the $K$-orthogonal space
\[ W \mapsto L_{\R}(V,W)
\]
for a fixed \emph{faithful} $K$-representation $V$ (cf. \cite[Section I.2]{Sch15}).

The $\Rep$-functor $\upi_0 (B_{gl}K)$ is naturally isomorphic to the one which sends a compact Lie group $G$ to the set of conjugacy classes of continuous group homomorphisms from $G$ to $K$, with functoriality through precomposition. This is proved in \cite[Thm. 1.7]{Sch14} if $K$ is compact and follows from Corollary \ref{cor:homorbquot} below in the general case. Hence, it is representable if $K$ is compact.

Global classifying spaces of compact Lie groups are the fundamental building blocks of global homotopy theory. Their suspension spectra form a class of compact generators of the triangulated stable global homotopy category (\cite[Cor. IV.3.4]{Sch15}).
\end{Example}

We will see many other examples of global universal and classifying spaces in this paper.

\subsection{Global homotopy orbits} \label{sec:glhomorb}
Let $K$ be a Lie group, $X$ a $K$-space. There is an associated orthogonal space $E_{gl}K\times_K X$, the \emph{global homotopy orbits} of $X$, defined via $(E_{gl}K\times_K X)(V)=(E_{gl}K)(V)\times_K X$. This gives rise to a large class of examples. To understand the underlying $G$-spaces of global homotopy orbits, we need the following lemma:
\begin{Lemma} \label{lem:fixedpointsquot} Let $K$ and $G$ be Lie groups of which $G$ is compact, and $Y$ be a $(K\times G)$-cell complex such that the $K$-action is free. Then there is a natural homeomorphism
\[(Y/K)^G\cong \bigsqcup_{\langle \alpha:G\to K\rangle} Y^{\Gamma(\alpha)}/C(\alpha) \]
where $\alpha$ ranges through a set of representatives of conjugacy classes of continuous group homomorphisms from $G$ to $K$, $C(\alpha)$ denotes the centralizer of the image of $\alpha$ and $\Gamma(\alpha)\subseteq K\times G$ is the graph of $\alpha$.
\end{Lemma}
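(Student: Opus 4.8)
The plan is to build the homeomorphism by hand, exploiting that freeness of the $K$-action on a $G$-fixed orbit forces a homomorphism into existence. So let $Ky\in(Y/K)^G$, i.e.\ $g\cdot(Ky)=Ky$ for every $g\in G$. Since the $K$- and $G$-actions commute and $K$ acts freely, for each $g$ there is a \emph{unique} $k_g\in K$ with $g\cdot y=k_g\cdot y$, and a short computation shows that $g\mapsto\alpha(g):=k_g^{-1}$ is a continuous homomorphism $G\to K$. (The homomorphism property is purely formal; continuity is the one step that needs more, namely that the free $K$-action is proper, so that the orbit relation $\{(z,k\cdot z)\}\subseteq Y\times Y$ is closed with continuous ``difference'' map to $K$.) By construction $y$ is fixed by the graph $\Gamma(\alpha)$; moreover replacing $y$ by $k\cdot y$ conjugates $\alpha$ by $k$, so the conjugacy class $\langle\alpha\rangle$ depends only on the orbit $Ky$ and gives the indexing of the summands. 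Conversely, any $y\in Y^{\Gamma(\alpha)}$ satisfies $g\cdot y=\alpha(g)^{-1}\cdot y\in Ky$, so $Ky\in(Y/K)^G$ with associated homomorphism exactly $\alpha$; and $k\cdot y$ and $y$ in $Y^{\Gamma(\alpha)}$ lie in the same $K$-orbit if and only if $k\in C(\alpha)$ (equality of the associated homomorphisms forces $k$ to centralize the image of $\alpha$). Thus $[y]\mapsto Ky$ defines a bijection $\bigsqcup_{\langle\alpha\rangle}Y^{\Gamma(\alpha)}/C(\alpha)\to(Y/K)^G$, evidently natural in $(K\times G)$-equivariant maps of $Y$.

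Upgrading this to a homeomorphism is the technical part. The map $Y^{\Gamma(\alpha)}/C(\alpha)\to(Y/K)^G$ is continuous, being induced by $Y^{\Gamma(\alpha)}\hookrightarrow Y\to Y/K$, so the assembled bijection is continuous and only openness is at issue. Here I would use the cell structure: the cells of $Y$ have the form $(K\times G)/L\times D^n$ with $L\cap(K\times 1)=1$, so $L$ is the graph $\Gamma(\alpha)$ of a homomorphism $\alpha\colon H\to K$ defined on a closed subgroup $H\le G$. Passing to $Y/K$ replaces such a cell by $(G/H)\times D^n$, and taking $G$-fixed points retains it (as a single $n$-cell) precisely when $H=G$. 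The elementary fact that $\Gamma(\alpha)\subseteq\Gamma(\alpha')$ forces $\alpha=\alpha'$ when both are graphs over all of $G$ implies that, along its attaching map, such a cell can only meet cells whose stabilizer is exactly $\Gamma(\alpha)$; hence for each conjugacy class $\langle\alpha\rangle$ the associated cells form an open and closed subcomplex of $(Y/K)^G$, which realizes the coproduct topologically. Finally, that each such subcomplex is homeomorphic to $Y^{\Gamma(\alpha)}/C(\alpha)$ reduces to the free $K$-action admitting local sections (so that $Y\to Y/K$ is open and a parallel cellular analysis of $Y^{\Gamma(\alpha)}$ and its $C(\alpha)$-quotient matches the $\langle\alpha\rangle$-subcomplex).

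I expect the main obstacle to be exactly this point-set bookkeeping around the free $K$-action, especially when $K$ is noncompact: continuity of $y\mapsto\alpha$, closedness of the orbit relation, and pushing the homeomorphism through the quotient by $K$ all rest on the free $K$-action on the $(K\times G)$-cell complex $Y$ being proper and locally sectioned. For $K$ compact this is automatic; in general it must be read off the cells, using that on $(K\times G)/\Gamma(\alpha)$ the projection to $G/H$ is a (locally trivial) principal $K$-bundle, and then inducting up the skeleta. An alternative that sidesteps these subtleties is to carry out the whole comparison combinatorially, matching the cell structures of $(Y/K)^G$ and of $\bigsqcup_{\langle\alpha\rangle}Y^{\Gamma(\alpha)}/C(\alpha)$ directly.
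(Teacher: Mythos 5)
Your proposal is correct in outline, but it takes a genuinely different route from the paper. The paper does not prove the lemma from scratch: it cites \cite[Prop.~A.1.28]{Sch15}, which gives the statement under the extra hypothesis that $K$ is compact, and then observes that compactness of $K$ enters only in showing that $\Hom(G,K)$ modulo conjugation is discrete (so that the union of the pieces $Y^{\Gamma(\alpha)}/C(\alpha)$ really carries the disjoint-union topology), and that this discreteness already follows from compactness of the \emph{source} $G$ by a rigidity result of Conner--Floyd. You instead reconstruct the homeomorphism directly: the orbit-by-orbit production of $\alpha$ from freeness, the identification of the residual ambiguity with $C(\alpha)$, and a cellular matching. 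Two remarks on your version. First, in the cellular step the stabilizer of a cell met by the attaching map is only forced to be \emph{conjugate} to $\Gamma(\alpha)$ in $K\times G$, not equal to it; to conclude that this stays within the $K$-conjugacy class of $\alpha$ you need the identity $\alpha(gxg^{-1})=\alpha(g)\alpha(x)\alpha(g)^{-1}$, i.e.\ precomposing $\alpha$ with an inner automorphism of $G$ agrees with postcomposing with conjugation by $\alpha(g)$ in $K$ --- without this the indexing by $K$-conjugacy classes would be in doubt. Second, once you commit to the cellular argument, the weak topology does all the work that the paper's discreteness observation does: each $\langle\alpha\rangle$-part and its complement are subcomplexes of the CW complex $(Y/K)^G$, hence clopen, so the properness and continuity worries of your first paragraph (continuity of $y\mapsto\alpha$, closedness of the orbit relation, local sections) are subsumed by the cell-by-cell matching you sketch at the end, where a cell $(K\times G)/\Gamma(\beta)\times D^n$ of $Y$ with $\beta$ conjugate to $\alpha$ contributes exactly one $n$-cell to each side. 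That final matching is left as a sketch, but it is routine; your approach buys a self-contained proof at the cost of this bookkeeping, while the paper's buys brevity at the cost of leaning on an external reference.
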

\begin{proof} The statement is \cite[Prop. A.1.28]{Sch15}. There the Lie group $K$ is also required to be compact. This is not necessary, as one only needs the space of continuous group homomorphisms from $G$ to $K$ modulo conjugation to be discrete to see that the topology on the union of the $Y^{\Gamma(\alpha})/C(\alpha)$ is indeed that of a disjoint union. For this it suffices that the source $G$ is compact (\cite[Lemma 38.1]{CF64}).
\end{proof}
Applying this to $Y=X(\U_G)$ we see:
\begin{Cor} \label{cor:homorbquot} For $K$ a Lie group, $X$ a cofibrant $K$-space and $G$ a compact Lie group there is a natural homeomorphism
\[ (E_{gl} K\times_K X)(\U_G)^G \cong \bigsqcup_{\langle \alpha:G\to K \rangle} EC(\alpha)\times_{C(\alpha)} X^{\im(\alpha)}. \]
\end{Cor}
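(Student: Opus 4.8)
The plan is to deduce this directly from Lemma \ref{lem:fixedpointsquot}. The key observation is that the global homotopy orbits $E_{gl}K \times_K X$ evaluate at a complete $G$-universe $\U_G$ as a quotient of the kind to which the lemma applies. First I would unravel the definitions: by construction $(E_{gl}K \times_K X)(V) = (E_{gl}K)(V) \times_K X$, and passing to the complete universe $\U_G$ via the colimit formula gives
\[ (E_{gl}K \times_K X)(\U_G) = (E_{gl}K)(\U_G) \times_K X. \]
Here $(E_{gl}K)(\U_G)$ is, by the very definition of a global universal space, a universal space for the collection $1_K\langle G\rangle$, and in particular the $K$-action on it is free; moreover it carries the structure of a $(K\times G)$-CW complex (one should check this, either directly from the model $W\mapsto L_\R(V,W)$ for $K$ compact and a cofibrant-replacement argument in general, or by citing the relevant cell structure from \cite{Sch15}). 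Thus $Y := (E_{gl}K)(\U_G) \times X$, with diagonal $K$-action (free because it is free on the first factor) and the $G$-action on the first factor, is a $(K\times G)$-cell complex on which $K$ acts freely, so Lemma \ref{lem:fixedpointsquot} applies to $Y/K = (E_{gl}K \times_K X)(\U_G)$.

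Next I would substitute into the conclusion of the lemma. Since the indexing set in Lemma \ref{lem:fixedpointsquot} — conjugacy classes of continuous homomorphisms $\alpha\colon G\to K$ — is exactly the indexing set claimed in the corollary, it remains to identify, for each such $\alpha$, the space $Y^{\Gamma(\alpha)}/C(\alpha)$ with $EC(\alpha)\times_{C(\alpha)} X^{\im(\alpha)}$. The graph $\Gamma(\alpha) = \{(\alpha(g),g) \mid g\in G\} \subseteq K\times G$ acts on $Y = (E_{gl}K)(\U_G)\times X$; taking fixed points commutes with the product, so $Y^{\Gamma(\alpha)} = ((E_{gl}K)(\U_G))^{\Gamma(\alpha)} \times X^{\Gamma(\alpha)}$. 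For the second factor, a point $x\in X$ is $\Gamma(\alpha)$-fixed iff $\alpha(g)\cdot x = x$ for all $g\in G$, i.e. iff $x\in X^{\im(\alpha)}$, with the residual $C(\alpha)$-action (the centralizer of $\im(\alpha)$ is precisely the part of $K$ still acting on $X^{\im(\alpha)}$). For the first factor, $\Gamma(\alpha)$ lies in $1_K\langle G\rangle$ by the Example preceding the definition of global universal space, so $((E_{gl}K)(\U_G))^{\Gamma(\alpha)}$ is contractible; one checks it is a cofibrant free $C(\alpha)$-CW complex, hence a model for $EC(\alpha)$. Combining, $Y^{\Gamma(\alpha)} \simeq EC(\alpha) \times X^{\im(\alpha)}$ as $C(\alpha)$-spaces, and quotienting by the (free, diagonal) $C(\alpha)$-action yields $EC(\alpha)\times_{C(\alpha)} X^{\im(\alpha)}$, as desired. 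Naturality in $G$ follows from naturality of the homeomorphism in Lemma \ref{lem:fixedpointsquot} together with naturality of the identifications just made.

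The main obstacle I anticipate is the verification that the various quotient spaces are honestly of the homotopy type (indeed $C(\alpha)$-homotopy type) asserted, rather than merely weakly equivalent: one needs $((E_{gl}K)(\U_G))^{\Gamma(\alpha)}$ to be not just contractible but a genuine model for $EC(\alpha)$, which requires knowing the fixed points of the $(K\times G)$-CW structure are again CW and that the residual $C(\alpha)$-action is free with the correct cell structure. For $K$ compact this is immediate from the explicit model $W\mapsto L_\R(V,W)$ and the standard analysis of its fixed points; for general Lie $K$ one must pass through a cofibrant replacement of $E_{gl}K$ in the relevant model structure on $K$-orthogonal spaces, and the point is precisely that Lemma \ref{lem:fixedpointsquot} has already been extended (in the excerpt) to non-compact $K$ using only compactness of $G$. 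A secondary, more bookkeeping-level point is checking that evaluation of the orthogonal space $E_{gl}K\times_K X$ at $\U_G$ commutes with the relevant colimits and with passing to $K$-orbits, which is where the hypothesis that $X$ is cofibrant (so that $-\times_K X$ is well-behaved) gets used.
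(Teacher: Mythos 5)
Your proof is correct and takes essentially the same route as the paper: the paper's entire argument is the one-line instruction to apply Lemma \ref{lem:fixedpointsquot} to $Y=\bigl((E_{gl}K)\times X\bigr)(\U_G)=(E_{gl}K)(\U_G)\times X$ and then identify $Y^{\Gamma(\alpha)}$ factorwise, exactly as you do. The details you supply (freeness of the $K$-action, $X^{\Gamma(\alpha)}=X^{\im(\alpha)}$, and the observation that $((E_{gl}K)(\U_G))^{\Gamma(\alpha)}$ is a genuine model for $EC(\alpha)$) are precisely what the paper leaves implicit.
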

This shows that the global homotopy orbits depend on the fixed points $X^H$ for all compact subgroups $H$ of $K$, or more precisely on the functor on the orbit category of compact subgroups of $K$ that is associated to $X$. This stands in contrast to the underlying space of $E_{gl}K\times_K X$, the usual homotopy orbits, which only depend on $X$ up to non-equivariant equivalences that commute with the $K$-action.

\begin{Remark} The unstable global homotopy category is equivalent to the homotopy category of stacks, in the sense introduced in \cite{GH08} (the equivalence follows from the results in \cite[Section 4.4]{GH08} together with \cite[Theorem 8.37]{Sch15}). In this language, $E_{gl}G\times_G X$ corresponds to the quotient stack $X\sslash G$.
\end{Remark}
There is also a pointed version of global homotopy orbits, defined as $E_{gl}K_+\wedge_K X$.

\subsection{Orthogonal spectra}
\label{sec:orthspec}
We quickly give the relevant definitions of orthogonal spectra from the perspective of global equivariance, for details we refer to \cite[Chapter III]{Sch15}.

\begin{Def} An orthogonal spectrum consists of
\begin{itemize}
 \item a based $O(n)$-space $X_n$ and
 \item based structure maps $\sigma_n:X_n\wedge S^1\to X_{n+1}$
\end{itemize}
for every natural number $n$, such that all iterates $\sigma_n^m:X_n\wedge S^m\to X_{n+m}$ are $(O(n)\times O(m))$-equivariant. Here, $O(m)$ acts on $S^m$ by one-point compactification of its natural action on $\R^m$ and the $(O(n)\times O(m))$-action on $X_{n+m}$ is via pullback of the $O(n+m)$-action along the block sum embedding.
\end{Def}
A morphism $f:X\to Y$ of orthogonal spectra is a sequence of based $O(n)$-maps $f_n:X_n\to Y_n$ such that $f_{n+1}\circ \sigma^{(X)}_n=\sigma_{n+1}^{(Y)}\circ (f_n\wedge S^1)$ as maps from $X_n\wedge S^1$ to $Y_{n+1}$.

Even though not featured in the definition, an orthogonal spectrum can be evaluated on any finite dimensional real inner product space $V$ via the formula
\[ X(V)=L_{\R}(\R^n,V)_+\wedge_{O(n)}X_n, \]
where $n$ is the dimension of $V$. In addition, one can define generalized associative structure maps
\[ X(V)\wedge S^W\to X(V\oplus W) \]
for any pair of inner product spaces $V,W$ and these are $(O(V)\times O(W))$-equivariant (cf. \cite{Sch15}, in particular Remark III.1.5).

\begin{Example}[Suspension spectra] Any based orthogonal space $X$ gives rise to an orthogonal spectrum $\Sigma^{\infty} X$, its \emph{suspension spectrum}, via
\[ (\Sigma^{\infty} X)_n=X(\R^n)\wedge S^n \]
with diagonal $O(n)$-action and structure map the smash product of the map $X(i:\R^n\to \R^{n+1})$ with the homeomorphism $S^n\wedge S^1\cong S^{n+1}$. More generally, the evaluation $(\Sigma^{\infty}X)(V)$ is always naturally homeomorphic to $X(V)\wedge S^V$.

For an unbased orthogonal space $X$ we denote by $\Sigma^{\infty}_+ X$ the suspension spectrum of the based orthogonal space $X_+$. In particular, $\Sigma^{\infty}_+*$ gives the sphere spectrum $\mathbb{S}$.
\end{Example}

If $V$ comes equipped with an action of a compact Lie group $G$, the evaluation $X(V)$ also inherits a $G$-action through the isometries $L_{\R}(\R^n,V)$, just like for orthogonal spaces. Furthermore, for any other $G$-representation $W$ the structure map $X(V)\wedge S^W\to X(V\oplus W)$ becomes $G$-equivariant. Fixing a complete $G$-universe $\U_G$ for every compact Lie group $G$, one defines the equivariant homotopy groups of an orthogonal spectrum $X$ via
\[ \pi_k^G (X) = \begin{cases}  \colim_{f.d. V\subseteq U_G} [S^{k+V},X(V)]_*^G & \text{for }k\geq 0 \\
 \colim_{f.d. V\subseteq U_G}[S^V,X(\R^{-k}\oplus V)]^G_* & \text{for } k\leq 0\end{cases} \]
where the connecting maps in the colimit system are induced by the generalized equivariant structure maps.

It is an important feature of the global equivariant homotopy theory of orthogonal spectra that for each fixed $k\in \N$ the collection of homotopy groups $\underline{\pi}_k (X)=\{\pi_k^G (X)\}_{G\text{ compact Lie}}$ has a rich natural functoriality, it forms a so-called \emph{global functor}. Concretely this means that there are
\begin{itemize} 
 \item contravariantly functorial \emph{restriction maps} $\varphi^*:\pi_k^G (X)\to \pi_k^K (X)$ for every continuous group homomorphism $\varphi:K\to G$, and
 \item covariantly functorial \emph{transfer maps} $\tr_H^G:\pi_k^H (X)\to \pi_k^G (X)$ for every closed subgroup inclusion $H\subseteq G$.
\end{itemize}
The restrictions are defined in the same manner as for orthogonal spaces in Section \ref{sec:orthspaces}. If $\varphi$ is the inclusion of a closed subgroup $H$ of $G$, we also use the notation ${\res}_H^G$ instead of $\varphi^*$.

We quickly recall the construction of the transfer in the case where $H$ is of finite index in $G$, as we need it explicitly in Section \ref{sec:pi0ku}:
\begin{Example}[Finite index transfers] Let $X$ be an orthogonal spectrum and $x\in \pi_0^H (X)$ (the construction for other degrees is similar). The element $x$ is represented by an $H$-map $f:S^V\to X(V)$ for some $H$-representation $V$, which we can without loss of generality assume to be the restriction of a $G$-representation which also allows an embedding of the $G$-set $G/H$. This embedding can be extended to a $G$-equivariant embedding $G/H\times D(V)\to V$ (where $D(V)$ denotes the unit disc in $V$). Collapsing everything outside the interiors of the discs to a point (the ``Thom-Pontryagin construction'') gives a $G$-map $S^V\to G/H_+ \wedge S^V$, from which one obtains a representative for the transfer $\tr_H^G(x)$ of $x$ by postcomposing with the map $G/H_+\wedge S^V\to S^V$ sending a tuple $([g]\wedge v)$ to $gf(g^{-1}v)$.
\end{Example}

Restrictions and transfers satisfy the double coset formula (cf. \cite[III.4.15]{Sch15}). Furthermore, like for orthogonal spaces, the restriction along an inner automorphism of a compact Lie group is always the identity, and transfers along subgroup inclusions whose Weyl group is infinite are always zero.

For every orthogonal space $X$, the group $\pi_0^G(\Sigma^{\infty}_+X)$ can be expressed in terms of the set $\pi_0^G(X)$. Every element $[x]$ of $\pi_0^G (X)$ is represented by a point $x\in X(V)^G$ for some $G$-representation $V$ contained in the chosen $G$-universe $\U_G$ and gives rise to an element (also denoted $[x]$) in $\pi_0^G (\Sigma^{\infty}_+ X)$ represented by the $G$-map $S^V\to X(V)_+\wedge S^V,v\mapsto x\wedge v$. This construction commutes with restrictions along group homomorphisms. Based on the tom Dieck-splitting, Schwede shows:

\begin{Prop}[{\cite[Prop. III.4.8]{Sch15}}] \label{prop:pi0susp} Let $G$ be a compact Lie group and $X$ an orthogonal space. Then the $0$-th homotopy group $\pi_0^G(\Sigma^{\infty}_+X)$  is free with basis $\{\tr_H^G([x])\}$, where $H$ ranges through conjugacy classes of subgroups of $G$ with finite Weyl group $W_GH=N_GH/H$ and $[x]$ ranges through a set of representatives of $W_GH$-orbits of $\pi_0^H(X)$.
\end{Prop}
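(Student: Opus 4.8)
My plan is to reduce the statement to the classical tom Dieck splitting for genuine $G$-equivariant suspension spectra, and then to match the abstract summands of that splitting with the transferred classes named in the proposition.

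\textbf{Step 1 (reduction to a genuine $G$-equivariant statement).} Write $Y=X(\U_G)$ for the $G$-space underlying $X$. After replacing $X$ by a globally equivalent closed orthogonal space (which changes neither side, since $\Sigma^{\infty}_+$ respects the relevant equivalences and $\upi_0$ is invariant), I may assume $X$ is closed, so that $X(\U_H)^H=Y^H$ and hence $\pi_0^H(X)=\pi_0(Y^H)$ for every closed $H\subseteq G$. Since $\U_H=\res_H^G\U_G$ is again a complete $H$-universe, and since $S^V$ is compact while the structure maps of a closed orthogonal space are closed embeddings, a cofinality argument identifies
\[ \pi_0^G(\Sigma^{\infty}_+X)=\colim_{f.d.\,V\subseteq \U_G}[S^V,\,X(V)_+\wedge S^V]^G_*\ \cong\ \colim_{f.d.\,V\subseteq \U_G}[S^V,\,Y_+\wedge S^V]^G_* \]
with the zeroth $G$-equivariant homotopy group of the genuine $G$-suspension spectrum $\Sigma^{\infty}_{G,+}Y$. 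Under this identification the canonical map $\pi_0^H(X)\to\pi_0^H(\Sigma^{\infty}_+X)$, $x\mapsto(v\mapsto x\wedge v)$, becomes the usual map $\pi_0(Y^H)\to\pi_0^H(\Sigma^{\infty}_{H,+}Y)$, and the global transfers $\tr_H^G$ coincide with the genuine equivariant transfers. So it suffices to prove the proposition for $\Sigma^{\infty}_{G,+}Y$.

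\textbf{Step 2 (tom Dieck splitting and identification of the basis).} Apply the tom Dieck splitting for the $G$-equivariant suspension spectrum of a $G$-space, in the form valid for compact Lie groups: there is a natural isomorphism
\[ \pi_0^G(\Sigma^{\infty}_{G,+}Y)\ \cong\ \bigoplus_{(H)}\ \Z\{\pi_0(Y^H)/W_GH\}, \]
the sum running over conjugacy classes of closed subgroups $H$ with \emph{finite} Weyl group $W_GH=N_GH/H$. (A conjugacy class with $\dim W_GH>0$ contributes nothing in degree $0$: its contribution is a Thom spectrum over $EW_GH\times_{W_GH}Y^H$ twisted by the positive-dimensional tangent representation along $G/N_GH$, which raises degrees; this is also why one expects exactly the finite-Weyl-group transfers, in line with the fact recalled above that $\tr_H^G=0$ when $W_GH$ is infinite.) Each summand $\Z\{\pi_0(Y^H)/W_GH\}$ is free abelian, so $\pi_0^G(\Sigma^{\infty}_{G,+}Y)$ is free. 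It remains to see that the $(H)$-summand is freely spanned by $\{\tr_H^G([x])\}$ as $[x]$ runs over $W_GH$-orbits of $\pi_0(Y^H)=\pi_0^H(X)$. That $\tr_H^G([x])$ depends only on the orbit follows from the double coset formula: for $n\in N_GH$ one has $\tr_H^G\circ c_n^*=\tr_H^G$, and $c_n^*$ realises the $W_GH$-action on $\pi_0^H(X)$ (with inner conjugations acting trivially). That these classes form a basis adapted to the decomposition is seen by inspecting the splitting: its inverse on the $(H)$-summand is built from $\tr_H^G$ followed by projection onto the $(H)$-isotropy part — equivalently onto geometric $H$-fixed points, where $\Phi^H\Sigma^{\infty}_{G,+}Y\simeq\Sigma^{\infty}_+Y^H$ — so $\tr_H^G([x])$ has leading term $[x]$ in the $(H)$-summand and trivial components in summands for conjugacy classes strictly below $(H)$. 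Inducting over the finite poset of conjugacy classes with finite Weyl group then turns the $\tr_H^G([x])$ into a basis.

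\textbf{Main obstacle.} The crux is the last part of Step 2: making the splitting concrete enough to read off that its basis is precisely the set of transferred classes. If the splitting is not taken off the shelf, the natural way to organise this is via the isotropy-separation cofibre sequences $E\mathcal{F}'_+\wedge\Sigma^{\infty}_{G,+}Y\to E\mathcal{F}_+\wedge\Sigma^{\infty}_{G,+}Y\to (E\mathcal{F}/E\mathcal{F}')\wedge\Sigma^{\infty}_{G,+}Y$ along a filtration of the family of subgroups with finite Weyl group, using that $\pi_0^G$ of the top cofibre is $\Z\{\pi_0(Y^G)\}$ and inducting over the poset of conjugacy classes. The bookkeeping required to verify that the relevant connecting homomorphisms vanish in degree $0$ and that the transfers $\tr_H^G$ provide compatible splittings of the resulting short exact sequences is the real technical content; it is here that the finite-Weyl-group hypothesis and the formal properties of equivariant transfers (compatibility with restriction and the double coset formula) are used essentially.
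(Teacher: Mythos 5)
Your argument is correct and follows essentially the route the paper itself takes: the paper does not prove this statement but cites it from Schwede's book with the explicit remark that it is based on the tom Dieck splitting, which is exactly the reduction you carry out (pass to the underlying genuine $G$-suspension spectrum of $X(\U_G)$, invoke the splitting, and identify the summands with finite Weyl group via transfers and the double coset formula). No substantive difference in approach, and your identification of the technical crux --- organising the isotropy-separation induction so that the transfers split the resulting sequences in degree $0$ --- matches where the real work lies in the cited source.
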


\begin{Example} \label{exa:stablebglg} Applying this to $X=B_{gl}K$ as in Example \ref{exa:bglg} we see that $\pi_0^G(\Sigma^{\infty}_+ (B_{gl}K))$ has a basis $\{\tr_H^G([\alpha])\}$, where $(H,\alpha:H\to K)$ ranges through $G$-conjugacy classes of pairs of a subgroup of $G$ together with a (continuous) group homomorphism to $K$. If $K$ is compact, we call $\upi_0(\Sigma^{\infty}_+ (B_{gl} K))$ the \emph{free global functor in degree $K$}. It is representable if one interprets global functors as functors on the global Burnside category, cf. \cite[Section III.4]{Sch15}).
\end{Example}
We give one more example, as it soon becomes relevant:

\begin{Example}[Global Eilenberg-MacLane spectrum] \label{exa:em} The \emph{global Eilenberg-MacLane spectrum} $H\Z$ of the integers is given by $(H\Z)_n=\widetilde{\Z}[S^n]$, the reduced linearization of $S^n$. The structure map $\widetilde{\Z}[S^n]\wedge S^1\to \Z[S^{n+1}]$ sends a pair $(\sum a_i x_i)\wedge y$ to $\sum a_i (x_i\wedge y)$. For $G$ a finite group there is an isomorphism \[ \pi_k^G (H\Z) \cong \begin{cases} \Z & k=0 \\ 0 & \text{else} \end{cases}. \]
Under this isomorphism all restrictions are the identity and transfers are given by multiplication with the index (cf. \cite{dS03}). In other words, on finite groups  $H\Z$ is a model for an Eilenberg-MacLane spectrum for the constant global functor with value $\Z$. This is not true for compact Lie groups. Neither is $\pi_0^G (H\Z)$ isomorphic to $\Z$ in general, nor do higher homotopy groups vanish (cf. \cite[Proposition V.3.21]{Sch15}).
\end{Example}
Equivariant homotopy groups also give rise to the notion of global equivalence:

\begin{Def} A morphism $f:X\to Y$ of orthogonal spectra is a \emph{global equivalence} if it induces an isomorphism on $\pi_k^G$ for all integers $k$ and every compact Lie group $G$.
\end{Def}
The localization of the category of orthogonal spectra at the class of global equivalences gives rise to the \emph{global stable homotopy category} $\mathcal{GH}$. Again there are several model structures available for examining this homotopy category (cf. \cite[Section IV.1]{Sch15}).

\section{The symmetric product filtration/rank filtration of $H\Z$}
We now begin studying modified rank and complexity filtrations. The first example deals with the symmetric product filtration which interpolates between the (global) sphere spectrum and the (global) Eilenberg-MacLane spectrum for the integers $\Z$. It has been much studied non-equivariantly, in particular the homotopy type of the filtration quotients has been determined by Lesh in \cite{Lesh00}. As explained in the introduction, its (global) equivariant properties were examined by Schwede in \cite{Sch14}. Everything we write in this section can be found there. Nevertheless we repeat the argument here, because it is the basic example of a rank filtration and because we need it later to introduce and study complexity filtrations.

Let $n$ be a natural number. The $n$-th symmetric product $Sp^n$ of the sphere spectrum is given by
\[ Sp^ n_k=(S^ k)^ {\times n}/{\Sigma_n}. \]
There are inclusions $Sp^ {n-1}\to Sp^ n$ inserting a basepoint in the last component, giving rise to a filtration that starts with the global sphere spectrum $\Sph=Sp^1$ and converges to $Sp^ {\infty}$, which is globally equivalent to the global Eilenberg-MacLane spectrum $H\Z$ of Example \ref{exa:em} by \cite[Proposition V.3.29]{Sch15}.

The quotients in this filtration are given by 
\[(Sp^ n/Sp^ {n-1})_k=(S^ k)^ {\wedge n}/{\Sigma_n}, \]
or in other words the $\Sigma_n$-orbit space of the one-point compactification of the $(\Sigma_n\times O(k))$-representation $\R^ {{n}}\otimes \R^ k$. The natural $\Sigma_n$-representation $\R^ {{n}}$ decomposes as
\[ \R^ {{n}}=\overline{\R^ {{n}}}\oplus \R, \]
where $\overline{\R^ {{n}}}$ is the reduced natural representation of vectors whose entries sum to zero and $\mathbb{R}$ the trivial diagonal copy. Using this decomposition, we see that
\begin{align*} (Sp^ n/Sp^ {n-1})_k 	&\cong S^ {\overline{\R^ {{n}}}\otimes \R^ k}/{\Sigma_n}\wedge S^ k
											\cong S(\overline{\R^ {{n}}}\otimes \R^ k)^\dia/{\Sigma_n}\wedge S^ k.
\end{align*}
Here, $S(-)$ denotes the sphere of vectors of length one in an inner product space and the notation $X^\dia$ stands for the unreduced suspension of a space $X$ equipped with the basepoint $X\times 1$. Moreover, under this homeomorphism the structure map $(Sp^ n/Sp^ {n-1})_k\wedge S^ 1\to (Sp^ n/Sp^ {n-1})_{k+1}$ corresponds to the smash product of the inclusion $S(\overline{\R^ {{n}}}\otimes \R^ k)^\dia/\Sigma_n\to S(\overline{\R^ {{n}}}\otimes \R^ {k+1})^\dia/\Sigma_n$ and the homeomorphism $S^ k\wedge S^ 1\cong S^ {k+1}$. In other words, $Sp^ n/Sp^ {n-1}$ is isomorphic (!) to the suspension spectrum of the based orthogonal space
\[ V\mapsto S(\overline{\R^ {{n}}}\otimes V)^\dia/\Sigma_n. \]

Let $\mathcal{T}_n$ denote the collection of \emph{non-transitive subgroups} of $\Sigma_n$, i.e., those subgroups whose tautological action on the set $\underline{n}$ is not transitive. Further we denote by $\mathcal{C}^{\Sigma}_n$ the collection of \emph{complete subgroups} of $\Sigma_n$, i.e., those conjugate to one of the form $\Sigma_{n_1}\times \Sigma_{n_2}\times \hdots \times \Sigma_{n_k}$ with $n_1+n_2+\hdots + n_k=n$, all $n_i\geq 1$ and $k>1$. We note that a subgroup of $\Sigma_n$ is non-transitive if and only if it is contained in a complete subgroup.

\begin{Prop} \label{prop:sympow} The $\Sigma_n$-orthogonal space $S(\overline{\R^ {{n}}}\otimes - )$ is a global universal space for both $\mathcal{T}_n$ and $\mathcal{C}^{\Sigma}_n$.
\end{Prop}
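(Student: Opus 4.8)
The plan is to verify the two defining properties of a global universal space directly, exploiting that $\mathcal{C}^{\Sigma}_n\subseteq\mathcal{T}_n$ (a complete subgroup is non-transitive): it then suffices to check the isotropy condition for the smaller collection $\mathcal{C}^{\Sigma}_n$ and the fixed-point condition for the larger collection $\mathcal{T}_n$, after which both universal space statements follow at once. Concretely, for $X(\U_G)$ to be universal for $\mathcal{C}^{\Sigma}_n\langle G\rangle$ one needs isotropy in $\mathcal{C}^{\Sigma}_n\langle G\rangle$ and contractible fixed points for subgroups in $\mathcal{C}^{\Sigma}_n\langle G\rangle\subseteq\mathcal{T}_n\langle G\rangle$, while for $\mathcal{T}_n\langle G\rangle$ one needs isotropy in $\mathcal{C}^{\Sigma}_n\langle G\rangle\subseteq\mathcal{T}_n\langle G\rangle$ and contractible fixed points for all of $\mathcal{T}_n\langle G\rangle$. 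Since $S(\overline{\R^n}\otimes -)$ is closed, I fix a compact Lie group $G$ and work with the $(\Sigma_n\times G)$-space $S(\overline{\R^n}\otimes\U_G)=\colim_{V\subseteq\U_G}S(\overline{\R^n}\otimes V)$, the colimit over finite-dimensional $G$-subrepresentations. Its cofibrancy — that this is a $(\Sigma_n\times G)$-CW complex — I would obtain from equivariant triangulation of the individual unit spheres together with the cofibrancy results of the appendix; this is routine but technical.

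For the isotropy condition I would compute stabilizers. Writing a point of $S(\overline{\R^n}\otimes V)$, under $\R^n\otimes V\cong V^{n}$, as a tuple $(v_1,\dots,v_n)$ with $\sum_i v_i=0$ and $\sum_i|v_i|^2=1$, a pair $(\sigma,g)\in\Sigma_n\times G$ fixes it precisely when $g\,v_{\sigma^{-1}(i)}=v_i$ for all $i$. Intersecting the stabilizer with $\Sigma_n\times 1$, its $\Sigma_n$-part is exactly the subgroup $\Sigma_{P_1}\times\dots\times\Sigma_{P_k}$ of permutations fixing each block of the partition $\{P_1,\dots,P_k\}$ of $\{1,\dots,n\}$ into level sets of $i\mapsto v_i$. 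As $(v_1,\dots,v_n)$ is a unit vector with zero sum, the $v_i$ are not all equal, so $k\geq 2$ and this subgroup is complete. Hence every isotropy group of $S(\overline{\R^n}\otimes\U_G)$ lies in $\mathcal{C}^{\Sigma}_n\langle G\rangle\subseteq\mathcal{T}_n\langle G\rangle$.

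The substantive part is the fixed-point condition. Let $\Gamma\subseteq\Sigma_n\times G$ be a closed subgroup with $H:=\Gamma\cap(\Sigma_n\times 1)$ non-transitive. Since $\Gamma$ acts by isometries, $S(\overline{\R^n}\otimes\U_G)^\Gamma=S\big(\colim_V(\overline{\R^n}\otimes V)^\Gamma\big)$, and the unit sphere of a countable-dimensional inner product space that is the increasing union of its finite-dimensional subspaces is contractible; so it is enough to show $(\overline{\R^n}\otimes\U_G)^\Gamma$ is infinite-dimensional, and for that it suffices to produce one $G$-representation $V$ with $(\overline{\R^n}\otimes V)^\Gamma\neq 0$ (then $V^{\oplus N}$ gives fixed subspaces of arbitrarily large dimension). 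Since $\overline{\R^n}$ is an orthogonal, hence self-dual, real $\Gamma$-representation, $(\overline{\R^n}\otimes V)^\Gamma\cong\Hom_\Gamma(\overline{\R^n},V)$, so I need an irreducible $\Gamma$-summand of $\overline{\R^n}$ that occurs in the restriction of some $G$-representation. The key point is that $H$ is the kernel of $\pr_G\colon\Gamma\to G$, hence normal in $\Gamma$, so the $H$-fixed subspace $(\overline{\R^n})^H$ is a $\Gamma$-subrepresentation of $\overline{\R^n}$; its dimension equals (the number of $H$-orbits on $\{1,\dots,n\}$) minus $1$, which is positive exactly because $H$ is non-transitive. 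Any irreducible $\Gamma$-summand $W$ of $(\overline{\R^n})^H$ then has $H$ acting trivially, so $W$ is inflated from $\Gamma/H$, which embeds into $G$ via $\pr_G$; taking $V$ to be a $G$-representation whose restriction to this subgroup contains $W$ (for instance an induced one) gives $\Hom_\Gamma(\overline{\R^n},V)\supseteq\Hom_\Gamma(W,V)\neq 0$, as desired.

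The step I expect to cost the most thought is this last one: recognizing that the normality of $H=\Gamma\cap(\Sigma_n\times 1)$ in $\Gamma$ — which holds for every $\Gamma$ in a collection of the form $\langle G\rangle$, and genuinely uses that structure rather than an abstract pair of nested collections — is exactly what turns $(\overline{\R^n})^H$ into a $\Gamma$-representation, and then translating ``$H$ non-transitive'' first into the non-vanishing of this fixed space and then into detectability of one of its irreducible summands by an honest $G$-representation. The isotropy computation and the cofibrancy verification are comparatively routine.
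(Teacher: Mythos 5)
Your proposal is correct and follows essentially the same route as the paper: the same reduction to checking isotropy for $\mathcal{C}^{\Sigma}_n$ and fixed points for $\mathcal{T}_n$, the same stabilizer computation via level sets of the tuple $(v_1,\dots,v_n)$, and the same use of the normality of $H=\Gamma\cap(\Sigma_n\times 1)$ to pass to the nonzero $\Gamma$-subrepresentation $(\overline{\R^n})^H\cong\overline{\R^{\,n/H}}$, which is then detected inside the complete universe $\U_G$. Your phrasing of the last step via $\Hom_\Gamma(\overline{\R^n},V)$ and self-duality is just a repackaging of the paper's observation that $\U_G$ contains infinitely many copies of the dual of $\overline{\R^{\,n/H}}$ and that a representation tensored with its dual contains a trivial summand.
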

The notion of a global universal space for a collection is explained in Section \ref{sec:uni}.
\begin{proof} Clearly all structure maps $S(\overline{\R^ {{n}}}\otimes V)\to S(\overline{\R^ {{n}}}\otimes W)$ are closed. Now let $G$ be a compact Lie group and $\U_G$ a complete $G$-universe. As a consequence of a theorem of Illman (cf. \cite{Ill83}), all $(\Sigma_n\times G)$-spheres $S(\overline{\R^ {{n}}}\otimes V)$ are $(\Sigma_n\times G)$-cofibrant.

We are now going to show that all $\Sigma_n$-isotropy of $S(\overline{\R^ {{n}}}\otimes \U_G)$ lies in complete subgroups and that the fixed points for a subgroup $H$ of $\Sigma_n\times G$ are contractible whenever the intersection $H\cap (\Sigma_n\times 1)$ is non-transitive, implying universality for both collections. An element of $\overline{\R^ {n}}\otimes \U_G$ can be represented by an $n$-tuple $(v_1,\hdots, v_n)$ of vectors of $\U_G$ who sum up to zero, with $\Sigma_n$ acting by permuting the coordinates. Every such element defines a partition of the set $\underline{n}$ by the equivalence relation that $i\sim j$ if $v_i=v_j$.
Let the equivalence classes be denoted by $A_1,\hdots , A_k$. Then a permutation in $\Sigma_n$ fixes the element $(v_1,\hdots, v_n)$ if and only if it maps each $A_i$ into itself, i.e., if and only if it lies in the subgroup $\Sigma(A_1)\times \hdots \times \Sigma(A_k)$. Since the $v_i$'s add up to zero and are of total length one, they cannot all be the same and hence $k$ is greater than $1$ and the isotropy subgroup is complete.

Now let $H$ be an element of $\mathcal{C}^\Sigma_n\langle G\rangle$, i.e., a subgroup of $\Sigma_n\times G$ whose intersection $K:=H\cap (\Sigma_n\times 1)$ acts non-transitively. There is a short exact sequence of groups
\[ 1\to K\to H\to {\pr} _G(H) \to 1 \]
and hence the $H$-fixed points of $S(\overline{\R^ n}\otimes \U_G)$ equal the $\pr_G(H)$-fixed points of
\[ S(\overline{\R^ n}\otimes \U_G)^ K=S((\overline{\R^ n})^K\otimes \U_G)=S(\overline{\R^ {n/{K}}}\otimes \U_G).\]
The $\pr_G(H)$-action on the latter representation is the tensor product of the action on $\overline{\R^ {n/K}}$ induced from the short exact sequence above and the restricted action on $\U_G$. Since $\U_G$ is a complete $\pr_G(H)$-universe, it in particular contains an infinite direct sum of copies of the dual of $\overline{\R^ {n/{K}}}$ (which is again isomorphic to $\overline{\R^ {n/{K}}}$).
The tensor product of any finite dimensional representation with its dual always contains a trivial representation, so it follows that the $\pr_H(G)$-fixed points of $S(\overline{\R^ {n/K}}\otimes \U_G)$ are a unit sphere in an infinite dimensional vector space and hence contractible, and so we are done.
\end{proof}
So one obtains:
\begin{Theorem}[{\cite[Prop. 1.11]{Sch14}}] The quotient $Sp^n/Sp^{n-1}$ is globally equivalent (in fact, isomorphic) to the suspension spectrum of the unreduced suspension of a global classifying space for both the collection of non-transitive and complete subgroups of $\Sigma_n$, or in short:
\[ Sp^n/Sp^{n-1} \simeq \Sigma^{\infty}(B_{gl}\mathcal{T}_n)^\dia\simeq \Sigma^{\infty}(B_{gl}\mathcal{C}_n^ {\Sigma})^\dia \]
\end{Theorem}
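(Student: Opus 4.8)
The plan is to combine the two pieces of structure that the preceding material has already assembled. On the one hand, the levelwise computation carried out just before Proposition \ref{prop:sympow} identifies $Sp^n/Sp^{n-1}$, as an orthogonal spectrum, with the suspension spectrum of the based orthogonal space
\[ Y\colon\; V\;\longmapsto\; S(\overline{\R^n}\otimes V)^\dia/\Sigma_n, \]
pointed at the image of the cone point $S(\overline{\R^n}\otimes V)\times 1$. On the other hand, Proposition \ref{prop:sympow} says that the $\Sigma_n$-orthogonal space $E\colon V\mapsto S(\overline{\R^n}\otimes V)$ is a global universal space for \emph{both} $\mathcal{T}_n$ and $\mathcal{C}^{\Sigma}_n$, so by definition the orbit orthogonal space $E/\Sigma_n\colon V\mapsto S(\overline{\R^n}\otimes V)/\Sigma_n$ is simultaneously a model for $B_{gl}\mathcal{T}_n$ and for $B_{gl}\mathcal{C}^{\Sigma}_n$. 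So the whole theorem reduces to matching $Y$ with $(E/\Sigma_n)^\dia$.

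First I would exhibit a natural homeomorphism $Y\cong (E/\Sigma_n)^\dia$ of based orthogonal spaces, the unreduced suspension on the right being formed levelwise with basepoint the appropriate cone point. The key point is that $\Sigma_n$ acts trivially on the suspension parameter: for any $\Sigma_n$-space $Z$ we have $Z^\dia=(Z\times[0,1])\big/(Z\times 0,\,Z\times 1)$ with $\Sigma_n$ fixing $[0,1]$, so passing to $\Sigma_n$-orbits commutes with collapsing the two ends, giving a homeomorphism $Z^\dia/\Sigma_n\cong(Z/\Sigma_n)^\dia$; moreover the chosen basepoint $Z\times 1$ is $\Sigma_n$-fixed, hence descends to the chosen basepoint of $(Z/\Sigma_n)^\dia$. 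Taking $Z=S(\overline{\R^n}\otimes V)$ for each inner product space $V$ yields the identification levelwise, and the remaining routine check is that it is natural in $V$ and compatible with the suspension-spectrum structure maps, which only ever act on the $V$-coordinate and an auxiliary sphere coordinate and never touch the suspension parameter. Applying $\Sigma^{\infty}$ then gives
\[ Sp^n/Sp^{n-1}\;\cong\;\Sigma^{\infty} Y\;\cong\;\Sigma^{\infty}(E/\Sigma_n)^\dia\;=\;\Sigma^{\infty}(B_{gl}\mathcal{T}_n)^\dia . \]
The second equivalence then comes for free, since $E/\Sigma_n$ represents $B_{gl}\mathcal{C}^{\Sigma}_n$ just as well; here one only needs to remark that $B_{gl}$ of a collection is well-defined up to global equivalence and that $\Sigma^{\infty}$ sends global equivalences between sufficiently cofibrant orthogonal spaces to global equivalences of spectra, so the statement is independent of the models (the required cofibrancy of the spheres $S(\overline{\R^n}\otimes V)$, via Illman's theorem, was already recorded in the proof of Proposition \ref{prop:sympow}).

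I do not expect a serious obstacle here: the substantive content — that the decomposition spheres compute a common universal space for $\mathcal{T}_n$ and $\mathcal{C}^{\Sigma}_n$ — is exactly Proposition \ref{prop:sympow}, which is already proved. The one genuinely delicate bookkeeping point is the compatibility of the ``unreduced suspension with a preferred basepoint'' operation with passage to $\Sigma_n$-orbits, carried out naturally in $V$ so that it upgrades from a levelwise homeomorphism to an honest isomorphism of orthogonal spectra; keeping track of which cone point plays the role of the basepoint at each stage is essentially the only thing that requires attention.
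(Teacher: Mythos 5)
Your proposal is correct and follows essentially the same route as the paper: the identification of $Sp^n/Sp^{n-1}$ with the suspension spectrum of $V\mapsto S(\overline{\R^{n}}\otimes V)^\dia/\Sigma_n$ is exactly the computation the paper carries out before Proposition \ref{prop:sympow}, and the theorem is then immediate from that proposition together with the (correct) observation that unreduced suspension commutes with passage to $\Sigma_n$-orbits because the group acts trivially on the suspension coordinate.
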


\section{Filtrations associated to global topological $K$-theory} \label{sec:top}
In this section we introduce and study global equivariant versions of the modified rank filtration and complexity filtration associated to connective complex $K$-theory $ku$. Connective global $K$-theory is a global equivariant version of connective $K$-theory in the sense that it (at least for finite groups $G$) is the connective cover of a spectrum assembling all $G$-equivariant periodic $K$-theories into one global object.
For non-finite compact Lie groups this is not quite true, as is explained in \cite[Remark V.6.14 and Example V.6.15]{Sch15} and will also be dealt with further in Section \ref{sec:pi0ku}. The reason is that the model we use is based on equivariant $\Gamma$-spaces, the theory of which is not as well-behaved over non-finite groups. We note also that this is \emph{not} equivariant connective $K$-theory in the sense of Greenlees \cite{Gre04} (even though there does exist a global version of this, too, see \cite[V.6.43]{Sch15}), which is in fact not an equivariantly connective spectrum.

\subsection{Quotients in the modified rank filtration} \label{sec:kurank}
We quickly recall the construction of connective global $K$-theory as it is given in \cite[V.6.1]{Sch15}, together with the modified rank filtration. For a complex inner product space $W$ and a finite based set $A_+$ we define $k(W,A_+)$ to be the space of tuples $(W_a)_{a\in A}$ of finite dimensional pairwise orthogonal subspaces of $W$ indexed on $A$, or in other words the space
\[ \bigsqcup_{(n_a\in \N)_{a\in A}}L_\C(\bigoplus_{a\in A} \C^{n_a},W)/{\prod_{a\in A} U(n_a)}. \]
It comes with a filtration \[*=k^0(W,A_+)\subseteq k^ 1(W,A_+)\subseteq \hdots \subseteq k^ n(W,A_+)\subseteq \hdots \subseteq k(W,A_+)\] by restricting to those tuples whose dimensions add up to at most a fixed number $n$.
   
For a real inner product space $V$ we denote by $\Sym(\C\otimes V)$ the symmetric algebra on the complexification $\C\otimes V$, equipped with an inner product structure as described in \cite[II.7.16]{Sch15}. Then the assignment
\[ (V,A)\mapsto k(\Sym(\C\otimes V),A_+) \]
forms an orthogonal $\Gamma$-space (i.e., a functor from $\Gamma^{op}$ to orthogonal spaces) via direct sum of subspaces. The connective global $K$-theory spectrum $ku$ is defined as the realization of this orthogonal $\Gamma$-space, i.e., \[ ku(V)=k(\Sym(\C\otimes V),S^ V).\]
The $O(V)$-action on $ku(V)$ is the diagonal one through $\Sym(\C\otimes V)$ and $S^V$. Furthermore, the filtration $k^ n(\Sym(\C\otimes V),A_+)$ is compatible with the orthogonal $\Gamma$-space structure and hence gives rise to a filtration of orthogonal $\Gamma$-spaces and finally to the modified rank filtration of orthogonal spectra
\[ *=ku^ 0\xr{i_0} ku^ 1 \xr{i_1} \hdots \xr{i_{n-1}} ku^ n\xr{i_n} \hdots \to ku. \] 

\begin{Remark} The tensor product of complex vector subspaces turns $ku$ into a strictly commutative monoid for the smash product of orthogonal spectra, an \emph{ultracommutative ring spectrum} in the language of \cite[Chapter V]{Sch15}. This uses the natural isometry $\Sym(V)\otimes \Sym(W)\cong \Sym(V\oplus W)$, the main reason for the appearance of the symmetric algebra in the definition for $ku$. The multiplication is compatible with the rank filtration in the sense that it restricts to maps $ku^m\wedge ku^n\to ku^{m\cdot n}$. In particular, each $ku^n$ and also the filtration quotients $ku^n/ku^{n-1}$ become modules over the ultracommutative ring spectrum $ku^1$, which is the suspension spectrum of a global classifying space for $U(1)$. The description of the filtration quotients we give below could be stated in the category of $ku^1$-modules, but we refrain from doing so to ease the exposition.
\end{Remark}

In the following we determine the filtration quotients $ku^n/ku^{n-1}$ of this rank filtration. We argue in Appendix \ref{app:inclusions} that the maps $ku^{n-1}\to ku^n$ are levelwise equivariant cofibrations, so the strict quotient indeed has the homotopy type of the homotopy cofiber. The first step consists of rewriting the quotient $\Gamma$-spaces in a slightly different way. For a complex vector space $W$ and a finite set $A$ the space $k^ n(W,A_+)/k^ {n-1}(W,A_+)$ is given by
\[ \bigvee_{(n_a),\Sigma n_a=n}(L_\C(\bigoplus_{a\in A} \C^{n_a},W)/{\prod _{a\in A}U(n_a)})_+. \]
Using that composition defines a homeomorphism
\[L_\C(\C^ n,W)_+\wedge _{U(n)}L_\C(\bigoplus_{a\in A} \C^{n_a},\C^ n)_+\xr{\cong} L_\C(\bigoplus_{a\in A} \C^{n_a},W)_+ \]
we obtain that this quotient is isomorphic to 
\begin{equation*} \label{eq:split} L_\C(\C^ n,W)_+\wedge _{U(n)}(\bigvee_{(n_a),\Sigma n_a=n}(L_\C(\bigoplus_{a\in A} \C^{n_a},\C^ n)/{\prod_{a\in A} U(n_a)})_+). \end{equation*}
Applying this to $W=\Sym(\C\otimes V)$ we see that we have rewritten the orthogonal $\Gamma$-space $k^n(\Sym(\C\otimes -),-)/k^{n-1}(\Sym(\C\otimes -),-)$ as a balanced smash product of two parts. The first -- $L_\C(\C^ n,\Sym(\C\otimes -))_+$-- is constant in the $\Gamma$-space direction, while the second is constant in the orthogonal space direction. This second factor is the $U(n)$-$\Gamma$-space of decompositions of $\C^ n$ into a direct sum of orthogonal sub-vector spaces. We give it the shorter notation
\[\mathcal{L}(n,A_+) = (\bigvee_{(n_a),\Sigma n_a=n}(L_\C(\bigoplus_{a\in A} \C^{n_a},\C^ n)/{\prod_{a\in A} U(n_a)})_+). \]

Hence we can treat the two parts separately. The first is easy:
\begin{Lemma} The $U(n)$-orthogonal space $L_\C(\C^n,\Sym(\C\otimes -))$ is a global universal space for $U(n)$.
\end{Lemma}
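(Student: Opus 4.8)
To show that the $U(n)$-orthogonal space $W \mapsto L_\C(\C^n, \Sym(\C \otimes W))$ is a global universal space for the group $U(n)$ — that is, for the trivial collection $1_{U(n)}$ — I would follow the template of Example \ref{exa:bglg}, which already identifies $V \mapsto L_{\R}(V', W)$ as a model for $E_{gl}K$ whenever $V'$ is a faithful $K$-representation. The idea is that the complex isometry space $L_\C(\C^n, \Sym(\C \otimes W))$ is, after forgetting down to real inner product spaces, essentially of that form, with the role of the ``faithful representation'' played (in the colimit over $W$) by the full symmetric algebra $\Sym(\C \otimes \U_G)$, which certainly contains a faithful $U(n)$-representation (e.g. the copy of $\C^n = \C \otimes \R^n$ sitting in symmetric degree one).

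\textbf{Key steps.} First I would check that the orthogonal space is closed: each structure map $L_\C(\C^n, \Sym(\C\otimes V)) \to L_\C(\C^n, \Sym(\C\otimes W))$ induced by an isometric embedding $V \hookrightarrow W$ is a closed embedding, since $\Sym(\C\otimes V) \to \Sym(\C\otimes W)$ is a split isometric inclusion of inner product spaces and postcomposition of linear isometries with such an inclusion is a closed embedding. Second, fix a compact Lie group $G$ and evaluate on the complete universe: $(L_\C(\C^n, \Sym(\C\otimes \U_G)))$ is a $(U(n)\times G)$-space, and I would observe that the $U(n)$-action (by postcomposition) is free, because any linear isometric embedding $\C^n \to \Sym(\C\otimes \U_G)$ is injective and an element of $U(n)$ fixing such an embedding must be the identity. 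Third, I would verify the fixed-point condition: for a subgroup $\Gamma \subseteq U(n)\times G$ lying in $1_{U(n)}\langle G\rangle$ — equivalently $\Gamma = \{(\varphi(h), h) : h \in H\}$ for a closed subgroup $H \leq G$ and a continuous homomorphism $\varphi : H \to U(n)$ — the fixed-point space $L_\C(\C^n, \Sym(\C\otimes \U_G))^{\Gamma}$ is the space of $H$-equivariant complex linear isometric embeddings of $\C^n$ (with the $H$-action via $\varphi$) into $\Sym(\C\otimes \U_G)$ (with the restricted $H$-action). Since $\U_G$ is a complete $G$-universe, $\Sym(\C\otimes \U_G)$ contains a countably-infinite direct sum of copies of every irreducible $H$-representation, hence in particular of the representation $\C^n$; the space of $H$-equivariant isometric embeddings of a finite-dimensional unitary $H$-representation into such an ``$H$-complete'' space is contractible (it is a colimit of Stiefel-type manifolds $L_\C^H(\C^n, M)$ with $M$ containing more and more copies of $\C^n$, and each inclusion is highly connected). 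This gives contractibility of all the relevant fixed points, and the isotropy of any point of $L_\C(\C^n, \Sym(\C\otimes \U_G))$ automatically lies in $1_{U(n)}\langle G\rangle$ by the freeness of the $U(n)$-action. Together with the cofibrancy of these $(U(n)\times G)$-spaces — which, as in the proof of Proposition \ref{prop:sympow}, follows from Illman's triangulation theorem applied to the smooth $(U(n)\times G)$-manifolds $L_\C(\C^n, \Sym(\C\otimes V))$ for finite-dimensional $V$ — this verifies the defining properties of a global universal space for $U(n)$.

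\textbf{Main obstacle.} The step I expect to require the most care is the contractibility of the equivariant isometry fixed-point spaces $L^H_\C(\C^n, \Sym(\C\otimes \U_G))$, i.e. making precise the claim that a complete complex $G$-universe gives rise, after applying $\Sym(\C\otimes -)$, to a space that is ``complete enough'' to absorb any finite-dimensional unitary $H$-representation infinitely often, uniformly in $H$. One must check that $\Sym(\C\otimes \U_G)$ contains every irreducible $G$-representation (hence, by restriction, every irreducible $H$-representation for closed $H \leq G$) with infinite multiplicity; this follows because $\U_G$ already contains each such irreducible infinitely often and these copies survive into symmetric degree one of $\Sym(\C\otimes \U_G)$, but one should be slightly careful about the inner product normalization from \cite[II.7.16]{Sch15} and about the fact that we only need embeddings, not a splitting of the whole algebra. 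Once that input is in place, contractibility follows from the standard fact that the space of equivariant isometric embeddings of a fixed finite-dimensional representation into an infinite direct sum of copies of a ``complete'' universe is a sequential colimit of $(2k-1)$-connected spaces, hence weakly contractible. Everything else — closedness, freeness of the $U(n)$-action, cofibrancy — is routine given the analogous arguments already carried out in Section \ref{sec:uni} and in the proof of Proposition \ref{prop:sympow}.
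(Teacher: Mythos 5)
Your proposal is correct and follows essentially the same route as the paper: the paper's proof is a one-line reduction to the complex analogue of Example \ref{exa:bglg}, resting on the fact that $\Sym(\C\otimes \U_G)$ is a complete complex $G$-universe whenever $\U_G$ is a complete real one, and your argument simply unpacks the verifications (closedness, freeness of the $U(n)$-action, contractibility of the $\Gamma(\varphi)$-fixed points via completeness of the universe, and cofibrancy via Illman) that this citation encapsulates.
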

\begin{proof} This is merely the complex version of Example \ref{exa:bglg}, since $\Sym(\C\otimes \U_G)$ is a complete complex $G$-universe if $\U_G$ is a complete real $G$-universe.
\end{proof}

Now we concentrate on the second part and consider the evaluation of the $U(n)$-$\Gamma$-space $\mathcal{L}(n,-)$ on a representation sphere $S^ V$ for some compact Lie group $G$ and $G$-representation $V$. Every element is represented by a tuple $(W_i,x_i)_{i\in I}$ for some finite indexing set $I$, where the $W_i$ form an orthogonal decomposition of $\C^n$ into complex subspaces and the $x_i$ are elements of $S^ V$. (This representative becomes unique up to a change of labels if we require all the $x_i$ to be distinct elements of $V$ and all the $W_i$ to be non-zero.) One can imagine the vector space $W_i$ sitting on the point $x_i$ and the topology is such that if two points approach one another the vector spaces sitting on them are added. The action of $U(n)$ is through the partition, that of $G$ through the points $x_i$.

Sitting inside $\mathcal{L}(n,S^V)$ we have a copy of $S^V$ (with trivial $U(n)$-action) as elements of the form $(\C^ n,x)$. In fact we can mimic the construction for the symmetric products to see that $S^ V$ splits off $(U(n)\times G)$-equivariantly as a smash factor. The other factor is given by the subspace of $\mathcal{L}(n,S^ V)$ consisting of elements represented by tuples $(W_i,x_i)_{i\in I}$ (with $x_i\in V$) that satisfy the relation \[\sum_{i\in I} (\dim(W_i)\cdot x_i)=0\]
as elements of $V$ (this property is independent of the representing tuple). We denote this ``reduced'' subspace by $\overline{\mathcal{L}(n,S^ V)}$. Then the $(U(n)\times G)$-homeomorphism
\[ \mathcal{L}(n,S^ V)\xr{\cong} \overline{\mathcal{L}(n,S^ V)}\wedge S^ V \]
is given by $[(W_i,x_i)_{i\in I}]\mapsto [(W_i,x_i-x)]\wedge x$ where $x=\frac{1}{n}\sum_{i\in I} (\dim(W_i)\cdot x_i)$. 

Under this identification, for another $G$-representation $V'$ the structure map
\[\overline{\mathcal{L}(n,S^ V)}\wedge S^ V\wedge S^ {V'}\to \overline{\mathcal{L}(n,S^ {V\oplus V'})}\wedge S^ {V\oplus V'} \]becomes the smash product of the canonical homeomorphism $S^ V\wedge S^ {V'}\cong S^ {V\oplus V'}$ with the inclusion $\overline{\mathcal{L}(n,S^ V)}\to \overline{\mathcal{L}(n,S^ {V\oplus V'})}$ induced from the inclusion of $V$ into $V\oplus V'$. Thus we see that the orthogonal spectrum realization of $\mathcal{L}(n,-)$ (with induced $U(n)$-action) is isomorphic to the suspension spectrum of the based $U(n)$-orthogonal space sending $V$ to $\overline{\mathcal{L}(n,S^ V)}$.

Finally, like in the symmetric product filtration, this based orthogonal space is itself the unreduced suspension of the subspace of ``norm 1 elements": Let $\overline{\mathcal{L}_{|.|=1}(n,S^V)}$ be the subspace of $\overline{\mathcal{L}(n,S^ V)}$ consisting of those elements that are represented by a tuple $(W_i,x_i)_{i\in I}$ (with $x_i\in V$) satisfying the relation $\sum_{i=1}^ m (\dim(W_i) |x_i|^2)=1$. Then there is a $(U(n)\times G)$-homeomorphism
\begin{align*}  \overline{\mathcal{L}(n,S^ V)}&\to\overline{\mathcal{L}_{|.|=1}(n,S^V)}^\dia \\
		  [(W_i,x_i)_{i\in I}]& \mapsto ([(W_i,x_i/|x|)_{i\in I}],|x|/(1+|x|)),
\end{align*}
where $|x|=\sqrt{\sum_{i=1}^ m \dim(W_i)|x_i|^2}$ and the image of the basepoint is understood to be the endpoint at $1$. Hence we obtain:

\begin{Cor} \label{cor:kuquot} The quotient $ku^ n/ku^ {n-1}$ is isomorphic to the suspension spectrum of the based orthogonal space \[ L_\C(\C^n,\Sym(\C\otimes -))_+\wedge_{U(n)}(\overline{\mathcal{L}_{|.|=1}(n,S^-)}^\dia)\] 
\end{Cor}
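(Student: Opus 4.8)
The plan is to assemble the natural homeomorphisms already set up in the discussion preceding the statement; no new geometric input is needed, so the corollary follows by tracking a chain of isomorphisms of orthogonal spectra. First I would recall that by Appendix~\ref{app:inclusions} the map $ku^{n-1}\to ku^n$ is a levelwise equivariant cofibration, so the strict quotient $ku^n/ku^{n-1}$ is a model for the homotopy cofiber and, since the realization of an orthogonal $\Gamma$-space is a colimit and therefore commutes with levelwise quotients, it is the realization of the quotient orthogonal $\Gamma$-space $k^n(\Sym(\C\otimes-),-)/k^{n-1}(\Sym(\C\otimes-),-)$. The composition homeomorphism
\[ L_\C(\C^n,W)_+\wedge_{U(n)}L_\C(\bigoplus_{a\in A}\C^{n_a},\C^n)_+\xrightarrow{\cong}L_\C(\bigoplus_{a\in A}\C^{n_a},W)_+ \]
then exhibits this quotient $\Gamma$-space, applied to $W=\Sym(\C\otimes V)$, as the balanced smash product $L_\C(\C^n,\Sym(\C\otimes-))_+\wedge_{U(n)}\mathcal{L}(n,-)$ of a factor constant in the $\Gamma$-variable with a factor constant in the orthogonal-space variable. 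Since $-\wedge_{U(n)}L_\C(\C^n,\Sym(\C\otimes-))_+$ is applied levelwise and the $U(n)$-action on $L_\C(\C^n,-)$ is free, I would check that realization commutes with this balanced product, giving $ku^n/ku^{n-1}\cong L_\C(\C^n,\Sym(\C\otimes-))_+\wedge_{U(n)}(\text{realization of }\mathcal{L}(n,-))$.

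Next I would identify the realization of the $U(n)$-$\Gamma$-space $\mathcal{L}(n,-)$ on its own. Evaluating on a representation sphere $S^V$ for a compact Lie group $G$ and splitting off a copy of $S^V$ via the center-of-mass map $x=\tfrac1n\sum_{i}\dim(W_i)\,x_i$ exhibits $\mathcal{L}(n,S^V)\cong\overline{\mathcal{L}(n,S^V)}\wedge S^V$; this splitting is $(U(n)\times G)$-equivariant because $U(n)$ fixes each $x_i$ and preserves $\dim(W_i)$ while $G$ acts through $V$, and it is compatible with the structure maps $-\wedge S^{V'}$, so the realization of $\mathcal{L}(n,-)$ is the suspension spectrum of the based $U(n)$-orthogonal space $V\mapsto\overline{\mathcal{L}(n,S^V)}$. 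The radial homeomorphism $[(W_i,x_i)_i]\mapsto([(W_i,x_i/|x|)_i],\,|x|/(1+|x|))$ then identifies this orthogonal space, naturally in $V$, with the unreduced suspension $\overline{\mathcal{L}_{|.|=1}(n,S^-)}^\dia$.

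Finally I would combine the two: for a based $U(n)$-orthogonal space $Z$ one has
\[ \bigl(L_\C(\C^n,\Sym(\C\otimes-))_+\wedge_{U(n)}\Sigma^\infty Z\bigr)(V)=\bigl(L_\C(\C^n,\Sym(\C\otimes V))_+\wedge_{U(n)}Z(V)\bigr)\wedge S^V, \]
since $U(n)$ does not act on the suspension coordinate $S^V$, so $\Sigma^\infty$ passes across $\wedge_{U(n)}$; taking $Z=\overline{\mathcal{L}_{|.|=1}(n,S^-)}^\dia$ then yields the claimed isomorphism. There is no deep obstacle here: every map in sight is an honest homeomorphism, so the content is pure bookkeeping, and the only slightly delicate points are (a) verifying that the center-of-mass splitting of $S^V$ is $(U(n)\times G)$-equivariant and natural in $V$ simultaneously for all compact Lie groups $G$, and (b) checking that both the orthogonal-$\Gamma$-space realization and $\Sigma^\infty$ commute with the balanced product $-\wedge_{U(n)}L_\C(\C^n,\Sym(\C\otimes-))_+$ — both resting on the freeness of the $U(n)$-action on $L_\C(\C^n,-)$ and on $U(n)$ acting trivially on the orthogonal-space sphere coordinate.
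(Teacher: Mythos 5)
Your proposal is correct and follows essentially the same route as the paper: the composition homeomorphism factoring the quotient $\Gamma$-space as a balanced smash product $L_\C(\C^n,\Sym(\C\otimes -))_+\wedge_{U(n)}\mathcal{L}(n,-)$, the center-of-mass splitting $\mathcal{L}(n,S^V)\cong\overline{\mathcal{L}(n,S^V)}\wedge S^V$ identifying the realization of $\mathcal{L}(n,-)$ as a suspension spectrum, and the radial rescaling to the unreduced suspension of the norm-one subspace are exactly the steps in Section~\ref{sec:kurank}. The delicate points you flag (equivariance and naturality of the splitting, and commuting realization and $\Sigma^{\infty}$ past the balanced product) are the right ones and are handled the same way in the paper.
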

It remains to determine the global homotopy type of the $U(n)$-orthogonal space $\overline{\mathcal{L}_{|.|=1}(n,S^-)}$, which we from now on abbreviate by $\oL_n$. We introduce two collections of subgroups of $U(n)$:
\begin{Def} A subgroup of $U(n)$ is called
\begin{itemize}
 \item \emph{complete} if it is conjugate to one of the form $U(n_1)\times \hdots \times U(n_k)$ with each $n_i$ positive, $n_1+\hdots + n_k=n$ and $k>1$.
  \item \emph{non-isotypical} if its tautological action on $\C^ n$ is not isotypical, i.e., if $\C^n$ is not the direct sum of isomorphic copies of one irreducible representation.
\end{itemize}
The collection of complete subgroups is denoted by $\mathcal{C}_n^u$, that of non-isotypical subgroups by $\mathcal{I}^u_n$.
\end{Def}
We note that to every (unordered) decomposition $\C^ n=\bigoplus_{i\in I} W_i$ into at least two pairwise orthogonal non-trivial subspaces we can associate a complete subgroup $\prod_{i\in I} U(W_i)$ of $U(n)$. This assignment is bijective, the inverse maps a complete subgroup to the decomposition of $\C^ n$ into the isotypical components of its action. Note also that every complete subgroup is non-isotypical. Then we have:

\begin{Prop} \label{prop:clacompl} The $U(n)$-orthogonal space $\oL_n$ is a global universal space for both $\mathcal{C}_n^u$ and $\mathcal{I}^u_n$.
\end{Prop}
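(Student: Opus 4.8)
The plan is to verify the three defining properties of a global universal space directly, in the spirit of the proof of Proposition~\ref{prop:sympow}: closedness of the structure maps, that all isotropy lies in the collection, and that the fixed points for subgroups in the collection are contractible. Closedness of the maps $\oL_n(V)\to\oL_n(W)$ induced by an isometric embedding $V\hookrightarrow W$ is immediate from the definition, and a $(U(n)\times G)$-CW structure on each $\overline{\mathcal{L}_{|.|=1}(n,S^V)}$ (for $V$ a $G$-representation) is obtained from Illman's triangulation theorem~\cite{Ill83}, these spaces being semialgebraic; hence $\oL_n(\U_G)$ is $(U(n)\times G)$-cofibrant. For the isotropy: a point $p=(W_i,x_i)_{i\in I}$ of $\oL_n(\U_G)$ consists of pairwise orthogonal nonzero subspaces $W_i\subseteq\C^n$ with $\bigoplus_i W_i=\C^n$ together with distinct points $x_i\in\U_G$ subject to $\sum_i\dim(W_i)\,x_i=0$ and $\sum_i\dim(W_i)|x_i|^2=1$; the two relations force $|I|\geq 2$, since a single summand would have to sit on the point $0$ and violate the norm condition. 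An element $(u,1)$ fixes $p$ precisely when it permutes the labelled pairs $(W_i,x_i)$, and distinctness of the $x_i$ then forces $uW_i=W_i$ for all $i$, so the $U(n)$-isotropy of $p$ equals $\prod_{i\in I}U(W_i)$, a complete subgroup of $U(n)$ since $|I|\geq 2$. Thus the isotropy of $p$ in $U(n)\times G$ lies in $\mathcal{C}_n^u\langle G\rangle\subseteq\mathcal{I}_n^u\langle G\rangle$.

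It remains to prove that $\oL_n(\U_G)^H$ is contractible whenever $K:=H\cap(U(n)\times 1)$ is non-isotypical; as complete subgroups are non-isotypical, this settles both collections at once. Let $\C^n=V_1\oplus\dots\oplus V_r$ be the isotypical decomposition of the tautological $K$-representation, so $r\geq 2$. Because $K$ is normal in $H$, every element of $H$ normalises $K$ inside $U(n)$ and therefore permutes the canonical isotypical components, so $H$ acts on $\{V_1,\dots,V_r\}$. Using completeness of $\U_G$ one can choose an $H$-equivariant family of distinct points $b_1,\dots,b_r\in\U_G$ with $\sum_j\dim(V_j)\,b_j=0$ and $\sum_j\dim(V_j)|b_j|^2=1$: first embed the finite $H$-set $\{1,\dots,r\}$ equivariantly into $\U_G$ (possible since $\U_G$ is a complete universe), then translate by the weighted barycentre $\tfrac1n\sum_j\dim(V_j)(-)$, which is $H$-fixed because $H$-conjugate isotypical components have the same dimension, and finally rescale. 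This produces a basepoint $\mathbf b=(V_j,b_j)_{j=1}^r\in\oL_n(\U_G)^H$.

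Now let $\U_G\oplus\U_G$ be a larger complete $G$-universe with summand inclusions $\iota_1,\iota_2$, and for $p=(W_i,x_i)_{i\in I}\in\oL_n(\U_G)^H$ and $t\in[0,1]$ define
\[ F_t(p)=\Big(\,W_i\cap V_j,\ \ (1-t)\,\iota_1(x_i)+t\,\iota_2(b_j)\,\Big)_{\{(i,j)\,:\,W_i\cap V_j\neq 0\}}\ \in\ \oL_n(\U_G\oplus\U_G)^H, \]
renormalised to norm $1$. Here $\{W_i\cap V_j\}$ is the orthogonal decomposition of $\C^n$ refining both $\{W_i\}$ and $\{V_j\}$, and one checks that the barycentre still vanishes and the norm is nowhere $0$ along the homotopy (so the renormalisation is defined and continuous), that for $t\in(0,1)$ the labelling points are pairwise distinct since $\iota_1(\U_G)\perp\iota_2(\U_G)$, that $F$ is $H$-equivariant, and that $F$ is continuous — the only subtlety being that points may collide at $t=0$ and $t=1$, where colliding subspaces add, exactly consistently with $F_0(p)=(W_i,\iota_1(x_i))_i=\oL_n(\iota_1)(p)$ and $F_1(p)=(V_j,\iota_2(b_j))_j=\oL_n(\iota_2)(\mathbf b)$. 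Thus $F$ is a homotopy from $\oL_n(\iota_1)$ to a constant map; since $\oL_n(\iota_1)\colon\oL_n(\U_G)^H\to\oL_n(\U_G\oplus\U_G)^H$ is a homotopy equivalence ($\oL_n$ being closed, the structure map of an inclusion of complete universes is one), it follows that $\oL_n(\U_G)^H$ is contractible.

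The step I expect to be most delicate is the construction of the contraction $F$: one must invoke the definition of the topology on the $\Gamma$-space $\mathcal{L}(n,-)$ precisely enough to see that $F$ is continuous across $t=0$ and $t=1$, where the combinatorial type of the configuration degenerates, and one has to keep the two renormalisations and the $H$-action under control throughout. Producing the basepoint $\mathbf b$ is also a genuine use of the completeness of $\U_G$, together with the observation that the weighted barycentre is $H$-fixed even though the individual dimensions $\dim(V_j)$ are only permuted by $H$.
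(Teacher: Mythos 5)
Your proof is correct in substance, but for the key step -- contractibility of the $H$-fixed points when $K=H\cap(U(n)\times 1)$ is non-isotypical -- it takes a genuinely different route from the paper. The paper uses the canonical isotypical decomposition $\C^n=V_1\oplus\dots\oplus V_k$ to split the $\Gamma$-space evaluation as a smash product, $\mathcal{L}(\C^n,S^{\U_G})^K\cong\bigwedge_j\mathcal{L}(V_j,S^{\U_G})^K$, which after reduction exhibits $\oL_n(\U_G)^K$ as a join having $S(\overline{\R^k}\otimes\U_G)$ as one factor; contractibility of the $\pr_G(H)$-fixed points then follows from the sphere computation already done in Proposition \ref{prop:sympow}. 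You instead build an explicit $H$-equivariant contraction: an $H$-fixed basepoint supported on the isotypical decomposition, and a straight-line homotopy through the common refinement $\{W_i\cap V_j\}$ in a doubled universe. Both arguments rest on the same two inputs (the isotypical decomposition of the $K$-invariant $W_i$, and completeness of $\U_G$), and your barycentre/norm/distinctness/equivariance checks are all sound; the paper's version buys a structural description of the whole fixed-point space and reuses Proposition \ref{prop:sympow}, while yours is more self-contained but shifts the burden onto two points you should nail down: (i) continuity of $F$ across the strata of the $\Gamma$-space topology, which you correctly flag and which does work out because collisions of the $x_i$ merge the $W_i\cap V_j$ consistently and the multiplicities $\dim(W_i\cap V_j)$ are locally constant on $K$-invariant subspaces; and (ii) the claim that $\oL_n(\iota_1)$ is an equivalence on $H$-fixed points, which holds for closed orthogonal spaces evaluated on complete universes but deserves a citation (or can be avoided by a compactness argument placing the $b_j$ in the orthogonal complement of a finite-dimensional $V$). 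One genuine inaccuracy: Illman's theorem as cited concerns smooth actions on smooth manifolds and does not directly apply to the singular spaces $\overline{\mathcal{L}_{|.|=1}(n,S^V)}$ ``because they are semialgebraic''; the cofibrancy is instead established in Appendix \ref{app:cw} by a skeletal induction, and you should defer to that rather than to Illman directly.
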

\begin{proof} Let $G$ be a compact Lie group and $\U_G$ a complete $G$-universe. In Appendix \ref{app:cw} it is proved that $\oL_n(\U_G)$ is $(U(n)\times G)$-cofibrant. We now show that all $U(n)$-isotropy of $\oL_n(\U_G)$ lies in complete subgroups and that the $H$-fixed points are contractible whenever $H$ lies in $\mathcal{I}^u_n\langle G\rangle$. Since complete subgroups are non-isotypical, this implies universality for both collections. Any point $x$ in $\oL_n(\U_G)$ is represented by a tuple $(W_i,x_i)_{i\in I}$ satisfying the relations $\sum_{i\in I} \dim(W_i)\cdot x_i=0$ and $\sum_{i\in I} \dim(W_i)|x_i|^ 2=1$. Without loss of generality we can assume that all the $x_i$ are distinct.
Since an element $\varphi$ of $U(n)$ only acts through the $W_i$ and the presentation of $x$ as such a tuple is unique up to a permutation, $\varphi$ fixes $x$ if and only if it fixes each of the $W_i$. In other words, the isotropy group of $x$ is the product $\prod_{i\in I} U(W_i)$. The two relations force $|I|$ to be larger than $1$ (the only element would have to be zero by the ``reduced'' condition, contradicting that the tuple has norm $1$) and hence this product is complete.

We move on to show that the relevant fixed point spaces are contractible. First let $K\subseteq U(n)$ be any subgroup and denote by $W_1,W_2,\hdots ,W_k$ the isotypical components of its action on $\C^n$. Since every $K$-representation decomposes canonically into isotypical subrepresentations, we see that
\[ \mathcal{L}(\C^n,S^ {\U_G})^ K \cong \bigwedge_{i=1}^k\mathcal{L}(W_i,S^ {\U_G})^K. \]
Here, the notation $\mathcal{L}(W_i,S^{\U_G})$ is used to denote the evaluation of the $\Gamma$-space of decompositions of the complex vector space $W_i$ on $\U_G$, i.e., $\mathcal{L}(n,S^{\U_G})$ with $\C^n$ replaced by $W_i$.

We can perform the manipulations of this section to each smash factor separately and obtain $k$ smash copies of $S^{\U_G}$, of which the diagonal corresponds to the one of $\mathcal{L}(\C^n,S^{\U_G})$ used as the suspension spectrum coordinate. Hence we have:
\[  \overline{\mathcal{L}(\C^n,S^{\U_G})}^K\cong (S^{\overline{\R^k}\otimes {\U_G}})\wedge \bigwedge_{i=1}^k \overline{\mathcal{L}(W_i,S^{\U_G})}^K \]
Finally we make use of the fact that a smash product of unreduced suspensions is (based) homeomorphic to the unreduced suspension of the join (denoted by $-*-$) and obtain
\[ \oL_n(\U_G)^K\cong S(\overline{\R^k}\otimes {\U_G})^K*\overline{\mathcal{L}_{|.|=1}(W_1,S^{\U_G})}^K*\hdots * \overline{\mathcal{L}_{|.|=1}(W_K,S^{\U_G})}^K. \]

Now let $H$ be a subgroup of $U(n)\times G$ such that $K:= H\cap (U(n)\times 1)$ acts non-isotypically. We have to show that the $H$-fixed points of $\oL_n(\U_G)$ are contractible. Again we make use of the short exact sequence
\[ 1\to K\to H\to {\pr} _G(H)\to 1 \]
to write these $H$-fixed points as the $\pr_G(H)$-fixed points of $\oL_n(\U_G)^K$. But by the homeomorphism above, these are given ($\pr_G(H)$-equivariantly) by the join of $S(\overline{\R^k}\otimes {\U_G})$ with another space. Here, the $\pr_G(H)$-action on $S(\overline{\R^k}\otimes {\U_G})$ comes from the fact that any element of $U(n)$ which normalizes $H$ permutes its isotypical components and hence acts on the set $\underline{k}$. But we have seen in the proof of Proposition \ref{prop:sympow} that the $\pr_G(H)$-fixed points of $S(\overline{\R^k}\otimes \U_G)$ under such an action are contractible if $k>1$ and hence so is the join and we are done.
\end{proof}

Putting everything together:
\begin{Theorem}[Subquotients in the modified rank filtration] There are global equivalences
\begin{align*}  ku^n/ku^ {n-1}& \simeq \Sigma^ {\infty} (E_{gl}U(n)_+\wedge_{U(n)} (E_{gl}\mathcal{C}^u_n)^\dia)  \simeq \Sigma^ {\infty} (E_{gl}U(n)_+\wedge_{U(n)} (E_{gl}\mathcal{I}^u_n)^\dia). \end{align*}
\end{Theorem}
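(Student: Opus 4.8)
The plan is to obtain the Theorem by combining the geometric identification of Corollary~\ref{cor:kuquot} with the two recognition results proved just above, the only genuine input beyond this being the homotopy-invariance of the global homotopy orbit construction, which is what the appendices are for.

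First I would start from Corollary~\ref{cor:kuquot}, which already gives an honest \emph{isomorphism} of orthogonal spectra
\[
ku^n/ku^{n-1}\ \cong\ \Sigma^\infty\bigl(L_\C(\C^n,\Sym(\C\otimes -))_+\wedge_{U(n)}\oL_n^\dia\bigr),
\]
the balanced smash product being formed levelwise; that the left-hand strict quotient has the homotopy type of the homotopy cofiber uses that $ku^{n-1}\to ku^n$ is a levelwise cofibration (Appendix~\ref{app:inclusions}). Now, by the lemma preceding Corollary~\ref{cor:kuquot} the $U(n)$-orthogonal space $L_\C(\C^n,\Sym(\C\otimes -))$ is a global universal space for $U(n)$, i.e.\ a model for $E_{gl}U(n)$, and by Proposition~\ref{prop:clacompl} the $U(n)$-orthogonal space $\oL_n$ is \emph{simultaneously} a global universal space for $\mathcal{C}^u_n$ and for $\mathcal{I}^u_n$, hence a model both for $E_{gl}\mathcal{C}^u_n$ and for $E_{gl}\mathcal{I}^u_n$. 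Consequently the two displayed equivalences in the Theorem are really the same statement, read off from the two descriptions of $\oL_n$, and it suffices to produce one of them.

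Second, to turn the isomorphism above into the asserted equivalence in $\mathcal{GH}$ I would check that its right-hand side depends only on the global homotopy types of the two $U(n)$-orthogonal spaces entering it, so that ``$E_{gl}U(n)_+\wedge_{U(n)}(E_{gl}\mathcal{C}^u_n)^\dia$'' is well defined. The key observation is that on each complete $G$-universe $\U_G$ the space $(E_{gl}U(n))(\U_G)$ carries a \emph{free} $U(n)$-action, being a space of linear isometric embeddings out of a faithful $U(n)$-representation, so that the levelwise quotient $(E_{gl}U(n))(-)_+\wedge_{U(n)}(-)$ computes a genuine homotopy quotient as soon as its second input is sufficiently cofibrant. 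The cofibrancy of $\oL_n(\U_G)$ as a $(U(n)\times G)$-space is exactly what is verified in Appendix~\ref{app:cw}, and passing to the unreduced suspension preserves both cofibrancy and global equivalences; finally $\Sigma^\infty$ sends global equivalences of well-based cofibrant orthogonal spaces to global equivalences of orthogonal spectra (see \cite{Sch15}), which lets us pass to suspension spectra. The non-isotypical version needs no separate argument, only the second half of Proposition~\ref{prop:clacompl}.

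The step I expect to be the main obstacle is this last homotopy-invariance claim: one must ensure that over every universe $\U_G$ the \emph{strict} levelwise quotient by $U(n)$ still models the homotopy orbit, i.e.\ that $\oL_n^\dia$ and the chosen model of $E_{gl}U(n)$ are cofibrant enough after evaluation on each $\U_G$. This is precisely what is isolated in Appendices~\ref{app:inclusions} and~\ref{app:cw}; granting those, the Theorem is a formal consequence of the isomorphism of Corollary~\ref{cor:kuquot} together with the recognition of the two factors as global universal spaces.
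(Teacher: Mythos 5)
Your proposal is correct and follows essentially the same route as the paper: the theorem is obtained by combining the isomorphism of Corollary~\ref{cor:kuquot} with the identification of $L_\C(\C^n,\Sym(\C\otimes -))$ as $E_{gl}U(n)$ and of $\oL_n$ as a simultaneous global universal space for $\mathcal{C}^u_n$ and $\mathcal{I}^u_n$ (Proposition~\ref{prop:clacompl}), the cofibrancy input from the appendices guaranteeing that the strict balanced quotient has the correct global homotopy type. The only difference is that you make explicit the homotopy-invariance step that the paper summarizes as ``putting everything together,'' which is a fair reading of what that phrase relies on.
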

As explained in the introduction, the underlying non-equivariant statement of this theorem is due to Arone and Lesh \cite[Section 2.2]{AL10}.

\subsection{Equivalence to decomposition lattice} \label{sec:lattice}
In this section we show that the description of the filtration quotients in the modified rank filtration can be further simplified. The $U(n)$-orthogonal space $\oL_n$ can be replaced by an actual $U(n)$-space, the nerve of the lattice $\mL_n$ of non-trivial orthogonal sum decompositions of $\C^n$. This implies (see Proposition \ref{theo:zig-zag} below) that $ku^n/ku^{n-1}$ is globally equivalent to the suspension spectrum of the global homotopy orbits of this lattice, in the sense of Section \ref{sec:glhomorb}.
\begin{Def}[Decomposition lattice] Let $\mL_n$ be the topological lattice of orthogonal decompositions $\C^n=\bigoplus _{i\in I} W_i$ with $|I|>1$ and all $W_i\neq 0$ (modulo bijections of indexing sets $I$), ordered by refinement. Concretely, a decomposition $\bigoplus _{i\in I} W_i$ is smaller than or equal to $\bigoplus _{j\in J} W'_j$ if for every $i\in I$ there exists a $j\in J$ such that $W_i\subseteq W'_j$.
\end{Def}
We think of $\mL_n$ as a topological category with a $U(n)$-action given by \[ \varphi\cdot (\bigoplus _{i\in I} W_i)=\bigoplus _{i\in I} \varphi(W_i).\] The topology on both the objects and the morphisms of $\mL_n$ is the weakest topology such that the $U(n)$-action becomes continuous, i.e., as the disjoint union of its $U(n)$-orbits. We denote the geometric realization of the (topological) nerve of $\mL_n$ by $|\mL_n|$.

As written above, our aim is to show the following:
\begin{Theorem} \label{theo:zig-zag} There is a zig-zag of $U(n)$-maps between $\oL_n$ and the constant orthogonal space $|\mL_n|$ inducing a global equivalence \[ S^1\wedge (E_{gl}U(n)\times_{U(n)}\oL_n)_+\simeq S^1\wedge(E_{gl}U(n)\times_{U(n)} |\mL_n|)_+. \] In particular, there is a global equivalence
\[ ku^n/ku^{n-1}\simeq \Sigma^{\infty} (E_{gl}U(n)_+\wedge_{U(n)}|\mL_n|^{\dia}). \] 
\end{Theorem}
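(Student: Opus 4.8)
The plan is to connect $\oL_n$ and the constant orthogonal space $|\mL_n|$ by a zig-zag of $U(n)$-orthogonal spaces running through a ``fattened'' auxiliary model, and then to verify that applying the global homotopy orbit construction $E_{gl}U(n)\times_{U(n)}(-)$ turns both legs into global equivalences. By Corollary \ref{cor:homorbquot} this last verification reduces to a comparison of equivariant fixed points of configuration spaces with nerves of posets of decompositions.

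For the auxiliary model I would take the $U(n)$-orthogonal space $\mathcal{M}_n$ sending an inner product space $V$ to the realization of the simplicial $U(n)$-space whose $p$-simplices are the pairs $(c;\,D_0\leq D_1\leq\cdots\leq D_p)$, with $c\in\oL_n(V)$ and $D_0\leq\cdots\leq D_p$ a chain in $\mL_n$ subordinate to the decomposition $D(c)=\bigoplus_i W_i$ underlying $c$ (say, $D_0$ refining $D(c)$); the point is that this subordination condition cuts out a \emph{closed} $U(n)$-subspace of $\oL_n(V)\times N_p\mL_n$, since collapsing points of a configuration only coarsens the underlying decomposition. Forgetting the chain gives a $U(n)$-map $q\colon\mathcal{M}_n\to\oL_n$ and forgetting the configuration gives a $U(n)$-map $r\colon\mathcal{M}_n\to\underline{|\mL_n|}$, which together form the zig-zag. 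For $q$, one identifies the relevant homotopy fibre over a configuration $c$ with the nerve of the poset of (suitably stabilized) refinements of $D(c)$; this poset has $D(c)$ as a largest element and is therefore contractible, and a topological and equivariant version of Quillen's Theorem~A then makes $q$ a global equivalence of $U(n)$-orthogonal spaces --- this leg works uniformly in $V$. For $r$, one identifies the relevant homotopy fibre over the cell indexed by a chain with bottom term $D_0$ with a space of configurations whose underlying decomposition is a proper coarsening of $D_0$; only after evaluation at the \emph{complete} universe $\U_G$ is this a unit sphere in an infinite-dimensional inner product space and hence (equivariantly) contractible, so $r$ becomes a global equivalence after applying $E_{gl}U(n)\times_{U(n)}(-)$.

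Granting the zig-zag, I would finish by applying $E_{gl}U(n)\times_{U(n)}(-)$, adding a disjoint basepoint, and smashing with $S^1$: since equivariant homotopy orbits preserve equivariant weak equivalences and, by Corollary \ref{cor:homorbquot}, $E_{gl}U(n)\times_{U(n)}(-)$ sees only the fixed-point behaviour analysed above, one obtains the stated global equivalence $S^1\wedge(E_{gl}U(n)\times_{U(n)}\oL_n)_+\simeq S^1\wedge(E_{gl}U(n)\times_{U(n)}|\mL_n|)_+$. The displayed consequence for $ku^n/ku^{n-1}$ then follows by combining this with Corollary \ref{cor:kuquot} and the identification $L_\C(\C^n,\Sym(\C\otimes-))\simeq E_{gl}U(n)$, using the formal identities $\Sigma^{\infty}(Z^{\dia})\simeq\Sigma^{\infty}(S^1\wedge Z_+)$ for a well-pointed orthogonal space $Z$ and $E_{gl}U(n)_+\wedge_{U(n)}(X^{\dia})\simeq(E_{gl}U(n)\times_{U(n)}X)^{\dia}$ for a $U(n)$-orthogonal space $X$ carrying the trivial action on the suspension coordinate (both valid for sufficiently cofibrant inputs).

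The main obstacle is the passage from ``the homotopy fibres are contractible'' to ``$q$ and $r$ are (global) equivalences'': this needs a careful topological and equivariant form of Theorem~A that simultaneously handles the disjoint-union topology on $\mL_n$, geometric realization of simplicial (orthogonal) spaces, and all fixed-point functors, and it is precisely for this that the cofibrancy results of Appendices \ref{app:inclusions} and \ref{app:cw} are invoked. A secondary but essential point is the bookkeeping that expresses the equivariant fixed points appearing above in terms of posets of stable decompositions --- this is where completeness of the universes $\U_G$ is used.
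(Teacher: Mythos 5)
Your route is genuinely different from the paper's, although both rest on the same computational engine. The paper's intermediate object $Z_n$ is the homotopy colimit over $\mL_n$ of the functor $F_n$ of \emph{regular} configurations whose underlying decomposition is \emph{exactly} a given $D$ (with averaging maps as the covariant functoriality), and the two legs are: (i) the map $Z_n\to|\mL_n|$, handled by showing each $F_n(D)(\U_G)^{\Gamma(\varphi)}$ is a contractible space of linear embeddings $\widetilde{\R}[I]\hookrightarrow\U_G$; and (ii) an explicitly constructed homotopy-coherent cone $\alpha\colon Z_n\to\oL_n$, shown to be an equivalence after one suspension by filtering both sides by the number of summands and matching subquotients, the key identification being $\oL_n(\U_G)(\geq D)\cong S(\overline{\R^{k}}\otimes\U_G)$ and the contractibility of its relevant fixed points. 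Your $\mathcal{M}_n$ is instead the homotopy colimit of the \emph{contravariant} diagram of closed subspaces $D\mapsto\oL_n(\geq D)$, so your $r$ leg coincides in substance with the paper's sphere identification, while your $q$ leg replaces the paper's explicit $\alpha$ and its filtration argument by a single closed-cover/Theorem~A statement. What your version buys is the elimination of the regularity condition, the averaging formulas, and the subquotient bookkeeping; what the paper's version buys is that it never has to prove a descent statement for a \emph{closed} cover.

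That last point is where your proposal still has real work to do, and it is more than routine: the subspaces $\oL_n(\geq D)$ are closed and not $U(n)$-invariant individually, so to conclude that $\hocolim_{D}\oL_n(\geq D)\to\oL_n$ is an equivalence on all fixed points you need the cover (restricted to each fixed-point space) to be by subcomplexes of a common CW structure and closed under intersection --- and the cell structure produced in Appendix~\ref{app:cw} is stratified by the number of points of a configuration, not by decomposition type, so the subcomplex property is not immediate from what is proved there. Two smaller corrections: the subordination condition must be imposed on the \emph{top} term of the chain ($D_p\leq D(c)$, i.e.\ the whole chain refines $D(c)$), since requiring only $D_0\leq D(c)$ is not stable under the face map deleting $D_0$ and does not define a simplicial space; correspondingly the fibre of $r$ over a cell is $\oL_n(\geq D_p)$, indexed by the top rather than the bottom of the chain. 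With these repairs the outline is sound and the reduction to $ku^n/ku^{n-1}$ at the end matches the paper's.
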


To compare $|\mL_n|$ with $\oL_n$ we define an intermediate $U(n)$-orthogonal space $Z_n$, restricting in some sense to the ``regular'' elements of $\oL_n$. Let $F_n:\mathcal{L}_n\to \{\text{orthogonal spaces}\}$ be the continuous functor which assigns to a decomposition $\C^n=\bigoplus_{i\in I} W_i$ and an inner product space $V$ the subspace of $\oL_n(V)$ represented by elements of the form $(W_i,x_i)_{i\in I}$ for which the $x_i$ span a subspace of dimension $|I|-1$. This is the maximal dimension possible, since by definition the $x_i$ share a linear relation. This relation also shows that the regularity condition for an element $(W_i,x_i)_{i\in I}\in F_n(\bigoplus_{i\in I} W_i)$ is equivalent to all $(|I|-1)$-element subtuples of $(x_i)_{i\in I}$ being linearly independent.
Given a refinement $\C^n=\bigoplus_{i\in I}W_i$ of $\C^n=\bigoplus_{j\in J} W_j'$, the associated map $F_n(\bigoplus_{i\in I}W_i)\to F_n(\bigoplus_{j\in J} W_j')$ sends an element $(W_i,x_i)_{i\in I}$ to $(W_j',\sum_{i\in I_j} ((\dim W_i/\dim W_j')\cdot x_i))_{j\in J}$, where $I_j\subseteq I$ denotes the subset of those $i$ for which $W_i\subseteq W_j'$.
\begin{Remark} \label{rem:regular} It is not hard to check that this element does indeed lie in $F_n(\bigoplus_{j\in J} W_j')$, i.e., that the span of the second coordinates is $(|J|-1)$-dimensional. In fact, any linear relation of $|J|-1$ many of the $\sum_{i\in I_j} ((\dim W_i/\dim W_j')\cdot x_i)$ directly gives a linear relation of at most $|I|-1$ of the $x_i$. In particular this keeps the span from being zero and hence allows a rescaling to norm $1$ in the sense of the previous section. The rescaling is omitted from the notation above in favor of readability but necessary for the image to land in $\oL_n$. For arbitrary elements $(W_j,x_j)_{j\in j}$ of $\oL_n(V)$ the averaging sum can be zero, making the above map ill-defined. This is the reason for restricting to the regular elements in the definition of $F_n$.
\end{Remark}
\begin{Def} We define the orthogonal space $Z_n$ as the homotopy colimit of $F_n$.
\end{Def}
By the homotopy colimit we mean the (topologically enriched) bar construction applied levelwise. The $U(n)$-action on $\oL_n$ restricts to compatible maps \[ \varphi_*:F_n(\bigoplus _{i\in I} W_i)\to F_n(\varphi\cdot (\bigoplus _{i\in I} W_i))\] and hence turns $F_n$ into a $U(n)$-diagram in the sense of \cite{JS01} or \cite{DM14}. Then the bar construction inherits a $U(n)$-action by combining the one on the nerve of $\mathcal{L}_n$ and the ones on the values of $F_n$ (cf. \cite[Proposition 2.4]{JS01}), turning $Z_n$ into a $U(n)$-orthogonal space.

Mapping $F_n$ to the terminal constant functor $*$ induces a $U(n)$-equivariant map \[ p:Z_n\to \hocolim_{\mL_n}*=|\mL_n|,\]
where we think of $|\mL_n|$ as a constant orthogonal space.
\begin{Prop} \label{prop:zglobal} The morphism \[ E_{gl}U(n)\times_{U(n)} p:E_{gl}U(n)\times_{U(n)} Z_n\to E_{gl}U(n)\times_{U(n)} |\mL_n|\] is a global equivalence.
\end{Prop}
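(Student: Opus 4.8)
The plan is to evaluate both sides on a complete universe, use the formula for global homotopy orbits on fixed points to strip everything down to a statement about a single diagram, and then reduce that to the contractibility of certain equivariant configuration spaces in a complete universe.

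\emph{Reduction to fixed points.} First I would apply Lemma~\ref{lem:fixedpointsquot} to $Y=(E_{gl}U(n))(\U_G)\times Z_n(\U_G)$ and to $Y'=(E_{gl}U(n))(\U_G)\times|\mL_n|$; the $U(n)$-action is free in both cases because of the first factor, and the cofibrancy needed is provided by Appendix~\ref{app:cw}. Exactly as in Corollary~\ref{cor:homorbquot} this expresses the two $G$-fixed point spaces as disjoint unions, indexed by conjugacy classes of continuous homomorphisms $\alpha\colon G\to U(n)$, of the Borel constructions $EC(\alpha)\times_{C(\alpha)}Z_n(\U_G)^{\Gamma(\alpha)}$ respectively $EC(\alpha)\times_{C(\alpha)}|\mL_n|^{\im\alpha}$, where $\Gamma(\alpha)\subseteq U(n)\times G$ is the graph of $\alpha$ and $C(\alpha)\subseteq U(n)$ the centralizer of its image (we use here that $\Gamma(\alpha)\cap(U(n)\times 1)$ is trivial, so that $(E_{gl}U(n))(\U_G)^{\Gamma(\alpha)}$ is a contractible free $C(\alpha)$-space). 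Since $p$ is $U(n)$-equivariant, hence $C(\alpha)$-equivariant, and since the Borel construction (equivalently, for free actions, the quotient) takes a $C(\alpha)$-map which is an underlying weak equivalence to a weak equivalence, it suffices to show that for every compact Lie group $G$ and every $\alpha\colon G\to U(n)$ the map
\[ Z_n(\U_G)^{\Gamma(\alpha)}\longrightarrow |\mL_n|^{\im\alpha} \]
is a weak equivalence of spaces.

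\emph{Identifying the two sides.} Writing $\mL_n^{\im\alpha}$ for the subposet of $\im\alpha$-invariant decompositions, one has $|\mL_n|^{\im\alpha}\cong|\mL_n^{\im\alpha}|$, since geometric realization commutes with fixed points for the (levelwise cofibrant) nerve of $\mL_n$. On the other side, $Z_n$ is the levelwise bar construction of the $U(n)$-diagram $F_n$, and $\Gamma(\alpha)$ acts on $Z_n(\U_G)$ through the action of $\alpha$ on the index poset $\mL_n$ together with the compatible actions on the values $F_n(-)(\U_G)$. A simplex of this bar construction is $\Gamma(\alpha)$-fixed precisely when its underlying chain lies in $\mL_n^{\im\alpha}$ and its $F_n(x_0)(\U_G)$-coordinate is fixed by the induced $\Gamma(\alpha)$-action; using once more that realization commutes with fixed points (cofibrancy again from Appendix~\ref{app:cw}), this yields a natural homeomorphism
\[ Z_n(\U_G)^{\Gamma(\alpha)}\;\cong\;\hocolim_{\mL_n^{\im\alpha}}\bigl(x\mapsto F_n(x)(\U_G)^{\Gamma(\alpha)}\bigr), \]
under which the map to $|\mL_n^{\im\alpha}|=\hocolim_{\mL_n^{\im\alpha}}\ast$ is induced by collapsing each value to a point. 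As the bar construction sends an objectwise weak equivalence of suitably cofibrant diagrams to a weak equivalence, we are reduced to the statement that $F_n(x)(\U_G)^{\Gamma(\alpha)}$ is contractible for every $\im\alpha$-invariant decomposition $x=\bigoplus_{i\in I}W_i$ (so $|I|>1$).

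\emph{The contractibility statement.} This is the step I expect to be the main obstacle. A point of $F_n(x)(\U_G)^{\Gamma(\alpha)}$ is a tuple of pairwise distinct vectors $(x_i)_{i\in I}$ of $\U_G$ which is equivariant for the $G$-action on $I$ permuting the summands of $x$ through $\alpha$, and which satisfies $\sum_{i}\dim(W_i)x_i=0$, $\sum_{i}\dim(W_i)|x_i|^2=1$, and the regularity condition that $(x_i)_{i\in I}$ span a subspace of dimension $|I|-1$. Forgetting the norm-one and regularity conditions, the equivariant tuples satisfying $\sum_{i}\dim(W_i)x_i=0$ form a real sub-vector space of $\prod_{O\in I/G}\U_G^{G_O}$; because $\U_G$ is a complete universe and $|I|>1$, this space is infinite-dimensional and contains trivial summands of arbitrarily large dimension — this is precisely the computation carried out in the proof of Proposition~\ref{prop:sympow}, now applied on each orbit summand (and assembled by the join decomposition of a sphere as in the proof of Proposition~\ref{prop:clacompl}). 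The norm-one condition then cuts out the unit sphere in this space, which is contractible, and the non-regular locus is a locally finite union of proper linear subspaces of infinite codimension, so removing it leaves the homotopy type unchanged. The delicate points will be to verify that the regular locus is nonempty and dense and that its complement indeed has infinite codimension in each relevant finite-dimensional approximation; once these are in place the earlier reductions are purely formal, modulo the cofibrancy inputs of Appendix~\ref{app:cw}.
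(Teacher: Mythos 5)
Your proposal follows essentially the same route as the paper: reduce via Lemma~\ref{lem:fixedpointsquot} to showing that $p(\U_G)$ is a weak equivalence on $\Gamma(\alpha)$-fixed points for each $\alpha\colon G\to U(n)$, identify $Z_n(\U_G)^{\Gamma(\alpha)}$ with $\hocolim_{\mL_n^{\im\alpha}}F_n^{\Gamma(\alpha)}$, and use that the homotopy colimit is homotopical to reduce to the weak contractibility of each $F_n(\bigoplus_{i\in I}W_i)(\U_G)^{\Gamma(\alpha)}$. The paper dispatches the final step you flag as delicate by identifying that fixed-point space with the space of equivariant linear embeddings of the reduced permutation representation $\widetilde{\R}[I]$ into $\U_G$ (regularity of the tuple being exactly injectivity of the associated linear map), which is weakly contractible by the standard complete-universe argument, so no separate analysis of the non-regular locus is required.
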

\begin{proof} In view of Lemma \ref{lem:fixedpointsquot} we have to show that for every compact Lie group $G$ and every continuous group homomorphism $\varphi:G\to U(n)$ the map $p(\mathcal{U}_G)$ induces a weak equivalence on $\Gamma(\varphi)$-fixed points. The $\Gamma(\varphi)$-fixed points of $Z_n$ are given by $\hocolim_{\mL_n^{\im(\varphi)}} F_n^{\Gamma(\varphi)}$. Since the homotopy colimit is homotopical, it thus suffices to show that $F_n(\bigoplus_{i\in I}W_i)^{\Gamma(\varphi)}$ is weakly contractible for every decomposition $\C^n=\bigoplus_{i\in I}W_i$ that is fixed by $\im(\varphi)$. Given such a decomposition, the indexing set $I$ is acted on by $G$.
Then the space $F_n(\bigoplus_{i\in I}W_i)^{\Gamma(\varphi)}$ can be identified with the space of linear embeddings from the reduced permutation representation $\widetilde{\R}[I]$ into $\mathcal{U}_G$ (via $f\mapsto (W_i,1/(\dim(W_i))\cdot f(e_i))_{i\in I}$). As $\U_G$ is a complete $G$-universe this space is weakly contractible (which for example follows by the same argument as for \cite[Proposition 2.4]{Sch15}) and so we are done.
\end{proof}
We now construct a map $\alpha:Z_n\to \oL_n$ by applying the universal property of the bar construction, i.e., by giving a homotopy coherent natural transformation from $F_n$ to the constant functor with value $\oL_n$. On objects it is defined to be the inclusion of $F_n(\bigoplus_{i\in I} W_i)$ into $\oL_n$. To define $\alpha$ on higher simplices we introduce the following notation: Given an element $x=(W_i,x_i)_{i\in I}$ of $F_n(\bigoplus_{i\in I} W_i)$ and a subset $J\subseteq I$, we denote by $W_J$ the direct sum of all $W_j$ with $j\in J$ and by $x_J$ the vector $\sum_{j\in J} (\dim(W_j)/\dim(W_J)\cdot x_j)$.

Now we assume given an ascending chain of decompositions starting with $\bigoplus_{i\in I} W_i$. We interpret it as a chain of equivalence relations on the indexing set $I$. In particular, for every $i\in I$ we get an ascending chain $\{i\}=J_i^0\subseteq J_i^1\subseteq \hdots \subseteq J_i^k$ of subsets of $I$ given by those elements which are equivalent to $i$ at the respective stage of the chain. Then the map $F_n(\bigoplus_{i\in I} W_i)\times \Delta^k\to \oL_n$ is defined via
\[ ((W_i,x_i)_{i\in I},(t_0,\hdots,t_k)) \mapsto (W_i,\sum_{l=0}^kt_l\cdot x_{J_i^l})_{i\in I}\]
plus rescaling to norm $1$. It is not hard to check that this is well-defined (again using the regularity of the $x_i$ as in Remark \ref{rem:regular}) and that it gives a homotopy-coherent cone over $F$. It remains to show:

\begin{Prop} After one suspension, the map $\alpha:Z_n\to \oL_n$ becomes a $U(n)$-global equivalence, i.e., for all compact Lie groups $G$ it induces a weak $(U(n)\times G)$-equivalence when evaluated on $\mathcal{U}_G$. 
\end{Prop}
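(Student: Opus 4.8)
\emph{Proof proposal.} The plan is to check the statement on fixed points: for every compact Lie group $G$ and every closed subgroup $H\leq U(n)\times G$ I want $S^1\wedge\alpha(\U_G)^H$ to be a weak equivalence. Write $K=H\cap(U(n)\times 1)$ and $Q={\pr}_G(H)\cong H/K$, so that $H$-fixed points are $Q$-fixed points of $K$-fixed points. First I would record two bookkeeping facts, each a refinement of an argument already used in this section. On the target side the ``distinct labels'' observation from the proof of Proposition \ref{prop:clacompl} shows that a $K$-fixed point of $\oL_n(\U_G)$ has underlying decomposition made of $K$-invariant summands, so $\oL_n(\U_G)^H$ is assembled only from decompositions of $\C^n$ into $K$-stable subspaces. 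On the source side, $(\hocolim_{\mL_n}F_n)(\U_G)^H$ is the homotopy colimit of $F_n(-)(\U_G)^H$ over the poset $\mL_n^{{\pr}_{U(n)}(H)}$, and identifying $F_n(\bigoplus_i W_i)(\U_G)^H$ with a space of equivariant embeddings of the reduced permutation representation of the indexing set $I$ (as in the proof of Proposition \ref{prop:zglobal}) shows that this value is contractible when $K$ fixes each $W_i$ and \emph{empty} otherwise; since the decompositions with all summands $K$-stable form an upward-closed subposet, the empty values may be discarded, and $Z_n(\U_G)^H$ is the homotopy colimit with contractible values over exactly the poset of $K$-stable decompositions that also governs $\oL_n(\U_G)^H$. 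Thus $\alpha^H$ becomes the comparison between this homotopy colimit and the corresponding stratified space.

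Next I would peel off the isotypical blocks. Let $W_1,\dots,W_k$ be the isotypical components of $\C^n$ under $K$ and $m_j$ the multiplicity of the $j$-th isotype. As in the proof of Proposition \ref{prop:clacompl} the target splits $Q$-equivariantly as an iterated join of a sphere factor $S(\overline{\R^k}\otimes\U_G)$ — recording how blocks amalgamate, with $Q$ permuting the set $\underline k$ of isotypes — and one factor per block; by Schur's lemma a $K$-stable orthogonal decomposition of $W_j$ is the same as an orthogonal decomposition of the multiplicity space $\C^{m_j}$, so the $j$-th factor is $\oL_{m_j}(\U_G)$ with the $Q$-action coming from $\C^{m_j}$ being a representation of the stabiliser of the $j$-th isotype. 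Carrying out the same manipulation inside the bar construction splits $Z_n(\U_G)^H$ as a join of the nerve of the poset of $Q$-equivariant partitions of $\underline k$ with the factors $Z_{m_j}(\U_G)$, and $\alpha^H$ becomes the join of the corresponding comparisons. On the amalgamation factor: by Proposition \ref{prop:sympow} the sphere $S(\overline{\R^k}\otimes\U_G)$ is a global universal space for the complete subgroups of $\Sigma_k$, while the partition poset of $\underline k$ has a $Q$-invariant least element for $k\geq 2$ and hence $Q$-equivariantly contractible nerve (as in the global Barratt--Priddy--Quillen theorem); so for $k\geq 2$ both sides become contractible, except that for subgroups of $Q$ acting transitively on $\underline k$ the sphere has empty fixed points while the partition nerve stays contractible — the only discrepancy, and precisely the one repaired by a single unreduced suspension. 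For the block factors with $m_j<n$ I would induct on $n$; a block whose isotype is one-dimensional (so possibly $m_j=n$) is untwisted by tensoring with the inverse character, reducing to a trivial $K$-action. Hence the only remaining case is $K=1$, where $\oL_n(\U_G)^H\cong\oL_n(\U_G^Q)$ and $Z_n(\U_G)^H\cong Z_n(\U_G^Q)$, and the statement becomes the non-equivariant assertion that $\alpha$ evaluated on a complete universe for the trivial group is a weak equivalence after one suspension.

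For this last step I would compare the (upper-semicontinuous) stratification of $\oL_n(\U_G^Q)$ by the underlying decomposition in $\mL_n$ with its homotopy-colimit resolution $Z_n(\U_G^Q)=\hocolim_{\mL_n}F_n(\U_G^Q)$, using the explicit coherence homotopies of $\alpha$ together with the regularity bookkeeping of Remark \ref{rem:regular} to see that $\alpha$ exhibits $\oL_n$ as the collapse of the resolution; since all strata and links are contractible (spaces of embeddings of reduced permutation representations into a complete universe), a Quillen-type cofinality argument identifies the two up to a single cell-dimension shift, which one suspension absorbs. I expect the main obstacle to be exactly this comparison together with the equivariant bookkeeping threading through the joins and the residual $Q$-actions: one must check carefully that the homotopy colimit over $\mL_n$ genuinely computes $\oL_n$ on every $\Gamma(\varphi)$-fixed point, that discarding the empty fixed-point strata is legitimate, and that the unique place where source and target disagree before the suspension is precisely the amalgamation factor with its ``empty versus contractible'' discrepancy on transitive subgroups — so that one, but no fewer, suspension is needed.
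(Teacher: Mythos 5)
Your outline---compute $H$-fixed points on both sides, split off isotypical blocks, and induct---is reasonable, and the preliminary bookkeeping is sound: the value $F_n(\bigoplus W_i)(\U_G)^H$ is indeed empty unless each $W_i$ is $K$-stable and contractible otherwise, and the $K$-stable decompositions do form an upward-closed subposet. But the heart of the matter is never proved. After all your reductions, what remains is exactly the comparison between $\hocolim_{\mL_n}F_n$ (equivalently, since the values are contractible, the nerve of the relevant decomposition poset) and the configuration space $\oL_n$ stratified by that poset; you defer this to a ``non-equivariant base case'' and then dispose of it with ``a Quillen-type cofinality argument identifies the two up to a single cell-dimension shift.'' That is not an argument, and it is precisely where the paper does its work: it filters both $Z_n$ and $\oL_n$ by the number of summands, identifies the $k$-th subquotients on both sides as wedges of spaces induced up from $N_{U(n)}(\prod U(l_i))$, and shows $\alpha$ is an equivalence on each subquotient because $\hocolim_{\mL_n(\geq\bigoplus\C^{l_i})}F_n(\U_G)$ (which has a minimal element, hence is a space of equivariant embeddings of $\overline{\R^k}$ into $\U_G$) and $\oL_n(\U_G)(\geq\bigoplus\C^{l_i})\cong S(\overline{\R^k}\otimes\U_G)$ are universal spaces for the same collection $\mathcal{C}^{\Sigma}_k\langle G\rangle$; the single suspension then enters only because comparing the resulting cofiber sequences has no unstable five lemma. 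Note also that your parenthetical ``all strata \dots are contractible (spaces of embeddings of reduced permutation representations)'' conflates the regular locus with the stratum of configurations with merely distinct labels; the latter strictly contains the former.

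Second, your diagnosis of where the suspension is needed is incorrect. You claim the sphere $S(\overline{\R^k}\otimes\U_G)$ has empty fixed points for subgroups of $Q$ acting transitively on $\underline{k}$. But $Q=\pr_G(H)$ acts on this sphere through a graph-type homomorphism into $\Sigma_k\times G$ whose intersection with $\Sigma_k\times 1$ is trivial, hence non-transitive; since $\U_G$ is a complete $Q$-universe, $(\overline{\R^k}\otimes\U_G)^{Q'}\neq 0$ for every $Q'\leq Q$, so by Proposition \ref{prop:sympow} these fixed points are contractible infinite-dimensional spheres and never empty. (Empty fixed points occur only for subgroups meeting $\Sigma_k\times 1$ in a transitive subgroup, which cannot happen here because $K$ has already been divided out.) So the ``empty versus contractible'' discrepancy you propose to repair with the suspension does not exist, and your accounting of why exactly one suspension suffices collapses. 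Finally, the asserted join splitting of $Z_n(\U_G)^H$ inside the bar construction, and the claim that the residual action on each block factor is again of the form $U(m_j)\times G'$ required to run the induction, are stated without justification and are not obvious; the paper's filtration by number of summands avoids all of this.
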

\begin{proof} Let $G$ be a compact Lie group. We consider the following filtration on $Z_n(\U_G)$: Given a decomposition $\C^n=\bigoplus_{i\in I} W_i$ we denote by $\mL_n(\geq \bigoplus_{i\in I} W_i)$ the sub-poset of $\mL_n$ given by the decompositions that are refined by $\bigoplus_{i\in I} W_i$ (and similarly by $\mL_n(> \bigoplus_{i\in I} W_i)$ those that are properly refined). Furthermore, by $\mL_n^{(k)}$ we denote the subposet of all decompositions into at most $k$ summands. This filtration on the level of posets induces a filtration $\{Z_n^{(k)}\}$ of $Z_n$ after taking homotopy colimits. There are similar filtrations for $\oL_n$, where we say that an element $(W_j',x_j)_{j\in J}$ (with all $x_j$ distinct) lies in $\oL_n(\geq \bigoplus_{i\in I} W_i)$, $\oL_n(> \bigoplus_{i\in I} W_i)$ or $\oL_n^{(k)}$ if the decomposition $\C^n=\bigoplus_{j\in J}W_j'$ is contained in the respective sub-poset of $\mL_n$.
 
These filtrations are preserved by $\alpha$ and so we can consider the induced map on quotients. The quotient $(Z_n^{(k)}/Z_n^{(k-1)})(\U_G)$ can be $(U(n)\times G)$-equivariantly identified with the wedge
\[ \bigvee_{n=l_1+\hdots+l_k, l_1\geq \hdots \geq l_k} U(n)\times_{N_{U(n)}(\prod_{i=1}^k U(l_i))}((\hocolim_{(\mL_n)(\geq \bigoplus \C^{l_i})}F_n(\U_G)/\hocolim_{(\mL_n)(>\bigoplus \C^{l_i})}F_n(\U_G)). \]
Likewise, $\oL_n(\U_G)^{(k)}/\oL_n(\U_G)^{(k-1)}$ also decomposes as
\[ \bigvee_{n=l_1+\hdots+l_k, l_1\geq \hdots \geq l_k} U(n)\times_{N_{U(n)}(\prod_{i=1}^k U(l_i))} (\oL_n(\U_G)(\geq \bigoplus_{i=1,\hdots,k} \C^{l_i}))/(\oL_n(\U_G)(> \bigoplus_{i=1,\hdots,k} \C^{l_i})).
\]
Moreover, the map $\alpha$ preserves these decompositions. The space $\oL_n(\U_G)(\geq \bigoplus \C^{l_i})$ is $(N_{U(n)}(\prod_{i=1}^k U(l_i))\times G)$-homeomorphic to the unit sphere in the $G$-representation $\overline{\R^k}\otimes \U_G$ (where the normalizer acts through its projection to $\Sigma_k$). In this description, $\oL_n(\U_G)(< \bigoplus \C^{l_i})$ corresponds to the subspace of tuples $(v_1,\hdots,v_k)$ for which at least two of the $v_i$ are equal. We first note:
\begin{Lemma} The restriction of $\alpha$ induces $(W_{U(n)}(\prod_{i=1}^k U(l_i))\times G))$-equivalences
\[ \hocolim_{(\mL_n)(\geq \bigoplus \C^{l_i})}F_n(\U_G) \simeq \oL_n(\U_G)(\geq \bigoplus_{i=1,\hdots,k} \C^{l_i})\cong S(\overline{\R^k}\otimes \U_G) \]
and
\[ \hocolim_{(\mL_n)(>\bigoplus \C^{l_i})}F_n(\U_G)\simeq \oL_n(\U_G)(> \bigoplus_{i=1,\hdots,k} \C^{l_i}), \]
after one suspension.
\end{Lemma}
\begin{proof} The product of the $U(l_i)$'s acts trivially, so it suffices to show that both maps are $(W_{U(n)}(\prod_{i=1}^k U(l_i))\times G)$-equivalences. We start with the first one. The poset $\mL_n(\geq \bigoplus \C^{l_i})$ has a minimal element, so it suffices to show that the embedding $F_n(\bigoplus \C^{l_i})(\U_G)\hookrightarrow S(\overline{\R^k})$ is an equivariant weak equivalence. Up to rescaling, this inclusion corresponds to the inclusion of the space of linear embeddings $\overline{\R^k}\hookrightarrow \U_G$ into the space of all non-zero linear maps $\overline{\R^k}\to \U_G$. We claim that this map is even a $(\Sigma_k\times G)$-weak equivalence.
We saw in the proof of Proposition \ref{prop:sympow} that the space of non-zero linear maps is a universal space for the family $\mathcal{C}_k^{\Sigma}\langle G\rangle$. But $F_n(\bigoplus \C^{l_i})(\U_G)$ is also a universal space for this family, since for every $G$-representation $W$ the space of equivariant linear embeddings $W\hookrightarrow \U_G$ is weakly contractible, as we already used in the proof of Proposition \ref{prop:zglobal}. Any equivariant map between universal spaces for the same collection of subgroups is automatically an equivariant weak equivalence, so this finishes the proof of the first statement.

For the second statement we can again filter both sides by the number of summands. Then, similarly to above, the subquotients are wedges of inductions of spaces of the form $(\hocolim_{(\mL_n)(>\bigoplus_{j=1}^{l} \C^{m_j})}F_n(\U_G))$ with $l<k$, and similarly for $\oL_n$. So, by induction we can reduce to the first statement and are done.
\end{proof}
Hence we see that $\alpha$ induces a $(U(n)\times G)$-weak equivalence on the subquotients. The inclusion maps in both filtrations are equivariant cofibrations (which in the case of $\oL_n$ follows from the pushouts described in Appendix \ref{app:cw}), so this shows that $\alpha$ induces a weak equivalence on homotopy cofibers. Hence, it induces a $(U(n)\times G)$-weak equivalence after one suspension.
\end{proof}

\subsection{Quotients in the complexity filtration} \label{sec:kucomp}
In this section we describe a global version of another filtration induced by Arone and Lesh, which we call the \emph{complexity filtration}. It interpolates between $ku$ and $Sp^{\infty}\simeq H\mathbb{Z}$ and is constructed as follows: We first note that there is a morphism from $ku$ to $Sp^{\infty}$ that sends a complex vector space to its dimension, non-equivariantly realizing the $0$-th Postnikov section. By definition, it maps $ku^n$ into $Sp^n$. Then the $n$-th term $A^u_n$ of the complexity filtration is defined as the homotopy pushout
\[ \xymatrix{ ku^n\ar[r] \ar[d] & ku \ar[d]\\
	      Sp^n\ar[r] & A^u_n. }
\]
Concretely, we let $A^u_n$ be the spectrum $([0,1]_+\wedge Sp^n)\vee_{ku^n} ku$, where the embedding $Sp^n\to [0,1]_+\wedge Sp^n$ is via the endpoint $1$. Since the map $ku^n\to ku$ is always a level-cofibration, $A^u_n$ could also be defined as the strict pushout, but we use the mapping cylinder construction to ensure that the induced maps $A_n^u\to A_{n+1}^u$ are level-cofibrations. Hence we obtain a sequence of morphisms of orthogonal spectra
\[ ku\cong A_0^u\xr{p_0} A^u_1\xr{p_1} \hdots \to A^u_{\infty}\simeq H\Z, \]
since both $ku^0$ and $Sp^0$ are a point and the $Sp^n$'s converge to $H\mathbb{Z}$.

\begin{Remark}The presentation of the complexity filtration given here is different to the way it was originally constructed by Arone and Lesh. In \cite{AL07} they associated to any augmented permutative category $\mathcal{C}$ a sequence of permutative categories interpolating between $\mathcal{C}$ and the category $\N$ and obtained the complexity filtration as the spectrum realization of this categorical filtration. Later in \cite{AL10} they constructed the modified rank filtration of the spectrum of an augmented permutative category via $\Gamma$-spaces and showed that the complexity filtration has this different description that we use here. 
\end{Remark}
 
Since we know the filtration quotients of the rank filtration and the symmetric product filtration, it is not difficult to obtain a description for the filtration quotients of the complexity filtration. By forming termwise quotients in the pushout diagram defining $A_n^u$, we see that the sequence \begin{equation} \label{eq:quot} ku^n/ku^{n-1}\to Sp^n/Sp^{n-1}\to A_n^u/A_{n-1}^u \end{equation}
is a mapping cone sequence. Using the results and notation of the previous sections we can identify the first two terms with suspension spectra of the orthogonal spaces $L_{\C}(\C^n,\Sym(\C\otimes -))_+\wedge_{U(n)} \oL_n^\dia$ respectively $S(\overline{\R^n}\otimes -)^\dia/\Sigma_n$. Moreover, the map is induced from the map of orthogonal spaces which collapses $L_{\C}(\C^n,\Sym(\C\otimes -))$ to a point and sends an element in $\oL_n$ represented by a tuple $(W_i,x_i)_{i\in I}$ to the element of $S(\overline{\R^n}\otimes -)/\Sigma_n$ represented by 
\[ (\underbrace{x_{i_1},\hdots,x_{i_1}}_{\dim W_{i_1}},\underbrace{x_{i_2},\hdots, x_{i_2}}_{\dim W_{i_2}},\hdots, \underbrace{x_{i_j},\hdots, x_{i_j}}_{\dim W_{i_j}}) \]
for some enumeration $i_1,i_2,\hdots,i_j$ of $I$, on which it does not depend since the $\Sigma_n$-action is quotiened out. In fact, the map $\oL_n\to S(\overline{\R^n}\otimes -)/\Sigma_n$ induces an isomorphism $\oL_n/U(n)\xr{\cong} S(\overline{\R^n}\otimes -)/\Sigma_n$ (and in particular, the global classifying space of complete subgroups of $U(n)$ is globally equivalent to the global classifying space of complete subgroups of $\Sigma_n$). In other words, the map $ku^n/ku^{n-1}\to Sp^n/Sp^{n-1}$ is induced -- by forming $U(n)$-orbits and applying the suspension spectrum functor -- from the map of $U(n)$-orthogonal spaces
\[ L_{\C}(\C^n,\Sym(\C\otimes -))_+\wedge \oL_n^\dia\to  \oL_n^\dia\]
that collapses $L_{\C}(\C^n,\Sym(\C\otimes -))$ to a point. So we have:

\begin{Cor} The quotient $A^u_n/A^u_{n-1}$ is isomorphic to the suspension spectrum of the based orthogonal space $L_{\C}(\C^n,\Sym(\C\otimes -))^\dia\wedge_{U(n)} \oL_n^\dia$.
\end{Cor}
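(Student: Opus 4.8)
The plan is to read off $A_n^u/A_{n-1}^u$ from the mapping cone sequence \eqref{eq:quot} by identifying the homotopy cofiber of $ku^n/ku^{n-1}\to Sp^n/Sp^{n-1}$ directly. Write $L=L_\C(\C^n,\Sym(\C\otimes -))$ for the $U(n)$-orthogonal space appearing in Corollary \ref{cor:kuquot}, and let $c\colon L_+\to S^0$ be the map of based $U(n)$-orthogonal spaces that is constant with value the non-basepoint on $L$. Under the isomorphism $ku^n/ku^{n-1}\cong\Sigma^{\infty}(L_+\wedge_{U(n)}\oL_n^\dia)$ of Corollary \ref{cor:kuquot} and the isomorphism $Sp^n/Sp^{n-1}\cong\Sigma^{\infty}(S^0\wedge_{U(n)}\oL_n^\dia)$ coming from the homeomorphism $\oL_n/U(n)\cong S(\overline{\R^n}\otimes -)/\Sigma_n$ noted above, the map $ku^n/ku^{n-1}\to Sp^n/Sp^{n-1}$ becomes $\Sigma^{\infty}$ applied to $c\wedge_{U(n)}\mathrm{id}_{\oL_n^\dia}$. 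So the whole statement reduces to computing a mapping cone of this very explicit shape.

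First I would record that $A_n^u/A_{n-1}^u$ is in fact \emph{isomorphic} to the mapping cone of $ku^n/ku^{n-1}\to Sp^n/Sp^{n-1}$, not merely equivalent to it: forming termwise quotients in the homotopy-pushout square defining $A_n^u$ kills the $ku$-corner, and because $A_n^u=([0,1]_+\wedge Sp^n)\vee_{ku^n}ku$ was built with the mapping cylinder and level cofibrations, the resulting strict pushout $*\leftarrow ku^n/ku^{n-1}\to [0,1]_+\wedge (Sp^n/Sp^{n-1})$ already computes the mapping cone on the nose. This is precisely the content of \eqref{eq:quot} being a mapping cone sequence.

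Next I would pull the two functors out of the cone. The suspension spectrum functor $\Sigma^{\infty}$ is a left adjoint and strong symmetric monoidal, so it takes the pushout computing a mapping cone to the corresponding pushout of orthogonal spectra. The functor $(-)\wedge_{U(n)}\oL_n^\dia$ on based $U(n)$-orthogonal spaces is the composite of the left adjoints $(-)\wedge\oL_n^\dia$ and $(-)/U(n)$, and since smash products are associative and commutative and orbits commute with cones one has $C(A)\wedge_{U(n)}\oL_n^\dia\cong C(A\wedge_{U(n)}\oL_n^\dia)$ for every based $U(n)$-orthogonal space $A$; hence this functor sends the mapping cone of $c$ to the mapping cone of $c\wedge_{U(n)}\mathrm{id}_{\oL_n^\dia}$. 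Combining these observations gives
\[ A_n^u/A_{n-1}^u\;\cong\;\Sigma^{\infty}\bigl(\mathrm{Cone}(c)\wedge_{U(n)}\oL_n^\dia\bigr). \]
Finally I would identify $\mathrm{Cone}(c)$. Inspecting the reduced cone on $L_+$ and gluing $S^0$ along $c$, the mapping cone of the collapse $L_+\to S^0$ is $U(n)$-equivariantly homeomorphic to the unreduced suspension $L^\dia$: the cone vertex and the image of the non-basepoint of $S^0$ become the two cone points, and the basepoint of $S^0$ is identified with the cone vertex, which after relabeling the suspension coordinate matches the convention that $X^\dia$ is based at $X\times\{1\}$. (This is the standard cofiber sequence $A_+\to S^0\to A^\dia$, natural in the unbased space $A$.) Substituting $\mathrm{Cone}(c)\cong L^\dia$ yields the claimed isomorphism $A_n^u/A_{n-1}^u\cong\Sigma^{\infty}\bigl(L_\C(\C^n,\Sym(\C\otimes -))^\dia\wedge_{U(n)}\oL_n^\dia\bigr)$.

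I do not anticipate a real obstacle here: all the substantive work already went into the description of the map $ku^n/ku^{n-1}\to Sp^n/Sp^{n-1}$ given just above, so the corollary is a formal manipulation of mapping cones. The only mildly delicate points are the basepoint bookkeeping in the homeomorphism $\mathrm{Cone}(A_+\to S^0)\cong A^\dia$ and making sure the mapping cylinder model really renders the relevant pushouts strict rather than only homotopy-coherent; both are routine.
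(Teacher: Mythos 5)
Your proposal is correct and follows essentially the same route as the paper: identify the sequence $ku^n/ku^{n-1}\to Sp^n/Sp^{n-1}\to A_n^u/A_{n-1}^u$ as a (strict) mapping cone sequence, recognize the middle map as $U(n)$-orbits of the collapse $L_\C(\C^n,\Sym(\C\otimes-))_+\wedge \oL_n^\dia\to \oL_n^\dia$, and conclude via $\mathrm{Cone}(L_+\to S^0)\cong L^\dia$. The paper leaves the final formal cone manipulation implicit where you spell it out, but there is no substantive difference.
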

Since the smash product of two unreduced suspensions is isomorphic to the unreduced suspension of the join $*$, this based orthogonal space can be rewritten as $(L_{\C}(\C^n,\Sym(\C\otimes -))* \oL_n)^\dia/U(n).$ From Section \ref{sec:kurank} we know that the first join factor is a global universal space for $U(n)$ and that the second is a global universal space for the collection of complete (or non-isotypical) subgroups of $U(n)$.

The global homotopy type of this join is then implied by the following easy lemma:
\begin{Lemma} \label{lem:join} Let $\mathcal{F}$ be any collection of subgroups of a Lie group $K$, $E_{gl}\mathcal{F}$ a global universal space for $\mathcal{F}$ and $E_{gl}K$ be a global universal space for $K$. Then the join $E_{gl}\mathcal{F}*E_{gl}K$ is a global universal space for the collection $\overline{\mathcal{F}}$, i.e., $\mathcal{F}$ with the trivial subgroup added. 
\end{Lemma}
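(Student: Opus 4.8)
The plan is to verify the two defining conditions of a global universal space for $\overline{\mathcal{F}}$ directly, exploiting that joins commute with fixed points, colimits and retracts. First I would record the formal inputs. The join is formed levelwise, $(E_{gl}\mathcal{F}*E_{gl}K)(V)=E_{gl}\mathcal{F}(V)*E_{gl}K(V)$ with the diagonal $K$-action; since $E_{gl}\mathcal{F}$ and $E_{gl}K$ are closed and the join construction commutes with the filtered colimit over finite-dimensional subrepresentations, $(E_{gl}\mathcal{F}*E_{gl}K)(\mathcal{U}_G)=E_{gl}\mathcal{F}(\mathcal{U}_G)*E_{gl}K(\mathcal{U}_G)$ for every compact Lie group $G$, and this orthogonal space is again closed because a levelwise join of closed embeddings is a closed embedding. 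Thus the statement reduces to showing that for every compact Lie group $G$ the $(K\times G)$-space $E_{gl}\mathcal{F}(\mathcal{U}_G)*E_{gl}K(\mathcal{U}_G)$ is a universal space for $\overline{\mathcal{F}}\langle G\rangle$. I would begin that verification with the observation that a closed subgroup $L\leq K\times G$ has $L\cap(K\times 1)\in\overline{\mathcal{F}}$ exactly when $L$ lies in $\mathcal{F}\langle G\rangle$ or in $1_K\langle G\rangle$, so $\overline{\mathcal{F}}\langle G\rangle=\mathcal{F}\langle G\rangle\cup 1_K\langle G\rangle$.

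Next I would dispatch cofibrancy and isotropy. By hypothesis $E_{gl}\mathcal{F}(\mathcal{U}_G)$ and $E_{gl}K(\mathcal{U}_G)$ are retracts of $(K\times G)$-cell complexes; the join of two $(K\times G)$-cell complexes is again one, and the join of retracts is a retract of the join, so $E_{gl}\mathcal{F}(\mathcal{U}_G)*E_{gl}K(\mathcal{U}_G)$ is cofibrant. For isotropy, a point of a join $A*B$ lies in $A$, in $B$, or in the open part, and its stabilizer is respectively a stabilizer of a point of $A$, a stabilizer of a point of $B$, or an intersection $L_1\cap L_2$ of such. Stabilizers arising in $E_{gl}\mathcal{F}(\mathcal{U}_G)$ lie in $\mathcal{F}\langle G\rangle$ and those arising in $E_{gl}K(\mathcal{U}_G)$ lie in $1_K\langle G\rangle$, both contained in $\overline{\mathcal{F}}\langle G\rangle$; and for $L_1\in\mathcal{F}\langle G\rangle$, $L_2\in 1_K\langle G\rangle$ one computes $(L_1\cap L_2)\cap(K\times 1)\subseteq L_2\cap(K\times 1)=1$, so $L_1\cap L_2\in 1_K\langle G\rangle\subseteq\overline{\mathcal{F}}\langle G\rangle$. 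Hence all isotropy lies in $\overline{\mathcal{F}}\langle G\rangle$.

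For the fixed-point condition I would invoke the standard identity $(A*B)^L=A^L*B^L$ for a group $L$ acting diagonally on a join, together with the fact that the join of a contractible space with an arbitrary space is contractible. Given $L\in\overline{\mathcal{F}}\langle G\rangle$, either $L\in\mathcal{F}\langle G\rangle$, so that $E_{gl}\mathcal{F}(\mathcal{U}_G)^L$ is contractible, or $L\in 1_K\langle G\rangle$, so that $E_{gl}K(\mathcal{U}_G)^L$ is contractible; in both cases $(E_{gl}\mathcal{F}(\mathcal{U}_G)*E_{gl}K(\mathcal{U}_G))^L=E_{gl}\mathcal{F}(\mathcal{U}_G)^L*E_{gl}K(\mathcal{U}_G)^L$ is contractible. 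This checks the fixed-point condition and finishes the proof.

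I do not expect a genuine obstacle here; the proof is essentially a formal manipulation. The only points needing mild care are the topological bookkeeping — that the join commutes with the colimit defining evaluation on $\mathcal{U}_G$, that cofibrancy and the closed-orthogonal-space property are preserved under levelwise joins, and that $(A*B)^L=A^L*B^L$ holds as topological spaces (not merely as sets) for the possibly non-compact subgroups $L\leq K\times G$ that occur — and all of these are routine.
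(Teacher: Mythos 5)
Your proof is correct and follows exactly the paper's approach: the paper's entire proof is the one-line observation that the join commutes with taking fixed points (so that contractibility of one join factor's $L$-fixed points forces contractibility of the join's), and your write-up simply spells out that argument together with the routine cofibrancy, isotropy, and colimit bookkeeping.
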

\begin{proof}
This follows directly from the fact that the join commutes with taking fixed points.
\end{proof}
Hence, denoting the collection of complete and trivial subgroups of $U(n)$ by $\overline{\mathcal{C}}^u_n$ and the collection of non-isotypical and trivial subgroups by $\overline{\mathcal{I}}^u_n$, we obtain:
\begin{Theorem}[Subquotients in the complexity filtration] \label{theo:quotcomp} There are global equivalences \[ A^u_n/A^u_{n-1} \simeq \Sigma^{\infty} (B_{gl}(\overline{\mathcal{C}}^u_n)^\dia) \simeq \Sigma^{\infty} (B_{gl}(\overline{\mathcal{I}}^u_n)^\dia). \]
\end{Theorem}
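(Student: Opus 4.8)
The plan is to assemble the statement from the identifications already in place, so that essentially no new argument is required. By the Corollary immediately preceding the statement, $A^u_n/A^u_{n-1}$ is \emph{isomorphic} as an orthogonal spectrum to the suspension spectrum of the based $U(n)$-orthogonal space $L_{\C}(\C^n,\Sym(\C\otimes -))^\dia\wedge_{U(n)}\oL_n^\dia$. Hence it suffices to identify this based orthogonal space, up to global equivalence, with $(B_{gl}\overline{\mathcal{C}}^u_n)^\dia$ and with $(B_{gl}\overline{\mathcal{I}}^u_n)^\dia$; applying $\Sigma^{\infty}$ then gives the theorem.

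First I would convert the smash product of unreduced suspensions into the unreduced suspension of a join. There is a homeomorphism $A^\dia\wedge B^\dia\cong(A*B)^\dia$, natural in the spaces $A$ and $B$; applied levelwise it yields an isomorphism of $U(n)$-orthogonal spaces that is $(U(n)\times G)$-equivariant for every compact Lie group $G$. Since the levelwise join and the unreduced suspension are both colimit constructions, they commute with passage to $U(n)$-orbits, so that
\[ L_{\C}(\C^n,\Sym(\C\otimes -))^\dia\wedge_{U(n)}\oL_n^\dia \;\cong\; \big((L_{\C}(\C^n,\Sym(\C\otimes -))*\oL_n)/U(n)\big)^\dia. \]
Next I would feed in the two universality statements from Section \ref{sec:kurank}: $L_{\C}(\C^n,\Sym(\C\otimes -))$ is a global universal space for $U(n)$, and by Proposition \ref{prop:clacompl} the orthogonal space $\oL_n$ is a global universal space for \emph{both} $\mathcal{C}^u_n$ and $\mathcal{I}^u_n$. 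By Lemma \ref{lem:join}, applied with $K=U(n)$ and $\mathcal{F}=\mathcal{C}^u_n$ (respectively $\mathcal{F}=\mathcal{I}^u_n$), the join $L_{\C}(\C^n,\Sym(\C\otimes -))*\oL_n$ is then a global universal space for the collection $\overline{\mathcal{C}}^u_n$ (respectively $\overline{\mathcal{I}}^u_n$), so its quotient by $U(n)$ is a model for the global classifying space $B_{gl}\overline{\mathcal{C}}^u_n$ (respectively $B_{gl}\overline{\mathcal{I}}^u_n$). Substituting into the displayed isomorphism and applying $\Sigma^{\infty}$ gives the two asserted global equivalences; as in the symmetric product case these are in fact isomorphisms onto the specific model just constructed, and the two right-hand sides are realized by the \emph{same} orthogonal space, consistent with $\oL_n$ being universal for both collections.

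Since the genuinely substantial inputs --- the computation of the modified rank and symmetric product quotients, and the verification that $\oL_n$ and $L_{\C}(\C^n,\Sym(\C\otimes -))$ are the claimed universal spaces --- are already established, there is no deep obstacle; the one point that needs attention is cofibrancy. To apply Lemma \ref{lem:join} literally one wants the join of two actual universal spaces to again be a universal space, which requires the join $(L_{\C}(\C^n,\Sym(\C\otimes -))*\oL_n)(\U_G)$ to be $(U(n)\times G)$-cofibrant; the cofibrancy of $\oL_n(\U_G)$ is exactly what is proved in Appendix \ref{app:cw}, that of the linear-isometries factor is standard via Illman's theorem, and a join of cell complexes is again a cell complex. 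The remaining issue, namely the naturality in $V$ and the $(U(n)\times G)$-equivariance of the homeomorphism $A^\dia\wedge B^\dia\cong(A*B)^\dia$, is routine, but should be recorded since it is what makes the identification an isomorphism of orthogonal \emph{spectra} after applying $\Sigma^{\infty}$.
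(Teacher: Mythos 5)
Your proposal is correct and follows essentially the same route as the paper: it starts from the preceding corollary identifying $A^u_n/A^u_{n-1}$ with the suspension spectrum of $L_{\C}(\C^n,\Sym(\C\otimes -))^\dia\wedge_{U(n)}\oL_n^\dia$, rewrites the smash of unreduced suspensions as the unreduced suspension of the join modulo $U(n)$, and then invokes Lemma \ref{lem:join} together with the universality of the two join factors. The added remarks on cofibrancy of the join and on naturality of the homeomorphism $A^\dia\wedge B^\dia\cong(A*B)^\dia$ are sensible but do not change the argument.
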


\begin{Remark} One can show that there is also a global equivalence\[A_n^u/A_{n-1}^u\simeq \Sigma^{\infty} (S^1\wedge (E_{gl}U(n)_+\wedge _{U(n)} (\mL_n^\dia\wedge S^{\C^n}))),\]
globally generalizing another description of the quotients due to Arone and Lesh. The proof uses straightforward equivariant adaptions of the arguments in \cite[Sec. 9]{AL07} together with Theorem \ref{theo:zig-zag} of this paper.
\end{Remark}

\subsection{The modified rank filtration on $0$-th homotopy}
\label{sec:pi0ku}
In this section we describe the effect of the modified rank filtration on the global functor $\underline{\pi}_0$, the zero-th homotopy group.

For a compact Lie group $G$ we denote by $\Rep_{\C}(G)$ the complex representation ring of $G$. Every group homomorphism $f:G\to K$ gives rise to a restriction map $f^*:\Rep_{\C}(G)\to \Rep_{\C}(K)$ by pulling back the action on representations. Furthermore, if $H$ is a finite index subgroup of $G$ there is an induction map $\Ind_H^G:\Rep_{\C}(H)\to \Rep_{\C}(G)$ which sends an $H$-representation $W$ to the $G$-representation $\Ind_H^G W=\map_H(G,W)$.

This is connected to $\pi_0^G(ku)$ as follows: Let $W$ be a finite dimensional complex $G$-representation together with an isometric embedding $\psi:W\to \Sym(\C\otimes V)$ for some finite dimensional real representation $V$. This data gives rise to an element $[W]$ in $\pi_0^G (ku)$ represented by the map $S^V\to ku(V),v\mapsto (\psi(W),v)$. A different way to view this assignment $W\mapsto [W]$ is given by the following: There are maps \[ \alpha_n:\Sigma^{\infty}_+ (L_{\C}(\C^n,\Sym(\C\otimes -))/U(n))\to ku^n \] which in level $V$ send a pair $(\varphi,v)$ to the configuration $(\varphi(\C^n),v)$.
The orthogonal space $L_{\C}(\C^n,\Sym(\C\otimes -))/U(n)$ is a model for a global classifying space $B_{gl}U(n)$ of $U(n)$ and hence \[ \pi_0^G(L_{\C}(\C^n,\Sym(\C\otimes -))/U(n))\cong \Rep(G,U(n))\cong \{\text{isom. classes of } n\text{-dim. } G\text{-representations}\}\]
The assignment $W\mapsto [W]$ is then the composition
\[ \pi_0^G(B_{gl}U(n))\to \pi_0^G (\Sigma^{\infty}_+ B_{gl}U(n))\xr{(\alpha_n)_*} \pi_0^G(ku). \]
In particular, this shows that $[W]$ only depends on the isomorphism type of $W$ and not on the choice of $\psi$.
\begin{Remark} \label{rem:smoothind} The map $W\mapsto [W]$ is additive and induces a homomorphism $\Rep_{\C}(G)\to \pi_0^G(ku)$ which sends restrictions to homotopy-theoretic restrictions and finite index inductions to homotopy-theoretic transfers. It is an isomorphism if $G$ is finite. The functor mapping a compact Lie group $G$ to its complex representation ring does extend to a full global functor, i.e., there also exist inductions along infinite index subgroup inclusions satisfying the double coset formula. These are given by the \emph{smooth inductions} introduced by Segal in \cite{Seg68}.
However, they are not mapped to homotopy theoretic transfers under the map $\Rep_{\C}(G)\to \pi_0^G(ku)$ above, which can be seen as a reason for why this map is in general not an isomorphism for non-discrete compact Lie groups. In fact, the representation ring always maps injectively into $\pi_0^G(ku)$ and the cokernel is generated by transfers of elements in $\pi_0^H(ku)$, where $H$ ranges through all subgroups of $G$ that have infinite index but finite Weyl group. These results are due to Schwede, and also follow from Theorem \ref{theo:pirank} below.
\end{Remark}
In this section we are going to explain the intermediate groups $\pi_0^G (ku^n)$. If $m\leq n$, the map $\alpha_m:\Sigma^{\infty}_+ (B_{gl}U(m))\to ku$ described above has image in $ku^n$ and hence every $m$-dimensional $G$-representation $W$ already defines an element $[W]$ in $\pi_0^G(ku^n)$ which only depends on its isomorphism type. It turns out that $\pi_0^G (ku^n)$ is additively generated by transfers of these elements. To understand the relations, we observe: If $W$ is $n$-dimensional, the class $[W]$ does not make sense in $\pi_0^G (ku^{n-1})$ yet, but it might already secretly live there in the following sense:
\begin{itemize}
\item If $W=W_1\oplus W_2$ is (non-trivially) decomposable, then the classes $[W_1],[W_2]$ already live in $\pi_0^G (ku^{n-1})$ and hence so does their homotopy theoretic sum $[W_1]+[W_2]$.
\item If $W=\Ind_H^GW'$ is induced up from a proper finite index subgroup $H$, then $[W']$ is an element in $\pi_0^H (ku^{n-1})$ and hence one can form the homotopy theoretic transfer $\tr_H^G [W']\in \pi_0^G (ku^{n-1})$.
\end{itemize}
We will see that both elements map to $[W]$ under $\pi_0^G (ku^{n-1})\to \pi_0^G (ku^n)$ and that $[W]$ lies in the image if and only if one of those two conditions is satisfied. Furthermore, if $[W]$ does lie in the image, then it might be so for different reasons: It could both be decomposable and induced, or it could be induced up in different ways. We will see that these are reflected nicely in certain fixed points of the decomposition poset $\mathcal{L}_n$ and that it is exactly these different reasons that are identified via the boundary map $\partial:\pi_1^G (ku^n/ku^{n-1})\to \pi_0^G (ku^{n-1})$.

\begin{Example} The easiest case is that of $G=\Z/p$ for a prime $p$. Let $\eta_p$ be a primitive $p$-th root of unity in $\C$. Then $\pi_0^{\Z/p} (ku^1)$ is free with basis $\{[\eta_p^1],[\eta_p^2],\hdots,[\eta_p^p]\}\cup \{\tr_1^{\Z/p}[1]\}$ (cf. Example \ref{exa:stablebglg}). The elements $[\eta_p^i]$ also form a basis of the representation ring of $G$ and hence the only difference between $\pi_0^{\Z/p}(ku^1)$ and $\pi_0^{\Z/p}(ku)$ lies in the element $\tr_1^{\Z/p}[1]$, which is equal to the sum of the $\eta_p^i$ in $\pi_0^{\Z/p}(ku)$. It will be a consequence of Theorem \ref{theo:pirank} that this identification takes place in $\pi_0^{\Z/p}(ku^p)$ for the first time.
\end{Example}
In global equivariant homotopy theory, all this can be phrased via universal examples: If an $n$-dimensional $G$-representation $W$ is the direct sum of two subrepresentations $W_1$ and $W_2$ of dimensions $k$ and $l$, then -- up to conjugation -- the associated homomorphism $\beta:G\to U(n)$ factors through the embedding $U(k)\times U(l)\to U(k+l=n)$. For $t\geq 1$ let  $\tau_t^{\C}$ denote the tautological complex $t$-dimensional representation of $U(t)$. Then the fact that $[W]=[W_1]+[W_2]$ in $\Rep_{\C}(G)$ is the restriction along $\widetilde{\beta}:G\to U(k)\times U(l)$ of the relation
\begin{equation} \label{eq:alpha} ({\res}_{U(k)\times U(l)}^{U(k+l)})^*(\tau_{k+l}^{\C})=(p_1)^*(\tau_k^{\C}) + (p_2)^*(\tau_l^{\C}), \end{equation}
where $p_1$ and $p_2$ denote the projections from $U(k)\times U(l)$ to $U(k)$ respectively $U(l)$. We denote this relation by $a(k,l)$.

Likewise, if $W$ is the induction of a $j$-dimensional representation $W'$ of a subgroup $H$ of index $i$, the associated group homomorphism $\beta:G\to U(n)$ takes image in the wreath product $\Sigma_i\wr U(j)$, i.e., the semidirect product of $U(j)^{\times i}$ and $\Sigma_i$ via the permutation action on the product coordinates. Then the relation $[W]=\tr_H^G[W']$ in the representation ring global functor is the restriction along $\widetilde{\beta}:G\to \Sigma_i\wr U(j)$ of
\begin{equation}  \label{eq:beta} {\res}_{\Sigma_i\wr U(j)}^{U(i\cdot j)}(\tau_{i\cdot j}^{\C})={\tr}_{U(j)\times \Sigma_{i-1}\wr U(j)}^{\Sigma_i\wr U(j)}(p^*(\tau_j^{\C})). \end{equation}
Here, $p$ stands for the projection from $U(j)\times (\Sigma_{i-1}\wr U(j))$ to $U(j)$. Let $b(i,j)$ denote this relation.

In these terms the intermediate homotopy groups can be described as follows:
\begin{Theorem} \label{theo:pirank} The global functor $\upi_0 (ku^n)$ is isomorphic to the free global functor generated by the elements $\tau_1^{\C},\tau_2^{\C},\hdots, \tau_n^{\C}$ modulo the relations $a(k,l)$ for all $k+l\leq n$ and $b(i,j)$ for all $i\cdot j\leq n$.
\end{Theorem}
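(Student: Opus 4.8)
The plan is to construct the isomorphism by an inductive argument on $n$, using the cofiber sequences that relate consecutive filtration stages together with the geometric identification of the subquotients from Theorem~\ref{theo:zig-zag}. Write $\overline{\pi}_0^{\,\mathrm{free}}(n)$ for the free global functor on $\tau_1^{\C},\dots,\tau_n^{\C}$ modulo the relations $a(k,l)$ for $k+l\le n$ and $b(i,j)$ for $i\cdot j\le n$. First I would produce the comparison map: the classes $[\tau_t^{\C}]\in\pi_0^{U(t)}(ku^t)\subseteq\pi_0^{U(t)}(ku^n)$ defined via $\alpha_t$ are easily checked to satisfy the relations $a(k,l)$ and $b(i,j)$ in $\upi_0(ku^n)$ --- the first because the direct sum decomposition $U(k)\times U(l)\hookrightarrow U(k+l)$ is realized by orthogonal direct sum of configurations, the second because induction of representations is realized by the homotopy-theoretic transfer (Remark~\ref{rem:smoothind}, restricted to finite-index subgroups, which is exactly the range relevant here). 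This gives a natural transformation $\Phi_n\colon \overline{\pi}_0^{\,\mathrm{free}}(n)\to\upi_0(ku^n)$, and I must show it is an isomorphism.

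The inductive step compares the exact sequence obtained from the cofiber sequence $ku^{n-1}\to ku^n\to ku^n/ku^{n-1}$, namely
\[
\pi_1^G(ku^n/ku^{n-1})\xrightarrow{\ \partial\ }\pi_0^G(ku^{n-1})\to\pi_0^G(ku^n)\to\pi_0^G(ku^n/ku^{n-1})
\]
(using that $\pi_0$ of the subquotient is identified below and that $\pi_{-1}$ of a suspension spectrum of a global space vanishes, so the sequence is exact also at the right and we get a short exact sequence
$\coker(\partial)\to\pi_0^G(ku^n)\to\pi_0^G(ku^n/ku^{n-1})$
which is in fact a four-term exact sequence), with the analogous algebraic sequence on the free side. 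By Theorem~\ref{theo:zig-zag}, $ku^n/ku^{n-1}\simeq\Sigma^\infty(E_{gl}U(n)_+\wedge_{U(n)}|\mL_n|^{\dia})$, so Proposition~\ref{prop:pi0susp} together with Corollary~\ref{cor:homorbquot} lets me read off $\pi_0^G$ of the subquotient as a free abelian group on transferred classes indexed by pairs $(H,\alpha\colon H\to U(n))$ with $|W_GH|<\infty$, modulo $W_GH$-conjugacy, for which the associated $U(n)$-fixed point space $|\mL_n|^{\im\alpha}$ of the (unreduced suspension of the) decomposition lattice is nonempty and connected --- i.e. exactly when the representation $\alpha^*(\tau_n^{\C})$ of $H$ is \emph{not} isotypical, hence decomposes or is induced. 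In parallel, I would analyze $\partial$: its image is generated, via the boundary maps of the cells of $|\mL_n|$, precisely by the differences between the various ways an $n$-dimensional representation can be written as a sum or induction --- that is, by the relations $a(k,l)$ and $b(i,j)$ with equality $k+l=n$, $i\cdot j=n$ (the ones not already imposed at stage $n-1$). Matching these three pieces (cokernel of $\partial$, the new generators living in $ku^n/ku^{n-1}$, and the relation content) against the presentation of $\overline{\pi}_0^{\,\mathrm{free}}(n)$ from $\overline{\pi}_0^{\,\mathrm{free}}(n-1)$ gives the inductive step; the base case $n=0$ is trivial and $n=1$ is Proposition~\ref{prop:pi0susp} applied to $B_{gl}U(1)$.

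The main obstacle will be the precise analysis of the boundary map $\partial\colon\pi_1^G(ku^n/ku^{n-1})\to\pi_0^G(ku^{n-1})$: I need an honest geometric representative of elements of $\pi_1$ of the subquotient --- paths in the realized decomposition lattice joining two different maximal decompositions of a fixed $\alpha^*(\tau_n^{\C})$ --- and must track what the connecting map of the cofiber sequence does to such a path, showing it outputs exactly the expected combination of $[W_1]+[W_2]$ versus $[W]$ (for the $a$-relations) or $\tr_H^G[W']$ versus $[W]$ (for the $b$-relations). This is the technical heart; it is where the paper says it constructs "an explicit geometric representative of a certain stable map" and is presumably handled with the help of Appendix~\ref{app:inclusions} (so that strict quotients compute homotopy cofibers) and Appendix~\ref{app:htpypushout}. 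Everything else --- the surjectivity onto the subquotient's $\pi_0$, the verification that $\Phi_n$ is well-defined, the bookkeeping of $W_GH$-orbits and double cosets --- is routine given the tools already assembled in the excerpt, in particular Propositions~\ref{prop:pi0susp}, \ref{prop:clacompl} and Corollary~\ref{cor:homorbquot}.
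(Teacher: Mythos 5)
Your overall skeleton (induction on $n$, the cofiber sequence $ku^{n-1}\to ku^n\to ku^n/ku^{n-1}$, the lattice description of the subquotient, and the need for an explicit geometric representative of a stable map) matches the paper, but the step you yourself flag as the ``technical heart'' is precisely where your route breaks down, and the paper takes a different device to avoid it. You propose to extract the relations from the boundary map $\partial\colon\pi_1^G(ku^n/ku^{n-1})\to\pi_0^G(ku^{n-1})$. Nothing in the paper (and nothing standard) computes $\pi_1^G$ of the suspension spectrum $\Sigma^{\infty}(E_{gl}U(n)_+\wedge_{U(n)}|\mL_n|^{\dia})$: the tom Dieck splitting of Proposition \ref{prop:pi0susp} is a $\pi_0$ statement, and in degree $1$ the splitting acquires contributions from stable $\pi_1$ of the summands (signs, Weyl-group actions, $\pi_1^{st}$ of spheres), not just from ``paths in the realized decomposition lattice''. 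Your claim that the image of $\partial$ is generated by the differences of decompositions is exactly what has to be proved, and the proposed mechanism does not suffice. The paper sidesteps $\pi_1$ entirely: it exhibits the square
\[
\xymatrix@C=3em{ \Sigma^{\infty}_+ (E_{gl}U(n)\times_{U(n)}\oL_n) \ar[r]^-{p}\ar[d]_{\psi_n} & \Sigma^{\infty}_+ (B_{gl}U(n))\ar[d]^{\alpha_n}\\
  ku^{n-1}\ar[r] & ku^n}
\]
as a homotopy pushout (both rows of Diagram (\ref{eq:cofiber}) have the same cofiber, see Appendix \ref{app:cofiber}), so the Mayer--Vietoris sequence of Corollary \ref{cor:exact} is right-exact ending in $\upi_0(ku^n)\to 0$, and every input is a $\pi_0$ computation: the generators $\widetilde{\alpha}(k,l),\widetilde{\beta}(i,j)$ of $\upi_0$ of the homotopy orbits (Corollary \ref{cor:gen}) and the explicitly constructed map $\psi_n$, whose effect on these generators is Proposition \ref{prop:image}. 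In effect the connecting map you want is replaced by an honest map of spectra evaluated on $\pi_0$, which is why the ``explicit geometric representative'' in the paper is $\psi_n$ itself and not a lift of $\partial$.

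Two further points. First, your description of $\pi_0^G(ku^n/ku^{n-1})$ is inverted: a class indexed by $\alpha\colon H\to U(n)$ survives in the unreduced suspension exactly when the fixed-point lattice $|\mL_n|^{\im\alpha}$ is \emph{empty} (so that the fixed points of the suspension are an $S^0$), i.e.\ when $\alpha^*(\tau_n^{\C})$ is irreducible and not induced from a proper finite-index subgroup --- not, as you write, when it ``decomposes or is induced''; in the latter case the fixed points cone off and the class dies. In the paper's argument this description is not even needed, since surjectivity comes for free from the right-exact Mayer--Vietoris sequence, but in your four-term sequence it is load-bearing. Second, the well-definedness of your comparison map $\Phi_n$ requires the relations $a(k,l)$ and $b(i,j)$ to hold already in $\upi_0(ku^{k+l})$ resp.\ $\upi_0(ku^{ij})$, not merely in $\upi_0(ku)$ as Remark \ref{rem:smoothind} gives; this again is exactly the content of Proposition \ref{prop:image} combined with the exact sequence, so it cannot be dismissed as routine before that analysis is done.
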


We try to make clear what this means at a fixed compact Lie group $G$ in a few examples of these filtrations in Section \ref{sec:exa}. However, the result at a specific group is often a lot more complicated than the global formula.

The remainder of this section is devoted to proving this theorem. It proceeds by comparing the cofiber sequence $ku^{n-1}\to ku^n\to ku^n/ku^{n-1}$ to another one with the same cofiber, namely
\begin{equation} \label{eq:cofiber} \xymatrix{ \Sigma^{\infty}_+ (E_{gl}U(n)\times_{U(n)}\oL_n) \ar[r]^-p\ar@{-->}[d]_{\psi_n} & \Sigma^{\infty}_+ (B_{gl}U(n))\ar[r]\ar[d]^{\alpha_n} & \Sigma^{\infty}(E_{gl}U(n)_+\wedge_{U(n)} \oL_n^\dia) \ar[d]^{\cong}\\
  ku^{n-1}\ar[r]_{i_n} & ku^n\ar[r] & ku^n/ku^{n-1},}
\end{equation}
where the vertical isomorphism on the right is the one explained in Section \ref{sec:kurank}. The map $\psi_n$ could be obtained (at least as a stable map) via the triangulated structure on the homotopy category, but we make it explicit below in order to understand its effect on $\upi_0$.

The map of cofiber sequences exhibits the left square as a homotopy pushout, giving rise to a Mayer-Vietoris sequence on homotopy groups. In particular:
\begin{Cor} \label{cor:exact} The sequence
\[ \upi_0 (\Sigma^{\infty}_+ (E_{gl}U(n)\times_{U(n)}\oL_n))\xr{(p_*,-(\psi_n)_*)} \upi_0(\Sigma^{\infty}_+ (B_{gl}U(n)))\oplus \upi_0(ku^{n-1})\xr{\binom{(\alpha_n)_*}{(i_n)_*}} \upi_0(ku^n)\to 0 \]
is exact. 
\end{Cor}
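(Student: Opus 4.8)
The plan is to deduce Corollary \ref{cor:exact} as a formal consequence of diagram \eqref{eq:cofiber}: once we know the left-hand square is a homotopy pushout of orthogonal spectra, applying $\upi_0$ produces the asserted exact sequence by a standard Mayer--Vietoris argument, so the real content is to verify that the square commutes (up to homotopy) and is a homotopy pushout.

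First I would construct the map $\psi_n$ explicitly. The top row of \eqref{eq:cofiber} is the cofiber sequence attached to the based $U(n)$-orthogonal space $\oL_n$, presented as $*\to \oL_n{}_+\to \oL_n^\dia$ after forming global homotopy orbits over $U(n)$ and taking suspension spectra; the bottom row is $ku^{n-1}\to ku^n\to ku^n/ku^{n-1}$. Because the rightmost vertical map is the isomorphism from Section \ref{sec:kurank} identifying $ku^n/ku^{n-1}$ with $\Sigma^\infty(E_{gl}U(n)_+\wedge_{U(n)}\oL_n^\dia)$, and $\alpha_n$ is the given map $\Sigma^\infty_+(B_{gl}U(n))\to ku^n$, the composite $\Sigma^\infty_+(E_{gl}U(n)\times_{U(n)}\oL_n)\xr{p}\Sigma^\infty_+(B_{gl}U(n))\xr{\alpha_n} ku^n\to ku^n/ku^{n-1}$ is null (it factors through the cofiber of $p$, but it is also the image of $p$ followed by a quotient, hence it is canonically nullhomotopic via the description of $\oL_n^\dia$ as the mapping cone of $\oL_n{}_+\to *$). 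A nullhomotopy then produces a lift $\psi_n\colon \Sigma^\infty_+(E_{gl}U(n)\times_{U(n)}\oL_n)\to ku^{n-1}$ making the left square commute up to the chosen homotopy. Crucially, for later use in the proof of Theorem \ref{theo:pirank} one wants $\psi_n$ geometrically, not just abstractly; I would therefore give it levelwise by sending a point of $E_{gl}U(n)(V)\times_{U(n)}\oL_n(V)$ — a pair consisting of an isometric embedding $\varphi\colon \C^n\hookrightarrow \Sym(\C\otimes V)$ and a configuration $(W_i,x_i)_{i\in I}$ in $\oL_n$ — to the configuration $(\varphi(W_i),\varphi\cdot x_i)_{i\in I}$ in $k^{n-1}(\Sym(\C\otimes V),S^V)$, using that each summand $W_i$ is a \emph{proper} subspace so its dimension is at most $n-1$ and the total configuration still has rank at most $n-1$ after suitably interpreting the points; one then checks this is a well-defined morphism of orthogonal spectra landing in $ku^{n-1}$ and that it fits into the square. (This is the point where the mapping-cylinder/cofibration bookkeeping of Appendix \ref{app:inclusions} is needed so that strict and homotopy cofibers agree.)

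Next I would argue the left square is a homotopy pushout. The general principle is: given a map of cofiber sequences $(X\to Y\to C)\to (X'\to Y'\to C')$ which is an equivalence on the cofibers $C\simeq C'$, the left square $X\to X'$, $Y\to Y'$ is a homotopy pushout. Here the cofiber of the top row is $\Sigma^\infty(E_{gl}U(n)_+\wedge_{U(n)}\oL_n^\dia)$ by construction, the cofiber of the bottom row is $ku^n/ku^{n-1}$, and the right vertical map is the isomorphism of Section \ref{sec:kurank} (in particular an equivalence), so the criterion applies. Equivalently, one forms the iterated homotopy cofiber of the left square and identifies it with $\operatorname{cofib}(C\to C')\simeq *$. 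I would phrase this cleanly in the triangulated category $\mathcal{GH}$: the octahedral axiom applied to the composite $\Sigma^\infty_+(E_{gl}U(n)\times_{U(n)}\oL_n)\xr{\psi_n} ku^{n-1}\xr{i_n} ku^n$ — combined with the fact that $ku^n\to ku^n/ku^{n-1}$ has fiber $ku^{n-1}$ and $\Sigma^\infty_+(E_{gl}U(n)\times_{U(n)}\oL_n)\to \Sigma^\infty_+(B_{gl}U(n))$ has cofiber $\Sigma^\infty(E_{gl}U(n)_+\wedge_{U(n)}\oL_n^\dia)$ — yields that the two total cofibers agree, hence the square is homotopy cocartesian.

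Finally, from a homotopy pushout square of spectra one gets the long exact Mayer--Vietoris sequence on homotopy groups $\upi_*$, which is exact as a sequence of global functors since $\upi_*$ is computed levelwise on each compact Lie group $G$ and Mayer--Vietoris is natural in $G$ (restrictions and transfers are part of the global-functor structure and commute with the connecting maps). Truncating this long exact sequence at the relevant spot gives exactly
\[ \upi_0(\Sigma^\infty_+(E_{gl}U(n)\times_{U(n)}\oL_n))\xr{(p_*,-(\psi_n)_*)}\upi_0(\Sigma^\infty_+(B_{gl}U(n)))\oplus\upi_0(ku^{n-1})\xr{\binom{(\alpha_n)_*}{(i_n)_*}}\upi_0(ku^n), \]
and surjectivity on the right ($\to 0$) follows because the next term in the Mayer--Vietoris sequence is $\upi_{-1}$ of the top-left corner, which maps to $\upi_{-1}$ of $ku^{n-1}$, and the relevant portion of exactness forces the cokernel of $\binom{(\alpha_n)_*}{(i_n)_*}$ to inject into $\upi_{-1}(\Sigma^\infty_+(E_{gl}U(n)\times_{U(n)}\oL_n))$; since $E_{gl}U(n)\times_{U(n)}\oL_n$ is (the global homotopy orbit of) a space, its suspension spectrum is connective, so $\upi_{-1}$ vanishes and the map is onto. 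I expect the main obstacle to be not the homological algebra but pinning down $\psi_n$ precisely enough as an honest map of orthogonal spectra — ensuring the configuration $(\varphi(W_i),\varphi\cdot x_i)$ genuinely has rank $\le n-1$ and that the formula is natural in $V$ and compatible with the $\Gamma$-space/realization structure — together with the cofibration hygiene from the appendices needed to justify that all the strict quotients appearing compute the homotopy cofibers.
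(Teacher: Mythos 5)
Your overall route is exactly the paper's: compare the two cofiber sequences in Diagram (\ref{eq:cofiber}), observe that the right-hand vertical map is the isomorphism of Section \ref{sec:kurank}, conclude that the left square is homotopy cocartesian, and read off the statement from the Mayer--Vietoris sequence, with surjectivity coming from the vanishing of $\upi_{-1}$ of the suspension spectrum in the corner. For the corollary itself your abstract construction of $\psi_n$ via the triangulated structure is enough, and the paper explicitly says so.

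One concrete warning about your sketched explicit formula: sending $(\varphi,(W_i,x_i)_{i\in I},v)$ to the configuration $(\varphi(W_i),x_i+v)_{i\in I}$ does \emph{not} land in $ku^{n-1}$. Each $W_i$ is a proper subspace, but the $W_i$ together span all of $\C^n$, so the total dimension of the configuration is exactly $n$; there is no ``suitable interpretation of the points'' that fixes this for free. The paper's actual construction shrinks each coordinate function so that it hits the basepoint outside a small neighbourhood of $-x_i$ (via the reparametrization $s$ and the projection $p_x$), and then uses the normalization conditions $\sum \dim(W_i)\cdot x_i=0$ and $\sum\dim(W_i)|x_i|^2=1$ to prove that for every $v$ at least one coordinate is already at the basepoint, so the surviving summands have total dimension strictly less than $n$. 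This is the genuinely nontrivial geometric input, and it is needed not for Corollary \ref{cor:exact} but for Proposition \ref{prop:image}, where the effect of $\psi_n$ on $\upi_0$ is computed; an abstractly chosen nullhomotopy would not let you identify $(\psi_n)_*(\widetilde{\alpha}(k,l))$ and $(\psi_n)_*(\widetilde{\beta}(i,j))$ with sums and transfers of representations.
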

The rest of the proof is divided into the following two parts:
\begin{enumerate}
 \item A description of $\upi_0 (\Sigma^{\infty}_+ (E_{gl}U(n)\times_{U(n)}\oL_n))$.
 \item Constructing $\psi_n$ and determining its effect on $\upi_0$. 
\end{enumerate}

We start with number $(1)$. Applying Lemma \ref{lem:fixedpointsquot} to $Y=(E_{gl}U(n)\times_{U(n)}\oL_n)(\U_G)$ and $K=U(n)$ for a compact Lie group $G$, we see that the $G$-fixed points of the quotient decompose as
\[ \bigsqcup_{\langle \alpha:G\to U(n)\rangle}EC(\alpha)\times_{C(\alpha)} ((\oL_n)(\U_G))^{\Gamma(\alpha)}. \]

A tuple $(W_i,x_i)_{i\in I}\in \oL_n(\U_G)$ (with pairwise different $x_i$) is $\Gamma(\alpha)$-fixed if and only if each pair $(\alpha(g)(W_i),g\cdot x_i)$ is equal to some $(W_j,x_j)$ in the tuple. Hence, any such fixed point in particular gives rise to a non-trivial decomposition $\C^n=\bigoplus_{i\in I} W_i$ that is weakly fixed by $\alpha(G)$. Here, \emph{weakly fixed} means that not necessarily every $W_i$ is fixed itself, but they may be permuted in a way encoded by a $G$-action on the indexing set $I$.
Making use of the weak equivalence to the decomposition lattice constructed in Section \ref{sec:lattice}, we see that the path-component of $(W_i,x_i)_{i\in I}$ in the $\Gamma(\alpha)$-fixed points only depend on this associated decomposition and that every weakly fixed decomposition is realized. Furthermore, if one decomposition refines another, the associated fixed points lie in the same path-component. Written in a more coordinate-free  way we get:
\begin{Prop} The set $\pi_0^G((E_{gl}U(n)\times_{U(n)}\oL_n))$ stands in natural bijection to the set of pairs \[ \{(W,\oplus_{i\in I}W_i)\ |\ W\text{ n-dim $G$-rep.}, W=\bigoplus_{i\in I}W_i \text{ non-trivial and weakly $G$-fixed}\}\] modulo isomorphisms of representations and refinement of decompositions.

In this description, the induced map to $\pi_0^G(B_{gl}U(n))\cong \{\text{isom. classes of $n$-dim $G$-rep.}\}$ is given by forgetting the decompositions.
\end{Prop}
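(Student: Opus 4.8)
The plan is to combine the fixed-point formula for global homotopy orbits with the replacement of $\oL_n$ by the decomposition lattice. First I would apply Lemma \ref{lem:fixedpointsquot} to $Y=(E_{gl}U(n)\times_{U(n)}\oL_n)(\U_G)$ with $K=U(n)$ — the $U(n)$-action on $(E_{gl}U(n))(\U_G)$ being free — to get a natural decomposition of $(E_{gl}U(n)\times_{U(n)}\oL_n)(\U_G)^G$ into a disjoint union of spaces $EC(\alpha)\times_{C(\alpha)}\oL_n(\U_G)^{\Gamma(\alpha)}$ indexed by conjugacy classes of homomorphisms $\alpha:G\to U(n)$, and likewise for $|\mL_n|$ in place of $\oL_n$. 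The zig-zag $\oL_n\leftarrow Z_n\to|\mL_n|$ of Section \ref{sec:lattice} induces, after one (unreduced) suspension, a global equivalence between the corresponding homotopy orbits (Theorem \ref{theo:zig-zag}); and since the unreduced suspension of a space $A$ has $H_1$ naturally isomorphic to $\widetilde H_0(A)$, a map of spaces which becomes a weak equivalence after one unreduced suspension is a bijection on $\pi_0$ (one checks this on reduced zeroth homology, the case of empty fixed-point sets being trivial). Hence $\pi_0^G(E_{gl}U(n)\times_{U(n)}\oL_n)\cong\pi_0^G(E_{gl}U(n)\times_{U(n)}|\mL_n|)$ naturally, and — because all maps in the zig-zag, as well as the comparison with the constant orthogonal space, are compatible with collapsing the $\oL_n$- resp. $|\mL_n|$-direction — this isomorphism is compatible with the maps to $\pi_0^G(B_{gl}U(n))$. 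So it suffices to identify $\pi_0^G(E_{gl}U(n)\times_{U(n)}|\mL_n|)$ together with its map to $\pi_0^G(B_{gl}U(n))$.

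For this I would reapply Lemma \ref{lem:fixedpointsquot}, which (again using that $EC(\alpha)$ is contractible with free $C(\alpha)$-action) expresses $\pi_0^G(E_{gl}U(n)\times_{U(n)}|\mL_n|)$ as the disjoint union over $\langle\alpha\rangle$ of the sets $\pi_0(|\mL_n|^{\Gamma(\alpha)})/C(\alpha)$. Since $\mL_n$ is the nerve of a topological poset on which $\Gamma(\alpha)$ acts through the automorphisms coming from $\alpha(G)\subseteq U(n)$, fixed points of the nerve are the nerve of the fixed subposet: $|\mL_n|^{\Gamma(\alpha)}=|\mL_n^{\alpha(G)}|$. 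Unwinding the definitions as in the paragraph preceding the statement — a decomposition $\C^n=\bigoplus_{i\in I}W_i$ being $\alpha(g)$-fixed in $\mL_n$ means $\alpha(g)$ permutes the summands, and doing this for all $g$ equips the indexing set $I$ with a $G$-action — one sees that $\mL_n^{\alpha(G)}$ is precisely the poset of non-trivial orthogonal decompositions of $\C^n$ on which $\alpha(G)$ acts by permuting summands, i.e. the \emph{weakly $\alpha(G)$-fixed} ones. Therefore $\pi_0(|\mL_n|^{\Gamma(\alpha)})$ is the set of connected components of this poset, that is, the set of weakly $\alpha(G)$-fixed non-trivial decompositions modulo the equivalence relation generated by refinement.

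It remains to reassemble the disjoint union over $\langle\alpha\rangle$. A conjugacy class $\langle\alpha:G\to U(n)\rangle$ corresponds, via Example \ref{exa:bglg}, to the isomorphism class of the $n$-dimensional $G$-representation $W=\alpha^*(\tau_n^{\C})$ on $\C^n$, and under this correspondence a weakly $\alpha(G)$-fixed non-trivial decomposition of $\C^n$ is the same datum as a weakly $G$-fixed non-trivial decomposition $W=\bigoplus_{i\in I}W_i$. Passing to the quotient by $C(\alpha)$ while simultaneously letting $\alpha$ range over its conjugacy class amounts precisely to identifying two pairs $(W,\bigoplus_i W_i)$ whenever they are related by an isomorphism of $G$-representations; together with the refinement relation this yields the asserted natural bijection
\[
\pi_0^G(E_{gl}U(n)\times_{U(n)}\oL_n)\;\cong\;\bigl\{(W,\textstyle\bigoplus_{i\in I}W_i)\bigr\}\big/(\text{isom. of reps, refinement}).
\]
The map to $\pi_0^G(B_{gl}U(n))\cong\{\text{isom. classes of $n$-dim. $G$-reps}\}$ is induced by collapsing the $|\mL_n|$-direction, which on these descriptions simply forgets the decomposition.

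I expect the main obstacle to be the reassembly step: matching the disjoint union $\bigsqcup_{\langle\alpha\rangle}(-)/C(\alpha)$ with the quotient by isomorphisms of representations is, conceptually, a comparison of the groupoid of $n$-dimensional $G$-representations-with-decomposition with its set of isomorphism classes, and although routine it must be set up carefully so as to keep track both of the refinement relation and of the compatibility with the map to $\pi_0^G(B_{gl}U(n))$. A secondary point, easy to overlook, is that the equivalence of Theorem \ref{theo:zig-zag} holds only after a single suspension, so one has to justify — as above, through reduced zeroth homology — that it nevertheless detects $\pi_0$.
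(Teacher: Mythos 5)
Your proposal is correct and follows essentially the same route as the paper: both apply Lemma \ref{lem:fixedpointsquot} to decompose the $G$-fixed points into pieces $EC(\alpha)\times_{C(\alpha)}\oL_n(\U_G)^{\Gamma(\alpha)}$, identify $\Gamma(\alpha)$-fixed points with weakly $\alpha(G)$-fixed decompositions, and invoke the comparison with the lattice $\mL_n$ from Section \ref{sec:lattice} to see that path components are governed by refinement. Your explicit justification that the suspension-level equivalence of Theorem \ref{theo:zig-zag} still detects $\pi_0$ (via $\widetilde H_0$) is a point the paper leaves implicit, but it is a refinement of the same argument rather than a different one.
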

Let $W=\bigoplus_{i\in I} W_i$ be such a weakly $G$-fixed partition and denote by $A_1,A_2,\hdots,A_k$ the orbits of the induced $G$-action on $I$. Then the decomposition $W=\bigoplus_{j=1,\hdots k}(\bigoplus_{i\in A_j}W_i)$ is strongly fixed. It is non-trivial if $I$ is not transitive, in which case it refines a strongly fixed decomposition with two summands. So we see:
\begin{Cor} \label{cor:type} Every point in $\pi_0^G(E_{gl}U(n)\times_{U(n)}\oL_n)$ is represented by a weakly $G$-fixed decomposition of one of the following two types:
\begin{enumerate}
	\item $W=W_1\oplus W_2$ and $W_1,W_2$ are $G$-subrepresentations.
	\item $W=W_1\oplus  \hdots \oplus W_k$ and the $W_i$ are permuted transitively by the $G$-action.
\end{enumerate}
\end{Cor}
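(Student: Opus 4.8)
The plan is to read Corollary \ref{cor:type} directly off the explicit description of $\pi_0^G(E_{gl}U(n)\times_{U(n)}\oL_n)$ established in the preceding Proposition, so that the argument is pure orbit-bookkeeping with no genuinely new content. First I would take an arbitrary class in $\pi_0^G(E_{gl}U(n)\times_{U(n)}\oL_n)$ and, invoking the Proposition, represent it by a pair $(W,\bigoplus_{i\in I}W_i)$ consisting of an $n$-dimensional $G$-representation $W$ together with a non-trivial weakly $G$-fixed decomposition; here I would use the facts recalled before the Proposition that every weakly $G$-fixed decomposition is realized (via the comparison with the decomposition lattice of Section \ref{sec:lattice}) and that the class depends on this datum only up to isomorphism of representations and refinement of decompositions.

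Next I would pass to the orbits $A_1,\dots,A_k$ of the induced $G$-action on the indexing set $I$ and set $V_j=\bigoplus_{i\in A_j}W_i$. Since $G$ merely permutes the summands $W_i$ within each block $A_j$, each $V_j$ is an honest $G$-subrepresentation of $W$, so $W=\bigoplus_{j=1}^k V_j$ is a \emph{strongly} $G$-fixed decomposition that is coarser than the original one, and hence represents the same class. If $k=1$ then $G$ acts transitively on $I$ and, because the original decomposition is non-trivial, $|I|>1$; in this case the original representative $(W,\bigoplus_{i\in I}W_i)$ is already of type (2). If $k\geq 2$ I would coarsen once more by grouping the $V_j$ into two non-zero blocks, say $W_1:=V_1$ and $W_2:=\bigoplus_{j\geq 2}V_j$: both are $G$-subrepresentations, $W=W_1\oplus W_2$ is non-trivial, and it is again a coarsening of the original decomposition, hence represents the same class — so we are in type (1).

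I do not expect any real obstacle here; the only points worth spelling out are that the coarser decompositions still qualify as weakly (in fact strongly) $G$-fixed decompositions and therefore give legitimate classes, that replacing a decomposition by a coarsening of it does not change the class (this is exactly the refinement relation being quotiented out), and that non-triviality is preserved in each case ($W_1$ and $W_2$ are both non-zero when $k\geq 2$, and $|I|>1$ by hypothesis when $k=1$). Each of these is immediate from the statement of the preceding Proposition, so the proof is short.
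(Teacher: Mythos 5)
Your argument is correct and is essentially the paper's own proof: the paper likewise passes to the orbits $A_1,\dots,A_k$ of $G$ on $I$, observes that the resulting decomposition is strongly fixed, and notes that it is either transitive (type (2)) or coarsens further to a strongly fixed two-summand decomposition (type (1)). The points you flag as needing care (coarsenings remain weakly fixed, refinement does not change the class, non-triviality is preserved) are exactly the ones the paper relies on implicitly.
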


Decompositions of the second type can be interpreted in the following way: Let $H$ be the isotropy subgroup of $W_1$ under the $G$-action on the set $\{W_i\}_{i=1,\hdots,k}$. Then $H$ is a subgroup of index $k$ in $G$, $W_1$ is an $H$-representation and the map $\Ind_H^G(W_1)\to W$ adjoint to the inclusion gives an isomorphism of $G$-representations. Vice versa, every induced representation $\Ind_H^G W'$ (where $H$ has finite index in $G$) possesses the weakly $G$-fixed decomposition $\Ind_H^G W'=\bigoplus _{gH\in G/H} gW'$. Hence, a general weakly $G$-fixed point of the decomposition poset can be interpreted as exhibiting $W$ as a combination of sums and transfers.

\begin{Remark} This also lets us determine $\pi_0^G$ of the cone $E_{gl}U(n)_+\wedge_{U(n)} \oL_n^\dia$ (and hence of the quotient $ku^n/ku^{n-1}$ via Proposition \ref{prop:pi0susp}) explicitly. It is given by isomorphism classes of irreducible $n$-dimensional $G$-representations which are not the induction of a representation from a proper finite index subgroup. The tautological $U(n)$-representation always has this property, so we see that the maps $\upi_0 (ku^{n-1})\to \upi_0 (ku^n)$ are never globally surjective.
\end{Remark}

The decompositions of Corollary \ref{cor:type} have universal representatives: Given $k,l>0$ with $k+l=n$, the $(U(k)\times U(l))$-representation obtained by restricting $\tau_n^{\C}$ along the embedding $U(k)\times U(l)\hookrightarrow U(n)$ decomposes as $\tau_k^{\C}\oplus \tau_l^{\C}$. We denote the element of $\pi_0^{U(k)\times U(l)}(E_{gl}U(n)\times_{U(n)}\oL_n)$ associated to this decomposition by $\widetilde{\alpha}(k,l)$. Likewise, given $i,j\in \N$ with $i\cdot j=n$, the restriction of $\tau_n^{\C}$ along $\Sigma_i\wr U(j)\hookrightarrow U(n)$ is the transfer of $p^*(\tau_j^{\C})$, where $p$ is the projection to $U(j)$. Hence there is an associated weakly $G$-fixed decomposition of type $(2)$ above, which we denote by $\widetilde{\beta}(i,j)$. We obtain:
\begin{Cor} \label{cor:gen} The $\Rep$-functor $\upi_0((E_{gl}U(n)\times_{U(n)}\oL_n))$ is generated by the elements $\{\widetilde{\alpha}(k,l)\}_{k+l=n}$ and $\{\widetilde{\beta}(i,j)\}_{i\cdot j=n}$. Hence, by Proposition \ref{prop:pi0susp}, so is $\upi_0(\Sigma^{\infty}_+(E_{gl}U(n)\times_{U(n)}\oL_n))$ as a global functor.
\end{Cor}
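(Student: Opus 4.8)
The plan is to deduce Corollary~\ref{cor:gen} directly from Corollary~\ref{cor:type} together with the universal property of the elements $\widetilde{\alpha}(k,l)$ and $\widetilde{\beta}(i,j)$ recorded just before its statement, handling the unstable $\Rep$-functor first and then bootstrapping to the stable global functor via Proposition~\ref{prop:pi0susp}.

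For the unstable assertion, I would fix a compact Lie group $G$ and an arbitrary element of $\pi_0^G(E_{gl}U(n)\times_{U(n)}\oL_n)$. By Corollary~\ref{cor:type} it is represented by a weakly $G$-fixed decomposition $W=\bigoplus_{i}W_i$ that is either of type (1), so $W=W_1\oplus W_2$ with $W_1,W_2$ genuine $G$-subrepresentations of dimensions $k$ and $l$, $k+l=n$; or of type (2), so there are $i$ summands, each of dimension $j$ with $ij=n$, permuted transitively by $G$. In the first case, an isometric identification $W_1\cong\C^k$, $W_2\cong\C^l$ produces a homomorphism $\widetilde{\beta}\colon G\to U(k)\times U(l)$, unique up to conjugacy, whose composite with the block embedding into $U(n)$ classifies $W$; unwinding the natural bijection of the Proposition preceding Corollary~\ref{cor:type} (and the fact that restriction maps of $\upi_0$ of a homotopy orbit space correspond to pulling back representations equipped with a decomposition) shows that the chosen class equals $\widetilde{\beta}^*\bigl(\widetilde{\alpha}(k,l)\bigr)$. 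In the second case, let $H\le G$ be the stabilizer of $W_1$ under the permutation action, an index-$i$ subgroup with $W\cong\Ind_H^G W_1$ and $\dim_\C W_1=j$; this data determines a homomorphism $G\to\Sigma_i\wr U(j)$ up to conjugacy, and the same unwinding identifies the class with the restriction of $\widetilde{\beta}(i,j)$ along it. Hence every element of every $\pi_0^G$ is a restriction of one of the listed elements, so they generate $\upi_0(E_{gl}U(n)\times_{U(n)}\oL_n)$ as a $\Rep$-functor.

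For the stable assertion I would invoke Proposition~\ref{prop:pi0susp}: $\pi_0^G\bigl(\Sigma^{\infty}_+(E_{gl}U(n)\times_{U(n)}\oL_n)\bigr)$ is free abelian on the classes $\tr_H^G([x])$, with $H$ running over conjugacy classes of subgroups of $G$ with finite Weyl group and $[x]$ over $W_GH$-orbits in $\pi_0^H(E_{gl}U(n)\times_{U(n)}\oL_n)$. By the unstable statement each such $[x]$ is the restriction along some homomorphism $\alpha$ of one of the $\widetilde{\alpha}(k,l)$ or $\widetilde{\beta}(i,j)$; since the comparison map $\pi_0^H(X)\to\pi_0^H(\Sigma^{\infty}_+X)$, $[x]\mapsto[x]$, commutes with restrictions along group homomorphisms, the corresponding stable class is $\alpha^*$ of the image of that generator, and applying $\tr_H^G$ places $\tr_H^G([x])$ in the sub-global-functor generated by the images of the $\widetilde{\alpha}(k,l)$ and $\widetilde{\beta}(i,j)$. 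As these transfers span $\pi_0^G$, the same elements generate $\upi_0(\Sigma^{\infty}_+(E_{gl}U(n)\times_{U(n)}\oL_n))$ as a global functor.

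The step I expect to require the most care is the identification $\widetilde{\beta}^*\bigl(\widetilde{\alpha}(k,l)\bigr)$ (resp.\ $\widetilde{\beta}^*\bigl(\widetilde{\beta}(i,j)\bigr)$) with the class of the given decomposition, i.e.\ making the quoted universal property precise: this means tracking the bijection of the Proposition before Corollary~\ref{cor:type} through the identifications of $B_{gl}K$ with conjugacy classes of homomorphisms into $K$ for $K=U(k)\times U(l)$ and $K=\Sigma_i\wr U(j)$, and checking that the tautological decomposition of $\C^n$ over each such $K$ pulls back along $\widetilde{\beta}$ to the decomposition at hand. All the inputs for this are already assembled in Sections~\ref{sec:kurank}--\ref{sec:pi0ku}, so I expect it to be careful bookkeeping rather than a genuine obstacle; and no further cases arise, since the classification in Corollary~\ref{cor:type} is exhaustive.
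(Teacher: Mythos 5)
Your proposal is correct and follows essentially the same route as the paper, which treats the corollary as an immediate consequence of Corollary \ref{cor:type} together with the observation that the two types of weakly fixed decompositions have universal representatives $\widetilde{\alpha}(k,l)$ and $\widetilde{\beta}(i,j)$, and then passes to the stable statement via Proposition \ref{prop:pi0susp} exactly as you do. The "careful bookkeeping" you flag at the end is indeed all that remains, and the paper likewise leaves it implicit.
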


So it remains to show that $\psi_n$ indeed maps the $\widetilde{\alpha}'$ and $\widetilde{\beta}'s$ to the right hand sides of Equations (\ref{eq:alpha}) and (\ref{eq:beta}) respectively. For this we require an explicit construction of $\psi_n$, which we now explain.

We quickly recall the objects involved: An element of $E_{gl}U(n)(V)$ is a linear isometric embedding $\C^n\to \Sym(\C\otimes V)$. Points in $\oL_n$ are represented by tuples $(W_i,x_i)_{i\in I}$ indexed on a finite set $I$, where the $x_i$ are elements of $V$ and the $W_i$ are pairwise orthogonal subspaces of $\C^n$ which add up to all of $\C^n$. Furthermore, these tuples are required to satisfy the two conditions $\sum \dim(W_i)\cdot x_i=0$ and $\sum \dim(W_i)|x_i|^2=1$. Finally, elements of $ku^n(V)$ are also represented by tuples $(W_i,x_i)_{i\in I}$, but this time the $W_i$ are orthogonal subspaces of $\Sym(\C\otimes V)$ and the only requirement is that the sum of the dimensions is at most $n$.

Now we come to the construction of $\psi_n$. We would like to define each level $(E_{gl}U(n)\times_{U(n)}\oL_n)(V)_+\wedge S^V\to ku^{n-1}(V)$ by sending $(\varphi,(W_i,x_i)_{i\in I},v)$ to the tuple $(\varphi(W_i),x_i+v)_{i\in I}$. However, even though all the $W_i$ necessarily have smaller dimension than $n$, their sum is still $n$-dimensional. So for fixed $v$ this tuple does not represent an element in $ku^{n-1}$. The idea is to shrink the domain of each of the coordinate functions $v\mapsto (\varphi(W_i),x_i+v)$, so that they become equal to the basepoint outside a certain neighborhood of $-x_i$.
For this let $s:[0,\infty]\to [0,\infty]$ be a map which induces a homeomorphism $[0,1/(2n^2)]\xr{\cong} [0,\infty]$ and is constant $\infty$ on $[1/(2n^2),\infty]$. Furthermore, given a finite tuple $x=(x_i)_{i\in I}$ of vectors of a (finite dimensional) real inner product space $V$ we let $p_x:V\to \langle \{x_i\}_{i\in I} \rangle \subseteq V$ denote the linear map defined by
\[ p_x(v)=\sum_{i\in I} \langle v,x_i\rangle \cdot x_i. \]
We need the following properties of this map:
\begin{Lemma} \label{lem:px} The value $p_x(v)$ only depends on the orthogonal projection of $v$ onto the span of the $x_i$ and is an automorphism when restricted to this span. Furthermore, it satisfies the inequality
\[ |p_x(x_j)|\geq |x_j|^3 \]
for every $j$ in $I$.
\end{Lemma}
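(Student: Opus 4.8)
The plan is to dispatch the three assertions in turn, all of which reduce to elementary properties of the positive semidefinite operator $p_x$. First I would decompose an arbitrary $v$ as $v = v' + v''$, where $v'$ is the orthogonal projection of $v$ onto the span $S := \langle \{x_i\}_{i\in I}\rangle$ and $v'' \perp S$; then $\langle v, x_i\rangle = \langle v', x_i\rangle$ for every $i$, so $p_x(v) = p_x(v')$, which is the first claim. The same formula shows $p_x(v) \in S$ always, so $p_x$ restricts to an endomorphism of $S$.

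Next, for the second claim I would observe that $p_x$ is self-adjoint (immediate from the defining formula) and positive semidefinite, since $\langle p_x(v), v\rangle = \sum_{i\in I}|\langle v, x_i\rangle|^2 \geq 0$. If $v \in S$ satisfies $\langle p_x(v), v\rangle = 0$, then $\langle v, x_i\rangle = 0$ for all $i$, hence $v \perp S$, and since $v \in S$ this forces $v = 0$. Thus $p_x|_S$ is a positive definite self-adjoint endomorphism of the finite-dimensional space $S$, hence invertible, i.e.\ an automorphism.

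For the inequality I would compute $\langle p_x(x_j), x_j\rangle = \sum_{i\in I}|\langle x_i, x_j\rangle|^2$ and discard all but the summand indexed by $j$ itself (the remaining terms being nonnegative), obtaining $\langle p_x(x_j), x_j\rangle \geq |x_j|^4$. Combining this with the Cauchy–Schwarz estimate $\langle p_x(x_j), x_j\rangle \leq |p_x(x_j)|\,|x_j|$ gives $|p_x(x_j)|\,|x_j| \geq |x_j|^4$, and dividing by $|x_j|$ (the case $x_j = 0$ being trivial) yields $|p_x(x_j)| \geq |x_j|^3$.

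There is no real obstacle here; the only point deserving a word of care is that the tuple $(x_i)_{i\in I}$ is \emph{a priori} arbitrary, so the $x_i$ need be neither distinct nor nonzero. However, the argument never uses distinctness — the crucial lower bound only extracts the single diagonal summand — so it applies verbatim to any finite tuple, which is exactly the generality in which $\psi_n$ needs it.
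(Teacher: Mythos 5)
Your proof is correct and follows essentially the same route as the paper: both arguments note that $\langle p_x(v),v\rangle=\sum_i\langle v,x_i\rangle^2$ to get injectivity on the span, and both derive the inequality from the single diagonal summand combined with Cauchy--Schwarz. The only difference is cosmetic (you phrase injectivity via positive definiteness of the self-adjoint operator rather than directly), so nothing further is needed.
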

\begin{Remark} The reason for using $p_x$ instead of the orthogonal projection onto the span of the $x_i$ is that the latter is not continuous in the $x_i$. However, the linear homotopy from the identity to $p_x$ restricts to an isotopy on this span and hence for a fixed tuple $x$ there is essentially no difference.
\end{Remark}
\begin{proof} If a vector is orthogonal to each of the $x_i$ it is sent to $0$ under $p_x$ and hence the value only depends on the orthogonal projection onto the span. For the other two statements we note that the scalar product of $p_x(v)$ and $v$ is equal to the sum of the squares $\langle v,x_i\rangle ^ 2$. Hence, if $v$ is a non-zero vector in the span of the $x_i$, this scalar product is non-zero and in particular $p_x(v)$ is non-zero, so the restriction of $p_{x}$ to the span is injective. Finally, the stated inequality follows from
\[ |p_x(x_j)|\cdot |x_j|\geq \langle p_x(x_j),x_j \rangle = \sum _{i\in I} \langle x_j,x_i \rangle ^2 \geq \langle x_j,x_j \rangle^ 2=|x_j|^ 4, \] where the first step is the Cauchy-Schwarz inequality.
\end{proof}

We use this to obtain a selfmap $s_x^V:S^V\to S^V$ via the formula
\[ s_x^V(v)=(s(|p_x(v)|)-|p_xv|)\cdot p_x(v)+v. \]
This map sends every vector $v$ for which $p_x(v)$ has length larger than $1/(2n^2)$ to the basepoint and is the identity on the orthogonal complement of the span of the $x_i$. Finally, for an element $(\varphi,(W_i,x_i)_{i\in I},v)$ of $(E_{gl}U(n)\times_{U(n)} \oL_n)(V)$ we set
\[ \psi_n(\varphi,(W_i,x_i)_{i\in I},v)=(\varphi(W_i),s_x^V(x_i+v))_{i\in I}.\]

This gives a map of orthogonal spectra: It commutes with the action of elements $A$ of $O(V)$ because of the equality $A(\langle v,x_i\rangle \cdot x_i)=\langle Av,Ax_i\rangle \cdot Ax_i$. Furthermore, if $W$ is another vector space and $w$ an element, then $s_x^{V\oplus W}(x_i+v+w)=s_x^V(x_i+v)+w$ since $w$ is orthogonal to all the $x_i$ and hence $\psi_n$ also commutes with the structure map. In fact, assuring this equality was the reason for introducing the projections into the formula.

Finally, we have to show that $\psi_n$ does indeed take image in $ku^{n-1}$. Each component function $(v\mapsto (\varphi(W_i),s_x^V(x_i+v))$ is equal to the basepoint on all points $v$ for which $p_x(v)$ is more than $1/(2n^2)$ away from $-p_x(x_i)$. We claim that for every fixed $v$ there is at least one $i$ such that this is the case. If this was not true, it would imply that all the $p_x(x_i)$ are less than $1/n^2$ away from each other. Now the conditions for the $x_i$ come into play. Since $\sum \dim(W_i)|x_i|^2=1$, there is at least one $j$ such that $|x_j|^2\geq 1/n$ and hence, by Lemma \ref{lem:px}, we have $|p_x(x_j)|\geq 1/n^ {3/2}\geq 1/n^2$. The equality 
\[ \sum \dim(W_i) \cdot p_x(x_j-x_i) + \sum \dim(W_i) \cdot p_x(x_i)=n\cdot p_x(x_j)\]
implies that  \[|\sum \dim(W_i)\cdot p_x(x_i)|\geq n\cdot |p_x(x_j)|-\sum \dim(W_i)|p_x(x_i-x_j)|>1/n-1/n=0,\]
which contradicts the condition $\sum \dim(W_i)\cdot x_i=0$. Hence, for every $(\varphi;(W_i,x_i)_{i\in I})$ the tuple $(\varphi(W_i),s_x^V(x_i+v))_{i\in I}$ contains at least one basepoint and thus represents an element in $ku^{n-1}$, as the total dimension of the remaining $\varphi(W_i)$ is strictly less than $n$.

Some justification is also needed that $\psi_n$ indeed turns Diagram (\ref{eq:cofiber}) into a map of cofiber sequences, but we outsource this to Appendix \ref{app:cofiber}.

Now we let $(\varphi,(W_i,x_i)_{i\in I})$ be a $G$-fixed point of $(E_{gl}U(n)\times_{U(n)} \oL_n)(V)$, assume that the balls of radius $1/(2n^2)$ around the $p_x(x_i)$ are pairwise disjoint and denote the span of the $x_i$ by $V'$. As noted before, the set $\{x_i\}_{i\in I}$ is permuted by the $G$-action. Then the induced $G$-map $S^V\to ku^{n-1}(V)$ given by $v\mapsto \psi_n(\varphi,(W_i,x_i)_{i\in I},v)$ is equal to the composite
\[ S^V\xr{p_{x}\wedge id} S^{V'}\wedge S^{V-V'}\to (\{x_i\}_+\wedge S^{V'})\wedge S^{V-V'}\cong \bigvee_{i \in I} S^V\xr{\bigvee \varphi(W_i)} ku^{n-1}(V), \]
where the second map is the smash product of $S^{V-V'}$ with the pinch map which collapses everything outside the balls of radius $1/(2n^2)$ around the $x_i$ to a point, and $\varphi(W_i)$ maps $v$ to the configuration $(\varphi(W_i),v)$. Up to homotopy, the first map can be replaced by the canonical homeomorphism $S^V\cong S^{V'}\wedge S^{V-V'}$, since $p_x$ is isotopic to the identity on $V'$.

This lets us prove:
\begin{Prop} \label{prop:image} If an element $y\in \pi_0^G(E_{gl}U(n)\times_{U(n)}\oL_n)$ is associated to $W=W_1\oplus W_2$ and $W_1,W_2$ are $G$-fixed, then \[ (\psi_n)_*(y)=[W_1]+[W_2]\in \pi_0^G (ku^{n-1}). \]
If $y$ is associated to $W=W_1\oplus \hdots \oplus W_k$ and the $W_i$ are permuted transitively by $G$, then \[ (\psi_n)_*(y)={\tr}_H^G [W_1]\in \pi_0^G (ku^{n-1}), \]
where $H$ is the subgroup of elements fixing $W_1$. In particular, 
\[ (\psi_n)_*(\widetilde{\alpha}(k,l))=p_1^*(\tau_k^{\C})+p_2^*(\tau_l^{\C})\]
and
\[ (\psi_n)_*(\widetilde{\beta}(i,j))={\tr}_{\Sigma_{i-1}\wr U(j)}^{\Sigma_i\wr U(j)} (p^*(\tau_j^{\C})).\] 
\end{Prop}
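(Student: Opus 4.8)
The plan is to compute $(\psi_n)_*$ on the classes $\widetilde{\alpha}(k,l)$ and $\widetilde{\beta}(i,j)$ (and, more generally, on the classes attached to the two types of decompositions in Corollary \ref{cor:type}) by evaluating the explicit map $\psi_n$ on a carefully chosen $G$-fixed representative and reading off the homotopy class of the resulting $G$-map $S^V\to ku^{n-1}(V)$ from the factorization obtained in the paragraph preceding the proposition.

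First I would reduce to a convenient representative. Since $\psi_n$ is a morphism of orthogonal spectra, $(\psi_n)_*$ is well defined on $\pi_0$, and $y\in\pi_0^G(E_{gl}U(n)\times_{U(n)}\oL_n)$ depends only on its path component; I may therefore evaluate on any $G$-fixed point $(\varphi,(W_i,x_i)_{i\in I})$ lying over $y$, and in particular on one for which the balls of radius $1/(2n^2)$ around the $p_x(x_i)$ are pairwise disjoint -- the hypothesis under which the factorization before the proposition was derived. Such a representative exists: spreading the $x_i$ into a rescaled ``regular simplex''-type configuration, permuted according to the weakly $G$-fixed decomposition attached to $y$, the Cauchy--Schwarz estimate of Lemma \ref{lem:px} together with $\sum_i\dim(W_i)|x_i|^2=1$ forces the $p_x(x_i)$ to lie at pairwise distance greater than $1/n^2$. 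For such a representative the $G$-map representing $(\psi_n)_*(y)$ is the composite
\[ S^V\;\cong\;S^{V'}\wedge S^{V-V'}\;\xrightarrow{\,c\wedge\mathrm{id}\,}\;\Bigl(\bigvee_{i\in I}S^V\Bigr)\;\xrightarrow{\ \bigvee_i\varphi(W_i)\ }\;ku^{n-1}(V), \]
where $V'=\langle x_i\rangle$, the map $c$ collapses everything outside the pairwise disjoint, $G$-invariant balls around the $x_i$, and $\varphi(W_i)\colon v\mapsto(\varphi(W_i),v)$.

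Next I would handle the two types from Corollary \ref{cor:type} separately. For a decomposition of type $(1)$ one has $I=\{1,2\}$ with trivial $G$-action, so $x_1,x_2$ are $G$-fixed, $W_1,W_2$ are $G$-subrepresentations, and $c$ is $G$-homotopic to the pinch map $S^V\to S^V\vee S^V$; moreover $\varphi|_{W_i}\colon W_i\to\Sym(\C\otimes V)$ is a $G$-equivariant isometric embedding, so by the definition of the classes $[\,\cdot\,]$ at the start of Section \ref{sec:pi0ku} the map $\varphi(W_i)$ represents $[W_i]\in\pi_0^G(ku^{n-1})$, and the composite represents $[W_1]+[W_2]$. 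Specializing to $G=U(k)\times U(l)$ with $\res^{U(n)}_{U(k)\times U(l)}(\tau_n^\C)=\tau_k^\C\oplus\tau_l^\C$ gives $W_1=p_1^*(\tau_k^\C)$, $W_2=p_2^*(\tau_l^\C)$, hence $(\psi_n)_*(\widetilde{\alpha}(k,l))=p_1^*(\tau_k^\C)+p_2^*(\tau_l^\C)$. For a decomposition of type $(2)$, the set $\{x_i\}_{i\in I}\subseteq V$ is a $G$-equivariantly embedded copy of $G/H$, where $H=\mathrm{Stab}(W_1)=\mathrm{Stab}(x_1)$, and $\varphi|_{W_1}\colon W_1\to\Sym(\C\otimes V)$ is an $H$-equivariant isometric embedding, so $\varphi(W_1)$ represents $[W_1]\in\pi_0^H(ku^{n-1})$; the collapse $c$ onto disjoint $G$-invariant balls around this embedded $G/H$ is precisely the Thom--Pontryagin map appearing in the ``Finite index transfers'' example, and, after matching the $G$-actions via $g\cdot\varphi(W_i)=\varphi(W_{gi})$, postcomposing with $\bigvee_i\varphi(W_i)$ reproduces the map $[g]\wedge v\mapsto g\cdot f(g^{-1}v)$ with $f=\varphi(W_1)$. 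Hence the composite represents $\tr_H^G([W_1])$. Specializing to $G=\Sigma_i\wr U(j)$ with $\res^{U(n)}_{\Sigma_i\wr U(j)}(\tau_n^\C)=\bigoplus_{l=1}^i W_l$ the block decomposition ($W_l\cong\C^j$) gives $H=U(j)\times(\Sigma_{i-1}\wr U(j))$ and $W_1=p^*(\tau_j^\C)$ for $p$ the projection onto the first $U(j)$-factor, whence $(\psi_n)_*(\widetilde{\beta}(i,j))=\tr_H^G(p^*(\tau_j^\C))$, the right-hand side of \eqref{eq:beta}.

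I expect the only genuine obstacle to be the $G$-equivariant bookkeeping in the type-$(2)$ case: one must verify that the radius $1/(2n^2)$ can be chosen so that the balls around the $x_i$ are simultaneously pairwise disjoint and $G$-permuted, so that $c$ really is the transfer Thom--Pontryagin pinch, and that the equivalence relation defining the homotopy orbit forces $\varphi$ to intertwine the $G$-action on $\Sym(\C\otimes V)$ with the action on $\C^n$ through the associated homomorphism $G\to U(n)$ in such a way that $g\cdot\varphi(W_i)=\varphi(W_{gi})$ -- this is what makes the composite agree with $[g]\wedge v\mapsto g f(g^{-1}v)$ rather than an inverse-twisted variant. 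Everything else is a direct unwinding of the explicit formula for $\psi_n$ against the descriptions of the classes $[W]$, of restrictions, and of finite-index transfers recalled in Sections \ref{sec:orthspec} and \ref{sec:pi0ku}.
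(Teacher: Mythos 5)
Your proposal is correct and follows essentially the same route as the paper's proof: choose a $G$-fixed representative with the $x_i$ sufficiently spread out, invoke the factorization of $\psi_n$ through the (transfer) pinch map established just before the proposition, and identify the resulting composite with $[W_1]+[W_2]$ in the split case and with the Thom--Pontryagin description of $\tr_H^G[W_1]$ in the induced case, then specialize to the universal examples. The equivariance bookkeeping you flag at the end is exactly what the paper checks, by observing that each wedge summand of the final map is the conjugate $g\cdot\varphi(W_1)\circ g^{-1}$, matching the transfer formula $[g]\wedge v\mapsto gf(g^{-1}v)$.
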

\begin{proof}  Without loss of generality we can assume that $W$ is equal to $\C^n$ with some $G$-action. We start with the first case. Let $(\varphi:\C^n\to \Sym(\C\otimes V),(W_1,x_1),(W_2,x_2))\in (E_{gl}U(n)\times_{U(n)} \oL_n)(V)$ be a fixed point giving rise to such a decomposition, for some finite dimensional $G$-representation $V$. Let $V'$ be the (one-dimensional) span of the $x_i$. It carries the trivial $G$-action, since the $x_i$ are fixed by assumption. Furthermore, as there are only two points it is automatic that the intervals of radius $1/(2n^2)$ around them (or their images under $p_x$) are disjoint and thus the description above shows that $(\psi_n)_*(y)$ is the class of the composition
\[ S^{V'} \wedge S^{V-V'}\to (\{x_1,x_2\}_+\wedge S^{V'})\wedge S^{V-V'}\cong S^V\wedge S^V\xr{\varphi(W_1)\vee \varphi (W_2)}ku^{n-1}(V). \]
Since $G$ acts trivially on the set $\{x_1,x_2\}$, the first map is just the usual pinch map and the second one is the wedge of two $G$-equivariant maps. Hence, this composite represents their sum $[\varphi(W_1)]+[\varphi(W_2)]=[W_1]+[W_2]\in \pi_0^G (ku^{n-1})$.

Now we let $y$ correspond to a decomposition of type two, i.e., $\C^n=W_1\oplus \hdots \oplus W_k$ and the $W_i$ are permuted transitively by $G$. Let $(\varphi,(W_i,x_i)_{i=1,\hdots,k})$ be a representative for this fixed point, chosen in a way that the $x_i$ have distance larger than $1/(n^2)$ from each other. We again denote by $V'$ their span. Then, as seen above, $(\psi_n)_*(y)$ is represented by the composite

\[ S^V\wedge S^{V-V'}\to (\{x_1,\hdots,x_k\}_+\wedge S^{V'})\wedge S^{V-V'}\cong \bigvee_{i=1,\hdots,k} S^V\xr{\bigvee \varphi(gW_i)} ku^{n-1}(V). \]
Using that the $G$-set $\{x_1,\hdots,x_k\}$ is isomorphic to $G/H$, we see that this is precisely the definition of the transfer of the class $[\varphi(W_1)]=[W_1]\in \pi_0^H (ku^{n-1})$ recalled in Section \ref{sec:orthspec}: The first map is the ``transfer pinch map'' and each wedge summand of the second equals the composite $S^V\xr{g^{-1}}S^V\xr{\varphi(W_1)}ku^{n-1} \xr{g\cdot} ku^{n-1}$. This finishes the proof.
\end{proof}

Now we are ready for:
\begin{proof}[Proof of Theorem \ref{theo:pirank}] We proceed by induction on $n$, the case $n=0$ being clear. So now let $n$ be a positive natural number and assume the statement to be true for $n-1$. Together with the induction hypothesis and the fact that $\upi_0(\Sigma^{\infty}_+ B_{gl}U(n))$ is generated by the tautological $U(n)$-representation, the exact sequence \[ \upi_0 (\Sigma^{\infty}_+ (E_{gl}U(n)\times_{U(n)}\oL_n))\xr{(p_*,-(\psi_n)_*)} \upi_0(\Sigma^{\infty}_+ B_{gl}U(n))\oplus \upi_0(ku^{n-1})\xr{\binom{(\alpha_n)_*}{(i_n)_*}} \upi_0(ku^n)\to 0 \] of Corollary \ref{cor:exact} shows that the global functor $\upi_0 (ku^n)$ is generated by the elements $\tau_1^{\C},\hdots,\tau_n^{\C}$. It further shows that the relations are generated by the ones of $\upi_0 (ku^{n-1})$, which we know by induction, and the image of $\upi_0 (\Sigma^{\infty}_+ (E_{gl}U(n)\times_{U(n)}\oL_n))$ in $\upi_0(\Sigma^{\infty}_+ B_{gl}U(n))\oplus \upi_0(ku^{n-1})$.
By Corollary \ref{cor:gen}, the global functor $\upi_0 (\Sigma^{\infty}_+ (E_{gl}U(n)\times_{U(n)}\oL_n))$ is generated by the elements $\{\widetilde{\alpha}(k,l)\}_{k+l=n}
$ and $\{\widetilde{\beta}(i,j)\}_{i\cdot j=n}$, which are sent to the relations $\alpha(k,l)$ and $\beta(i,j)$ by Proposition \ref{prop:image}, so we are done. \end{proof}
\begin{Remark} The fact that for infinite compact Lie groups $G$ the group $\pi_0^G(ku)$ fails to be the representation ring is reflected in the rank filtration: Infinite index inductions of representations do not give rise to a fixed point in the associated decomposition poset and hence these are never identified with the homotopy theoretic transfer.
\end{Remark}

\subsection{The complexity filtration on $0$-th homotopy}
\label{sec:pi0comp}
In this section we explain the behavior of the complexity filtration on $\pi_0$, again starting with the case of $ku$.  The method is similar to that of the last section, but shorter as we can make use of the constructions we made there. Also, as mentioned in the introduction, the result can be stated more compactly. Again we denote by $\tau_n^{\C}$ the tautological complex $U(n)$-representation. Then we show:
\begin{Theorem}[Complexity filtration on $\upi_0$] \label{theo:picomp}For all $n\in \N$ the map $q_n:ku\to A^u_n$ induces a surjection on $\upi_0$ and the kernel is generated as a global functor by the single element  \[\tau_n^{\C}-n\cdot [1]\in \pi_0^ {U(n)} (ku).\] 
\end{Theorem}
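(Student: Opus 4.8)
The plan is to read everything off the homotopy-pushout square
\[
\xymatrix{ ku^n \ar[r]^{i_n} \ar[d]_{j_n} & ku \ar[d]^{q_n} \\ Sp^n \ar[r]^{r_n} & A^u_n }
\]
that defines $A^u_n$. Since all four spectra are connective, its Mayer--Vietoris sequence yields an exact sequence of global functors
\[
\upi_0(ku^n) \xrightarrow{((j_n)_*,\,(i_n)_*)} \upi_0(Sp^n)\oplus\upi_0(ku) \longrightarrow \upi_0(A^u_n) \to 0 ,
\]
and this gives at once two reductions: $\upi_0(A^u_n)$ is the sum of the images of $(q_n)_*$ and $(r_n)_*$, and $\ker\bigl((q_n)_*\bigr)=(i_n)_*\bigl(\ker\bigl((j_n)_*\colon\upi_0(ku^n)\to\upi_0(Sp^n)\bigr)\bigr)$.

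For surjectivity of $(q_n)_*$ it suffices, by the first reduction, to show $\operatorname{im}\bigl((r_n)_*\bigr)\subseteq\operatorname{im}\bigl((q_n)_*\bigr)$. The unit $\Sph\to ku$ factors through $ku^1\simeq\Sigma^{\infty}_+B_{gl}U(1)\hookrightarrow ku^n$, and the composite $\Sph\to ku^n\xrightarrow{j_n}Sp^n$ equals the standard inclusion $\Sph=Sp^1\hookrightarrow Sp^n$, because $j_n$ restricted to $ku^1$ is the augmentation $\Sigma^{\infty}_+B_{gl}U(1)\to\Sph$. Precomposing the defining square with these maps out of $\Sph$ therefore gives a commuting square, and since $\upi_0(\Sph)\to\upi_0(Sp^n)$ is surjective (Schwede, \cite{Sch14}) every class in the image of $(r_n)_*$ already lies in the image of $(q_n)_*$. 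Hence $\upi_0(A^u_n)=\operatorname{im}\bigl((q_n)_*\bigr)$.

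For the kernel, first note that $\tau_n^{\C}-n\cdot[1]$ lies in $\ker\bigl((j_n)_*\bigr)$: the map $j_n$ only records the dimensions of a configuration of subspaces, so $(j_n)_*$ sends the class of an arbitrary $d$-dimensional representation to $d\cdot[\mathrm{pt}]$, and in particular $(j_n)_*(\tau_m^{\C})=(j_n)_*(m\cdot[1])$ for every $m\le n$. Restriction along the block inclusion $U(m)\hookrightarrow U(n)$, $\varphi\mapsto\varphi\oplus\mathrm{id}_{\C^{n-m}}$, sends $\tau_n^{\C}-n\cdot[1]$ to $\tau_m^{\C}-m\cdot[1]$, so the global subfunctor $\langle\tau_n^{\C}-n\cdot[1]\rangle$ of $\upi_0(ku^n)$ already contains all of the $\tau_m^{\C}-m\cdot[1]$; applying $(i_n)_*$ (which commutes with restrictions, transfers and sums) yields $\langle\tau_n^{\C}-n\cdot[1]\rangle\subseteq\ker\bigl((q_n)_*\bigr)$. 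The substantial part is the reverse inclusion, i.e.\ that $\ker\bigl((j_n)_*\bigr)$ equals $\langle\tau_n^{\C}-n\cdot[1]\rangle$. Here I would use that $\upi_0(ku^n)$ is generated as a global functor by $\tau_1^{\C},\dots,\tau_n^{\C}$ (Theorem~\ref{theo:pirank}) together with Schwede's presentation $\upi_0(Sp^n)\cong A(-)/(\tau_n^{\Sigma}-n\cdot 1)$. Let $\iota\colon A(-)=\upi_0(\Sph)\to\upi_0(ku^n)$ be the linearization induced by $\Sph\to ku^n$. Then $(j_n)_*\circ\iota$ is the quotient map $A(-)\to\upi_0(Sp^n)$ (it is induced by $Sp^1\hookrightarrow Sp^n$), and $\iota(\tau_n^{\Sigma}-n\cdot 1)=\res_{\Sigma_n}^{U(n)}(\tau_n^{\C}-n\cdot[1])$ lies in $\langle\tau_n^{\C}-n\cdot[1]\rangle$; so $\iota$ and $(j_n)_*$ descend to maps $\overline{\iota}\colon\upi_0(Sp^n)\to\upi_0(ku^n)/\langle\tau_n^{\C}-n\cdot[1]\rangle$ and $\overline{\jmath}$ in the other direction with $\overline{\jmath}\circ\overline{\iota}=\mathrm{id}$. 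Finally $\overline{\iota}$ is surjective, because modulo $\langle\tau_n^{\C}-n\cdot[1]\rangle$ each generator $\tau_m^{\C}$ becomes $m\cdot[1]=\iota(m\cdot[\mathrm{pt}])$; hence $\overline{\iota}$ is an isomorphism, $\overline{\jmath}=\overline{\iota}^{-1}$, and $\ker\bigl((j_n)_*\bigr)=\langle\tau_n^{\C}-n\cdot[1]\rangle$. Applying $(i_n)_*$ gives $\ker\bigl((q_n)_*\bigr)=\langle\tau_n^{\C}-n\cdot[1]\rangle\subseteq\upi_0(ku)$, which is the assertion.

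The step I expect to be the real obstacle is the identification of $\ker\bigl((j_n)_*\bigr)$: one must be sure that the relations $a(k,l)$ and $b(i,j)$ present in $\upi_0(ku^n)$ do not generate anything in $\ker\bigl((j_n)_*\bigr)$ beyond the single relation $\tau_n^{\Sigma}-n\cdot 1$ once all dimensions are collapsed. The comparison via $\iota$ and $\overline{\iota}$ above avoids dissecting those relations directly, but it rests on the bookkeeping facts that $ku^1\to Sp^1$ is exactly the augmentation $\Sigma^{\infty}_+B_{gl}U(1)\to\Sph$ and that $j_n$ genuinely only sees dimensions on $\upi_0$; this is where the care has to go.
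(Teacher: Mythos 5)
Your argument is correct, but it takes a genuinely different route from the paper's. The paper proceeds by induction on $n$ using the auxiliary homotopy-pushout square (\ref{eq:comppushout}) that relates $A^u_{n-1}\to A^u_n$ to the map $\Sigma^{\infty}_+ ((E_{gl}U(n)*\oL_n)/U(n))\to \Sph$; this requires assembling that square from three others (Appendix \ref{app:cofiber}), proving that $\pi_0^G(B_{gl}U(n))\to\pi_0^G((E_{gl}U(n)*\oL_n)/U(n))$ is surjective so that $\upi_0$ of the join quotient is generated by $(k_n)_*(\tau_n^{\C})$, and observing at the end that $\tau_{n-1}^{\C}-(n-1)\cdot[1]$ is a restriction of $\tau_n^{\C}-n\cdot[1]$. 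You instead stay with the defining square and reduce everything to the identification of $\ker\bigl((j_n)_*\colon\upi_0(ku^n)\to\upi_0(Sp^n)\bigr)$, which you settle by a retraction argument combining Theorem \ref{theo:pirank} with Schwede's presentation of $\upi_0(Sp^n)$. The trade-off is that the paper's proof is logically independent of Theorem \ref{theo:pirank} and additionally identifies each successive kernel $\ker(\upi_0(A^u_{n-1})\to\upi_0(A^u_n))$, whereas yours leans on Theorem \ref{theo:pirank} in three places that deserve explicit citation: generation of $\upi_0(ku^n)$ by the $\tau_m^{\C}$; the relations $a(m,n-m)$, which give $\res^{U(n)}_{U(m)}(\tau_n^{\C}-n\cdot[1])=\tau_m^{\C}-m\cdot[1]$ and also $(j_n)_*(\tau_n^{\C})=(j_n)_*(n\cdot[1])$; and the relation $b(n,1)$ (restricted to $\Sigma_n$ via the double coset formula), which is exactly what makes $\iota(\tau_n^{\Sigma})=\tr_{\Sigma_{n-1}}^{\Sigma_n}([1])$ equal to $\res^{U(n)}_{\Sigma_n}(\tau_n^{\C})$ in $\pi_0^{\Sigma_n}(ku^n)$ — note this identity fails in $\upi_0(ku^{n-1})$, so it is not a triviality. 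In return you avoid the join geometry entirely. The only other point to make explicit in a write-up is the vanishing of $\upi_{-1}(ku^n)$ (by induction over the cofibre sequences with suspension-spectrum cofibres), which is what lets the Mayer--Vietoris sequence terminate in $\upi_0(A^u_n)\to 0$ and justifies both of your reductions.
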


\begin{Remark} \label{rem:stabcomp} It turns out that if $G$ is finite this filtration already stabilizes at $\pi_0^G(A_1^u)$, for algebraic reasons closely related to Artin's theorem that the complex representation ring is generated by inductions of $1$-dimensional representations. The fastest way to see that the filtration stabilizes is to make use of the fact that the representation rings form a global functor also for compact Lie groups, making use of Segal's smooth induction mentioned in Remark \ref{rem:smoothind}. It takes on the following two values:
\begin{align*} {\Ind}_{U(1)\times U(n-1)}^{U(n)}(p^*(\tau_1^{\C}))& = \tau_n^{\C} \\
								{\Ind}_{U(1)\times U(n-1)}^{U(n)}([1])& = n\cdot [1] \end{align*}
These equalities follow from the character formula (cf. \cite[page 119]{Seg68} and \cite[Prop. 2.3]{Ol98}). Hence, the class $\tau_n^{\C}-n\cdot [1]$ can be obtained by applying restriction and induction to the class $\tau_1^{\C}-[1]$ and thus lies in the global functor generated by it. Since on finite groups $\pi_0^G(ku)$ agrees with the representation ring global functor, this shows that all restrictions of $\tau_n^{\C}-n\cdot [1]$ to finite groups already lie in the global functor generated by $\tau_1^{\C}-[1]$. This also shows that if $\pi_0^G(ku)$ was the representation ring also for infinite compact Lie groups, the filtration would stabilize at stage $1$ globally.
As it stands it does not, $\tau_n^{\C}$ is identified with the trivial $n$-dimensional representation exactly in the $n$-th step, since $U(n)$ has no proper finite index subgroups.

From this it also follows that over $\R$ the filtration stabilizes at $\pi_0^G(A_2^o)$ for finite $G$. For algebraic $K$-theory this is usually not the case and the complexity filtration can take arbitrarily long to stabilize on $\pi_0^G$ (for example over $\Q$ or finite fields), as the examples in Section \ref{sec:exa} show.
\end{Remark}
Similarly to the previous section, the proof makes use of an exact sequence associated to a homotopy-cocartesian square, which this time takes the form \begin{equation} \label{eq:comppushout} \xymatrix{ \Sigma^{\infty}_+ ((E_{gl}U(n)*\oL_n)/U(n)) \ar[r]^-{p} \ar[d]_{\gamma_n} & \Sph \ar[d]^i \\
 A^u_{n-1} \ar[r]^{p_n} & A^u_n, }
\end{equation}
where $p$ is induced from the constant map $((E_{gl}U(n)*\oL_n)/U(n)\to *$. This homotopy-cocartesian square is established by forming the homotopy-pushout of three homotopy-cocartesian squares: 
\[ \xymatrix{  \Sigma^{\infty}_+(E_{gl}U(n)\times_{U(n)} \oL_n)_+ \ar[rr] \ar[d]_{\psi_n} && \Sigma^{\infty}_+(B_{gl}U(n)) \ar[d]_{\alpha_n}^{\xRightarrow{\makebox[2.8cm]{}}} & \Sigma^{\infty}_+(\oL_n/U(n)) \ar[r] \ar[d]^{\psi_n} & \mathbb{S} \ar[d]\\ 
   ku^{n-1}\ar[rr]^{i_n} & \ar@{=>}[d] & ku^n & Sp^{n-1}\ar[r]^{i_n} & Sp^n \\
\Sigma^{\infty}_+(B_{gl}U(n)) \ar[rr]_{id} \ar[d]_{\alpha_n} && \Sigma^{\infty}_+(B_{gl}U(n)) \ar[d]_{\alpha_n} \\
ku \ar[rr]^{id} && ku }
\]
The fact that the first square is a homotopy-pushout (and in particular the construction of a homotopy between the two composites) is treated in Appendix \ref{app:cofiber}. The square on the right hand side can be dealt with by the same formulas, replacing complex subspaces by natural numbers, making the upper double arrow a homotopy-coherent natural transformation between the two squares. Finally, the lower square of course commutes on the nose and the vertical double arrow can be made a homotopy-coherent transformation by using the same homotopy as in the upper square. Hence we see that there exists a cocartesian square of the form (\ref{eq:comppushout}) above.

A comparison of the associated long exact sequences shows:
\begin{Cor} \label{cor:exact2} There is an exact sequence of global functors
\[ \ker(p_*) \xr{(\gamma_n)_*} \upi_0 (A^u_{n-1})\xr{(p_n)_*} \upi_0 (A^u_n) \to \upi_0 (A^u_n/A^u_{n-1})\cong \coker(p_*)\to 0. \] 
\end{Cor}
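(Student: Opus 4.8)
The plan is to read off the exact sequence from the two cofibre sequences attached to the homotopy-cocartesian square (\ref{eq:comppushout}); equivalently, from its Mayer--Vietoris sequence, which is what ``comparison of the associated long exact sequences'' amounts to. Write $P=\Sigma^{\infty}_+((E_{gl}U(n)*\oL_n)/U(n))$ for the upper-left corner and $C$ for the common cofibre of the two horizontal maps of (\ref{eq:comppushout}). Two structural facts drive the argument. First, a homotopy-cocartesian square of orthogonal spectra is also homotopy-cartesian, so (\ref{eq:comppushout}) identifies the cofibre of $p$ with the cofibre of $p_n$, i.e.\ $C\simeq\Sph/P\simeq A_n^u/A_{n-1}^u$, and the standard comparison of cofibre sequences attached to such a square provides a ladder in which the connecting map $C\to\Sigma A_{n-1}^u$ equals $\Sigma\gamma_n$ precomposed with the connecting map $C\to\Sigma P$. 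Second, $P$ is the suspension spectrum of an (unbased) orthogonal space and $A_{n-1}^u$ is a homotopy pushout of the connective spectra $ku$, $Sp^{n-1}$ and $ku^{n-1}$, so $\upi_{-1}(P)=0$ and $\upi_{-1}(A_{n-1}^u)=0$.

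First I would apply $\upi_*$ to the cofibre sequence $P\xr{p}\Sph\to C$. Since $\upi_{-1}(P)=0$, its tail reads $\upi_0(P)\xr{p_*}\upi_0(\Sph)\to\upi_0(C)\to 0$, giving the identification $\upi_0(A_n^u/A_{n-1}^u)\cong\upi_0(C)\cong\coker(p_*)$ at the right-hand end; moreover exactness one step further to the left exhibits the connecting map $\delta\colon\upi_1(C)\to\upi_0(P)$ as surjecting onto $\ker(p_*)$. Next I would apply $\upi_*$ to the cofibre sequence $A_{n-1}^u\xr{p_n}A_n^u\to C$; using $\upi_{-1}(A_{n-1}^u)=0$ this gives exactness of
\[ \upi_1(C)\xr{\partial}\upi_0(A_{n-1}^u)\xr{(p_n)_*}\upi_0(A_n^u)\to\upi_0(C)\to 0. \]
Finally, the comparison ladder gives $\partial=(\gamma_n)_*\circ\delta$ on $\upi_1(C)$, whence $\im(\partial)=(\gamma_n)_*(\ker p_*)$; substituting this together with $\upi_0(C)\cong\upi_0(A_n^u/A_{n-1}^u)$ into the displayed sequence produces precisely the asserted four-term exact sequence, with the arrows as stated.

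I do not expect a genuine obstacle here; the whole thing is the Mayer--Vietoris sequence plus connectivity bookkeeping. The two points that must be in place are (i) that (\ref{eq:comppushout}) really is homotopy-cocartesian, which is the content of the three-square decomposition recorded just before the corollary and ultimately rests on Appendix \ref{app:cofiber}, and (ii) the connectivity of $P$ and $A_{n-1}^u$, without which $\upi_0$ of the cofibre would only surject onto $\coker(p_*)$ up to a further connecting term. With those granted, the corollary is a routine diagram chase.
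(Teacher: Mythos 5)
Your argument is correct and is exactly what the paper means by ``a comparison of the associated long exact sequences'': the two cofibre sequences of the homotopy-cocartesian square (\ref{eq:comppushout}) share the cofibre $C\simeq A_n^u/A_{n-1}^u$, the connectivity of the suspension spectrum $P$ and of $A_{n-1}^u$ truncates them at $\upi_0$, and the ladder identifies the boundary map as $(\gamma_n)_*\circ\delta$ with $\delta$ surjecting onto $\ker(p_*)$. No gaps; this matches the paper's (unwritten) proof.
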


We now consider the map $k_n:B_{gl}U(n)\to (E_{gl}U(n)*\oL_n)/U(n)$, induced by mapping $E_{gl}U(n)$ into the join. By the definition of $\gamma_n$ above, it fits into the following homotopy-commutative square:
\begin{equation} \label{eq:square} \xymatrix{ \Sigma^{\infty}_+ (B_{gl}U(n)) \ar[r]^-{\Sigma^{\infty}_+ k_n} \ar[d]_{\alpha_n} & \Sigma^{\infty}_+ ((E_{gl}U(n)*\oL_n)/U(n)) \ar[d]^ {\gamma_n} \\
	      ku\ar[r]_{q_{n-1}} & A_{n-1} }
\end{equation}
Furthermore, we have:
\begin{Lemma} For every compact Lie group $G$ the induced map  \[ \pi_0^G (B_{gl}U(n))\xr{(k_n)_*} \pi_0^G ((E_{gl}U(n)*\oL_n)/U(n)) \] is surjective.
\end{Lemma}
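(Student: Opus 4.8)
The plan is to unwind the $G$-fixed points of $(E_{gl}U(n)*\oL_n)(\U_G)/U(n)$ and show that every path component already meets the subspace $E_{gl}U(n)(\U_G)/U(n)$ sitting at the endpoint $t=0$ of the join, since that subspace is exactly the image of $k_n$ and its $G$-fixed points are the image of $\pi_0^G(B_{gl}U(n))$. A class in $\pi_0^G$ of the quotient is represented by a point $z=(a,t,b)$ of the join whose stabilizer $\Gamma:=\mathrm{Stab}_{U(n)\times G}(z)$ surjects onto $G$; since $E_{gl}U(n)*\oL_n$ is a global universal space for $\overline{\mathcal{C}}^u_n$ by Lemma~\ref{lem:join}, the group $L:=\Gamma\cap(U(n)\times 1)$ is either trivial or a complete subgroup of $U(n)$. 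For $t<1$ the $a$-coordinate of $z$ is well defined and the path $s\mapsto[(a,st,b)]$, $s$ running from $1$ to $0$, connects $[z]$ to $[a]$ inside the $G$-fixed subspace: for $st\in(0,1)$ the stabilizer of $(a,st,b)$ equals $\mathrm{Stab}(a)\cap\mathrm{Stab}(b)=\Gamma$, which surjects onto $G$, and at $s=0$ the stabilizer only grows. So it remains to treat the case $t=1$, i.e.\ $z=[b]$ with $b=(W_i,x_i)_{i\in I}\in\oL_n(\U_G)$, all $x_i$ distinct.

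In that case $L=\mathrm{Stab}_{U(n)}(b)=\prod_{i\in I}U(W_i)$ is complete (cf.\ the proof of Proposition~\ref{prop:clacompl}), and any element of $\Gamma$ permutes the $W_i$, so $\pr_{U(n)}(\Gamma)$ normalizes $L$ and hence $\Gamma\leq N_{U(n)}(L)\times G$. The key structural input is that for a complete subgroup $L=\prod_{i\in I}U(W_i)$ the quotient $N_{U(n)}(L)/L$ is a finite group permuting the equal-dimensional blocks, and the projection $N_{U(n)}(L)\to N_{U(n)}(L)/L$ is split by the corresponding permutation matrices. Projecting $\Gamma$ into $(N_{U(n)}(L)/L)\times G$ kills $L$ and exhibits $\Gamma/L\cong G$ as the graph of a continuous homomorphism $\rho\colon G\to N_{U(n)}(L)/L$; composing $\rho$ with the permutation-matrix section and taking the graph yields a closed subgroup $\Delta\leq\Gamma$ with $\pr_G(\Delta)=G$ and $\Delta\cap(U(n)\times 1)=1$. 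In particular $\Delta$ lies in the collection $1_{U(n)}\langle G\rangle$, so $E_{gl}U(n)(\U_G)^{\Delta}$ is contractible, in particular nonempty; choosing $a^{*}$ in it, the path $s\mapsto[(a^{*},s,b)]$ stays in the $G$-fixed subspace (for $s\in(0,1)$ the stabilizer of $(a^{*},s,b)$ contains $\Delta$, which surjects onto $G$) and connects $[b]$ to $[a^{*}]\in E_{gl}U(n)(\U_G)/U(n)$. Thus every class in $\pi_0^G((E_{gl}U(n)*\oL_n)/U(n))$ lies in the image of $(k_n)_*$, which is the assertion.

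The only genuinely nontrivial step is the splitting of the extension $1\to L\to\Gamma\to G\to 1$; the rest is bookkeeping of join coordinates and isotropy groups. What makes the splitting work — and what would fail for an arbitrary collection in place of $\overline{\mathcal{C}}^u_n$ — is precisely that "complete'' means "product of unitary groups'', so that $N_{U(n)}(L)$ is obtained from $L$ by adjoining block-permutation matrices; this is the one point at which the combinatorics specific to $\oL_n$ enters, and it is where I expect to have to be careful (e.g.\ checking that $\rho$ is continuous, via the fact that $\Gamma\to G$ is a continuous bijective homomorphism of compact Lie groups, hence an isomorphism).
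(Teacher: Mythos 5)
Your argument is correct. The one point where something could go wrong is the splitting of the extension $1\to L\to \Gamma\to G\to 1$ for $L=\prod_{i\in I}U(W_i)$, and you handle it properly: $\Gamma/L\to G$ is a continuous bijective homomorphism of compact groups and hence an isomorphism, the image of $\Gamma$ in $(N_{U(n)}(L)/L)\times G$ is a graph because its intersection with the first factor is trivial, and the block-permutation matrices (with respect to chosen orthonormal bases of the $W_i$) give a genuine group-theoretic section $W_{U(n)}(L)\to N_{U(n)}(L)$.

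The paper reaches the same conclusion by a slightly different reduction. Instead of analyzing path components of the join directly, it observes that
$\pi_0^G$ of the square relating $E_{gl}U(n)\times_{U(n)}\oL_n$, $B_{gl}U(n)$, $\oL_n/U(n)$ and $(E_{gl}U(n)*\oL_n)/U(n)$ is a pushout of sets, so surjectivity of $(k_n)_*$ follows from surjectivity of $\pi_0^G(E_{gl}U(n)\times_{U(n)}\oL_n)\to \pi_0^G(\oL_n/U(n))$; the latter is then proved by exactly your construction in the $t=1$ case: given a $G$-fixed decomposition class $(W_i,x_i)_{i\in I}$, one defines $\alpha:G\to U(n)$ by permuting chosen orthonormal bases of the $W_i$ and picks an $\alpha$-equivariant embedding $\varphi:\C^n\to\U_G$, producing a $\Gamma(\alpha)$-fixed preimage. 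So the essential content is identical — the paper even records in the subsequent Remark that the main input is the splitting of $N_{U(n)}L\to W_{U(n)}L$ for complete $L$, which is precisely the step you isolate. What your route buys is independence from the pushout-of-sets observation, at the cost of the explicit isotropy bookkeeping in the join (your $t<1$ case, which the pushout square disposes of for free); what the paper's route buys is a cleaner formal reduction and a statement (surjectivity of $E_{gl}U(n)\times_{U(n)}\oL_n\to \oL_n/U(n)$ on $\pi_0^G$) that is reused implicitly elsewhere. Either version is acceptable.
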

\begin{proof} Since the square \[ \xymatrix{ \pi_0^G (E_{gl}U(n)\times_{U(n)} \oL_n) \ar[r] \ar[d] & \pi_0^G (B_{gl} U(n)) \ar[d] \\
  \pi_0^G (\oL_n/U(n)) \ar[r] & \pi_0^G ((E_{gl}U(n)*\oL_n)/U(n))}      \]
is a pushout of sets, it suffices to show that the projection $E_{gl}U(n)\times_{U(n)} \oL_n\to \oL_n/U(n)$ induces a surjection on $\pi_0^G$. An element of $\oL_n/U(n)$ is represented by a tuple $(W_i,x_i)_{i\in I}$, where the $x_i$ are elements of a complete $G$-universe $\U_G$ and the $W_i$ form an orthogonal decomposition of $\C^n$ (such that the equalities $\sum \dim W_i \cdot x_i=0$ and $\sum \dim W_i |x_i|^2=1$ are satisfied). Since the $U(n)$-action is modded out, the represented element only depends on the $x_i$ and the dimensions of the $W_i$. That the tuple $(W_i,x_i)_{i\in I}$ is a $G$-fixed point means that every element of $G$ maps each $x_i$ to an element $x_{g(i)}$ such that $\dim W_i= \dim W_{g(i)}$.
Now let $\alpha:G\to U(n)$ be any homomorphism such that $\alpha(G)\cdot W_i=W_{g(i)}$. For example one can choose orthonormal bases $\{a_{i,k}\}$ of the $W_i$ and define $\alpha(g)(a_{i,k})=a_{g(i),k}$. Furthermore, let $\varphi:\C^n\to \U_G$ be an embedding which is equivariant for the action on $\C^n$ induced by $\alpha$. Then the tuple $(\varphi,(W_i,x_i)_{i\in I})\in (E_{gl}U(n)\times \oL_n)(\U_G)$ is a fixed point for the graph $\Gamma(\alpha)$ and hence a $G$-fixed point of the quotient. Its projection to $\oL_n/U(n)$ gives back the tuple we started with, which hence lies in the image, and so we are done.
\end{proof}
\begin{Remark} Conceptually, the main input in the proof of the previous lemma is that for every complete subgroup $L$ of $U(n)$ the projection $N_{U(n)}L\to W_{U(n)}L$ splits. 
\end{Remark}
\begin{Cor} \label{cor:gencomp} The global functor $\upi_0 (\Sigma^{\infty}_+ ((E_{gl}U(n)*\oL_n)/U(n)))$ is generated by the element $(k_n)_*(\tau_n^{\C})$. Hence, the kernel of $\gamma_n$ is generated as a global functor by the element $(k_n)_*(\tau_n^{\C}-n\cdot [1])$. 
\end{Cor}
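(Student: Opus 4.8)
The plan is to bootstrap both assertions from the surjectivity lemma just proved, together with the tom Dieck splitting (Proposition \ref{prop:pi0susp}) and the homotopy-commutative square (\ref{eq:square}); for the statement about the kernel I would additionally feed in the case $n-1$ of Theorem \ref{theo:picomp}, available by induction.

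First I would promote the lemma --- surjectivity of $(k_n)_*$ on every unstable $\pi_0^G$ --- to surjectivity of the induced morphism of global functors
\[ (k_n)_*\colon \upi_0(\Sigma^{\infty}_+ B_{gl}U(n)) \longrightarrow \upi_0\bigl(\Sigma^{\infty}_+((E_{gl}U(n)*\oL_n)/U(n))\bigr). \]
By Proposition \ref{prop:pi0susp}, the target at a compact Lie group $G$ is free on the transfers $\tr_H^G([x])$ with $H$ ranging over conjugacy classes of subgroups with finite Weyl group and $[x]$ over $W_GH$-orbits in the unstable $\pi_0^H$ of $(E_{gl}U(n)*\oL_n)/U(n)$; the lemma lifts each such $[x]$ to a class in $\pi_0^H(B_{gl}U(n))$, and since the transfers and the unstable-to-stable comparison are natural, each basis element is hit, so $(k_n)_*$ is onto. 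As $\upi_0(\Sigma^{\infty}_+ B_{gl}U(n))$ is generated as a global functor by the tautological class $\tau_n^{\C}$ (Example \ref{exa:stablebglg}; this was used already in the proof of Theorem \ref{theo:pirank}), a surjection of global functors carries generators to generators, which gives the first assertion.

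For the kernel I would use surjectivity of $(k_n)_*$ once more to write $\ker((\gamma_n)_*)=(k_n)_*\bigl(\ker((\gamma_n)_*\circ(k_n)_*)\bigr)$, and then rewrite $(\gamma_n)_*\circ(k_n)_*$ as $(q_{n-1})_*\circ(\alpha_n)_*$ by the square (\ref{eq:square}). This transports the question to the homomorphism $\upi_0(ku)\to\upi_0(A_{n-1}^u)$, whose kernel is generated as a global functor by $\tau_{n-1}^{\C}-(n-1)\cdot[1]$ by the inductive case of Theorem \ref{theo:picomp}; combining this with the fact that $(\alpha_n)_*$ sends $\tau_n^{\C}$ to the class of the tautological representation in $\pi_0^{U(n)}(ku)$ and the trivial classes to the trivial representation, one pins down $\ker((\gamma_n)_*\circ(k_n)_*)$, and hence its image under $(k_n)_*$, obtaining the sub-global-functor generated by $(k_n)_*(\tau_n^{\C}-n\cdot[1])$.

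The main obstacle is precisely this last identification: one must verify that $(k_n)_*(\tau_n^{\C}-n\cdot[1])$ really lies in the kernel and that no further relations survive --- equivalently, that the only new relation visible through $(\gamma_n)_*\circ(k_n)_*$ beyond those already built into $\upi_0(\Sigma^{\infty}_+((E_{gl}U(n)*\oL_n)/U(n)))$ is the one forced by $\tau_n^{\C}-n\cdot[1]$. This is where one has to exploit the explicit combinatorics of the weakly-$G$-fixed decompositions of $\C^n$ (Corollaries \ref{cor:type} and \ref{cor:gen}) together with the effect of $\psi_n$ on $\upi_0$ from Proposition \ref{prop:image}, in the same spirit as the proof of Theorem \ref{theo:pirank}; I expect the careful bookkeeping here, rather than any single conceptual point, to be the real work.
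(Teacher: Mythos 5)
Your argument for the first assertion is correct and is essentially the paper's: surjectivity of $(k_n)_*$ on each unstable $\pi_0^H$, combined with the basis description of Proposition \ref{prop:pi0susp}, promotes to surjectivity of the induced map of global functors, and the source is generated by $\tau_n^{\C}$ by Example \ref{exa:stablebglg}.

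The second half has a genuine gap. The kernel the corollary is really about is $\ker(p_*)$, where $p_*\colon \upi_0(\Sigma^{\infty}_+((E_{gl}U(n)*\oL_n)/U(n)))\to \upi_0(\Sph)\cong A(-)$ is induced by collapsing the orthogonal space to a point: this is the global functor that feeds into the exact sequence of Corollary \ref{cor:exact2} and into the induction proving Theorem \ref{theo:picomp} (the phrase ``kernel of $\gamma_n$'' is a slip in the statement). Under your literal reading, $\ker((\gamma_n)_*)$, the claim is actually false: by Square (\ref{eq:square}) one has $(\gamma_n)_*(k_n)_*(\tau_n^{\C}-n\cdot[1])=(q_{n-1})_*(\tau_n^{\C}-n\cdot[1])$, and this element is \emph{nonzero} in $\pi_0^{U(n)}(A^u_{n-1})$ --- by Remark \ref{rem:stabcomp}, $\tau_n^{\C}$ only gets identified with $n\cdot[1]$ at stage $n$, since $U(n)$ has no proper finite index subgroups --- so your proposed generator does not even lie in the kernel you are computing. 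Relatedly, your route through the inductive case of Theorem \ref{theo:picomp} inverts the paper's logic (the corollary is an input to that induction) and the appeal to Corollaries \ref{cor:type}, \ref{cor:gen} and Proposition \ref{prop:image} is not needed at all: the corollary is a purely formal statement about the suspension spectrum of a nonempty global space generated by one class.

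The intended argument is short. Write $g=(k_n)_*(\tau_n^{\C})$ and let $[x_0]=(k_n)_*[\mathrm{triv}]$ be the class of the image of the trivial homomorphism $U(n)\to U(n)$; this is the class the corollary denotes $(k_n)_*(n\cdot[1])$, since $(\gamma_n)_*[x_0]=(q_{n-1})_*(n\cdot[1])$. Both $g$ and $[x_0]$ are classes of points, so $p_*(g)=p_*([x_0])=1\in A(U(n))$ and $g-[x_0]\in\ker(p_*)$. As $p_*$ is split by the inclusion $s$ of the basepoint and the whole global functor is generated by $g$, any element of $\ker(p_*)$ has the form $F(g)$ for a $\Z$-linear combination $F$ of composites of restrictions and transfers with $F(1)=0$; then $F([x_0])=F(s(1))=s(F(1))=0$, hence $F(g)=F(g-[x_0])$ lies in the global subfunctor generated by $g-[x_0]$. (Your identity $\ker(p_*)=(k_n)_*\bigl(\ker(p_*\circ(k_n)_*)\bigr)$ would also work if applied to $p$ rather than $\gamma_n$, since $p\circ k_n$ is the collapse of $B_{gl}U(n)$ and the augmentation ideal of the free global functor is generated by $\tau_n^{\C}-[\mathrm{triv}]$; the point is to never leave the suspension spectra.)
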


Now we are ready for:
\begin{proof}[Proof of Theorem \ref{theo:picomp}] We make use of the exact sequence of Corollary \ref{cor:exact2}. Since $(E_{gl}U(n)*\oL_n)/U(n)$ is not the empty orthogonal space, the projection to a point splits and hence $p_*$ is surjective. It follows that $\upi_0 (A^u_n/A^u_{n-1})$ is zero and we see that all the maps $(p_n)_*:\upi_0 (A^u_{n-1})\to \upi_0 (A^u_n)$ are surjective. Hence so is the composition $(q_n)_*:\upi_0 (ku)\to \upi_0 (A^u_n)$ and we have proved the first statement.

It remains to show that the kernel of $(q_n)_*:\upi_0 (ku)\to \upi_0 (A^u_n)$ is generated by the element $\tau_n^{\C}-n\cdot [1]\in \pi_0^{U(n)}(ku)$. We proceed by induction on $n$. For $n=0$ there is nothing to show. Now let $n$ be a natural number and assume the statement to be proved for $n-1$. By Corollary \ref{cor:gencomp}, the kernel of $\upi_0 (A^u_{n-1})\to \upi_0 (A^u_n)$ is generated by the element $(\gamma_n)_* ((k_n)_*(\tau_n^{\C}-n\cdot [1]))$ which by the commutativity of Square (\ref{eq:square}) above is equal to $(q_{n-1})_*(\tau_n^{\C}-n\cdot [1])$.
Thus, by induction hypothesis we see that the kernel of $(q_n)_*$ is generated as a global functor by the elements $(\tau_n^{\C}-n\cdot [1])$ and $(\tau_{n-1}^{\C}-(n-1)\cdot [1])$. But the latter is obtained from the former by restriction along the inclusion $U(n-1)\hookrightarrow U(n)$ and it follows that $(\tau_n^{\C}-n\cdot [1])$ generates the whole kernel, so we are done. 
\end{proof}

\begin{Remark}[Filtrations associated to $ko$]
Replacing $\C$ by $\R$ in the definition of Section \ref{sec:kurank}, one obtains a model $ko$ for connective global real $K$-theory and an associated rank and complexity filtration. All the proofs go through verbatim to give the analogous statements for $ko$, replacing unitary groups $U(n)$ by their orthogonal counterparts $O(n)$.
\end{Remark}

\section{The rank filtration for the category of finite sets and the global Barratt-Priddy-Quillen theorem}
\label{sec:finsets}
In this section we describe a spectrum which represents the global $K$-theory of finite sets, together with its rank filtration. The global Barratt-Priddy-Quillen Theorem states that this spectrum is globally equivalent to the sphere spectrum and we explain how it can be derived directly from the description of the filtration quotients. The method of proving the non-equivariant Barratt-Priddy-Quillen Theorem via a rank filtration was also used in \cite{Rog92}.

For a finite pointed set $A_+$ and a real vector space $W$ we denote by $k\mathcal{F}in(W,A_+)$ the space of tuples $(M_a)_{a\in A}$ of pairwise orthogonal finite orthonormal systems $M_a$ of vectors of $W$, or in other words the space
\[  \bigsqcup_{(n_a\in \N)_{a\in A}}L_\R(\bigoplus_{a\in A} \R^{n_a},W)/{\prod_{a\in A} \Sigma_{n_a}}. \]
As in the case of $ku$, these spaces come with a filtration where the $n$-th stage $k\mathcal{F}in^n(W,A_+)$ only contains those components where the sum $\sum_{a\in A}n_a$ is smaller than or equal to $n$.

For a fixed $W$ the spaces $k\mathcal{F}in(W,A_+)$ carry a $\Gamma$-space structure by disjoint union of subsets. Again we let $W$ vary in order to implement equivariance and multiplicativity and obtain an orthogonal $\Gamma$-space by defining
\[ (V,A_+) \mapsto k\mathcal{F}in(\Sym(V),A_+). \]
We let $k\mathcal{F}in$ be the realization of this orthogonal $\Gamma$-space. Just like for $ku$, the tensor product of embeddings turns $k\mathcal{F}in$ into an ultracommutative ring spectrum.

Since the filtration described above is compatible with both the $\Gamma$-space and the orthogonal structure, we again obtain a rank filtration
\[ *=k\mathcal{F}in^0\to k\mathcal{F}in^1\to \hdots \to k\mathcal{F}in^n \to \hdots \to k\mathcal{F}in. \]
 
The quotient $k\mathcal{F}in^n/k\mathcal{F}in^{n-1}$ is the realization of the orthogonal $\Gamma$-space which sends $(V,A_+)$ to the space
\[ \bigvee_{(n_a\in \N)_{a\in A},\sum n_a=n}(L_\R(\bigoplus_{a\in A} \R^{n_a},\Sym(V))/{\prod_{a\in A} \Sigma_{n_a}})_+ \]
which by the same trick as before is homeomorphic to
\[ L_\R(\R^n,\Sym(V))_+\wedge_{\Sigma_n} (\bigvee_{(n_a\in \N)_{a\in A},\sum n_a=n} \Bij(\bigsqcup_{a\in A} \underline{n_a},n)_+).\]
The first smash factor $L(\R^n,\Sym(-))$ is untouched by the $\Gamma$-space structure. Since the permutation representation $\R^n$ of $\Sigma_n$ is faithful, this first factor is a global universal space for $\Sigma_n$. The second smash factor is the $\Sigma_n$-$\Gamma$-space of partitions of the set $\underline{n}$. But this $\Gamma$-space can be described in an easier way, it is isomorphic to the one that sends a finite pointed set $A_+$ to its $n$-fold smash product $(A_+)^{\wedge n}$.
Hence its realization is given in level $k$ by $(S^k)^ {\wedge n}$, the $n$th quotient of the symmetric product filtration before taking orbits under the $\Sigma_n$-action. By Proposition \ref{prop:sympow}, this is the suspension spectrum of the unreduced suspension of a global universal space for the collection of complete subgroups $\mathcal{C}^{\Sigma}_n$ of $\Sigma_n$. So we obtain:
\begin{Prop} The quotient $k\mathcal{F}in^n/k\mathcal{F}in^ {n-1}$ is globally equivalent to
\[ \Sigma^{\infty} ({E_{gl}\Sigma_n}_+\wedge_{\Sigma_n} (E_{gl} \mathcal{C}^{\Sigma}_n)^\dia). \]
\end{Prop}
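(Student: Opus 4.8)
The plan is to run, \emph{mutatis mutandis}, the geometric decomposition carried out for $ku$ in Section~\ref{sec:kurank}, with $\C$ replaced by $\R$ and orthogonal decompositions of $\C^n$ replaced by labelled partitions of $\underline{n}$. As in the $ku$ case I would first invoke the appendix (see Appendix~\ref{app:inclusions}) to see that $k\mathcal{F}in^{n-1}\to k\mathcal{F}in^n$ is a levelwise equivariant cofibration, so that the strict quotient computes the homotopy cofiber. Post-composition with linear isometric embeddings then gives, exactly as before, a $(\Sigma_n\times O(V))$-homeomorphism rewriting the quotient $\Gamma$-space $(V,A_+)\mapsto \bigvee_{\sum n_a=n}(L_\R(\bigoplus_a\R^{n_a},\Sym(V))/\prod_a\Sigma_{n_a})_+$ as the balanced smash product $L_\R(\R^n,\Sym(V))_+\wedge_{\Sigma_n}P_n(A_+)$, where the first factor is constant in the $\Gamma$-direction and the ``labelled partition'' $\Sigma_n$-$\Gamma$-space $P_n$ is constant in the orthogonal-space direction. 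I would then treat the two factors separately, just as $L_\C(\C^n,\Sym(\C\otimes -))$ and $\mathcal{L}(n,-)$ are treated in Section~\ref{sec:kurank}.

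For the first factor, the real version of the argument used there applies verbatim: since $\Sym(\U_G)$ is a complete real $G$-universe and $\R^n$ is a faithful $\Sigma_n$-representation, $V\mapsto L_\R(\R^n,\Sym(V))$ is a model for $E_{gl}\Sigma_n$ by Example~\ref{exa:bglg}. For the second factor, the crucial (and only genuinely combinatorial) point to verify is that $P_n$ is isomorphic, as a $\Sigma_n$-$\Gamma$-space, to $A_+\mapsto (A_+)^{\wedge n}$ --- a labelled partition of $\underline{n}$ of type $(n_a)_{a\in A}$ being precisely a function $\underline{n}\to A$, recorded up to sorting by fibre sizes --- with $\Sigma_n$ permuting the $n$ smash factors. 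Hence the realization of $P_n$ has $k$-th space $(S^k)^{\wedge n}$; that is, it is the $n$-th subquotient of the symmetric product filtration \emph{before} passing to $\Sigma_n$-orbits. By the $\overline{\R^n}\oplus\R$-splitting recalled in the discussion of the symmetric product filtration, this $\Sigma_n$-orthogonal spectrum is isomorphic to $\Sigma^\infty(S(\overline{\R^n}\otimes -)^\dia)$, and by Proposition~\ref{prop:sympow} the $\Sigma_n$-orthogonal space $S(\overline{\R^n}\otimes -)$ is a global universal space for $\mathcal{C}^{\Sigma}_n$ (also for $\mathcal{T}_n$).

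It then remains to assemble the pieces. Realization in the $\Gamma$-direction is performed levelwise and commutes with the balanced smash product, and the $S^V$-factor of $P_n(S^V)\cong S(\overline{\R^n}\otimes V)^\dia\wedge S^V$ splits off $\Sigma_n$-equivariantly (it carries the trivial action) and compatibly with the structure maps, exactly as in Section~\ref{sec:kurank}. Hence $k\mathcal{F}in^n/k\mathcal{F}in^{n-1}$ is isomorphic to the suspension spectrum of the based orthogonal space $V\mapsto L_\R(\R^n,\Sym(V))_+\wedge_{\Sigma_n}S(\overline{\R^n}\otimes V)^\dia$, which with the models above is level $V$ of the pointed global homotopy orbits $(E_{gl}\Sigma_n)_+\wedge_{\Sigma_n}(E_{gl}\mathcal{C}^{\Sigma}_n)^\dia$; in particular it is globally equivalent to $\Sigma^\infty\bigl((E_{gl}\Sigma_n)_+\wedge_{\Sigma_n}(E_{gl}\mathcal{C}^{\Sigma}_n)^\dia\bigr)$. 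I expect the main obstacle to be purely one of bookkeeping: verifying the composition homeomorphism and the identification $P_n\cong (-)^{\wedge n}$, and tracking $\Sigma_n$- and $O(V)$-equivariance through the $S^V$-splitting --- all of which is a direct transcription of the corresponding steps for $ku$, requiring no new ideas.
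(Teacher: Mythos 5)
Your proposal is correct and follows essentially the same route as the paper: the same rewriting of the quotient $\Gamma$-space as a balanced smash product $L_\R(\R^n,\Sym(V))_+\wedge_{\Sigma_n}(-)$, the same identification of the partition $\Gamma$-space with $A_+\mapsto (A_+)^{\wedge n}$ and hence of its realization with the pre-orbit symmetric-product subquotient $\Sigma^\infty(S(\overline{\R^n}\otimes -)^\dia)$, and the same appeal to Proposition~\ref{prop:sympow}. No substantive differences.
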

But for $n>1$ the collection of complete subgroups of $\Sigma_n$ contains the trivial subgroup (unlike the collection of complete subgroups of $U(n)$ that appeared in the filtration quotients of $ku$). Hence the map from $E_{gl}\mathcal{C}^{\Sigma}_n$ to a point induces a global equivalence after taking $\Sigma_n$-homotopy orbits. In other words, all filtration quotients $k\mathcal{F}in^n/k\mathcal{F}in^{n-1}$ are globally trivial. Furthermore, the spectrum $k\mathcal{F}in^1$ is isomorphic to the suspension spectrum of the based orthogonal space $V\mapsto S(\Sym(V))_+$. Since the unit sphere in a complete $G$-universe is equivariantly contractible, we hence obtain:

\begin{Cor}[Global Barratt-Priddy-Quillen Theorem] \label{theo:gbpq} The unit \[ \Sph\to k\mathcal{F}in\] is a global equivalence of ultracommutative ring spectra. 
\end{Cor}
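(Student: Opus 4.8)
The plan is to read the statement off the rank filtration
\[ *=k\mathcal{F}in^0\to k\mathcal{F}in^1\to\dots\to k\mathcal{F}in^n\to\dots\to k\mathcal{F}in, \]
whose subquotients we have just computed, by assembling three observations. First, the unit $\mathbb{S}\to k\mathcal{F}in$ factors, up to a canonical global equivalence $\mathbb{S}\simeq k\mathcal{F}in^1$, through the inclusion of the rank-one stage $k\mathcal{F}in^1\hookrightarrow k\mathcal{F}in$. Second, $k\mathcal{F}in^1$ is globally equivalent to $\mathbb{S}$. Third, each higher inclusion $k\mathcal{F}in^{n-1}\to k\mathcal{F}in^n$ with $n\ge 2$ is a global equivalence, so that passing to the colimit identifies $k\mathcal{F}in^1$ with $k\mathcal{F}in$. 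Combining the three shows that the unit is a global equivalence, and, being the unit of an ultracommutative ring spectrum, it is automatically a morphism of ultracommutative ring spectra.

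For the first two points I would use that $k\mathcal{F}in^1$ is isomorphic to the suspension spectrum of the based orthogonal space $V\mapsto S(\Sym(V))_+$. The multiplicative unit $1\in\Sym(V)$ is a unit vector, is $O(V)$-fixed and compatible with the structure maps, hence defines a map of based orthogonal spaces $S^0\to S(\Sym(-))_+$; its suspension spectrum is a map $\mathbb{S}\to k\mathcal{F}in^1$, and post-composing with $k\mathcal{F}in^1\hookrightarrow k\mathcal{F}in$ gives precisely the unit of $k\mathcal{F}in$, which in level $V$ sends $x\in S^V$ to the configuration supported at $x$ carrying the single vector $1$. To see that $S^0\to S(\Sym(-))_+$ is a global equivalence I would evaluate on a complete $G$-universe $\U_G$: then $\Sym(\U_G)$ is a complete real $G$-universe, so its unit sphere $S(\Sym(\U_G))$ is $G$-equivariantly contractible, and hence $S^0\to S(\Sym(\U_G))_+$ is a $G$-weak equivalence for every compact Lie group $G$. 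This simultaneously yields $k\mathcal{F}in^1\simeq\mathbb{S}$ and the desired factorization of the unit.

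For the third point I would invoke the subquotient computation: for $n\ge 2$ one has $k\mathcal{F}in^n/k\mathcal{F}in^{n-1}\simeq\Sigma^{\infty}(E_{gl}\Sigma_{n+}\wedge_{\Sigma_n}(E_{gl}\mathcal{C}_n^{\Sigma})^{\dia})$, and since $n$ is a sum of $n>1$ copies of $1$, the collection $\mathcal{C}_n^{\Sigma}$ of complete subgroups of $\Sigma_n$ contains the trivial subgroup; hence $E_{gl}\mathcal{C}_n^{\Sigma}\to *$ becomes a global equivalence after $\Sigma_n$-homotopy orbits and the subquotient is globally trivial. As the maps $k\mathcal{F}in^{n-1}\to k\mathcal{F}in^n$ are levelwise equivariant cofibrations (by the argument of Appendix~\ref{app:inclusions}), the strict quotient computes the homotopy cofiber, so the long exact sequence in $\pi_*^G$ forces $k\mathcal{F}in^{n-1}\to k\mathcal{F}in^n$ to be a global equivalence for all $n\ge 2$; and since $k\mathcal{F}in$ is the sequential colimit of the $k\mathcal{F}in^n$ along these cofibrations and $\pi_*^G$ commutes with such colimits, $k\mathcal{F}in^1\to k\mathcal{F}in$ is a global equivalence. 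The substantive inputs here — the subquotient computation resting on Proposition~\ref{prop:sympow}, and the description of $k\mathcal{F}in^1$ — are already in place, so what remains is essentially bookkeeping; the step I expect to require the most care is the first observation, namely verifying that the abstract multiplicative unit of the $\Gamma$-space model is indeed the map induced by $1\in\Sym(V)$ and therefore factors through the rank-one stage as the identity under the identifications above.
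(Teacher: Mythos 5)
Your proposal is correct and follows essentially the same route as the paper: identify $k\mathcal{F}in^1$ with $\Sigma^{\infty}(S(\Sym(-))_+)\simeq \Sph$ via contractibility of the unit sphere in a complete universe, and observe that for $n\geq 2$ the collection $\mathcal{C}_n^{\Sigma}$ contains the trivial subgroup, so all higher subquotients are globally trivial and the filtration maps are global equivalences. The only difference is that you spell out the factorization of the multiplicative unit through $k\mathcal{F}in^1$ via $1\in\Sym^0(V)$, a point the paper leaves implicit.
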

This also shows that the complexity filtration for the global $K$-theory of finite sets agrees with the symmetric product filtration (except for level $0$). As explained in the introduction, its effect on $\upi_0$ is computed in \cite{Sch14}.

\section{Filtrations associated to global algebraic $K$-theory} \label{sec:alg}

In this section we explain the modified rank and complexity filtration of the global algebraic $K$-theory spectrum of a discrete ring $R$. While the results on the filtration and also the methods to obtain them are similar to the case of topological $K$-theory, the setup is a little different. The global algebraic $K$-theory spectrum of a discrete ring, as introduced by Schwede in \cite{Sch13alg}, only forms a \emph{symmetric} spectrum and not an orthogonal spectrum, in particular it only represents a global homotopy type on finite groups. Likewise, the filtration quotients turn out to be suspension spectra of \emph{$I$-spaces} (i.e., functors from the category of finite sets and injective maps to the category of topological spaces), the symmetric analog of orthogonal spaces.
See \cite[Section I.7]{Sch15} for the definitions and global homotopy theory of $I$-spaces, and \cite{Hau15} for global homotopy theory of symmetric spectra. A lot of the theory is parallel to the orthogonal case with $G$-representations replaced by $G$-sets, with the only caveat that in general the notion of global equivalence is more complicated. However, as explained in \cite{Hau15}, there is a subclass of symmetric spectra - called globally semistable - which behave very similar to orthogonal spectra: A map between two globally semistable symmetric spectra is a global equivalence if and only if it induces an isomorphism on equivariant homotopy groups, and every globally semistable symmetric spectrum allows a global $\upi_*$-isomorphism to an orthogonal spectrum.
In Corollary \ref{cor:semistable} we argue that the symmetric spectra we encounter are globally semistable, so we can treat them just like orthogonal spectra.

In \cite{Sch13alg}, Schwede sets up a machinery which takes a category with the structure of a certain categorical $E_{\infty}$-operad and produces a symmetric spectrum. We do not repeat the general construction here, but just present the output for the case of free global algebraic $K$-theory of discrete rings. In that case the machinery produces a symmetric spectrum whose $G$-fixed points for a finite group $G$ represent direct sum algebraic $K$-theory of those $R[G]$-modules whose underlying $R$-module is free, so-called $R[G]$-lattices.

\subsection{Quotients in the modified rank filtration}
We describe a slight modification of the construction of \cite{Sch13alg}, as explained in \cite[Sec. 6.3]{Hau15}. From now on let $R$ denote a discrete ring satisfying the dimension invariance property ($R^m\cong R^n$ implies $m=n$). Let $W$ be a free $R$-module and $A_+$ a finite pointed set. We define $kR(W,A_+)$ to be the nerve of the following category: Objects are tuples of the form $(W_a)_{a\in A}$ where the $W_a$ are finite rank free $R$-submodules of $W$ such that their sum $\sum_{a\in A}W_a$ in $W$ is direct and splits off from $W$ as a direct summand (but no such splitting is part of the data). Morphisms are tuples of $R$-module isomorphisms, again indexed by $A$. Another description of this category is as the quotient category
\[ \bigsqcup_{(n_a)_{a\in A}} E(\Emb_R(\bigoplus_{a\in A} R^{n_a},W))/\prod_{a\in A} GL_{n_a}(R) \]
where $E(-)$ of a set is the category with objects the set and exactly one morphism between any two objects and $\Emb(-,-)$ is the set of splittable $R$-module monomorphisms between two $R$-modules. Again, given a natural number $n$, we can restrict the space $kR(W,A_+)$ to those components where the sum of the $n_a$ is at most $n$ and obtain a filtration via spaces $kR^n(W,A_+)$.

For fixed $W$, the assignment $A_+\mapsto kR(W,A_+)$ possesses the structure of a $\Gamma$-space by forming the inner direct sum of objects and morphisms. We obtain an $I$-$\Gamma$-space via
\[ (M,A_+)\mapsto kR(R[M],A_+) \]
where $R[M]$ denotes the polynomial ring with commuting variable set $M$. The global algebraic $K$-theory spectrum $kR$ of $R$ is the realization of this $I$-$\Gamma$-space, i.e.,
\[ kR(M)= kR(R[M],S^M) \]
with diagonal $\Sigma_M$-action. The filtration of the spaces $kR(W,A_+)$ is compatible with the $I$-$\Gamma$-space structure and hence we obtain the modified rank filtration
\[ *\to kR^1\to kR^2\to \hdots \to kR^n\to \hdots \to kR. \] If $R$ is commutative, the tensor product of modules turns $kR$ into a strictly commutative ring spectrum and all the $kR^n$ into modules over $kR^1$ (which is globally equivalent to $\Sigma^{\infty}_+ (B_{gl}R^{\times})$, as we explain below).

We proceed similarly to Section \ref{sec:kurank} to describe the filtration quotients. The $n$-th quotient $kR^n/kR^{n-1}$ is the realization of the $I$-$\Gamma$-space that is given in level $(M,A_+)$ by
\[ \bigvee_{(n_a)_{a\in A},\sum n_a=n} (|E(\Emb_R(\bigoplus_{a\in A} R^{n_a},R[M]))|/\prod_{a\in A} GL_{n_a}R)_+. \]
We can rewrite this as
\[ |E(\Emb_R(R^n,R[M]))|_+\wedge_{GL_n(R)} (\bigvee_{(n_a)_{a\in A},\sum n_a=n} \Iso(\bigoplus_{a\in A} R^{n_a},R^n)_+). \]
The first factor $E(\Emb_R(R^n,R[-]))$ is constant in the $\Gamma$-space direction and we have:
\begin{Lemma} \label{lem:alguniversal} The $GL_n(R)$-$I$-space $|E(\Emb_R(R^n,R[-]))|$ is a universal space for $GL_n(R)$.
\end{Lemma}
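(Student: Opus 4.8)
The plan is to prove the statement directly by unwinding the definition of a global universal space: it suffices to show that for every finite group $G$ with complete $G$-set universe $\U_G$, the $(GL_n(R)\times G)$-space $|E(\Emb_R(R^n, R[\U_G]))|$ is a $(GL_n(R)\times G)$-CW complex on which $GL_n(R)$ acts freely and whose fixed points $|E(\Emb_R(R^n,R[\U_G]))|^{\Gamma(\varphi)}$ are contractible for every homomorphism $\varphi\colon H\to GL_n(R)$ from a subgroup $H\le G$; together with the immediate fact that the structure maps $|E(\Emb_R(R^n,R[M]))|\to|E(\Emb_R(R^n,R[M']))|$ are closed embeddings (a split monomorphism into $R[M]$ stays one into $R[M']$, so $E(-)$ of the first set is a full subcategory of $E(-)$ of the second), this says precisely that $|E(\Emb_R(R^n, R[-]))|$ is a global universal space for the collection $1_{GL_n(R)}$. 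This is the $I$-space counterpart of the fact, used in the topological case, that $L_\C(\C^n,\Sym(\C\otimes-))$ models $E_{gl}U(n)$ because $\Sym(\C\otimes\U_G)$ is a complete complex $G$-universe; compare also \cite{Hau15}.

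The reduction rests on a general observation about the functor $E$: for a set $S$ carrying an action of a discrete group $\Gamma$ one has $E(S)^L=E(S^L)$ for every subgroup $L\le\Gamma$, the realization $|E(S)|$ is a $\Gamma$-CW complex whose isotropy groups are exactly the point stabilizers of $S$ and their subgroups, and $|E(S)|^L\cong|E(S^L)|$ is contractible if $S^L\neq\emptyset$ and empty otherwise. Hence $|E(S)|$ is a universal space for the family generated by the point stabilizers of $S$. Applying this with $\Gamma=GL_n(R)\times G$ and $S=\Emb_R(R^n,R[\U_G])$, the lemma reduces to two claims about this $\Gamma$-set: (i) every point stabilizer of $S$ is the graph $\Gamma(\psi)$ of a homomorphism from a subgroup of $G$ to $GL_n(R)$ — which at once forces the $GL_n(R)$-action on $|E(S)|$ to be free, since such graphs meet $GL_n(R)\times 1$ trivially, and places all isotropy in $1_{GL_n(R)}\langle G\rangle$; and (ii) $S^{\Gamma(\varphi)}\neq\emptyset$ for every homomorphism $\varphi\colon H\to GL_n(R)$ with $H\le G$ — which yields contractibility of exactly the fixed points that must be contracted, the remaining subgroups contributing nothing in $1_{GL_n(R)}\langle G\rangle$.

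For (i): since $\psi\in\Emb_R(R^n,R[\U_G])$ is a monomorphism, a pair $(g,h)\in GL_n(R)\times G$ fixes $\psi$ iff $h$ preserves the submodule $\psi(R^n)$ and $g=\psi^{-1}\circ h\circ\psi$; thus the stabilizer of $\psi$ is the graph of the homomorphism $G_{\psi(R^n)}\to GL_n(R)$, $h\mapsto\psi^{-1}h\psi$, where $G_{\psi(R^n)}\le G$ is the $G$-stabilizer of the submodule $\psi(R^n)$. For (ii): a point of $S^{\Gamma(\varphi)}$ is exactly an $H$-equivariant, $R$-split monomorphism from the $R[H]$-lattice $M:=(R^n,\varphi)$ into $R[\U_G]|_H$, and I would construct one in two steps. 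First, the untwisting isomorphism $R[H]\otimes_R M\cong R[H]\otimes_R M_0\cong R[H]^{\oplus n}$ (with $M_0=R^n$ carrying the trivial $H$-action) identifies $R[H]\otimes_R M$ with a free $R[H]$-module of rank $n$, while $M\to R[H]\otimes_R M$, $m\mapsto\sum_{h\in H}h\otimes h^{-1}m$, is $H$-equivariant and split over $R$ by $\varepsilon\otimes\mathrm{id}$, where $\varepsilon\colon R[H]\to R$ is the projection onto the unit coordinate. Second, since $\U_G$ is a complete $G$-set universe, the free $R$-module on $\U_G$, which is a $G$-equivariant $R$-summand of $R[\U_G]$ (the degree-one part), restricts over $H$ to a module containing $R[H]^{\oplus n}$ as a split $R$-submodule. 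Composing the two gives the desired point of $S^{\Gamma(\varphi)}$.

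I expect the only genuine point to check is step (ii) — producing the $H$-equivariant, $R$-split embedding of an arbitrary lattice — where one must keep in mind that ``split'' in the definition of $kR(W,A_+)$ means split over $R$ only, so the fact that the retraction of $M\hookrightarrow R[H]\otimes_R M$ is merely $R$-linear is harmless; everything else is formal once the facts about $E(S)$ and about complete $G$-set universes are in place.
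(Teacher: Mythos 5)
Your fixed-point analysis coincides with the paper's in its essential content: the stabilizer of an embedding $\psi$ is the graph of $h\mapsto \psi^{-1}h\psi$ on the subgroup preserving $\psi(R^n)$, so the $GL_n(R)$-action is free and all isotropy lies in $1_{GL_n(R)}\langle G\rangle$; and nonemptiness of the $\Gamma(\varphi)$-fixed points is obtained by composing the $H$-equivariant, $R$-split map $R^n\to R[H]\otimes_R R^n\cong R[H]^{\oplus n}$ (the induction/coinduction comparison, split by projection to the unit coordinate) with an $H$-equivariant split embedding of $R[H]^{\oplus n}$ into the target. The formal facts about $E(-)$ ($|E(S)|^L\cong|E(S^L)|$, contractible or empty) are also exactly what the paper uses.

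The gap is in your opening reduction. For an $I$-space it does \emph{not} suffice to verify the universal-space conditions on the naive evaluation $|E(\Emb_R(R^n,R[\U_G]))|$: unlike for closed orthogonal spaces, the evaluation of an $I$-space at $\U_G$ only computes the correct global homotopy type after replacing the $I$-space by a static one, because the morphism sets of $I$ are discrete. This is why the paper's proof runs the fixed-point argument at every \emph{nonempty faithful finite} $G$-set $M$; that simultaneously shows the $I$-space is positive static (which is then cited again in Corollary \ref{cor:semistable}) and establishes universality. Your claim (i) goes through verbatim at finite levels, but your claim (ii) does not: you locate $R[H]^{\oplus n}$ inside the degree-one part $\bigoplus_{\U_G}R$, using that a complete $G$-set universe contains infinitely many free $H$-orbits. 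For a finite faithful $M$ the degree-one part need not contain $n$ free $H$-orbits (e.g.\ $M=G=H=C_2$ and $n=2$), so the embedding must use higher-degree monomials; this is precisely the paper's observation that any monomial $\prod_{m\in M}m^{i_m}$ with pairwise distinct exponents spans a free $H$-orbit, of which there are infinitely many. With that substitution, and the resulting verification of positive staticness, your argument closes completely.
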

We first explain what we mean by this statement in the setting of $I$-spaces. In principle, a $GL_n(R)$-$I$-space $X$ should be a universal space for $GL_n(R)$ if its evaluations $X(\U_G)$ are universal spaces for the family $1_{GL_n(R)}\langle G\rangle$, where $\U_G$ is a countable $G$-set in which every finite $G$-set embeds.
The complication with $I$-spaces is that in general the evaluation $X(\U_G)$ has to be derived, in the sense that $X$ has to be replaced by a $GL_n(R)$-globally equivalent static $GL_n(R)$-$I$-space first. Here, a $GL_n(R)$-$I$-space is called \emph{static}, if for any finite group $G$ and any embedding of faithful $G$-sets $M\hookrightarrow N$ the structure map $X(M)\to X(N)$ is a $(GL_n(R)\times G)$-equivalence. In the proof below we show that $E(\Emb_R(R^n,R[-]))$ is \emph{positive} static, in the sense that this condition is satisfied in all cases except for $G$ the trivial group and $M$ the empty set. This is good enough for our purposes, since the map from a positive static $GL_n(R)$-$I$-space to its static replacement induces a $(GL_n(R)\times \Sigma_n)$-equivalence in all levels $n>0$. So we do not have to worry about deriving~$-(\U_G)$.

\begin{proof} Every structure map is a closed inclusion, since it is the realization of a functor that is injective on objects and morphisms. Now let $G$ be a finite group and $M$ a non-empty faithful finite $G$-set. We have to show that the fixed points  $|E(\Emb_R(R^n,R[M]))|^L$ for a subgroup $L\subseteq GL_n(R)\times G$ are trivial if $L$ does not lie in $1_{GL_n(R)}\langle G\rangle$ and contractible otherwise. Since any map between universal spaces for the same family is an equivariant equivalence, this shows both the claim that $E(\Emb_R(R^n,R[-]))$ is positive static and that it is a global universal space for $GL_n(R)$.
The fixed points are homeomorphic to the nerve of the category $E(\Emb_R(R^n,R[M])^L)$. Hence it suffices to show that these fixed points are empty if and only if $L$ contains a non-trivial element of $GL_n(R)\times 1$. The ``if'' part is clear, since $GL_n(R)$ acts freely on $\Emb_R(R^n,R[M])$. If $L$ does not contain such an element, it is the graph of a homomorphism from a subgroup $H$ of $G$ to $GL_n(R)$. This homomorphism defines an $H$-module structure on $R^n$ and the $L$-fixed points are given by the (non-equivariantly splittable) $H$-equivariant embeddings of $R^n$ into $R[M]$. We have to show that such an embedding always exists. Since the canonical map from induction to coinduction is an isomorphism, we obtain an $H$-equivariant map
\[ R^n\xr{\epsilon} \map(H,R^n)\xleftarrow{\cong} \bigoplus_H R^n \]
which is $R$-linearly (though not $R[H]$-linearly) split by the projection onto the component of the neutral element of $H$. In particular, $R^n$ allows an $H$-equivariant embedding into the permutation representation $\bigoplus_H R^n$. Hence it suffices to show that this permutation representation in turn sits inside $R[M]$ as a direct summand. But this follows from the observation that any monomial $\prod_{m\in M} m^{i_m}$ with all $i_m$ pairwise different spans a free $H$-subset, since $M$ is faithful. This finishes the proof, since we assumed that $M$ is non-empty.
\end{proof}
The second factor is constant in the $I$-space direction and forms the $GL_n(R)$-$\Gamma$-space of partitions of $R^n$, we denote it by $\mathcal{P}^R(n,-)$. Its realization even forms a $GL_n(R)$-orthogonal spectrum, so we see:
\begin{Cor} \label{cor:semistable} The quotients $kR^n/kR^{n-1}$ are globally semistable symmetric spectra. Hence, by induction, so are the $kR^n$.
\end{Cor}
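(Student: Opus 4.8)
The plan is to realize the quotient $I$-$\Gamma$-space in the $\Gamma$-direction first, thereby writing $kR^n/kR^{n-1}$ as the global homotopy orbits of a $GL_n(R)$-\emph{orthogonal} spectrum along the free $GL_n(R)$-$I$-space $|E(\Emb_R(R^n,R[-]))|$; then to invoke that such homotopy orbits form a globally semistable symmetric spectrum; and finally to pass from the subquotients to the $kR^n$ by a short induction along the filtration.

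In more detail: write $E_n$ for the $GL_n(R)$-$I$-space $|E(\Emb_R(R^n,R[-]))|$ and $Q_n$ for the realization of the $GL_n(R)$-$\Gamma$-space $\mathcal{P}^R(n,-)$. The balanced smash decomposition recorded above exhibits the quotient $I$-$\Gamma$-space in level $(M,A_+)$ as $(E_n(M))_+\wedge_{GL_n(R)}\mathcal{P}^R(n,A_+)$, and since $(E_n(M))_+\wedge_{GL_n(R)}(-)$ is a left adjoint it commutes with the geometric realization in the $\Gamma$-coordinate, so
\[ kR^n/kR^{n-1}\;\cong\;{E_n}_+\wedge_{GL_n(R)}Q_n. \]
Here $E_n$ is a positive static model for $E_{gl}GL_n(R)$ by Lemma \ref{lem:alguniversal} and, being the nerve of a contractible groupoid on which $GL_n(R)$ acts freely, is a cofibrant free $GL_n(R)$-$I$-space in all positive levels; and $Q_n$, as recorded above, is not merely symmetric but a genuine $GL_n(R)$-orthogonal spectrum, its $k$-th space carrying a natural $O(k)$-action (from the $\R^k$-coordinate) extending the $\Sigma_k$-action of the underlying symmetric spectrum.

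The key input, which should be extracted from the machinery of \cite{Hau15}, is the general statement that for a discrete group $K$, a cofibrant free $K$-$I$-space $X$ and a $K$-orthogonal spectrum $Z$, the symmetric spectrum $X_+\wedge_K Z$ is globally semistable — morally because one may replace $X$ by a $K$-orthogonal-space model of $E_{gl}K$ (for $K$ finite the model $W\mapsto L_\R(V,W)$ of a faithful representation $V$, in general a bar-construction model) without changing the $K$-global homotopy type, so that the resulting homotopy orbits form an orthogonal spectrum to which $X_+\wedge_K Z$ maps by a global $\upi_*$-isomorphism; the needed homotopicality of $X_+\wedge_K(-)$ relies on $Q_n$ being sufficiently cofibrant, of the kind established by the arguments of Appendix \ref{app:cw}. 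Applying this with $K=GL_n(R)$, $X=E_n$, $Z=Q_n$ gives that $kR^n/kR^{n-1}$ is globally semistable. For the last assertion, $kR^0=\ast$ is (trivially) globally semistable, the inclusions $kR^{n-1}\hookrightarrow kR^n$ are level cofibrations by Appendix \ref{app:inclusions}, and global semistability is closed under extensions in the resulting homotopy cofiber sequences $kR^{n-1}\to kR^n\to kR^n/kR^{n-1}$ (again by \cite{Hau15}), so induction on $n$ shows each $kR^n$ is globally semistable.

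I expect the main obstacle to be the general lemma on homotopy orbits just invoked: one has to reconcile the combinatorial $I$-space $E_n$ with a genuinely orthogonal model of $E_{gl}GL_n(R)$ and check that the homotopy-orbit construction is homotopical on both sides, which requires knowing that $Q_n$ is flat (respectively cofibrant) enough. A secondary subtlety, relevant when $R$ is not finite, is that $GL_n(R)$ is then an infinite discrete group with no faithful finite-dimensional real representation, so the convenient model of $E_{gl}GL_n(R)$ is unavailable and one must either argue entirely within $I$-spaces or use a bar-construction model; it is precisely here that the observation that $\mathcal{P}^R(n,-)$ realizes to an \emph{orthogonal}, not merely symmetric, spectrum does the essential work, since it is the orthogonal structure on $Q_n$ that makes the target of the comparison an orthogonal spectrum in the first place.
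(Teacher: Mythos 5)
Your proposal is correct and follows essentially the same route as the paper: the paper likewise uses the balanced smash decomposition together with the positive-static property of $E(\Emb_R(R^n,R[-]))$ to produce a global $\upi_*$-isomorphism from the symmetric spectrum $kR^n/kR^{n-1}$ to a balanced smash product whose first factor is a genuine space, the orthogonality of the target being supplied exactly as you say by the fact that $|\mathcal{P}^R(n,-)|$ is a $GL_n(R)$-\emph{orthogonal} spectrum. The one practical difference is that the paper sidesteps your worry about infinite $GL_n(R)$ by arguing one finite group $G$ at a time — comparing to the $G$-orthogonal spectrum $E(1_{GL_n(R)}\langle G\rangle)_+\wedge_{GL_n(R)}|\mathcal{P}^R(n,-)|$ built from a universal $(GL_n(R)\times G)$-space rather than a global orthogonal-space model of $E_{gl}GL_n(R)$ — and then invokes \cite[Prop.~4.13~(i)]{Hau15} to pass from $G$-semistability for all finite $G$ to global semistability.
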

\begin{proof} Since the $GL_n(R)$-$I$-space $E(\Emb_R(R^n,R\langle - \rangle))$ is positive static, it follows that as a $G$-symmetric spectrum $|E(\Emb_R(R^n,R\langle M \rangle))|_+\wedge_{GL_n(R)} |\mathcal{P}^R(n,-)|$ is $\upi_*$-isomorphic to the restriction of the $G$-orthogonal spectrum $E(1_{GL_n(R)}\langle G\rangle)_+\wedge_{GL_n(R)} |\mathcal{P}^R(n,-)|$, and hence $G$-semistable. As this holds for all finite $G$, it follows from \cite[Prop. 4.13 (i)]{Hau15} that the quotient is globally semistable.\end{proof}

We proceed by examining $\mathcal{P}^R(n,-)$. A point in the $M$-th level of the realization of this $\Gamma$-space is represented by a tuple $(W_i, x_i)_{i\in I}$ where the $x_i$ are elements of $\mathbb{R}^M$ and the $W_i$ are free submodules of $R^n$ whose inner sum is direct and all of $R^n$. In other words, it is the direct algebraic analog of $\mathcal{L}(n,S^M)$ of Section \ref{sec:kurank}. Many of the arguments we used there can also be applied here by formally replacing complex subspaces by free $R$-submodules. We define two subspaces (where $\rk(-)$ denotes the rank of a free $R$-module):
\[ \overline{\mathcal{P}^R(n,S^V)}=\{[(W_i, x_i)_{i\in I}]\ |\ \sum \rk(W_i)\cdot x_i =0 \} 
\]
and
\[ \overline{\mathcal{P}_{|.|=1}^R(n,S^V)}=\{[(W_i, x_i)_{i\in I}]\ |\ \sum \rk(W_i)\cdot x_i =0,\ \sum \rk(W_i)|x_i|^2=1 \}.  \]
The same arguments as in Section \ref{sec:kurank} show that the realization of $\mathcal{P}^R(n,-)$ is $GL_n(R)$-isomorphic to the suspension spectrum of the unreduced suspension of the $GL_n(R)$-$I$-space $\overline{\mathcal{P}_{|.|=1}^R(n,S^-)}$, which we abbreviate by $\oP$. Again we remark that this $GL_n(R)$-$I$-space is in fact the restriction of a $GL_n(R)$-orthogonal space and hence can be examined by the means of Section \ref{sec:orthspaces}. It turns out that not all descriptions from Section \ref{sec:kurank} for $ku$ can be carried over to this setting. We start with the one that works in full generality. Let $\mathcal{P}^R_n$ denote the poset of proper decompositions of $R^n$ into direct sums of free $R$-submodules. Then we have:
\begin{Theorem} \label{theo:alglattice} For every discrete ring $R$ satisfying dimension invariance, there is a zig-zag of $GL_n(R)$-maps between $\oP$ and the constant orthogonal space $|\mathcal{P}^R_n|$, inducing a global equivalence \[ S^1\wedge (E_{gl}GL_n(R)\times_{GL_n(R)} \oP)_+ \simeq S^1\wedge (E_{gl}GL_n(R)\times_{GL_n(R)} |\mathcal{P}^R_n|)_+.\] In particular, there is a global equivalence of symmetric spectra
\[ kR^n/kR^{n-1}\simeq \Sigma^{\infty} (E_{gl}GL_n(R)_+\wedge_{GL_n(R)} |\mathcal{P}^R_n|^{\dia}). \]
\end{Theorem}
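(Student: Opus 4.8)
The plan is to transcribe the proof of Theorem \ref{theo:zig-zag} (Section \ref{sec:lattice}) almost verbatim, replacing $U(n)$ by $GL_n(R)$, $\C^n$ by $R^n$, complex subspaces by finite-rank free $R$-submodules, and complex dimension by $R$-rank (well-defined because $R$ satisfies dimension invariance). Concretely, first I would introduce the continuous functor $F_n^R\colon\mathcal{P}^R_n\to\{\text{orthogonal spaces}\}$ sending a decomposition $R^n=\bigoplus_{i\in I}W_i$ to the subspace of $\oP$ consisting of those tuples $(W_i,x_i)_{i\in I}$ whose $x_i$ span a subspace of dimension $|I|-1$ (the \emph{regular} elements), with refinement maps given by the same averaging formula $(W_i,x_i)_{i\in I}\mapsto(W'_j,\sum_{i\in I_j}(\rk W_i/\rk W'_j)\cdot x_i)_{j\in J}$ followed by rescaling to norm one; as in Remark \ref{rem:regular}, regularity is exactly what keeps this well-defined. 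Setting $Z_n^R=\hocolim_{\mathcal{P}^R_n}F_n^R$ (the levelwise bar construction), the $GL_n(R)$-action on the values of $F_n^R$ and on the nerve of $\mathcal{P}^R_n$ makes $Z_n^R$ a $GL_n(R)$-orthogonal space, following \cite{JS01}. Although $kR$ lives among symmetric spectra, everything here happens in orthogonal spaces over real universes, so the machinery of Section \ref{sec:orthspaces} applies directly, and Corollary \ref{cor:semistable} licenses comparing equivariant homotopy groups of the symmetric spectra at the end.

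Next I would carry out the two comparison steps. For $p\colon Z_n^R\to|\mathcal{P}^R_n|$ (the map to the constant orthogonal space induced by $F_n^R\to\ast$), Lemma \ref{lem:fixedpointsquot} reduces the claim that $E_{gl}GL_n(R)\times_{GL_n(R)}p$ is a global equivalence to showing that for a finite group $G$, a homomorphism $\varphi\colon G\to GL_n(R)$, and an $\im\varphi$-fixed decomposition $R^n=\bigoplus_{i\in I}W_i$, the space $F_n^R(\bigoplus_iW_i)^{\Gamma(\varphi)}$ is weakly contractible; exactly as in the proof of Proposition \ref{prop:zglobal} this space is homeomorphic to the space of $G$-linear embeddings of the reduced permutation representation $\widetilde{\R}[I]$ into $\U_G$ (the $x_i$ are real vectors and the weights $\rk W_i$ are constant on $G$-orbits since $\varphi(g)$ is an isomorphism), hence weakly contractible. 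For $\alpha\colon Z_n^R\to\oP$, built via the universal property of the bar construction by the same formula $((W_i,x_i)_{i\in I},(t_0,\dots,t_k))\mapsto(W_i,\sum_l t_l\cdot x_{J_i^l})_{i\in I}$ plus rescaling, I would filter both $Z_n^R(\U_G)$ and $\oP(\U_G)$ by number of summands, observe that $\alpha$ respects these filtrations, and identify the $k$-th subquotients. The one point here that needs more than mechanical translation is that the set of decompositions of $R^n$ into free submodules of a fixed rank-type $(l_1\geq\dots\geq l_k)$ forms a single $GL_n(R)$-orbit — true because concatenating bases of the $W_i$ yields a basis of $R^n$, so any such decomposition is $GL_n(R)$-conjugate to the standard one — and that the stabilizer of the standard decomposition (which contains $\prod_iGL_{l_i}(R)$) surjects, via block permutation matrices, onto the permutation group of the equal-rank blocks. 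Granting this, the subquotients decompose as wedges of inductions, and the key input — that $F_n^R$ evaluated at a standard decomposition and the unit sphere $S(\overline{\R^k}\otimes\U_G)$ are both universal spaces for the collection $\mathcal{C}_k^{\Sigma}\langle G\rangle$ — is literally the computation in the proof of Proposition \ref{prop:sympow}, since the $x_i$ lie in the real universe and $R$ has dropped out. Induction on $k$ then shows $\alpha$ is a weak equivalence on subquotients, and since the filtration inclusions are equivariant cofibrations (via the analog of the pushout presentations in Appendix \ref{app:cw}), $\alpha$ induces an equivalence on homotopy cofibers, hence a $GL_n(R)$-global equivalence after one suspension.

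Finally, the zig-zag $\oP\xleftarrow{\alpha}Z_n^R\xrightarrow{p}|\mathcal{P}^R_n|$ gives the asserted equivalence $S^1\wedge(E_{gl}GL_n(R)\times_{GL_n(R)}\oP)_+\simeq S^1\wedge(E_{gl}GL_n(R)\times_{GL_n(R)}|\mathcal{P}^R_n|)_+$, and combining with the identification $kR^n/kR^{n-1}\cong\Sigma^{\infty}(E_{gl}GL_n(R)_+\wedge_{GL_n(R)}\oP^{\dia})$ established before the theorem (from Lemma \ref{lem:alguniversal} and the rewriting of the quotient $\Gamma$-space) together with the cofiber sequence $\Sigma^{\infty}_+(E_{gl}GL_n(R)\times_{GL_n(R)}Y)\to\Sigma^{\infty}_+B_{gl}GL_n(R)\to\Sigma^{\infty}(E_{gl}GL_n(R)_+\wedge_{GL_n(R)}Y^{\dia})$, natural in the $GL_n(R)$-space $Y$, yields the ``in particular'' statement $kR^n/kR^{n-1}\simeq\Sigma^{\infty}(E_{gl}GL_n(R)_+\wedge_{GL_n(R)}|\mathcal{P}^R_n|^{\dia})$. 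I expect the main obstacle to be the subquotient identification in the $\alpha$-comparison — in particular pinning down the orbit and stabilizer combinatorics of $\mathcal{P}^R_n$ over a general ring satisfying only dimension invariance — while the remainder is a careful but essentially formal transcription of Section \ref{sec:lattice}.
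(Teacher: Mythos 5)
Your proposal matches the paper's proof exactly: the paper's entire argument for Theorem \ref{theo:alglattice} is the single sentence that it is ``completely analogous to the one given in Section \ref{sec:lattice}'', and your transcription carries out precisely that translation. The one point you flag as needing genuine care --- that decompositions of $R^n$ of a fixed rank-type form a single $GL_n(R)$-orbit and that the stabilizer of the standard decomposition (rather than the normalizer of $\prod_i GL_{l_i}(R)$, which may be strictly larger over rings like $\mathbb{F}_2$) surjects onto the permutation group of the equal-rank blocks --- is handled correctly and is indeed the only place where the algebraic setting requires more than mechanical substitution.
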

\begin{proof} The proof is completely analogous to the one given in Section \ref{sec:lattice}. 
\end{proof}

We now discuss in which cases $\oP$ is a universal space of a collection of subgroups, for which we have to make assumptions on the ring $R$. Let $\mathcal{C}_n^R$ denote the collection of complete subgroups of $GL_n(R)$, i.e., those that are conjugate to one of the form $GL_{n_1}(R)\times \hdots \times GL_{n_k}(R)$ with $n_1+\hdots + n_k=n$ and $k>1$. Then the following still holds for all rings $R$:
\begin{Lemma} The $GL_n(R)$-$I$-space $\oP$ is closed and has all isotropy in complete subgroups. 
\end{Lemma}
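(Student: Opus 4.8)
The plan is to transcribe the proof of Proposition~\ref{prop:clacompl} into the algebraic setting, replacing complex subspaces of $\C^n$ by free $R$-submodules of $R^n$; the dimension invariance hypothesis on $R$ is precisely what makes ranks of free submodules well-defined and additive, so that all the bookkeeping goes through unchanged. Since $\oP$ is the restriction of a $GL_n(R)$-orthogonal space, we may evaluate it on genuine inner product spaces without passing to a static replacement, so no derived evaluation enters anywhere in this lemma.

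For closedness: after the identification of $\mathcal{P}^R(n,-)$ carried out just before the statement (in analogy with Section~\ref{sec:kurank}), the structure map of the orthogonal space underlying $\oP$ along an inclusion $V\hookrightarrow V\oplus V'$ sends a representing tuple $(W_i,x_i)_{i\in I}$ to $(W_i,\iota(x_i))_{i\in I}$, up to the rescaling to norm one; this is visibly a closed embedding, exactly as for the spheres $S(\overline{\R^n}\otimes-)$ in the proof of Proposition~\ref{prop:sympow} (and as for the nerves appearing in Lemma~\ref{lem:alguniversal}). Since restriction to $I$-spaces preserves this, $\oP$ is closed.

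For the isotropy statement it suffices to check the $GL_n(R)$-isotropy of an arbitrary point $x\in\oP(V)$ for $V$ any inner product space. Such an $x$ has a representative $(W_i,x_i)_{i\in I}$ in which the $W_i$ are nonzero free submodules with internal direct sum $R^n=\bigoplus_{i\in I}W_i$, the $x_i\in V$ are pairwise distinct, and $\sum_i\rk(W_i)\,x_i=0$, $\sum_i\rk(W_i)\,|x_i|^2=1$; this representative is unique up to relabelling $I$. An element $\varphi\in GL_n(R)$ acts only through the submodules, $\varphi\cdot(W_i,x_i)_{i\in I}=(\varphi W_i,x_i)_{i\in I}$, so $\varphi x=x$ forces a permutation $\sigma$ of $I$ with $\varphi W_i=W_{\sigma(i)}$ and $x_i=x_{\sigma(i)}$, whence $\sigma=\mathrm{id}$ by distinctness of the $x_i$ and therefore $\varphi W_i=W_i$ for all $i$. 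Thus the isotropy group of $x$ is $\prod_{i\in I}GL(W_i)$, which after conjugating by an $R$-linear isomorphism $R^n\cong\bigoplus_i R^{\rk(W_i)}$ adapted to the decomposition becomes $\prod_{i\in I}GL_{\rk(W_i)}(R)$. The two relations rule out $|I|=1$ (which would force $n\,x_1=0$, i.e.\ $x_1=0$, contradicting $\rk(W_1)\,|x_1|^2=1$), while each $\rk(W_i)\geq1$ and $\sum_i\rk(W_i)=n$; hence the isotropy group is a complete subgroup of $GL_n(R)$, as claimed.

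There is no real obstacle in this lemma — it is a routine algebraic analogue of the $ku$-computation. The only points to watch are the two conventions used implicitly: that points of $\oP$ admit a representing tuple unique up to relabelling (a formal property of the $\Gamma$-space of decompositions that does not see whether the summands are subspaces or free submodules), and that the $I$-space $\oP$ is evaluated via its underlying orthogonal space. The genuine difficulties — showing that $\oP(\U_G)$ is cofibrant and that the relevant fixed points are contractible, so that $\oP$ actually becomes a global universal space for $\mathcal{C}_n^R$ — are precisely what require extra hypotheses on $R$ and are treated separately (and, as the paper notes, fail in full generality).
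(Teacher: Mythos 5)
Your proof is correct and follows the same route as the paper, which simply observes (citing the argument of Proposition \ref{prop:clacompl}) that an element of $GL_n(R)$ fixes a point of $\oP$ exactly when it preserves each $W_i$ in the associated decomposition, so the isotropy is the complete subgroup $\prod_i GL(W_i)$. Your additional details (closedness of the structure maps, the exclusion of $|I|=1$ via the two normalization relations) are exactly the steps the paper leaves implicit by analogy with the topological case.
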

\begin{proof}[Proof (cf. Proposition \ref{prop:clacompl})]An element of $GL_n(R)$ fixes each $W_i$ in a partition $R^n=\bigoplus W_i$ if and only if it lies in the product of the $GL(W_i)$, which is a complete subgroup. 
\end{proof}
One might guess that $\oP$ is in fact a universal space for the collection of complete subgroups, but this is not true in general. The issue is the following: Assume given a complete subgroup $H=\prod GL(W_i)$ and a decomposition $R^n=\bigoplus W_j'$ that is (strongly) fixed by $H$. Over the complex numbers this implied that each $W_i$ must be contained in some $W_j'$, or in other words that the decomposition $\bigoplus W_i$ is a refinement of $\bigoplus W_j'$. This allowed an easy description of the $H$-fixed point space and led to the proof that it is contractible. However, for general $R$ this is not the case, as the following example shows:
\begin{Example} Let $R=\mathbb{F}_2$ be the field with two elements and consider the decomposition $\mathbb{F}_2^2=\mathbb{F}_2\oplus \mathbb{F}_2$. Then the associated complete subgroup of $GL_2(\mathbb{F}_2)$ is $GL_1(\mathbb{F}_2)\times GL_1(\mathbb{F}_2)$ and hence trivial. So it fixes all three decompositions of $\mathbb{F}_2^2$ as a sum of two $1$-dimensional subspaces, not only the one it was associated to.
\end{Example}
Arone and Lesh showed that this phenomenon cannot occur under the following assumptions on $R$:
\begin{Lemma}[{\cite[Lemma 8.8]{AL07}}] \label{lem:dom2} Let $R$ be an integral domain with $2\neq 0$, and further be given two proper decompositions $R^n=\bigoplus {W_i}$ and $R^n=\bigoplus W_j'$ into free submodules. Then the subgroup $\prod GL(W_i)$ fixes all of the $W_j'$ if and only if $\bigoplus {W_i}$ is a refinement of $\bigoplus W_j'$.
\end{Lemma}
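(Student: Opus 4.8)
The plan is to reduce the whole statement to a fact about subspaces of a semisimple module over the field of fractions $K$ of $R$. Tensoring the two decompositions with $K$ gives decompositions $K^n=\bigoplus_{i\in I}V_i=\bigoplus_{j\in J}V_j'$ of the $K$-vector space $K^n=R^n\otimes_R K$, where $V_i=W_i\otimes_R K$ and $V_j'=W_j'\otimes_R K$, and $H:=\prod_{i\in I}GL(W_i)$ acts $K$-linearly. Since each $W_i$ (resp.\ $W_j'$) is a direct summand of $R^n$ it is saturated, i.e.\ $W_i=V_i\cap R^n$ and $W_j'=V_j'\cap R^n$ inside $K^n$; hence $\bigoplus W_i$ refines $\bigoplus W_j'$ if and only if every $V_i$ is contained in some $V_j'$, and $H$ preserves $W_j'$ if and only if it preserves $V_j'$. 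This turns both implications into statements over $K$.

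Over $K$ the ``if'' direction is immediate: if each $V_i$ lies in a (then unique) $V_{j(i)}'$, the subsum $\bigoplus_{i:\,j(i)=j}V_i$ of $\bigoplus_iV_i$ is contained in $V_j'$ for each $j$, and summing over $j$ and counting dimensions forces $V_j'=\bigoplus_{i:\,j(i)=j}V_i$; intersecting with $R^n$ gives $W_j'=\bigoplus_{i:\,j(i)=j}W_i$, a subsum of $\bigoplus_iW_i$, which is therefore preserved by $H=\prod_iGL(W_i)$.

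For the ``only if'' direction I would regard $K^n$ as an $H$-module. The factor $GL(W_i)\le H$ acts on $V_i$ through the standard representation of $GL_{\rk(W_i)}(R)$ on $K^{\rk(W_i)}$ and trivially on $V_{i'}$ for $i'\neq i$; this standard module is irreducible over $K$, as acting with the elementary matrices $1+E_{ab}$ on a nonzero invariant subspace quickly produces a coordinate basis vector and hence everything (for $\rk(W_i)=1$ irreducibility is automatic). Moreover the central element $-1\in GL(W_i)$ acts on $V_i$ by $-1$ and on every $V_{i'}$ with $i'\neq i$ by $+1$, and $-1\neq1$ in $K$ precisely because $2\neq0$ in $R$; hence the $V_i$ are pairwise non-isomorphic irreducible $H$-modules and $K^n=\bigoplus_iV_i$ is a multiplicity-free semisimple $H$-module. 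Since each $V_j'$ is $H$-invariant by hypothesis, the submodule structure of a multiplicity-free semisimple module forces $V_j'=\bigoplus_{i\in A_j}V_i$ with $A_j:=\{i:V_i\subseteq V_j'\}$; as the $V_j'$ are independent and all $V_i$ are nonzero, the sets $A_j$ partition $I$, so every $V_i$ lies in exactly one $V_j'$. By the reduction of the first paragraph this says that $\bigoplus W_i$ refines $\bigoplus W_j'$.

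The one genuinely delicate point is the input ``$2\neq0$'': for a general ring a rank-one summand need carry no nontrivial automorphism (as over $\mathbb{F}_2$), in which case $K^n$ is not multiplicity-free and the conclusion fails, so the hypothesis on $R$ is exactly what forbids this. The remaining ingredients --- irreducibility of the standard module via elementary matrices, the decomposition of a multiplicity-free semisimple module into isotypic pieces, and the saturation bookkeeping relating $R$- and $K$-submodules --- are routine, so I do not expect further difficulties.
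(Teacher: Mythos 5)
Your argument is correct. Note that the paper itself gives no proof of this lemma but imports it from \cite[Lemma 8.8]{AL07}, so there is nothing internal to compare against; your reduction to the fraction field $K$, where $K^n=\bigoplus V_i$ becomes a multiplicity-free semisimple module over $\prod GL(W_i)$ (irreducibility of each $V_i$ via elementary matrices, non-isomorphism via the central involution $-1$ on a single factor, which is where $2\neq 0$ enters), together with the saturation bookkeeping $W_i=V_i\cap R^n$, is a complete and self-contained proof in the same spirit as Arone and Lesh's original one, which likewise hinges on acting by $-1$ on one summand and dividing by $2$.
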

Under these conditions we see:
\begin{Prop} Let $R$ be an integral domain with $2\neq 0$. Then $\oP$ is a global universal space for the collection of complete subgroups of $GL_n(R)$. 
\end{Prop}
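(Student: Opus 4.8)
The plan is to follow the proof of Proposition~\ref{prop:clacompl} almost verbatim, with Lemma~\ref{lem:dom2} substituting for the elementary fact (automatic over $\C$, via isotypical components) that a decomposition fixed by a complete subgroup is a coarsening of the decomposition defining it. By the lemma just proved, $\oP$ is closed with all isotropy in complete subgroups, and --- exactly as for $\oL_n$ in Appendix~\ref{app:cw} --- I would check that $\oP(\U_G)$ is $(GL_n(R)\times G)$-cofibrant for every finite group $G$. So the whole content is to prove that $\oP(\U_G)^H$ is contractible for each $H\in\mathcal{C}^R_n\langle G\rangle$. Writing $K:=H\cap(GL_n(R)\times 1)=\prod_{i=1}^k GL(W_i)$ with $k>1$, and using the short exact sequence $1\to K\to H\to\pr_G(H)\to 1$, this reduces to showing that $\pr_G(H)$ acts on $\oP(\U_G)^K$ with contractible fixed points.

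The heart of the matter is the identification of $\oP(\U_G)^K$. Since $K$ acts trivially on the sphere coordinate, a point of $\mathcal{P}^R(n,S^{\U_G})$ represented by $(V_a,x_a)_a$ (with the $x_a$ pairwise distinct) is $K$-fixed iff each $V_a$ is $K$-fixed; by Lemma~\ref{lem:dom2} --- this is the only place where ``$R$ an integral domain with $2\neq 0$'' is used, and the $\mathbb{F}_2$ example shows it is genuinely necessary --- a decomposition of $R^n$ has every summand fixed by $K=\prod GL(W_i)$ if and only if it is a coarsening of $\bigoplus W_i$, i.e.\ is given by a partition of $\{1,\dots,k\}$. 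Thus $\mathcal{P}^R(n,S^{\U_G})^K$ is the partition $\Gamma$-space of the $k$-element set evaluated on $S^{\U_G}$ (the ranks $\rk(W_i)$ enter only through the reduced and norm-one conditions and so do not change the homotopy type); note that, unlike over $\C$, the inner join factors $\overline{\mathcal{P}_{|.|=1}^R(W_i,S^{\U_G})}^{GL(W_i)}$ are empty, because $GL(W_i)$ fixes no proper decomposition of $W_i$. Running the manipulations of Section~\ref{sec:kurank} as in Proposition~\ref{prop:clacompl}, I would then obtain a $\pr_G(H)$-homeomorphism $\oP(\U_G)^K\cong S(\overline{\R^k}\otimes\U_G)$, where $\pr_G(H)$ acts by its restricted action on $\U_G$ tensored with the permutation action on $\overline{\R^k}$ coming from the rank-preserving permutation of the $W_i$ that $H$ induces via conjugation on $K$.

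Granting this, contractibility of $\oP(\U_G)^H=S(\overline{\R^k}\otimes\U_G)^{\pr_G(H)}$ is immediate from the computation inside the proof of Proposition~\ref{prop:sympow}: the residual $\pr_G(H)$-action is exactly of the form treated there --- a permutation action on the $k\geq 2$ indices tensored with a complete universe, so the relevant intersection with $\Sigma_k\times 1$ is trivial, hence non-transitive --- and its fixed points are shown there to be a unit sphere in an infinite-dimensional vector space, hence contractible. The main obstacle I expect is the bookkeeping in the middle paragraph: one must verify that a complete subgroup of $GL_n(R)$ determines its decomposition (again a direct consequence of Lemma~\ref{lem:dom2}), so that $H$ really does act through rank-preserving permutations of the $W_i$, and that the ``weighted'' reduced permutation representation $\{v\in\R^k:\sum_i\rk(W_i)v_i=0\}$ is isomorphic, as a representation of the rank-preserving symmetric group, to the standard $\overline{\R^k}$. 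Both are routine, but they have to be made explicit for the reduction to Proposition~\ref{prop:sympow} to be legitimate; everything else is a word-for-word translation of the $ku$ argument with complex subspaces replaced by free $R$-submodules.
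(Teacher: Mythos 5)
Your proposal is correct and takes essentially the same route as the paper's proof: reduce via the short exact sequence to the $\pr_G(H)$-fixed points of $\oP(\U_G)^K$, use Lemma \ref{lem:dom2} to identify the $K$-fixed decompositions with partitions of $\{1,\dots,k\}$ and hence $\oP(\U_G)^K$ with a sphere in a (weighted) reduced permutation representation tensored with $\U_G$, and conclude by the fixed-point computation of Proposition \ref{prop:sympow}. The only cosmetic difference is that the paper phrases the identification as $\oP(\U_G)^K\cong S(\overline{\R^{n}}\otimes \R[\U_G])^{\Sigma_{\rk W_1}\times\cdots\times\Sigma_{\rk W_k}}$ and invokes the already-established universality of $S(\overline{\R^{n}}\otimes -)$ for $\mathcal{C}_n^{\Sigma}\langle G\rangle$, rather than rescaling the weighted hyperplane to the standard $\overline{\R^k}$ as you do.
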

\begin{proof} We have already seen that all the $GL_n(R)$-isotropy lies in complete subgroups. Now let $G$ be a finite group and $\U_G$ a complete $G$-set universe. In Appendix \ref{app:cw} we show that $(\oP)(\U_G)$ is a $(GL_n(R)\times G)$-cell complex. Let $H\subseteq GL_n(R)\times G$ be a subgroup whose intersection with $GL_n(R)\times 1$ (which we denote by $K$) is complete. Making use of Lemma \ref{lem:dom2}, we can associate to $K$ the unique minimal partition $R^n=W_1\oplus\hdots \oplus W_k$ that is fixed by it and it follows that $K=GL(W_1)\times \hdots \times GL(W_k)$. We have to show that the $H$-fixed points $(\oP)(\U_G)^H$ are contractible. By the short exact sequence
 \[ 1\to K\to H\to {\pr}_G(H)\to 1 \]
these $H$-fixed points are the $\pr_G(H)$-fixed points of the action on $(\oP)(\U_G)^K$ which is induced from the associated group homomorphism $\pr_GH\to W_{GL_n(R)}K$. By Lemma \ref{lem:dom2}, a partition is fixed by $K$ if and only if it refines $R^n=\bigoplus W_i$. Refinements of this partition stand in bijection to partitions of the set $\{1,\hdots, k\}$. Via this correspondence we see that the $K$-fixed points $(\oP)(\U_G)^K$ are in fact homeomorphic to the $(\Sigma_{\dim W_1}\times \hdots \times \Sigma_{\dim W_k})$-fixed points of $S(\overline{\R^n}\otimes \mathbb{R}[\U_G])$.
Moreover, the Weyl-groups $W_{GL_n(R)}K$ and $W_{\Sigma_n}(\Sigma_{\dim W_1}\times \hdots \times \Sigma_{\dim W_k})$ are canonically isomorphic and the homeomorphism is equivariant under this isomorphism. Hence the statement follows from the fact that $S(\overline{\R^n}\otimes \mathbb{R}[\U_G])$ is a universal space for the collection $\mathcal{C}_n^\Sigma\langle G\rangle$, which was proved in Proposition \ref{prop:sympow}.
\end{proof}

Altogether we summarize:
\begin{Theorem}[Quotients in the modified rank filtration] Let $R$ be a ring satisfying dimension invariance. Then there is a global equivalence
\[ kR^n/kR^{n-1}\simeq \Sigma^{\infty} (E_{gl}GL_n(R)_+\wedge_{GL_n(R)} |\mathcal{P}^R_n|^{\dia}). \]
If $R$ is in addition an integral domain with $2\neq 0$, then there is also a global equivalence
\[ kR^n/kR^{n-1}\simeq \Sigma^{\infty} (E_{gl}GL_n(R)_+\wedge_{GL_n(R)} (E_{gl}\mathcal{C}^R_n)^\dia). \] 
\end{Theorem}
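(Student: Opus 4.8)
The first equivalence is exactly the content of Theorem \ref{theo:alglattice}, whose proof is the formal analog of Section \ref{sec:lattice}, so nothing further is required there. For the second equivalence the plan is simply to assemble the pieces established above. First I would recall, exactly as in the derivation of Corollary \ref{cor:kuquot}, that realizing the $I$-$\Gamma$-space $|E(\Emb_R(R^n,R[-]))|_+\wedge_{GL_n(R)}\mathcal{P}^R(n,-)$ separately in the two directions identifies $kR^n/kR^{n-1}$ with the suspension spectrum of the based $GL_n(R)$-$I$-space
\[ |E(\Emb_R(R^n,R[-]))|_+\wedge_{GL_n(R)}\oP^{\dia}, \]
using that the $\Gamma$-direction realization of $\mathcal{P}^R(n,-)$ is $\Sigma^{\infty}$ of $\oP^{\dia}$ (by the arguments of Section \ref{sec:kurank} transported to free $R$-submodules) and that the first factor is constant in the $\Gamma$-direction. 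Thus it suffices to exhibit a $GL_n(R)$-global equivalence of based $I$-spaces between this and $E_{gl}GL_n(R)_+\wedge_{GL_n(R)}(E_{gl}\mathcal{C}^R_n)^{\dia}$.

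By Lemma \ref{lem:alguniversal} the $GL_n(R)$-$I$-space $|E(\Emb_R(R^n,R[-]))|$ is a global universal space for $GL_n(R)$, hence a model for $E_{gl}GL_n(R)$. By the Proposition preceding the statement — which is exactly where the hypotheses that $R$ be an integral domain with $2\neq 0$ enter, via Lemma \ref{lem:dom2} — the $GL_n(R)$-$I$-space $\oP$ is a global universal space for the collection $\mathcal{C}^R_n$. Choosing any $GL_n(R)$-equivariant map $\oP\to E_{gl}\mathcal{C}^R_n$ between global universal spaces for the same collection, the standard comparison of universal spaces (both sides have contractible $L$-fixed points for $L\in\mathcal{C}^R_n\langle G\rangle$ and empty $L$-fixed points otherwise) shows it induces a $(GL_n(R)\times G)$-equivalence on each complete $G$-set universe, i.e. a $GL_n(R)$-global equivalence; passing to unreduced suspensions keeps it one, since $(-)^{\dia}$ commutes with fixed points and preserves weak equivalences. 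Finally I would note that applying $E_{gl}GL_n(R)_+\wedge_{GL_n(R)}(-)$ preserves such equivalences: by Corollary \ref{cor:homorbquot} (its proof, via Lemma \ref{lem:fixedpointsquot}, applies verbatim to $I$-spaces here, the relevant $I$-spaces being sufficiently static by the positivity in Lemma \ref{lem:alguniversal} and the discussion around Corollary \ref{cor:semistable}) the $G$-fixed points of the global homotopy orbits decompose as a wedge of Borel constructions $EC(\alpha)_+\wedge_{C(\alpha)}(-)^{\im(\alpha)}$ over conjugacy classes of homomorphisms $\alpha\colon G\to GL_n(R)$, and $EC(\alpha)_+\wedge_{C(\alpha)}(-)$ takes weak equivalences of $C(\alpha)$-spaces to weak equivalences as $C(\alpha)$ acts freely on $EC(\alpha)$. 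Applying $\Sigma^{\infty}$ and composing with the isomorphism of the first paragraph gives the claimed global equivalence of symmetric spectra.

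There is no serious obstacle here: the argument is pure bookkeeping, assembling Theorem \ref{theo:alglattice}, Lemma \ref{lem:alguniversal}, the universality Proposition for $\oP$, and the homotopy-orbit compatibility. The one point that genuinely needs care — and the only place I would slow down — is the interaction between the $I$-space formalism (derived evaluations, staticness, and the resulting symmetric spectra being globally semistable) and the homotopy-orbit manipulations, since over symmetric spectra global equivalence is not simply detected on equivariant homotopy groups in general. This is precisely what Corollary \ref{cor:semistable} and the positive-staticness arguments are designed to guarantee, so once those are invoked the conclusion is a formal consequence of the listed inputs.
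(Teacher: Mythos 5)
Your proposal is correct and follows the paper's own route: the first equivalence is Theorem \ref{theo:alglattice}, and the second is obtained exactly as in the paper by combining the balanced smash product decomposition of $kR^n/kR^{n-1}$ with Lemma \ref{lem:alguniversal} and the Proposition identifying $\oP$ as a global universal space for $\mathcal{C}^R_n$ (the paper in fact treats these particular models as \emph{being} $E_{gl}GL_n(R)$ and $E_{gl}\mathcal{C}^R_n$, so no further comparison of models is needed). One small imprecision: for a general collection a universal space need not have empty $L$-fixed points for $L\notin\mathcal{C}^R_n\langle G\rangle$ (only the isotropy must lie in the collection, and a non-complete subgroup of a complete one still fixes points); the correct reason a map between two universal spaces for the same collection is an equivariant equivalence is the usual cell-by-cell obstruction argument, which is the fact the paper itself invokes, so your conclusion stands.
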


\subsection{Quotients in the complexity filtration}
Global algebraic $K$-theory $kR$ also allows a canonical map to $Sp^{\infty}$, mapping the modified rank filtration to the symmetric product filtration. On the level of $\Gamma$-spaces it is given by replacing a tuple $(M_a)_{a\in A}$ by its dimensions $(\dim_R(M_a))_{a\in A}$ and all automorphisms by identities. Then the analogous pushout construction as in Section \ref{sec:kucomp} gives the complexity filtration
\[ kR=A_0^R\xr{p_0} A_1^R\xr{p_1} \hdots  \to A_{\infty}^R\simeq H\Z. \]
Again there are cofiber sequences showing that $A_n^R/A_{n-1}^R$ is equivalent to the suspension spectrum of an $I$-space, the homotopy cofiber of
\[ E_{gl}(GL_n(R))_+\wedge_{GL_n(R)}|\mathcal{P}^R_n|^{\dia}\simeq E_{gl}(GL_n(R))_+\wedge_{GL_n(R)}(\oP)^{\dia}\to (B\F^{\Sigma}_n)^{\dia}\cong (\oP/GL_n(R))^{\dia}. \]
This homotopy cofiber is given by the unreduced suspension of the $GL_n(R)$-orbits of the join of $E_{gl}(GL_n(R))$ and $\oP$. For arbitrary $R$ we are not aware of a direct characterization of this global homotopy type, but under the same additional hypotheses as in the previous section we obtain the following via Lemma \ref{lem:join} :
\begin{Theorem} \label{theo:algquotcomp} Let $R$ be an integral domain with $2\neq 0$. Then there is a global equivalence
\[ A^R_n/A^R_{n-1} \simeq \Sigma^{\infty} (B_{gl}(\overline{\mathcal{C}}^R_n)^\dia), \]
where $\overline{\mathcal{C}}^R_n$ denotes the collection of complete subgroups of $GL_n(R)$ plus the trivial subgroup.
\end{Theorem}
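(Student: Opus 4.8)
The plan is to mirror the topological argument of Section~\ref{sec:kucomp} step for step, translating the orthogonal-space constructions into $I$-spaces and feeding in the algebraic input of the previous subsection. First I would recall that $A^R_n$ is the homotopy pushout of $kR \leftarrow kR^n \to Sp^n$ along the rank map $kR \to Sp^{\infty}$, exactly as $A^u_n$ was built, and then form termwise quotients in this defining square. This exhibits
\[ kR^n/kR^{n-1} \to Sp^n/Sp^{n-1} \to A^R_n/A^R_{n-1} \]
as a mapping cone sequence of symmetric spectra, which are globally semistable by Corollary~\ref{cor:semistable}, so that equivariant homotopy groups detect global equivalences and no derived evaluations intervene. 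By the previous subsection the first term is $\Sigma^{\infty}(E_{gl}GL_n(R)_+ \wedge_{GL_n(R)} (\oP)^{\dia})$, the second term is the suspension spectrum of the unreduced suspension of $\oP/GL_n(R)$, and --- just as in the topological case --- the middle map is induced, after passing to $GL_n(R)$-orbits and suspension spectra, by the equivariant collapse $E_{gl}GL_n(R) \to *$. That these quotients compute homotopy cofibers (i.e.\ that the maps in question are level cofibrations) is handled as in Appendix~\ref{app:inclusions}, and since $\oP$ is the restriction of a $GL_n(R)$-orthogonal space, all the geometric manipulations of Section~\ref{sec:orthspaces} apply to it.

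Next I would identify the homotopy cofiber of the above sequence. As in Section~\ref{sec:kucomp}, using that a smash product of two unreduced suspensions is the unreduced suspension of a join, this cofiber is the suspension spectrum of the unreduced suspension of the $I$-space $(E_{gl}GL_n(R) * \oP)/GL_n(R)$. Now the hypothesis on $R$ enters: by Lemma~\ref{lem:alguniversal} the first join factor is a global universal space for $GL_n(R)$, and --- because $R$ is an integral domain with $2 \neq 0$, via the combinatorial lemma of Arone and Lesh (Lemma~\ref{lem:dom2}) --- the second factor $\oP$ is a global universal space for the collection $\mathcal{C}^R_n$ of complete subgroups of $GL_n(R)$, as in the Proposition just proved above. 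Lemma~\ref{lem:join} (joins commute with fixed points) then shows that $E_{gl}GL_n(R) * \oP$ is a global universal space for $\overline{\mathcal{C}}^R_n$, so its $GL_n(R)$-orbits form a global classifying space $B_{gl}\overline{\mathcal{C}}^R_n$. Combining this with the mapping cone identification yields the claimed global equivalence $A^R_n/A^R_{n-1} \simeq \Sigma^{\infty}(B_{gl}\overline{\mathcal{C}}^R_n)^{\dia}$.

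I do not expect a serious obstacle here: essentially all of the real work is already in place, and the one genuinely nontrivial point --- that over a general ring $\oP$ need not be universal for complete subgroups (witnessed by the $\mathbb{F}_2^2$ example), but that this is repaired under the integral domain / $2 \neq 0$ hypothesis --- is isolated in Lemma~\ref{lem:dom2} and the Proposition quoted above. The remaining care is bookkeeping in the $I$-space and symmetric-spectrum setting: making sure positive staticness (Lemma~\ref{lem:alguniversal}) and global semistability (Corollary~\ref{cor:semistable}) are in force so that evaluation on complete universes need not be derived, and checking that Lemma~\ref{lem:join} and the ``smash of unreduced suspensions is unreduced suspension of a join'' identity hold for $I$-spaces verbatim --- which they do, as both reduce to compatibility of joins with fixed points.
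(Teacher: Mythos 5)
Your proposal is correct and follows essentially the same route as the paper: termwise quotients in the defining pushout give the mapping cone sequence $kR^n/kR^{n-1}\to Sp^n/Sp^{n-1}\to A^R_n/A^R_{n-1}$, the cofiber is identified with the unreduced suspension of $(E_{gl}GL_n(R)*\oP)/GL_n(R)$, and Lemma \ref{lem:dom2} together with Lemma \ref{lem:join} identifies this as $(B_{gl}\overline{\mathcal{C}}^R_n)^{\dia}$. The semistability and positive-staticness caveats you flag are exactly the points the paper delegates to Corollary \ref{cor:semistable} and Lemma \ref{lem:alguniversal}.
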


\subsection{The modified rank filtration on $0$-th homotopy}
Now we come to the behavior of these filtrations on $\upi_0$, starting with the rank filtration. Every $R[G]$-module $W$ which is free as an $R$-module gives rise to an element $[W]$ of $\pi_0^G (kR)$ represented by the map $S^M\to kR(M);v\mapsto (\varphi(M),v)$, where $\varphi:W\to R[M]$ is an equivariant embedding for some finite $G$-set $M$ (which always exists, cf. the proof of Lemma \ref{lem:alguniversal}). Like for $ku$ this element only depends on the isomorphism type of $W$ and is independent of the other choices. Furthermore, it is already defined in $\pi_0^G (kR^{\rk W})$.
The assignment $W\mapsto [W]$ is additive with respect to direct sum of $R[G]$-modules and induces an isomorphism from the representation ring $\Rep_R(G)$ of $R[G]$-modules that are finitely generated free as an $R$-module to $\pi_0^G(kR)$. Furthermore, it takes restrictions of representations to homotopy-theoretic restrictions and inductions to homotopy-theoretic transfers.

The intermediate terms $\upi_0(kR^n)$ allow a similar formula as the ones for topological $K$-theory. We first treat the case where $R$ is a finite ring. Let $\tau_n^R$ be the tautological $GL_n(R)$-module of $R$-rank $n$. Then analogously to the previous section we define universal relations $a(k,l)$ and $b(i,j)$ by \[ {\res}_{GL_k(R)\times GL_l(R)}^{GL_{k+l}(R)}(\tau_n^R)=p_1^*(\tau_k^R) + (p_2)^*(\tau_l^R), \]
respectively
\[  {\res}_{\Sigma_i\wr GL_j(R)}^{GL_{i\cdot j}(R)}(\tau_{i\cdot j}^R)={\tr}_{GL_j(R)\times (\Sigma_{i-1}\wr GL_j(R))}^{\Sigma_i\wr GL_j(R)}(p^*(\tau_j^R)). \]
And we obtain:
\begin{Theorem}[Modified rank filtration on $\upi_0$] \label{theo:algpirank} Let $R$ be a finite ring. Then the $\F in$-global functor $\upi_0 (kR^n)$ is isomorphic to the free global functor generated by the elements $\tau_1^R,\hdots,\tau_n^R$ modulo the relations $a(k,l)$ for all $k+l\leq n$ and $b(i,j)$ for all $i\cdot j\leq n$.
\end{Theorem}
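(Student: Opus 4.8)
The plan is to transcribe the proof of Theorem \ref{theo:pirank} almost line for line, systematically replacing complex vector spaces by $R$-lattices, the unitary groups $U(m)$ by $GL_m(R)$, and the functor $\Sym(\C\otimes-)$ by $R[-]$. First I would set up the comparison of the cofiber sequence $kR^{n-1}\xr{i_n}kR^n\to kR^n/kR^{n-1}$ with the geometric one
\[
\Sigma^{\infty}_+(E_{gl}GL_n(R)\times_{GL_n(R)}\oP)\xr{p}\Sigma^{\infty}_+(B_{gl}GL_n(R))\xr{\alpha_n}\Sigma^{\infty}(E_{gl}GL_n(R)_+\wedge_{GL_n(R)}\oP^{\dia}),
\]
where $\alpha_n$ sends a pair $(\varphi,v)$ to the configuration $(\varphi(R^n),v)$, the rightmost term is identified with $kR^n/kR^{n-1}$ as in the previous subsection, and $B_{gl}GL_n(R)$ is modelled by $E(\Emb_R(R^n,R[-]))/GL_n(R)$ (Lemma \ref{lem:alguniversal}), whose $0$-th homotopy set at a finite group $G$ is the set of isomorphism classes of rank-$n$ $R[G]$-lattices. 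Since all symmetric spectra in sight are globally semistable by Corollary \ref{cor:semistable}, their equivariant homotopy groups behave exactly as in the orthogonal case; hence, once $\alpha_n$ and the dotted lift $\psi_n$ are in place and the left-hand square is recognized as a homotopy pushout, one obtains the Mayer--Vietoris exact sequence of $\F in$-global functors
\[
\upi_0(\Sigma^{\infty}_+(E_{gl}GL_n(R)\times_{GL_n(R)}\oP))\xr{(p_*,-(\psi_n)_*)}\upi_0(\Sigma^{\infty}_+ B_{gl}GL_n(R))\oplus\upi_0(kR^{n-1})\to\upi_0(kR^n)\to 0,
\]
exactly as in Corollary \ref{cor:exact}.

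Next I would construct $\psi_n$ explicitly. An element of the level-$V$ space of $\Sigma^{\infty}_+(E_{gl}GL_n(R)\times_{GL_n(R)}\oP)$ is a tuple $(\varphi,(W_i,x_i)_{i\in I},v)$ with $\varphi\colon R^n\hookrightarrow R[M]$ a splittable monomorphism, the $W_i$ free submodules of $R^n$ whose inner sum is direct and all of $R^n$, and the $x_i\in V\subseteq\R^M$ satisfying $\sum\rk(W_i)\cdot x_i=0$ and $\sum\rk(W_i)|x_i|^2=1$. Using the very same shrinking maps $s_x^V\colon S^V\to S^V$ from Section \ref{sec:pi0ku} — which act only on the sphere coordinate and are therefore insensitive to the nerve-of-a-category model for $kR$ — I set $\psi_n(\varphi,(W_i,x_i)_{i\in I},v)=(\varphi(W_i),s_x^V(x_i+v))_{i\in I}$. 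The rank estimate is literally the topological one: the two norm conditions force that for each $v$ at least one coordinate function takes the basepoint value, so the surviving summands have total rank strictly less than $n$ and the assignment lands in $kR^{n-1}$; compatibility with the structure maps and with the $GL_n(R)$-action is checked exactly as before. The verification that $\psi_n$ turns the diagram into a map of cofiber sequences (the coherent nullhomotopy) I would relegate to the algebraic counterpart of Appendix \ref{app:cofiber}.

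Finally I would compute the left-hand term. Applying Lemma \ref{lem:fixedpointsquot} with $K=GL_n(R)$, and using the zig-zag equivalence of Theorem \ref{theo:alglattice} to replace $\oP$ by the nerve of the decomposition lattice $\mathcal{P}^R_n$, one identifies $\pi_0^G(E_{gl}GL_n(R)\times_{GL_n(R)}\oP)$ with the set of isomorphism classes of pairs consisting of a rank-$n$ $R[G]$-lattice $W$ together with a non-trivial weakly $G$-fixed decomposition into free submodules, taken modulo refinement; as in Corollary \ref{cor:type}, every such class is represented either by an honest $R[G]$-splitting $W=W_1\oplus W_2$ or by a transitively permuted family, i.e.\ by $W\cong\Ind_H^G W'$ for a finite-index subgroup $H$. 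The universal instances $\widetilde\alpha(k,l)\in\pi_0^{GL_k(R)\times GL_l(R)}$ and $\widetilde\beta(i,j)\in\pi_0^{\Sigma_i\wr GL_j(R)}$ generate $\upi_0(\Sigma^{\infty}_+(E_{gl}GL_n(R)\times_{GL_n(R)}\oP))$ as a global functor by Proposition \ref{prop:pi0susp}, and the explicit description of $\psi_n$ on fixed points, verbatim as in Proposition \ref{prop:image}, gives $(\psi_n)_*(\widetilde\alpha(k,l))=p_1^*(\tau_k^R)+p_2^*(\tau_l^R)$ and $(\psi_n)_*(\widetilde\beta(i,j))={\tr}_{GL_j(R)\times(\Sigma_{i-1}\wr GL_j(R))}^{\Sigma_i\wr GL_j(R)}(p^*(\tau_j^R))$ — precisely the relations $a(k,l)$ and $b(i,j)$. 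An induction on $n$ then closes the argument: since $\upi_0(\Sigma^{\infty}_+ B_{gl}GL_n(R))$ is the free $\F in$-global functor on the tautological class $\tau_n^R$, feeding the inductive hypothesis for $kR^{n-1}$ into the exact sequence above produces the asserted presentation, just as for Theorem \ref{theo:pirank}.

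The step I expect to cause the most friction is organizational rather than conceptual: keeping track of where the finiteness hypothesis on $R$ is genuinely used — it is what makes $GL_n(R)$ and the wreath products $\Sigma_i\wr GL_j(R)$ finite groups, hence legitimate objects of the global theory over $\F in$, what gives $\pi_0^G(kR)\cong\Rep_R(G)$, and, through Corollary \ref{cor:semistable}, what guarantees the global semistability making the Mayer--Vietoris sequence available — together with transporting the coherence argument of Appendix \ref{app:cofiber} to the categorical model. Everything else is a mechanical translation of Section \ref{sec:pi0ku}.
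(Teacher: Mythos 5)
Your proposal is correct and follows exactly the route the paper intends: the paper's own proof of Theorem \ref{theo:algpirank} consists of the single remark that it is analogous to Theorem \ref{theo:pirank}, using Theorem \ref{theo:alglattice} to identify the filtration quotients with the global homotopy orbits of the decomposition lattice, and your write-up is precisely that translation (including the correct places where finiteness of $R$ and Corollary \ref{cor:semistable} enter). No gaps.
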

Here, a \emph{$\F in$-global functor} is a global functor which is only defined on finite groups, also called \emph{inflation functor} in \cite{We93}.
\begin{proof} The proof is analogous to that of Theorem \ref{theo:pirank}, using the equivalence of the filtration quotients to the global homotopy orbits of the decomposition lattice of Theorem \ref{theo:alglattice}.
\end{proof}

The problem with infinite $R$ is that its general linear groups are not finite. Hence the universal elements above do not make sense, as the theory is ``not global enough'' to include infinite discrete groups. One can still give the following concrete description:
\begin{Prop}[Description for arbitrary rings] \label{prop:algpirank2} Let $R$ be a ring satisfying dimension invariance. Then $\upi_0(kR^n)$ is generated as a $\F in$-global functor by the elements $[W]\in \pi_0^G(kR)$, where $(G,W)$ runs through a system of representatives of isomorphism classes of a finite group $G$ together with an $R[G]$-module that is free of rank $\leq n$ over $R$. The relations are generated by:
\begin{itemize}
\item $[W\oplus W']=[W]\oplus [W']$ with $\dim_R(W)+\dim_R(W')\leq n$. 
\item $[\Ind_H^G(W)]=\tr_H^G[W]$ with $\dim_R(W)\cdot [G:H]\leq n$.
\end{itemize}
\end{Prop}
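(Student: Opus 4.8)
The plan is to transcribe the proof of Theorem \ref{theo:pirank} to the present setting; the only essential change is that the universal relations $a(k,l),b(i,j)$ are unavailable, because the groups $GL_n(R)$ need not be finite, and must be replaced by their explicit non-universal counterparts. First I would set up, for each $n\geq1$, the map of cofiber sequences analogous to Diagram (\ref{eq:cofiber}). Using Theorem \ref{theo:alglattice}, or rather the underlying $I$-space statement that $E_{gl}GL_n(R)_+\wedge_{GL_n(R)}(\oP)^{\dia}\simeq kR^n/kR^{n-1}$, one obtains a map of cofiber sequences whose left square is the homotopy-cocartesian square
\[
\xymatrix{
\Sigma^{\infty}_+ (E_{gl}GL_n(R)\times_{GL_n(R)}\oP)\ar[r]^-{p}\ar[d]_{\psi_n} & \Sigma^{\infty}_+ (B_{gl}GL_n(R))\ar[d]^{\alpha_n}\\
kR^{n-1}\ar[r]_-{i_n} & kR^n,
}
\]
in which $\alpha_n$ sends a rank-$n$ $R[G]$-lattice $W$ to its class $[W]$, and $\psi_n$ is built by the recipe of Section \ref{sec:pi0ku}: the points $x_i$ occurring in $\oP$ still lie in the real inner product space $\R[M]$, so the projections $p_x$, the shrinking function $s$, and the estimate guaranteeing that the image lands in $kR^{n-1}$ all carry over verbatim. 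As all spectra here are globally semistable by Corollary \ref{cor:semistable}, applying $\upi_0$ yields, exactly as in the orthogonal case, an exact sequence of $\F in$-global functors
\[
\upi_0 (\Sigma^{\infty}_+ (E_{gl}GL_n(R)\times_{GL_n(R)}\oP))\xrightarrow{(p_*,\,-(\psi_n)_*)}\upi_0 (\Sigma^{\infty}_+ B_{gl}GL_n(R))\oplus\upi_0 (kR^{n-1})\xrightarrow{\binom{(\alpha_n)_*}{(i_n)_*}}\upi_0 (kR^n)\to0.
\]

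Next I would identify the two outer terms. For a finite group $G$, $\pi_0^G(B_{gl}GL_n(R))$ is the set of isomorphism classes of $R[G]$-lattices free of rank $n$ over $R$, so by the tom Dieck splitting (Proposition \ref{prop:pi0susp}) $\upi_0 (\Sigma^{\infty}_+ B_{gl}GL_n(R))$ is free on the transfers $\tr_H^G[W]$ and, in particular, is generated as a $\F in$-global functor by the classes $[W]$ with $\dim_R W=n$. For the left-hand term I would copy the analysis of Section \ref{sec:pi0ku}: via the equivalence to the decomposition lattice, $\pi_0^G (E_{gl}GL_n(R)\times_{GL_n(R)}\oP)$ is the set of pairs of a rank-$n$ lattice $W$ together with a non-trivial weakly $G$-fixed decomposition $W=\bigoplus_{i\in I}W_i$, taken up to refinement and isomorphism, and every such pair is refined by one of the form $W=W_1\oplus W_2$ with $W_1,W_2$ sub-$R[G]$-lattices, or by one of the form $W=W_1\oplus\cdots\oplus W_k$ whose summands are permuted transitively, i.e.\ with $W\cong\Ind_H^G W_1$ for $H$ the stabiliser of $W_1$. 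Hence $\upi_0$ of the left term is generated as a $\F in$-global functor by the classes of these two kinds of decompositions.

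Then the exact analog of Proposition \ref{prop:image}, proved by the same identification of $\psi_n$ with a (transfer) pinch map, shows $(\psi_n)_*$ carries the class of $W=W_1\oplus W_2$ to $[W_1]+[W_2]$ and the class of $W\cong\Ind_H^G W_1$ to $\tr_H^G[W_1]$, while $p_*$ sends both to $[W]$. Plugging this into the exact sequence and inducting on $n$ (the case $n=0$ being trivial since $kR^0=\ast$) gives the claim: $\upi_0 (kR^n)$ is generated by the classes $[W]$ with $\dim_R W\leq n$, and its relations are generated by those of $\upi_0 (kR^{n-1})$ — the same relations with bound $n-1$, known inductively — together with $[W_1\oplus W_2]=[W_1]+[W_2]$ for $\dim_R W_1+\dim_R W_2=n$ and $[\Ind_H^G W]=\tr_H^G[W]$ for $\dim_R W\cdot[G:H]=n$; combining the inductive and the new relations yields precisely the two displayed families of relations with bound $\leq n$.

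I expect the main obstacle to be bookkeeping rather than conceptual: one must check that the construction of $\psi_n$ and the homotopy-cocartesian square (cf. Appendix \ref{app:cofiber}) go through for symmetric spectra and $I$-spaces, and that global semistability (Corollary \ref{cor:semistable}) genuinely licenses computing $\upi_0$ of the square just as in the orthogonal case. Granting that, the argument is a line-by-line translation of Section \ref{sec:pi0ku}, with complex subspaces replaced by free $R$-submodules and the universal relations $a(k,l),b(i,j)$ replaced by the concrete direct-sum and induction relations.
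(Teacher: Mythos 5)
Your proposal is correct and is exactly the argument the paper intends: the paper proves Theorem \ref{theo:algpirank} by declaring it "analogous to Theorem \ref{theo:pirank}" via the decomposition-lattice equivalence of Theorem \ref{theo:alglattice}, and Proposition \ref{prop:algpirank2} is the same argument with the universal classes over $GL_n(R)$ (unavailable for infinite $R$) replaced by the concrete generators and relations indexed over all finite groups, which is precisely your translation. The points you flag as potential obstacles (the construction of $\psi_n$ over $\R[M]$, the homotopy-cocartesian square, and global semistability licensing the $\upi_0$ computation) are indeed the only things to check, and they go through as you describe.
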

In this case both the generators and the relations are already closed under restrictions, so it suffices to apply transfers to obtain the concrete value at a given finite group. In this sense the global formula is no easier than the one for a specific group.
\subsection{The complexity filtration on $0$-th homotopy} We again start with the case of finite $R$, where formula and proof are analogous to the one for topological $K$-theory.
\begin{Theorem}[Complexity filtration on $\upi_0$] \label{theo:algpicomp} Let $R$ be a finite ring and $n\in \N$. Then the map $(q_n)_*:\Rep_R(-)\cong \upi_0(kR)\to \upi_0(A_n^R)$ is surjective with kernel generated as a $\F in$-global functor by the element
\[ \tau_n^R-n\cdot [1] \in \pi_0^{GL_n(R)} (kR)\cong {\Rep}_R(GL_n(R)). \]
\end{Theorem}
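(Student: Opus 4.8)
The plan is to follow the proof of Theorem~\ref{theo:picomp} essentially verbatim, carrying out the substitutions of $U(n)$ by $GL_n(R)$, of $\oL_n$ by $\oP$, of $\tau_n^{\C}$ by $\tau_n^R$, and of $\Rep_{\C}(-)$ by $\Rep_R(-)$. Since $R$ is finite, every $GL_n(R)$ is a finite group and hence belongs to the $\F in$-global framework, and by Corollary~\ref{cor:semistable} the spectra $kR^n$ -- and with them the homotopy pushouts $A^R_n$ -- are globally semistable, so their equivariant homotopy groups can be manipulated just as those of orthogonal spectra.

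First I would assemble, exactly as in Section~\ref{sec:pi0comp}, the homotopy-cocartesian square
\[ \xymatrix{ \Sigma^{\infty}_+ ((E_{gl}GL_n(R)*\oP)/GL_n(R)) \ar[r]^-{p} \ar[d]_{\gamma_n} & \Sph \ar[d]^i \\ A^R_{n-1} \ar[r]^{p_n} & A^R_n } \]
as the homotopy pushout of three homotopy-cocartesian squares: the left-hand one is the algebraic analog of Diagram~(\ref{eq:cofiber}), built from the explicit map $\psi_n\colon\Sigma^{\infty}_+(E_{gl}GL_n(R)\times_{GL_n(R)}\oP)\to kR^{n-1}$ together with the ($I$-space version of the) homotopy-pushout statement of Appendix~\ref{app:cofiber}; the right-hand one is the corresponding symmetric-product square; and the bottom one commutes on the nose. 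The resulting Mayer--Vietoris sequence then produces, as in Corollary~\ref{cor:exact2}, an exact sequence of $\F in$-global functors
\[ \ker(p_*)\xr{(\gamma_n)_*}\upi_0(A^R_{n-1})\xr{(p_n)_*}\upi_0(A^R_n)\to\upi_0(A^R_n/A^R_{n-1})\cong\coker(p_*)\to 0. \]
Because $(E_{gl}GL_n(R)*\oP)/GL_n(R)$ is nonempty, the projection to a point splits $p$, so $p_*$ is surjective; hence $\upi_0(A^R_n/A^R_{n-1})=0$, every $(p_n)_*$ is surjective, and therefore so is the composite $(q_n)_*$, which is the surjectivity claim.

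For the description of the kernel I would induct on $n$, the case $n=0$ being vacuous. The key ingredient is the algebraic analog of the surjectivity lemma preceding Corollary~\ref{cor:gencomp}: for every finite group $G$ the map $\pi_0^G(B_{gl}GL_n(R))\to\pi_0^G((E_{gl}GL_n(R)*\oP)/GL_n(R))$ is surjective. As in the topological proof this reduces, via the pushout-of-sets square for $\pi_0^G$ coming from the join, to surjectivity of $\pi_0^G(E_{gl}GL_n(R)\times_{GL_n(R)}\oP)\to\pi_0^G(\oP/GL_n(R))$, which one proves by realizing a given $G$-fixed tuple $(W_i,x_i)_{i\in I}$ through a homomorphism $\alpha\colon G\to GL_n(R)$ obtained by choosing $R$-bases of the $W_i$ and letting $\alpha$ permute basis vectors along the $G$-orbits (the conceptual content being that $N_{GL_n(R)}L\to W_{GL_n(R)}L$ splits for every complete subgroup $L$). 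It follows that $\upi_0(\Sigma^{\infty}_+((E_{gl}GL_n(R)*\oP)/GL_n(R)))$ is generated as a global functor by $(k_n)_*(\tau_n^R)$, and hence that $\ker\gamma_n$ is generated by $(k_n)_*(\tau_n^R-n\cdot[1])$; by the commutativity of the algebraic analog of Square~(\ref{eq:square}) this element maps to $(q_{n-1})_*(\tau_n^R-n\cdot[1])$ in $\upi_0(A^R_{n-1})$. Combined with the inductive hypothesis, the kernel of $(q_n)_*$ is then generated by $\tau_n^R-n\cdot[1]$ together with $\tau_{n-1}^R-(n-1)\cdot[1]$; but the latter is the restriction of the former along $GL_{n-1}(R)\hookrightarrow GL_n(R)$, so $\tau_n^R-n\cdot[1]$ generates the whole kernel.

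The main obstacle here is not conceptual but infrastructural: one must check that the explicit map $\psi_n$ and the homotopy exhibiting the square above as a homotopy pushout survive the passage to symmetric spectra and $I$-spaces, which is exactly where global semistability (Corollary~\ref{cor:semistable}) and the $I$-space versions of the cofibrancy results of the appendices are needed. Apart from that, the argument is a line-by-line translation of the topological one, and the hypothesis that $R$ be finite enters only to ensure that $GL_n(R)$ is a finite group -- for infinite $R$ the universal classes $\tau_n^R$ are unavailable and one is pushed into the less economical framework of Proposition~\ref{prop:algpirank2}.
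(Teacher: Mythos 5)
Your proposal is correct and follows essentially the same route as the paper, which simply declares the proof to be the translation of Theorem \ref{theo:picomp} to the algebraic setting, resting on the same key input you identify: that $B_{gl}(GL_n(R))\hookrightarrow (E_{gl}GL_n(R)*\oP)/GL_n(R)$ induces a surjection on $\upi_0$. Your additional remarks on semistability and the $I$-space versions of the appendix results are exactly the infrastructural points the paper implicitly relies on.
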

For general $R$ these universal elements are again not part of the theory and hence there is no such compact description. The result then reads as follows:
\begin{Prop}[Description for arbitrary rings] \label{prop:algpicomp2} Let $R$ be a ring satisfying dimension invariance and $n\in \N$. Then the map $\Rep_R(-):\upi_0 (kR)\to \upi_0 (A^R_n)$ is surjective with kernel generated as a $\F in$-global functor by the elements
\[ [W]-n\cdot [1]\in \pi_0^G(kR)\cong {\Rep}_R(G) \]
where $(G,W)$ runs through isomorphism classes of pairs with $W$ an $n$-dimensional $G$-representation over $R$.
\end{Prop}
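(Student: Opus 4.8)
The plan is to transport the proof of Theorem~\ref{theo:picomp} to the algebraic setting, the only genuinely new point being that $GL_n(R)$ need not be finite, so that instead of a single universal class $\tau_n^R-n\cdot[1]$ one works with the whole family of classes $[W]-n\cdot[1]$ indexed by a finite group $G$ together with an $n$-dimensional $R[G]$-module $W$.

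First I would record the homotopy-cocartesian square of symmetric spectra
\[ \xymatrix{ \Sigma^{\infty}_+ ((E_{gl}GL_n(R)*\oP)/GL_n(R)) \ar[r]^-{p} \ar[d]_{\gamma_n} & \Sph \ar[d]^i \\
 A^R_{n-1} \ar[r]^{p_n} & A^R_n } \]
obtained, exactly as in Section~\ref{sec:pi0comp}, by gluing the three homotopy-cocartesian squares built from the algebraic versions of $\psi_n$ and $\alpha_n$, the rank filtration of $kR$, and the symmetric product filtration. The construction of the algebraic $\psi_n$ and the verification that these squares are homotopy-cocartesian are word-for-word the ones of Section~\ref{sec:pi0ku} and Appendix~\ref{app:cofiber}, with complex subspaces replaced by free $R$-submodules and complex dimension by $R$-rank; that the resulting Mayer--Vietoris sequence
\[ \ker(p_*)\xr{(\gamma_n)_*}\upi_0(A^R_{n-1})\xr{(p_n)_*}\upi_0(A^R_n)\to \coker(p_*)\to 0 \]
of $\F in$-global functors is legitimate follows from global semistability of all spectra involved (Corollary~\ref{cor:semistable}, together with the same argument for $A^R_n$). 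Since the $I$-space $(E_{gl}GL_n(R)*\oP)/GL_n(R)$ is non-empty, the projection to a point admits a section (through the trivial $n$-dimensional representation), so $p_*$ is surjective, $\coker(p_*)=0$, every $(p_n)_*$ is surjective, and hence $(q_n)_*\colon\upi_0(kR)\to\upi_0(A^R_n)$ is surjective. This gives the first assertion.

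For the statement about the kernel I would induct on $n$, the case $n=0$ being trivial since $A_0^R=kR$ and the zero module yields $[W]-0\cdot[1]=0$. The key input is the algebraic analogue of the surjectivity lemma of Section~\ref{sec:pi0comp}: for every finite group $G$ the map $(k_n)_*\colon\pi_0^G(B_{gl}GL_n(R))\to\pi_0^G((E_{gl}GL_n(R)*\oP)/GL_n(R))$ induced by including $E_{gl}GL_n(R)$ into the join is surjective. As before this reduces, via the pushout of sets computing $\pi_0^G$ of a join, to surjectivity of $\pi_0^G(E_{gl}GL_n(R)\times_{GL_n(R)}\oP)\to\pi_0^G(\oP/GL_n(R))$, and one lifts a class $(W_i,x_i)_{i\in I}$ in the target---on which $G$ permutes the $x_i$ preserving the ranks of the $W_i$---by choosing a homomorphism $\alpha\colon G\to GL_n(R)$ permuting the $W_i$ accordingly (possible because $G$ is finite, after fixing compatible $R$-bases of the $W_i$) together with a $G$-equivariant splittable embedding $R^n\hookrightarrow R[\U_G]$, which exists by the argument in the proof of Lemma~\ref{lem:alguniversal}. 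Combining this surjectivity with the tom Dieck splitting (the $I$-space analogue of Proposition~\ref{prop:pi0susp}) shows that $\upi_0(\Sigma^{\infty}_+((E_{gl}GL_n(R)*\oP)/GL_n(R)))$ is generated as a $\F in$-global functor by the classes $(k_n)_*([W])$, with $W$ an $n$-dimensional $G$-representation; dividing by the section of $p_*$ given by $k_n$ applied to the trivial $n$-dimensional representation then identifies $\ker(p_*)$ with the sub-$\F in$-global functor generated by the classes $(k_n)_*([W]-n\cdot[1])$.

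It then remains to unwind the exact sequence. Since $\ker((p_n)_*)=(\gamma_n)_*(\ker(p_*))$, and by commutativity of the algebraic analogue of Square~(\ref{eq:square}) the composite $(\gamma_n)_*\circ(k_n)_*$ agrees with $(q_{n-1})_*$ on representation classes, $\ker((p_n)_*)$ is generated by $\{(q_{n-1})_*([W]-n\cdot[1])\ :\ W\ n\text{-dimensional}\}$; as $q_n=p_n\circ q_{n-1}$ this yields that $\ker((q_n)_*)$ is generated by this $n$-dimensional family together with $\ker((q_{n-1})_*)$, which by the induction hypothesis is generated by $\{[W']-(n-1)\cdot[1]\ :\ W'\ (n-1)\text{-dimensional}\}$. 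The induction closes because each such $W'$ satisfies $[W'\oplus R]-n\cdot[1]=[W']-(n-1)\cdot[1]$ (with $R$ carrying the trivial action), and $W'\oplus R$ is an $n$-dimensional $G$-representation, so the $(n-1)$-dimensional family is already contained in the $n$-dimensional one and $\ker((q_n)_*)$ is generated as claimed. The work here is essentially bookkeeping; the one thing that genuinely needs care---and the reason the argument is run in the global framework at all---is checking that the homotopy-cocartesian square and its Mayer--Vietoris sequence survive the passage to symmetric spectra and $\F in$-global functors, which is exactly what global semistability provides.
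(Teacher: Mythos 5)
Your proposal is correct and follows essentially the same route as the paper, which proves this proposition by transporting the argument of Theorem~\ref{theo:picomp} to the algebraic setting and noting that the inclusion $B_{gl}(GL_n(R))\hookrightarrow (E_{gl}GL_n(R)*\oP)/GL_n(R)$ still induces a surjection on $\upi_0$ even though $\upi_0(B_{gl}GL_n(R))$ is no longer generated by a single element. Your write-up simply makes explicit the bookkeeping (the generation of $\ker(p_*)$ by the family $(k_n)_*([W]-n\cdot[1])$ and the absorption of the $(n-1)$-dimensional generators via $W'\oplus R$) that the paper leaves implicit.
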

The proof of the two statements is the same as that of Theorem \ref{theo:picomp}. It uses that the inclusion $B_{gl}(GL_n(R))\hookrightarrow ((E_{gl}GL_n(R)*\oP)/GL_n(R))$ induces a surjection on $\upi_0$, even if $\upi_0 (B_{gl}GL_n(R))$ is not generated by a single element.

\section{Examples}
\label{sec:exa}
In this final section we give examples for the effect on $\pi_0^G$ of the modified rank and complexity filtrations for various finite groups $G$ and topological or discrete rings. The purpose is two-fold: On the one hand we want to explain how one computes concrete values $\pi_0^G(X)$ for the global spectra $X$ that appeared in this paper, using the global formulas we gave in the previous sections. On the other hand we try to demonstrate that the behavior at a specific group is often quite complicated, while the global formula is not.
 
\begin{Example}[The symmetric group $\Sigma_3$, over $\C$] We begin by going through the example $G=\Sigma_3$ in detail. To compute the values of the modified rank and complexity filtration for $\Sigma_3$, we need to know its subgroups, their complex representation rings (together with the conjugation action) and the induction maps between them. The conjugacy classes of subgroups are given by the trivial group $\{e\}$, the cyclic groups $C_2$ (represented by any transposition) and $C_3$ (the normal subgroup of $3$-cycles), and the whole group $\Sigma_3$. Their representation rings are:
\begin{eqnarray*} {\Rep}_{\C}(\{e\})\cong \Z\{[1]\} && {\Rep}_{\C}(C_2)\cong \Z\{[1],[-1]\} \\
									{\Rep}_{\C}(C_3)\cong \Z\{[1],[\eta_3],[\eta_3^2]\} && {\Rep}_{\C}(\Sigma_3)\cong \Z\{[1],[\sgn],[\nu_3]\},
\end{eqnarray*}
where $\nu_3$ is the $2$-dimensional reduced natural representation and $\eta_3$ is a primitive third root of unity. The $C_3$-representations $\eta_3$ and $\eta_3^2$ are conjugate under the Weyl-group action. Furthermore, we have the following formulas for induction:
\begin{eqnarray*}	{\Ind}_{\{e\}}^{C_2}([1])=[1]+[-1] & \hspace{1cm}  {\Ind}_{\{e\}}^{C_3}([1])=[1]+[\eta_3]+[\eta_3^2] \hspace{1cm}  & {\Ind}_{\{e\}}^{\Sigma_3}([1])=[1]+[\sgn]+2\cdot [\nu_3]\\
									{\Ind}_{C_2}^{\Sigma_3}([1])=[1]+[\nu_3] & {\Ind}_{C_2}^{\Sigma_3}([-1])=[\sgn]+[\nu_3] \\
									{\Ind}_{C_3}^{\Sigma_3}([1])=[1]+[\sgn] & {\Ind}_{C_3}^{\Sigma_3}([\eta_3])=[\nu_3]\end{eqnarray*}
To compute the first term $\pi_0^{\Sigma_3}(ku^1)$ we need to consider all transfers of $1$-dimensional representations (modulo the respective Weyl group actions) so we see that it is given by the free abelian group \[ \Z\{[1],[\sgn],{\tr}_{C_3}^{\Sigma_3}([1]),{\tr}_{C_3}^{\Sigma_3}([\eta_3]),{\tr}_{C_2}^{\Sigma_3}([1]),{\tr}_{C_2}^{\Sigma_3}([-1]),{\tr}_{\{e\}}^{\Sigma_3}([1])\}.\]
For the second stage we add on everything that comes from a $2$-dimensional irreducible representation (since, using the relation $a(1,1)$ of Theorem \ref{theo:pirank}, we can replace a non-irreducible representation by the homotopy-theoretic sum of its summands). In this case there is only one $2$-dimensional irreducible representation, the $\Sigma_3$-representation $\nu_3$. Taking into account the relation $b(2,1)$ we furthermore have to identify all representations that are at most $2$-dimensional and an induction over a proper subgroup with the homotopy-theoretic transfer of that respective representation, 
transferred up to the whole group $\Sigma_3$ if necessary. Considering the formulas for induction above, this means that we have to identify the following:
\begin{eqnarray*} {\tr}_{\{e\}}^{\Sigma_3}([1])={\tr}_{C_2}^{\Sigma_3}({\tr}_{\{e\}}^{C_2}([1])) & \text{with} & {\tr}_{C_2}^{\Sigma_3}([1])+{\tr}_{C_2}^{\Sigma_3}([-1)) \\
									{\tr}_{C_3}^{\Sigma_3}([1]) & \text{with} & [1]+[\sgn] \\
									{\tr}_{C_2}^{\Sigma_3}([\eta_3]) & \text{with} & [\nu_3] \end{eqnarray*}
So we see that $\pi_0^{\Sigma_3}(ku^2)$ is a free group with basis $\{[1],[\sgn],[\nu_3],{\tr}_{C_2}^{\Sigma_3}([1]),{\tr}_{C_2}^{\Sigma_3}([-1])\}$. Since there are no irreducible representations of dimension $3$ or higher for any of the subgroups of $\Sigma_3$, we from now on do not add any new generators but only have to take into account new relations. In the third step the universal relation $b(3,1)$ shows that ${\tr}_{C_2}^{\Sigma_3}([1])$ is identified with $[1]+[\nu_3]$ and ${\tr}_{C_2}^{\Sigma_3}([-1])$ with $[\sgn]+[\nu_3]$. Hence, $\pi_0^{\Sigma_3}(ku^3)$ is isomorphic to $\Z\{[1],[\sgn],[\nu_3]\}$, which is the representation ring of $\Sigma_3$, and the rank filtration is constant from then on.

For every finite group $G$ the complexity filtration over $\C$ stabilizes (on $\pi_0^G(-)$) at stage $1$, as noted in Remark \ref{rem:stabcomp}. For $\Sigma_3$ this can be seen concretely as follows: Recall that we start with $\pi_0^{\Sigma_3}(ku)$, the representation ring, a free group on the classes $[1],[\sgn]$ and $[\nu_3]$. As the sign representation is one-dimensional, it is identified with $[1]$ in $\pi_0^{\Sigma_3}(A_1^u)$.
Furthermore, $\nu_3$ is the induction of the $1$-dimensional $C_3$-representation $\eta_3$. So, since $[\eta_3]$ is identified with the trivial representation, application of ${\Ind}_{C_3}^{\Sigma_3}(-)$ shows that $[\nu_3]$ is identified with $[1]+[\sgn]$. We already argued that $[\sgn]$ is identified with $[1]$, so this shows that $[\nu_3]$ becomes $2\cdot [1]$ in $\pi_0^{\Sigma_3}(A_1^u)$, which is hence isomorphic to $\Z$.
\end{Example}
\begin{Example}[The symmetric group $\Sigma_3$, over $\R$] We again discuss the symmetric group on $3$ letters, this time over $\R$. Though the representation rings over $\R$ and $\C$ are isomorphic, like for every symmetric group, the effect of the modified rank and complexity filtrations on $\pi_0^{\Sigma_3}$ differ, as we now see. Again we have to start with the representation rings over all subgroups, and the only difference to the complex case is at the subgroup $C_3$, where $\Rep_{\Q}(C_3)$ only has rank $2$ with basis $[1]$ and the reduced regular representation $[\overline{\rho}_{C_3}]$.
Consequently, we find that $\pi_0^{C_3}(ko^1)$ has one basis element less, it is the free group on $\{[1],[\sgn],{\tr}_{C_3}^{\Sigma_3}([1]),{\tr}_{C_2}^{\Sigma_3}([1]),{\tr}_{C_2}^{\Sigma_3}([-1]),{\tr}_{\{e\}}^{\Sigma_3}([1])\}$. In the next step the irreducible representations $[\nu_3]$ and ${\tr}_{C_3}^{\Sigma_3}[\overline{\rho}_{C_3}]$ are added, and again ${\tr}_{\{e\}}^{\Sigma_3}([1])$ is identified with ${\tr}
_{C_2}^{\Sigma_3}([1])+{\tr}_{C_2}^{\Sigma_
3}([-1))$, as well as ${\tr}_{C_3}^{\Sigma_3}([1])$ with $[1]+[\sgn]$. This gives \[ \pi_0^{\Sigma_3}(ko^2)\cong \Z\{[1],[\sgn],[\nu_3],{\tr}_{C_3}^{\Sigma_3}[\overline{\rho}_{C_3}],{\tr}_{C_2}^{\Sigma_3}([1]),{\tr}_{C_2}^{\Sigma_3}([-1])\}.\]
In the third step the latter two classes are identified with $[1]+[\nu_3]$ and $[-1]+[\nu_3]$ respectively, hence they become algebraically dependent of the first three. Furthermore, applying ${\tr}_{C_3}^{\Sigma_3}(-)$ to the relation ${\tr}_{\{e\}}^{C_3}([1])=[1]+[\overline{\rho}_{C_3}]$ (plus using earlier relations) shows that ${\tr}_{C_3}^{\Sigma_3}([\overline{\rho}_3])$ represents the same class as $2\cdot [\nu_3]$, so $\pi_0^{\Sigma_3}(ko^3)$ is isomorphic to the representation ring $\Rep_{\R}(\Sigma_3)$.

The complexity filtration is also different to the complex one: In the first step $[1]$ and $[\sgn]$ are identified, but this time there are no further relations. This is because applying ${\Ind}_{C_2}^{\Sigma_3}$ to the identification $[1]\sim [-1]$ contributes nothing new, and there is only one $1$-dimensional representation of $C_3$ over $\R$. So $\pi_0^{\Sigma_3}(A_1^o)\cong \Z\{\overline{[1]},\overline{[\nu_3]}\}$. In the second step $[\nu]$ is identified with $2\cdot [1]$ and hence $\pi_0^{\Sigma_3}(A_2^o)\cong \Z$, which is true for all $\pi_0^G(A_2^o)$ with $G$ finite.
\end{Example}

\begin{Example}[Cyclic groups of prime order] Having seen the general algorithm, we now go back to the easiest example and use it to illustrate the behavior over various rings. Let $p$ be a prime and $C_p$ the cyclic group with $p$ elements.

\textbf{Over $\C$}: The irreducible $\C[C_p]$-representations are given by $\eta_p^1,\eta_p^2,\hdots,\eta_p^p$, where $\eta_p$ is a primitive $p$-th root of unity. So we find that $\pi_0^{C_p}(ku^n)\cong \Z\{[\eta_p^1],[\eta_p^2],\hdots,[\eta_p^p],\tr_1^{\Z/p}[1]\}$ for $0<n<p$ and $\Z\{[\eta_p^1],[\eta_p^2],\hdots,[\eta_p^p]\}$ for all $n\geq p$. As mentioned before, $\pi_0^G(A_n^u)$ is isomorphic to $\Z$ for all $n\geq 1$ and any finite group $G$.

\textbf{Over $\R$}: If $p$ is $2$, all complex representations are already defined over the reals, so the filtrations are the same. If $p$ is odd, there are $(p-1)/2$ isomorphism classes of $2$-dimensional indecomposable representations, which can be expressed as the underlying real representations of $\eta_p^1,\hdots,\eta_p^{(p-1)/2}$, plus the trivial $1$-dimensional one. So we find that 
\[\pi_0^{\C_p}(ko^n)\cong \begin{cases} 	\Z\{[1],{\tr}_{\{e\}}^{C_p}[1]\} & \text{for } n=1 \\
																					\Z\{[1],{\res}_\R^\C(\eta^1_p),\hdots,{\res}_\R^\C(\eta^{(p-1)/2}_p),{\tr}_{\{e\}}^{C_p}[1]\} & \text{for } 1<n<p \\
																					\Z\{[1],{\res}_\R^\C(\eta^1_p),\hdots,{\res}_\R^\C(\eta^{(p-1)/2}_p)\} & \text{for } n\geq p. \end{cases} \]																				
Furthermore, $\pi_0^{\C_p}(A_1^o)\cong \pi_0^{\C_p}(ko)\cong {\Rep}_{\R}(C_p)$ since there are no non-trivial one-dimensional representations, and $\pi_0^{\C_p}(A_n^o)\cong \Z$ for all $n>1$.

\textbf{Over $\Q$}: There are only two isomorphism classes of irreducible $C_p$-representations over $\Q$, the trivial $1$-dimensional representation and the reduced regular representation $\overline{\rho}_{C_p}$ of dimension $p-1$. Hence we find that \[ \pi_0^{C_p}(k\Q^n)\cong \begin{cases} \Z\{[1],{\tr}_{\{e\}}^{C_p}[1]\} & \text{for }0<n<p-1 \\
																								\Z\{[1],[\overline{\rho}_{C_p}],{\tr}_{\{e\}}^{C_p}[1]\} & \text{for } n=p-1 \\																							\Z\{[1],[\overline{\rho}_{C_p}]\} & \text{for } n\geq p. \end{cases}	\]
Furthermore, we see that 
\[\pi^{C_p}_0(A_n^{\Q})\cong \begin{cases} \Z\{[1],[\overline{\rho}_{C_p}]\} & \text{for } 0\leq n\leq p-1 \\ \Z & \text{for } n\geq p. \end{cases} \]
In particular, the complexity filtration over $\Q$ does not stabilize globally on $\upi_0$.

\textbf{Over $\mathbb{F}_p$}: Unlike in characteristic $0$ the group ring $\mathbb{F}_p[C_p]\cong \mathbb{F}_p[t]/(t^p-1)\cong \mathbb{F}_p[t]/(t-1)^p\cong \mathbb{F}_p[t]/t^p$ is no longer semisimple. Up to isomorphism, there is exactly one indecomposable representation $V_i$ in every dimension $1\leq i\leq p$ (and none in higher dimensions) and every representation decomposes uniquely as a sum of these. So we see that

\[\pi_0^{C_p}((k\mathbb{F}_p)^n)\cong \begin{cases} \Z\{[V_1],\hdots,[V_n],\tr^{C_p}_{\{e\}}[1]\}  & \text{for }n=1,\hdots,p-1 \\
																										\Z\{[V_1],\hdots,[V_p]\} & \text{for } n\geq p,
\end{cases} \]
where the cases $p-1$ and $p$ are only notationally different, since the map $\pi_0^{C_p}((k\mathbb{F}_p)^{p-1})\to \pi_0^{C_p}((k\mathbb{F}_p)^p)$ sends $\tr^{C_p}_{\{e\}}[1]$ to $[V_p]$. For the complexity filtration we obtain:
\[ \pi_0^{C_p}(A_n^{\mathbb{F}_p}) \cong \begin{cases} \Z\{\overline{[V_{n+1}]},\hdots,\overline{[V_p]}\}  & n=0,\hdots,p-1 \\
																										\Z & n\geq p
\end{cases} \]
\end{Example}

Finally we compute the complexity filtration of the alternating group $A_5$ over $\Q$. To achieve this we first need two preparatory examples:
\begin{Example}[Complexity filtration of the alternating group $A_4$ over $\Q$]
The representation ring is given by $\Rep_{\Q}(A_4)=\Z\{[1],[\eta],[\nu_4]\}$, where $\eta$ is of dimension $2$ and $\nu_4$ is of dimension $3$. There are two conjugacy classes of maximal subgroups, the alternating group $A_3$ and the Klein four-group $K$, with representation rings $\Rep_{\Q}(A_3)=\Z\{[1],[\overline{\rho_{A_3}}]$ respectively $\Rep_{\Q}(K)\cong \Z\{[1],[\varphi_1],[\varphi_2],[\varphi_3]\}$. The $\varphi_i$ are all $1$-dimensional and conjugate under the action of the Weyl group in $A_4$. We have
\begin{align*} {\Ind}_{K}^{A_4}([1])=[1]+[\eta] \hspace{1cm} \text{ and } \hspace{1cm} {\Ind}_{K}^{A_4}([\varphi_i])=[\nu_4]. \end{align*}
So $[\nu_4]$ is identified with $[1]+[\eta]$ in $\pi_0^{A_4}(A_1^{\Q})$ and this is the only relation (since $A_3$ has only one $1$-dimensional representation). Hence $\pi_0^{A_4}(A_1^{\Q})\cong \Z\{[1],[\eta],[\nu_4]\}/([1]+[\eta]-[\nu_4])\cong \Z\{\overline{[1]},\overline{[\eta]}\}$. In $\pi_0^{A_4}(A_2^{\Q})$ the representation $[\eta]$ is identified with $2\cdot [1]$, so the filtration becomes constant $\Z$ from then on.

\end{Example}
\begin{Example}[Complexity filtration of the dihedral group $D_5$ over $\Q$] The representation ring of $D_5$ is given by $\Rep_{\Q}(D_5)\cong \Z\{[1],[-1],[\psi],[(-1)\cdot \psi]\}$, where $[-1]$ is restricted from the projection $D_5\to D_5/C_5\cong C_2$. The $4$-dimensional irreducible representations $[\psi]$ and $[(-1)\cdot \psi]$ are characterized by
\[ {\Ind}_{C_2}^{D_5}([1])=[1]+[\psi] \hspace{1cm} \text{ and } \hspace{1cm} {\Ind}_{C_2}^{D_5}([-1])=[-1]+[(-1)\cdot \psi]. \]
Hence we see that the kernel of $\Rep_{\Q}(D_5)\to \pi_0^{D_5}(A_1^{\Q})$ is generated by $[1]-[-1]$ and $[1]+[\psi]-[-1]-[(-1)\cdot \psi]$, which can be simplified to $[1]-[-1]$ and $[\psi]-[(-1)\cdot \psi]$. So $\pi_0^{D_5}(A_1^{\Q})$ is free of rank $2$ with basis the classes of $[1]$ and $[\psi]$. Since there are no $2$- or $3$-dimensional irreducible representations for any subgroup of $D_5$, this is also the case for $\pi_0^{D_5}(A_2^{\Q})$ and $\pi_0^{D_5}(A_3^{\Q})$. In $\pi_0^{D_5}(A_4^{\Q})$ we have the relation $[\psi]-4\cdot [1]$, so the filtration stabilizes.
\end{Example}
\begin{Example}[Complexity filtration of the alternating group $A_5$ over $\Q$] The representation ring is given by $\Rep_{\Q}(A_5)\cong \Z\{[1],[\nu_5],[\psi],[\Lambda^2 \nu_5]\}$, where $\nu_5$ is the restriction of the reduced natural $\Sigma_5$-representation, $\Lambda^2 \nu_5$ is its $6$-dimensional exterior square and $[\psi]$ is $5$-dimensional. There are $3$ conjugacy classes of maximal subgroups given by $A_4$, $\Sigma_3$ (generated by $(123)$ and $(12)(45)$)  and $D_5$ (generated by $(1234)$ and $(13)$). We note that the rational complexity filtration for $\Sigma_3$ is the same as the one over $\R$, since all real representations of its subgroups are already defined rationally. Using the notation from the previous examples, we have
\begin{eqnarray*}	{\Ind}_{A_4}^{A_5}([1])=[1]+[\nu_5] & \hspace{1cm}{\Ind}_{A_4}^{A_5}([\eta])=2\cdot [\psi]\hspace{1cm} & {\Ind}_{A_4}^{A_5}(\nu_4)=[\nu_5]+[\psi]+[\Lambda^2\nu_5]\\
									{\Ind}_{\Sigma_3}^{A_5}([1])=[1]+[\nu_5]+[\psi] & {\Ind}_{\Sigma_3}^{A_5}([\sgn])=[\nu_5]+[\Lambda^2\nu_5] & {\Ind}_{\Sigma_3}^{A_5}([\nu_3])=[\nu_5]+2\cdot [\psi] + [\Lambda^2\nu_5] \\
									{\Ind}_{D_5}^{A_5}([1])=[1]+[\psi] & {\Ind}_{D_5}^{A_5}([-1])=[\Lambda^2\nu_5] & {\Ind}_{D_5}^{A_5}([\varphi])=2\cdot [\nu_5]+2\cdot [\psi] + [\Lambda^2 \nu_5]\end{eqnarray*}
and ${\Ind}_{D_5}^{A_5}([(-1)\cdot \varphi])=2\cdot [\nu_5]+2\cdot [\psi] + [\Lambda^2 \nu_5]$. From our previous calculations we know that the relations in $\pi_0^G(A_1^{\Q})$ are generated by $[1]+[\eta]-[\nu_4]$ for $G=A_4$, by $[1]-[\sgn]$ for $G=\Sigma_3$ and by $[1]-[-1]$ and $[\psi]-[(-1)\cdot \psi]$ for $G=D_5$. Applying inductions to these relations, we see that they only give the relation $([\psi]+[1]-[\Lambda^2\nu_5])$ in $\pi_0^{A_5}(A_1^{\Q})$. From this we can read off that \[\pi_0^{A_5}(A_1^{\Q})\cong {\Rep}_{\Q}(A_5)/([\psi]+[1]-[\Lambda^2\nu_5]),\]
hence it is free with basis $[1]$, $[\nu_5]$ and $[\psi]$. In step $2$ we have to add the inductions of the relations $2\cdot [1]-[\eta]$ for $A_4$ and $2\cdot [1]-[\nu_3]$ for $\Sigma_3$. This yields the new relation $[1]+[\nu_5]-[\psi]$, so
\[\pi_0^{A_5}(A_2^{\Q})\cong {\Rep}_{\Q}(A_5)/([\psi]+[1]-[\Lambda^2\nu_5],[1]+[\nu_5]-[\psi])\]
is free with basis $[1]$ and $[\nu_5]$. In the third step nothing happens, because $A_5$ has no $3$-dimensional irreducible representation and we have seen that there are no new relations for any of the maximal subgroups. At step $4$ the element $[\nu_5]$ is identified with $4\cdot [1]$ and so $\pi_0^{A_5}(A_n^{\Q})\cong \Z$ for all $n\geq 4$.
\end{Example}

\section{Appendix}
\subsection{Cofibrancy properties of the rank filtration} \label{app:inclusions}
In this appendix we show that the $V$-th level of the inclusions $ku^{n-1}\to ku^n$ is an $O(V)$-cofibration, guaranteeing that the quotient $ku^n/ku^{n-1}$ has the global homotopy type of the homotopy cofiber. For instance, this was used in the proof of Theorem \ref{theo:pirank}. For finite subgroups of $O(V)$ (and hence for the $\mathcal{F}in$-global homotopy type of the quotient) this would follow quite directly from the results of \cite{Ost14}, but we need the general statement. In this and the next appendix we repeatedly make use of a theorem due to Illman (cf. \cite{Ill83}) that states that every smooth manifold equipped with a smooth action by a compact Lie group allows the structure of an equivariant CW complex.

We recall from \cite[Sec. 3]{Lyd99} that the evaluation $X(A)$ of a $\Gamma$-space $X$ on a based space $A$ is naturally filtered by skeleta $sk_m(X(A))$. The $m$-skeleton is obtained from the $m-1$-st by forming a certain pushout (\cite[Thm. 3.10]{Lyd99}). Furthermore, given a map $i:X\to Y$ of $\Gamma$-spaces, one can define relative skeleta $sk_m[i](A)$ by $sk_m(Y(A))\cup_{sk_m(X(A))} X(A)$ and it follows that these are related by a similar pushout square. The colimit over the $sk_m[i](A)$ gives back $Y(A)$ and the map from $X(A)=sk_0[i](A)$ agrees with $i$.
Now let $V$ be a finite dimensional real inner product space. We are interested in the case where $A$ is equal to $S^V$ and $i$ is the inclusion $k^{n-1}(\Sym(\C \otimes V),-)\hookrightarrow k^n(\Sym(\C\otimes V),-)$. There the connecting pushout takes the form

\begin{equation} \label{eq:pushout} \xymatrix{ (\bigvee_{n_1+\hdots+n_m=n}(L_\C(\bigoplus \C^{n_i},\Sym(\C\otimes V))/\prod U(n_i))_+)\times_{\Sigma_m} F((S^V)^{\times m}) \ar[r] \ar[d] & sk_{m-1}[i](S^V) \ar[d]\\
 (\bigvee_{n_1+\hdots+n_m=n}(L_\C(\bigoplus \C^{n_i},\Sym(\C\otimes V))/\prod U(n_i))_+)\times_{\Sigma_m} (S^V)^{\times m} \ar[r] & sk_m[i](S^V),}
\end{equation}
where the wedge is indexed over all $m$-tuples $(n_1,\hdots,n_m)$ which add up to $n$, with all $n_i$ larger than $0$. The notation $F((S^V)^{\times m})$ stands for the subspace of $(S^V)^{\times m}$ of tuples which contain two equal entries or a basepoint. It suffices to show that $sk_{m-1}[i](S^V)\to sk_m[i](S^V)$ is an $O(V)$-cofibration for all $m\in \N$, since the sequential colimit of $O(V)$-cofibrations is again an $O(V)$-cofibration. This follows from:

\begin{Lemma} \label{lem:cofibration1}
The left hand vertical map in Diagram (\ref{eq:pushout}) is an $O(V)$-cofibration.
\end{Lemma}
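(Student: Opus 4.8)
The plan is to split the assertion into two independent statements. Write $A$ for the based $(\Sigma_m\times O(V))$-space $\bigl(\bigvee_{n_1+\dots+n_m=n}(L_{\C}(\bigoplus_i\C^{n_i},\Sym(\C\otimes V))/\prod_i U(n_i))\bigr)_+$ appearing in Diagram~(\ref{eq:pushout}), where $\Sigma_m$ permutes the wedge summands together with the blocks $\C^{n_i}$ and $O(V)$ acts through $\Sym(\C\otimes V)$, and let $\iota\colon F((S^V)^{\times m})\hookrightarrow(S^V)^{\times m}$ be the inclusion of the fat diagonal. Then the left vertical map of Diagram~(\ref{eq:pushout}) is $\mathrm{id}_A\times_{\Sigma_m}\iota$, and it suffices to prove: $(1)$ that $\iota$ is a $(\Sigma_m\times O(V))$-cofibration; and $(2)$ that for any $(\Sigma_m\times O(V))$-space $A$ the functor $A\times_{\Sigma_m}(-)$ sends $(\Sigma_m\times O(V))$-cofibrations to $O(V)$-cofibrations. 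The advantage of this organisation is that \emph{no} cofibrancy or other property of $A$ is required, so the infinite-dimensional space $\Sym(\C\otimes V)$ never needs to be analysed; the whole content is concentrated in $(1)$.

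For $(2)$ I would argue as follows. The functor $A\times_{\Sigma_m}(-)$ is a coend, hence preserves all colimits, so since every $(\Sigma_m\times O(V))$-cofibration is a retract of a relative $(\Sigma_m\times O(V))$-CW inclusion it is enough to check that it carries each generating cofibration $(\Sigma_m\times O(V))/H\times S^{k-1}\to(\Sigma_m\times O(V))/H\times D^k$ to an $O(V)$-cofibration. Since $\Sigma_m$ acts trivially on the disk pair, this image is the map $Q\times S^{k-1}\to Q\times D^k$ with $Q:=A\times_{\Sigma_m}((\Sigma_m\times O(V))/H)$ an $O(V)$-space, and the product of an arbitrary $O(V)$-space with the non-equivariant cofibration $S^{k-1}\hookrightarrow D^k$ is an $O(V)$-cofibration. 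As $O(V)$-cofibrations are closed under pushout and sequential colimit and $A\times_{\Sigma_m}(-)$ commutes with both, statement $(2)$ follows.

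Statement $(1)$ is the substantive point, and it is where I would invoke Illman's equivariant triangulation theorem \cite{Ill83}. The space $(S^V)^{\times m}$ is a compact smooth manifold carrying a smooth action of the compact Lie group $\Sigma_m\times O(V)$, with $O(V)$ acting through the linear action on $V\subset S^V$ (fixing $\infty$) and $\Sigma_m$ permuting the factors. The fat diagonal $F((S^V)^{\times m})$ is the union of the finitely many smooth closed submanifolds $\Delta_{ij}=\{x_i=x_j\}$ for $i<j$ and $B_i=\{x_i=\infty\}$; this collection is permuted by $\Sigma_m$ and preserved by $O(V)$, and — since in the product chart $V^{\times m}$ it is literally an arrangement of linear subspaces — every intersection of members of the collection is again a smooth submanifold, so the arrangement is clean. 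The relative form of the equivariant triangulation theorem applied to this arrangement then provides a $(\Sigma_m\times O(V))$-CW structure on $(S^V)^{\times m}$ in which each $\Delta_{ij}$, each $B_i$, and hence their union $F((S^V)^{\times m})$, is a subcomplex; therefore $\iota$ is a relative $(\Sigma_m\times O(V))$-CW inclusion, in particular a $(\Sigma_m\times O(V))$-cofibration, and combining with $(2)$ finishes the proof. The main obstacle is precisely upgrading Illman's theorem for a single smooth submanifold to this clean arrangement of several submanifolds; the remaining steps are formal.
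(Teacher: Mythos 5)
The decisive gap is in your step $(2)$: the claim that $A\times_{\Sigma_m}(-)$ carries equivariant cofibrations to $O(V)$-cofibrations for an \emph{arbitrary} $(\Sigma_m\times O(V))$-space $A$ is false for the notion of cofibration used in this paper, namely a retract of a relative cell complex. Your own reduction to generating cofibrations exposes the problem: for $k=0$ the generating cofibration is $\emptyset\to(\Sigma_m\times O(V))/H$, whose image under $A\times_{\Sigma_m}(-)$ is $\emptyset\to Q$ with $Q=A\times_{\Sigma_m}((\Sigma_m\times O(V))/H)$, and this is an $O(V)$-cofibration only if $Q$ is $O(V)$-cofibrant --- which fails for general $A$. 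Even for $k\geq 1$ the map $Q\times S^{k-1}\to Q\times D^k$ is an $h$-cofibration (NDR pair) but need not be a retract of a relative cell complex: its cofiber is $\Sigma^k(Q_+)$, and for $Q$ a convergent sequence this is (a wedge summand away from) the Hawaiian earring, which does not have the homotopy type of a CW complex; a retract of a relative cell complex always has a cofiber of CW homotopy type. So the cofibrancy of the first factor cannot be discarded. Establishing it is exactly the content of the first half of the paper's proof, and it is nontrivial precisely because $\Sym(\C\otimes V)$ is infinite dimensional: one exhausts it by the finite-dimensional $O(V)$-invariant pieces $W_k=\bigoplus_{i\leq k}\Sym^i(\C\otimes V)$, applies Illman to the smooth manifolds $L_{\C}(\bigoplus\C^{n_i},W_k)$, and uses that $L_{\C}(\bigoplus\C^{n_i},W_{k-1})$ is the fixed locus of the subgroup $U(\Sym^k(\C\otimes V))$, which $O(V)$ normalizes, to see that each stage includes into the next as a subcomplex. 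Your step $(2)$ becomes correct once you add the hypothesis that $A$ is $(\Sigma_m\times O(V))$-cofibrant (then each $Q$ is $O(V)$-cofibrant and $Q\times(D^k,S^{k-1})$ is a relative $O(V)$-cell pair), but then you must supply that input, and you are back to the paper's argument.

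On your step $(1)$: the conclusion is right but your route through a ``relative Illman theorem for a clean arrangement of submanifolds'' asks for more than the cited reference provides and leaves the main technical point unaddressed, as you note. The paper gets the same statement more cheaply: take an $O(V)$-CW structure on $S^V$ with $\infty$ a $0$-cell, give $(S^V)^{\times m}$ the product $(\Sigma_m\wr O(V))$-CW structure, refine to a $(\Sigma_m\times O(V))$-cell structure, and observe that the locus of tuples containing $\infty$ is a subcomplex by construction while the locus of tuples with a repeated entry is exactly the set of points with nontrivial $\Sigma_m$-isotropy, hence automatically a subcomplex of \emph{any} equivariant cell structure. You may want to adopt that argument; it requires only Illman's theorem for the single manifold $S^V$.
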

\begin{proof} We first argue that $\bigvee_{n_1+\hdots+n_m=n}(L_\C(\bigoplus \C^{n_i},\Sym(\C\otimes V))/{\prod U(n_i)})_+$ is $(\Sigma_m\times O(V))$-cofibrant. This would follow directly from Illman's theorem if we put $W_k=\bigoplus_{i=0,\hdots,k}\Sym^i(\C\otimes V)$ instead of the full $\Sym(\C\otimes V)$, since each $L_\C(\bigoplus \C^{n_i},W_k)$ is a smooth manifold with a smooth action by $U(W_k)\times (N_{U(n)}\prod U(n_i))$. The subspace $L_\C(\bigoplus \C^{n_i},W_{k-1})$ is exactly the space of $U(\Sym^k(\C\otimes V))$-fixed points under this action. Since $O(V)$ fixes $\Sym^k(\C\otimes V)$, it normalizes the subgroup $U(\Sym^k(\C\otimes V))$.
This implies that if we forget any $(U(W_k)\times (N_{U(n)}\prod U(n_i)))$-CW structure to an $(O(V)\times (N_{U(n)}\prod U(n_i)))$-cell structure, the space $L_\C(\bigoplus \C^{n_i},W_{k-1})$ is necessarily a subcomplex and hence the inclusion a cofibration. By quotienting out the $\prod U(n_i)$-actions and passing to the colimit we see that the wedge is $(\Sigma_m\times O(V))$-cofibrant, as claimed.

Hence it suffices to show that $F((S^V)^{\times m})\to (S^V)^{\times m}$ is an $(\Sigma_m\times O(V))$-cofibration. By once more applying Illman's theorem we see that $S^V$ is an $O(V)$-CW complex, containing the basepoint $\infty$ as a $0$-cell. This $O(V)$-CW structure induces a $(\Sigma_m\wr O(V))$-CW structure on the $m$-fold cartesian product $(S^V)^{\times m}$ and thus in particular a $(\Sigma_m\times O(V))$-cell structure by choosing $(\Sigma_m\times O(V))$-CW structures on the $(\Sigma_m\wr O(V))$-orbits. We now claim that $F((S^V)^{\times m})$ is a $(\Sigma_m\times O(V))$-subcomplex for this cell structure.
By definition, $F((S^V)^{\times m})$ is the union of two subspaces: The space of tuples containing a basepoint and the space of tuples containing two equal entries. By definition, the former is even a $(\Sigma_m\wr O(V))$-CW subcomplex of $(S^V)^{\times m}$, since the basepoint is a $0$-cell. But the latter is given precisely by those points that have non-trivial $\Sigma_m$-isotropy, hence it is an equivariant subcomplex for any $(\Sigma_m\times O(V))$-cell structure. This finishes the proof.
\end{proof}

\subsection{Equivariant CW structures}
\label{app:cw}
The content of this appendix is to show that the $U(n)$-orthogonal spaces $\oL_n$ that appeared in Section \ref{sec:kurank} give $(U(n)\times G)$-cell complexes when evaluated on any $G$-representation $V$ (at most countably infinite dimensional). This property was needed in Proposition \ref{prop:clacompl} for $\oL_n$ to be a global universal space for the family of complete/non-isotypical subgroups of $U(n)$. The same proof also shows that the spaces $\oP(V)$ arising in the filtrations of algebraic $K$-theory are $(GL_n(R)\times G)$-cell complexes.

The proof is similar to that of the previous section. This time we consider the (absolute) skeleta filtration for the $U(n)$-$\Gamma$-space $\mathcal{L}(n,-)$, where the relating pushouts take the form

\[ \xymatrix{ \bigvee_{n_1+\hdots+n_m=n}(L_\C(\bigoplus \C^{n_i},\C^ n)/{\prod U(n_i)})_+\times_{\Sigma_m} F((S^V)^{\times m}) \ar[r] \ar[d] & sk_{m-1} (\mathcal{L}(n,S^V)) \ar[d]\\
  \bigvee_{n_1+\hdots+n_m=n}(L_\C(\bigoplus \C^{n_i},\C^ n)/{\prod U(n_i)})_+\times_{\Sigma_m} (S^V)^{\times m} \ar[r] & sk_m(\mathcal{L}(n,S^V)).}
\]
The wedge is taken over the same indexing system as in the previous section. We recall that the closed subspace $\oL_n(V)$ of $\mathcal{L}(n,S^V)$ was defined to contain those elements that can be represented by a tuple $(W_i,x_i)_{i\in I}$ with all $x_i$ non-equal to the basepoint and satisfying the equations $\sum \dim(W_i)\cdot x_i=0$ and $\sum \dim(W_i)|x_i|^2=1$. Intersection with $sk_m (\mathcal{L}(n,S^V))$ gives subspaces $sk_m (\oL_n(V))$ whose colimit over $m$ is isomorphic to $\oL_n(V)$. Likewise, for fixed $n_1,\hdots,n_m$ we define closed subspaces $S_{\{n_i\}}((S^V)^{\times m})\subseteq (S^V)^ {\times m}$ as those tuples satisfying $\sum n_i \cdot x_i=0$ and $\sum n_i |x_i|^2=1$.
With these definitions an element of $L(\bigoplus \C^{n_i},\C^ n)/{\prod U(n_i)}\times (S^V)^ {\times m}$ is mapped to $sk_m(\oL_n(V))$ if and only if it lies in $L(\bigoplus \C^{n_i},\C^ n)/{\prod U(n_i)}\times S_{\{n_i\}}((S^V)^{\times m})$. So we obtain a new pushout square
\begin{equation} \label{eq:pushout2} \xymatrix{ (\bigsqcup_{n_1+\hdots+n_m=n}(L(\bigoplus \C^{n_i},\C^ n)/{\prod U(n_i)}))\times_{\Sigma_m} F(S_{\{n_i\}}((S^V)^{\times m})) \ar[r] \ar[d] & sk_{m-1} (\oL_n(V)) \ar[d]\\
  (\bigsqcup_{n_1+\hdots+n_m=n}(L(\bigoplus \C^{n_i},\C^ n)/{\prod U(n_i)}))\times_{\Sigma_m} S_{\{n_i\}}((S^V)^{\times m}) \ar[r] & sk_m(\oL_n(V)).} \end{equation}

Hence it suffices to show:
\begin{Lemma} The left hand vertical map in Diagram (\ref{eq:pushout2}) is a $(U(n)\times G)$-cofibration.
\end{Lemma}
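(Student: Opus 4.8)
The plan is to run the argument of Lemma~\ref{lem:cofibration1} essentially verbatim, with $\Sym(\C\otimes V)$ replaced by the finite dimensional space $\C^n$, the isometry group $O(V)$ replaced by $U(n)$, the second copy of $O(V)$ (the one acting on spheres) replaced by $G$, and $(S^V)^{\times m}$ replaced by the subspaces $S_{\{n_i\}}((S^V)^{\times m})$. First I would show that $\bigsqcup_{n_1+\cdots+n_m=n}L_\C(\bigoplus_i\C^{n_i},\C^n)/\prod_i U(n_i)$ is $(\Sigma_m\times U(n))$-cofibrant. This is in fact simpler than in Lemma~\ref{lem:cofibration1}: since $\sum_i n_i=n$, the space $L_\C(\bigoplus_i\C^{n_i},\C^n)$ is a torsor under the compact group $U(n)$, so $L_\C(\bigoplus_i\C^{n_i},\C^n)/\prod_i U(n_i)$ is the compact homogeneous manifold $U(n)/\prod_i U(n_i)$, on which $U(n)$ acts smoothly from the left and the Weyl group $N_{U(n)}(\prod_i U(n_i))/\prod_i U(n_i)$ acts smoothly from the right; the disjoint union over the tuples $(n_i)$ is therefore a smooth compact $(\Sigma_m\times U(n))$-manifold, and Illman's theorem \cite{Ill83} equips it with a $(\Sigma_m\times U(n))$-CW structure (no passage to a colimit over finite dimensional subspaces being needed here). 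By the same formal step as at the end of the proof of Lemma~\ref{lem:cofibration1} --- the balanced product over $\Sigma_m$ of a $(\Sigma_m\times U(n))$-cofibrant space with a $(\Sigma_m\times G)$-cofibration is a $(U(n)\times G)$-cofibration --- it then suffices to show that the inclusion $\bigsqcup_{(n_i)}F(S_{\{n_i\}}((S^V)^{\times m}))\hookrightarrow\bigsqcup_{(n_i)}S_{\{n_i\}}((S^V)^{\times m})$ is a $(\Sigma_m\times G)$-cofibration.

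To see this, I would identify $S_{\{n_i\}}((S^V)^{\times m})$ with the unit sphere of the $G$-representation $\{(x_i)_i\in V^{\oplus m}:\sum_i n_ix_i=0\}$, equipped with the weighted inner product $\langle(x_i),(y_i)\rangle=\sum_i n_i\langle x_i,y_i\rangle$; the stabiliser $\Sigma_{(n_i)}\subseteq\Sigma_m$ of the tuple $(n_i)$ permutes coordinates and $G$ acts diagonally, so that $\bigsqcup_{(n_i)}S_{\{n_i\}}((S^V)^{\times m})$ is a smooth compact $(\Sigma_m\times G)$-manifold. Inside it, $\bigsqcup_{(n_i)}F(S_{\{n_i\}}((S^V)^{\times m}))$ is the closed, $(\Sigma_m\times G)$-invariant subset of tuples with a repeated coordinate, which is the finite union of the ``diagonal'' sub-spheres $\{x_i=x_j\}\cap S_{\{n_i\}}$, each of these being again the unit sphere of a $G$-subrepresentation. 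Being a closed semialgebraic invariant subset of a smooth compact $(\Sigma_m\times G)$-manifold, such a union is a subcomplex for a suitable $(\Sigma_m\times G)$-CW structure (obtained by a relative equivariant triangulation, i.e.\ by applying Illman's theorem inductively, triangulating the deepest intersection strata first); hence the inclusion is a $(\Sigma_m\times G)$-cofibration, which completes the proof. The algebraic analogue, for the $GL_n(R)$-$I$-spaces $\oP(V)$, is obtained mutatis mutandis and is even easier, since the Stiefel factor $GL_n(R)/\prod_i GL_{n_i}(R)$ is discrete.

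The step I expect to be the main obstacle is the last one. In Lemma~\ref{lem:cofibration1} the subset $F$ was precisely the locus of non-trivial $\Sigma_m$-isotropy in $(S^V)^{\times m}$ and was therefore automatically an equivariant subcomplex of any cell structure; here this fails, because a coincidence $x_i=x_j$ with $n_i\neq n_j$ lies in $F$ but not in the non-free locus, and such coincidences genuinely occur (for instance for the tuple $(1,1,2)$ when $V\neq 0$). One must therefore choose a cell structure adapted to the whole configuration of diagonal sub-spheres rather than appeal to the mere existence of an equivariant triangulation; equivalently, one uses that a closed semialgebraic invariant subset of a compact $G$-manifold is an equivariant neighbourhood deformation retract. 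Either way only standard equivariant triangulation input is required, so no essentially new difficulty arises.
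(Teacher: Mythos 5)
Your overall strategy is the same as the paper's: reduce to showing that $\bigsqcup_{(n_i)}L_\C(\bigoplus_i\C^{n_i},\C^n)/\prod_i U(n_i)$ is $(\Sigma_m\times U(n))$-cofibrant and that $\bigsqcup_{(n_i)}F(S_{\{n_i\}}((S^V)^{\times m}))\hookrightarrow\bigsqcup_{(n_i)}S_{\{n_i\}}((S^V)^{\times m})$ is a $(\Sigma_m\times G)$-cofibration, then combine via the balanced product. Your treatment of the first factor (a compact homogeneous $(\Sigma_m\times U(n))$-manifold, no colimit over finite-dimensional pieces of $\Sym(\C\otimes V)$ needed) is exactly what the paper says. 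Where you genuinely diverge is the sphere factor, and you are right to do so: the paper disposes of this step by asserting that, basepoints being absent, $F(S_{\{n_i\}}((S^V)^{\times m}))$ is \emph{exactly} the locus of non-trivial $\Sigma_m$-isotropy and hence automatically a subcomplex of any equivariant cell structure, exactly as in Lemma \ref{lem:cofibration1}. As you observe, this fails once the weights are not all equal: for $(n_1,n_2,n_3)=(1,1,2)$ the tuple with $x_1=x_3\neq 0$, $x_2=-3x_1$, $12|x_1|^2=1$ satisfies both defining equations, lies in $F$, yet has trivial isotropy, since the transposition $(13)$ does not preserve the summand indexed by $(1,1,2)$ and the only stabilizing transposition $(12)$ would force $x_1=x_2$. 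So for $n\geq 4$ the isotropy argument does not cover all of $F$, and one really does need a cell structure adapted to the full diagonal arrangement, which your argument supplies (triangulating the poset of intersection strata, or invoking the equivariant NDR/triangulability of a closed invariant semialgebraic subset). Two small points to tighten: the adapted triangulation needs a bit more than the bare statement of Illman's theorem for a single smooth $G$-manifold --- one should cite or prove triangulability of $G$-manifold pairs, or of invariant semialgebraic sets, as you sketch; and for countably infinite-dimensional $V$ one still has to run the colimit over finite-dimensional subrepresentations in the sphere factor, choosing the adapted triangulations compatibly with the sub-spheres $S_{\{n_i\}}((S^{V'})^{\times m})$ for $V'\subseteq V$ (these are again invariant semialgebraic subsets, so the same argument applies). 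With these caveats your proof is correct, and at the crucial step it is the argument the paper should have given.
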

\begin{proof} The proof is very similar to that of Lemma \ref{lem:cofibration1}. Again it suffices to see that \[ \bigsqcup_{\{n_i\neq 0\}_{0\leq i\leq m}\sum n_i=n}(L(\bigoplus \C^{n_i},\C^ n)/{\prod U(n_i)})\] is a $(U(n)\times \Sigma_m)$-CW complex and that the map $F(S_{\{n_i\}}((S^ V)^{\times m})))\to (S^V)^{\times m}$ is a $(\Sigma_m\times G)$-cofibration. The former is easy to see, because each summand is $U(n)$-isomorphic to $U(n)/\prod U(n_i)$ and these summands are permuted by the $\Sigma_m$-action. For the latter we note that by a transformation of variables each $S_{\{n_i\}}((S^V)^{\times m})$ is homeomorphic to the usual unit sphere $S(V\otimes \R^m)$, which -- by Illman's theorem for finite dimensional $V$ and the same trick as in Lemma \ref{lem:cofibration1} for the infinite case -- is a $(\Sigma_m\times G)$-CW complex.
Since $F(S_{\{n_i\}}((S^V)^{\times m}))$ no longer contains any basepoints, it is exactly the subspace of elements with non-trivial $\Sigma_m$-isotropy, and hence always a $(\Sigma_m\times G)$-subcomplex. This finishes the proof.
\end{proof}

\subsection{Verification of cofiber sequence}
\label{app:cofiber}
In this final appendix we give the proof that the map \[ \psi_n: \Sigma^{\infty}_+ (E_{gl}U(n)\times_{U(n)} \oL_n)\to ku^{n-1} \] constructed in Section \ref{sec:pi0ku} makes the following diagram a map of cofiber sequences:
\begin{equation} \label{eq:cof} \xymatrix{ \Sigma^{\infty}_+ (E_{gl}U(n)\times_{U(n)} \oL_n) \ar[r]\ar[d]_{\psi_n} & \Sigma^{\infty}_+ (B_{gl}U(n)) \ar[d]^{\alpha_n}\ar[r] & \Sigma^{\infty} (E_{gl}U(n)_+\wedge_{U(n)} (\oL_n)^\dia)\ar[d]^{\cong}\\
  ku^{n-1}\ar[r]^{i_{n-1}}& ku^n \ar[r] &ku^n/ku^{n-1} } 
\end{equation}
In order to establish this we turn the upper sequence into a strict quotient sequence as well by using the mapping cylinder and by constructing a map \[ \overline{\psi}_n:\Sigma^{\infty}_+ (E_{gl} U(n)\times_{U(n)} ([0,1]\times \oL_n/(0\times \oL_n)))\to ku^n \] with the following three properties:
\begin{enumerate}
 \item The restriction of $\overline{\psi}_n$ to the copy of $\Sigma^{\infty}_+ (E_{gl} U(n)\times_{U(n)} \oL_n)$ at the coordinate $1$ is equal to the composite $i_{n-1} \circ \psi_n$.
 \item The restriction of $\overline{\psi}_n$ to the copy of $\Sigma^{\infty}_+ (B_{gl}U(n))$ at the cone point is equal to $\alpha_n$.
  \item The induced map $\Sigma^{\infty}(E_{gl}U(n)_+\wedge (\oL_n)^\dia)\to ku^n/ku^{n-1}$ (by quotiening out $\Sigma^{\infty}_+ (E_{gl} U(n)\times_{U(n)} \oL_n)$ and $ku^{n-1}$) is homotopic to the isomorphism constructed in Section \ref{sec:kurank}.
\end{enumerate}
The first two properties show that $\overline{\psi}_n$ induces a homotopy between the two composites in the first square of Diagram \ref{eq:cof}. The third property implies that there is a homotopy between the two composites in the square
\[ \xymatrix{ \Sigma^{\infty} (E_{gl}U(n)_+\wedge _{U(n)} (\oL_n)^\dia) \ar[r]\ar[d]_{\cong} & S^1\wedge \Sigma^{\infty}_+ (E_{gl}U(n)\times_{U(n)} \oL_n) \ar[d]^{S^1\wedge \psi_n} \\
  ku^n/ku^{n-1} \ar[r] & S^1\wedge ku^n/ku^{n-1}}
\]
and so we are done. The map $\overline{\psi}_n$ is also used in Section \ref{sec:pi0comp} to determine the effect of the complexity filtration on $\upi_0$.

In order to construct $\overline{\psi}_n$ we quickly recall the objects involved: An element of $E_{gl}U(n)(V)$ is a linear isometry $\C^n\to \Sym(\C\otimes V)$. Points in $\oL_n(V)$ are represented by tuples $(W_i,x_i)_{i\in I}$ where the $x_i$ are elements of $V$ and the $W_i$ are pairwise orthogonal subspaces of $\C^n$ who add up to all of $\C^n$. Furthermore, these tuples have to be reduced and of norm $1$ (cf. Section \ref{sec:kurank}). Finally, elements of $ku^n(V)$ are also represented by tuples $(W_i,x_i)_{i\in I}$, but this time the $W_i$ are orthogonal subspaces of $\Sym(\C\otimes V)$ and the only requirement is that the sum of the dimensions is at most $n$.

We recall also that the definition of $\psi_n$ made use of a function $s:[0,\infty]\to [0,\infty]$ which maps the interval $[0,1/(2n^2)]$ homeomorphically onto $[0,\infty]$ and is constant on the rest. Finally, given a finite tuple of vectors $x=(x_i)_{i\in I}$ of a real inner product space $V$ we defined a map $p_x:V\to \langle \{x_i\}_{i\in I} \rangle\subseteq V$ by $p_x(v)=\sum_I \langle v,x_i \rangle \cdot x_i$.

Now let $H:[0,\infty]\times [0,1]\to [0,\infty]$ be a homotopy relative endpoints from the identity to $s$. Given a real inner product space $V$ with a finite tuple of vectors $x=(x_i)_{i\in I}$ as above, we define a map $H^V_x:S^V\times [0,1]\to S^V$ via
\[ H^V_x(v,t)=(H(|p_xv|,t)-|p_xv|)\cdot p_xv + v. \]
This gives a homotopy from the identity to the map $s^V_x$ used in the definition of $\psi_n$.

Now we can define $\overline{\psi}_n$ by the formula
\[ (\varphi,(W_i,x_i)_{i\in I},t)\wedge v\mapsto \begin{cases}
                 (\varphi(W_i),\frac{t}{1-t} \cdot x_i+v)_{i\in I} & \text{ if } 0\leq t\leq 1/2\\
		 (\varphi(W_i),H^V_x(x_i + v,2t-1))_{i\in I} & \text{ if } 1/2\leq t\leq 1
                                                 \end{cases}
\]
where $x$ is short for the tuple of the $x_i$. Since $H_x(x_i + v,0)$ is equal to $x_i+v$, these two definitions agree on the intersection and glue to a well-defined map. By definition, setting $t$ equal to $1$ gives back $\psi_n$, thus property $(1)$ is satisfied. Furthermore, the elements $(\varphi,(W_i,x_i)_{i\in I},0)$ are mapped to the tuple $(\varphi(W_i),v)_{i\in I}$, which is equal to $(\varphi(\C^n),v)$. Hence it is independent of the $W_i$ and $x_i$ and the induced map $\Sigma^{\infty}_+ (B_{gl}U(n))\to ku^n$ gives back $\alpha_n$, yielding property $(2)$.

It remains to prove property $(3)$, i.e., that the induced map 
\[ \overline{\psi}_n':\Sigma^{\infty}(E_{gl}U(n)_+\wedge \oL_n^\dia)\to ku^n/ku^{n-1} \] obtained by quotiening out $\Sigma^{\infty}_+ (E_{gl} U(n)\times_{U(n)} \oL_n)$ and $ku^{n-1}$ is homotopic to the isomorphism from Section \ref{sec:kurank}. For $t\leq 1/2$ the two maps are in fact equal and hence it suffices to construct a homotopy on the part where $t\geq 1/2$, relative to $t=1/2$. This is achieved by the formula
\[ (\varphi,(W_i,x_i)_{i\in I},t,v,s) \mapsto [(H^V_x((\frac{(1-s)t}{1-(1-s)t}+\frac{s-1}{s+1}+1)\cdot x_i+v,s(2t-1)),\varphi(W_i))_{i\in I}] \]
for $s\in [0,1]$. Continuity is only unclear at points for which $t=1$ and $s=0$, which are mapped to the basepoint. However, by the same estimate as in Section \ref{sec:pi0ku} one sees that the expression $(H^V_x((\frac{(1-s)t}{1-(1-s)t}+\frac{s-1}{s+1}+1)\cdot x_i+v,s(2t-1)),\varphi(W_i))_{i\in I}$ lies in $ku^{n-1}$ already for all $t$ close enough to $1$ and $s$ close enough to $0$. So the homotopy is actually constant around $s=0$ and $t=1$, hence we are done.

\end{document}